\newtheorem{theorem}{Theorem}[section]
\newtheorem{lemma}[theorem]{Lemma}
\newtheorem{proposition}[theorem]{Proposition}
\newtheorem{corollary}[theorem]{Corollary}
\theoremstyle{definition}
\newtheorem{definition}[theorem]{Definition}
\theoremstyle{remark}
\newtheorem{remark}[theorem]{Remark}
\newtheorem{example}[theorem]{Example}
\newtheorem{conjecture}[theorem]{Conjecture}
\numberwithin{equation}{section}
\newcommand{\0}{\mathbf 0}
\newcommand{\1}{\mathbf 1}
\newcommand{\bb}{\mathbf b}
\newcommand{\ZZ}{\mathbf Z}
\newcommand{\CC}{\mathbf C}
\newcommand{\RR}{\mathbf R}
\newcommand{\HH}{\mathbf H}
\newcommand{\EE}{\mathbf E}
\newcommand{\BB}{\mathbf B}
\newcommand{\bO}{\mathbf{bO}}
\newcommand{\BO}{\mathbf{BO}}
\newcommand{\mO}{\mathbf{mO}}
\newcommand{\MO}{\mathbf{MO}}
\newcommand{\SSS}{\mathbf{S}}
\newcommand{\cP}{\mathcal P}
\newcommand{\F}{\mathcal F}
\newcommand{\fS}{\mathfrak S}
\newcommand{\im}{\operatorname{im}}
\newcommand{\id}{\operatorname{id}}
\newcommand{\Eq}{\operatorname{Eq}}
\newcommand{\Coeq}{\operatorname{Coeq}}
\newcommand{\Emb}{\operatorname{Emb}}
\newcommand{\Hom}{\operatorname{Hom}}
\newcommand{\Inj}{\operatorname{Inj}}
\newcommand{\End}{\operatorname{End}}
\newcommand{\Aut}{\operatorname{Aut}}
\newcommand{\Nbd}{\operatorname{Nbd}}
\newcommand{\Maps}{\operatorname{Maps}}
\newcommand{\Mapstilde}{\widetilde{\operatorname{Maps}}}
\newcommand{\RepMapstilde}{\widetilde{\operatorname{RepMaps}}}
\newcommand{\Gr}{\operatorname{Gr}}
\newcommand{\GL}{\operatorname{GL}}
\newcommand{\Iso}{\operatorname{Iso}}
\newcommand{\colim}{\operatornamewithlimits{colim}}
\newcommand{\cyl}{\operatorname{cyl}}
\newcommand{\hq}{\mathord{/\!/}}
\newcommand{\Str}{\operatorname{Str}}
\newcommand{\Top}{\mathrm{Top}}
\newcommand{\kTop}{\mathrm{kTop}}
\newcommand{\Set}{\mathrm{Set}}
\newcommand{\PSh}{\mathrm{PSh}}
\newcommand{\Ab}{\mathrm{Ab}}
\newcommand{\Grpd}{\mathrm{Grpd}}
\newcommand{\FinGrp}{\mathrm{FinGrp}}
\newcommand{\InjFinGrp}{\mathrm{InjFinGrp}}
\newcommand{\Spc}{\mathrm{Spc}}
\newcommand{\OrbSpc}{\mathrm{OrbSpc}}
\newcommand{\OrbSpcPair}{\mathrm{OrbSpcPair}}
\newcommand{\SpcPair}{\mathrm{SpcPair}}
\newcommand{\RepOrbSpcPair}{\mathrm{RepOrbSpcPair}}
\newcommand{\RepOrbSpc}{\mathrm{RepOrbSpc}}
\newcommand{\OrthSpc}{\mathrm{OrthSpc}}
\newcommand{\GloSpc}{\mathrm{GloSpc}}
\newcommand{\Sp}{\mathrm{Sp}}
\newcommand{\OrbSp}{\mathrm{OrbSp}}
\newcommand{\RepOrbSp}{\mathrm{RepOrbSp}}
\newcommand{\OrthSp}{\mathrm{OrthSp}}
\newcommand{\GloSp}{\mathrm{GloSp}}
\newcommand{\rep}{\mathrm{rep}}
\newcommand{\Rep}{\mathrm{Rep}}
\newcommand{\C}{\mathrm C}
\newcommand{\op}{\mathrm{op}}
\newcommand{\LL}{\mathbb L}
\newcommand{\OO}{\mathbb O}
\newcommand{\Shv}{\mathrm{Shv}}
\newcommand{\Vect}{\mathrm{Vect}}
\newcommand{\Ind}{\operatorname{Ind}}
\newcommand{\Pro}{\operatorname{Pro}}
\newcommand{\Ho}{\operatorname{Ho}}
\newcommand{\der}{\mathrm{der}}
\newcommand{\iso}{\mathrm{iso}}
\newcommand{\st}{\mathrm{st}}
\newcommand{\cst}{\mathrm{cst}}
\newcommand{\fr}{\mathrm{fr}}
\newcommand{\acts}{\curvearrowright}
\newcommand{\righttwoarrows}{\mathrel{\vcenter{\mathsurround0pt{\ialign{##\crcr\noalign{\nointerlineskip}$\rightarrow$\crcr\noalign{\nointerlineskip}$\rightarrow$\crcr}}}}}
\newcommand{\rightthreearrows}{\mathrel{\vcenter{\mathsurround0pt{\ialign{##\crcr\noalign{\nointerlineskip}$\rightarrow$\crcr\noalign{\nointerlineskip}$\rightarrow$\crcr\noalign{\nointerlineskip}$\rightarrow$\crcr}}}}}
\newcommand{\lefttwoarrows}{\mathrel{\vcenter{\mathsurround0pt{\ialign{##\crcr\noalign{\nointerlineskip}$\leftarrow$\crcr\noalign{\nointerlineskip}$\leftarrow$\crcr}}}}}
\newcommand{\leftthreearrows}{\mathrel{\vcenter{\mathsurround0pt{\ialign{##\crcr\noalign{\nointerlineskip}$\leftarrow$\crcr\noalign{\nointerlineskip}$\leftarrow$\crcr\noalign{\nointerlineskip}$\leftarrow$\crcr}}}}}
\begin{document}

\title{Orbifold bordism and duality for finite orbispectra}

\author{John Pardon\thanks{This research was conducted during the period the author served as a Clay Research Fellow and was partially supported by a Packard Fellowship and by the National Science Foundation under the Alan T.\ Waterman Award, Grant No.\ 1747553.}}

\date{August 16, 2021}

\maketitle

\begin{abstract}
We construct the stable (representable) homotopy category of \emph{finite orbispectra}, whose objects are formal desuspensions of finite orbi-CW-pairs by vector bundles and whose morphisms are stable homotopy classes of (representable) relative maps.
The stable representable homotopy category of finite orbispectra admits a contravariant involution extending Spanier--Whitehead duality.
This duality relates homotopical cobordism theories (cohomology theories on finite orbispectra) represented by global Thom spectra to geometric (derived) orbifold bordism groups (homology theories on finite orbispectra).
This isomorphism extends the classical Pontryagin--Thom isomorphism and its known equivariant generalizations.
\end{abstract}

\section{Introduction}

The classical Pontryagin--Thom isomorphism \cite{pontryagin,pontryagintranslation,thom} equates manifold bordism groups $\Omega_*(X)$ with corresponding stable homotopy groups $[S,X\wedge MO]$ for spaces $X$.
When $X$ is a $G$-space ($G$ a compact Lie group), equivariant versions of this isomorphism are well-studied \cite{wasserman,tomdieck,connerfloyd,brockerhook,schwedeglobal}.

A main result of this paper is to construct the Pontryagin--Thom isomorphism in the homotopy theory of \emph{orbispaces} \cite{haefligerorbiespaces}.
The basic objects of this homotopy theory are \emph{orbi-CW-complexes}, which are built like CW-complexes by attaching cells of the form $(D^k,\partial D^k)\times\BB G$ for finite groups $G$ along \emph{representable} maps \cite{gepnerhenriques}.
(The more general setting in which one allows compact Lie groups in place of finite groups is unfortunately beyond the scope of this paper, most significantly due to the failure of `enough vector bundles' in this context.)
The most familiar instance of orbispaces in topology is probably orbifolds; moduli spaces of solutions to elliptic partial differential equations, as they appear in low-dimensional and symplectic topology, are also best regarded as orbispaces, and they provide some of the motivation for our present investigation.

The Pontryagin--Thom isomorphism relates `geometric bordism theories' with `homotopical cobordism theories' for orbispaces $X$.
In our setting, the relevant geometric bordism theories $\Omega_*(X)$ are given by bordism classes of (possibly `derived') orbifolds with a representable map to $X$ (and possibly with some sort of tangential structure).
The homotopical cobordism theories relevant for us are those associated to the global Thom spectra defined by Schwede \cite{schwedeglobal}.
These theories (on both the geometric side and the homotopical side) come in two flavors; on the geometric side, these correspond to the adjectives `ordinary' and `derived'.
The difference between ordinary and derived bordism measures the failure of equivariant transversality.

The Pontryagin--Thom isomorphism between geometric bordism and homotopical cobordism passes through the category of \emph{finite representable orbispectra} and a contravariant `duality' involution on this category.
The construction of this category and of its involution are our remaining main results.
They both rely crucially on the fact, proven in \cite{orbibundle}, that compact orbispaces admit `enough vector bundles' (the assertion that a given compact orbi-CW-complex $X$ admits enough vector bundles is equivalent to the assertion that $X$ is homotopy equivalent to a compact effective orbifold with boundary; \emph{effective} means that in the local models $\RR^n/G$ or $(\RR^{n-1}\times\RR_{\geq 0})/G$, the homomorphism $G\to\GL_n(\RR)$ is \emph{injective}).
Enough vector bundles also underlies much of the other reasoning in this paper, including the definition of derived orbifold bordism groups, the extension of geometric bordism theories to orbispectra, and the relation between orbi-CW-complexes and the global homotopy theory from \cite{schwedeglobal}.

Before stating our main results more formally, we give a concrete example to motivate the more abstract discussion which follows.

\begin{example}\label{orbibordismex}
We describe a stable homotopy theoretic realization of the bordism group of closed orbifolds, which for reasons which will become apparent shortly, we denote by $\Omega_*(R(*))$.
This group has been studied by Druschel \cite{druschelI,druschelII,druschelIII}, \'Angel \cite{angelss,angelbdry}, and Sarkar \cite{sarkar}.

The first main point of the Pontryagin--Thom construction for manifolds is to note that every manifold $M$ admits a homotopically unique embedding into $\RR^N$ as $N\to\infty$, in the sense that the space of embeddings $M\hookrightarrow\RR^N$ becomes highly connected in the limit $N\to\infty$.
We therefore seek a corresponding sequence of orbifolds $X_N$ with the property that every orbifold $M$ admits a homotopically unique embedding into $X_N$ in the limit $N\to\infty$.
In this pursuit, it is helpful to separate the two key properties of $\RR^N$ which give rise to the unique embedding property for manifolds: it is contractible (so everything has a homotopically unique map to it) and high-dimensional (so the locus of maps which are not embeddings has arbitrarily high codimension as $N\to\infty$).

Now if we are seeking an embedding of orbifolds $M\hookrightarrow X_N$, we should first note that an embedding is necessarily \emph{representable}, so we should not seek $X_N$ which are contractible, rather we should seek $X_N$ with the property that the space of \emph{representable} maps to $X_N$ is contractible (for every domain orbispace).
This universal property defines a unique homotopy type which we denote by
\begin{equation}
R(*):=\bigsqcup_{G_0\hookrightarrow\cdots\hookrightarrow G_p}\BB G_0\times\Delta^p\Bigm/{\sim}
\end{equation}
where the right side is modelled on the nerve of the $2$-category of finite groups, injective maps, and conjugations.
It is straightforward to check that $R(*)$ has the desired property: it is enough (by an obstruction theory argument) to show that the space of representable maps $\BB G\to R(*)$ is contractible for every finite group $G$, and this space is
\begin{equation}
\bigsqcup_{G_0\hookrightarrow\cdots\hookrightarrow G_p}\RepMapstilde(\BB G,\BB G_0)\times\Delta^p\Bigm/{\sim}\quad=\quad\bigsqcup_{G\hookrightarrow G_0\hookrightarrow\cdots\hookrightarrow G_p}\Delta^p\Bigm/{\sim}
\end{equation}
which is contractible as it is the nerve of a category with an initial object (the under-category of $G$ in the $2$-category of finite groups, injective maps, and conjugations).
Thus, in particular, every compact orbifold $M$ admits a homotopically unique representable map $M\to R(*)$.

Next, we should realize $R(*)$ as a high-dimensional orbifold so as to ensure that the locus of representable maps $M\to R(*)$ which fail to be an embedding has arbitrarily large codimension inside the space of all maps (in fact, to guarantee this, we need more than just that the dimension of $R(*)$ is large, rather we need that when its tangent bundle is decomposed into isotypic pieces with respect to the isotropy group actions, every isotypic piece has high dimension).
Filter $R(*)$ by finite subcomplexes, and use enough vector bundles \cite{orbibundle} to realize each as a compact effective orbifold with boundary; moreover use enough vector bundles again to replace each with the total space of the unit disk bundle of a vector bundle over it whose isotypic pieces are all high-dimensional.
We thus get a sequence of compact orbifolds with boundary and smooth embeddings $X_0\hookrightarrow X_1\hookrightarrow X_2\hookrightarrow\cdots$, such that for every closed orbifold $M$, the direct limit over $i\to\infty$ of the space of embeddings $M\hookrightarrow X_i$ is contractible.

There is now an obvious Pontryagin--Thom collapse map giving, for any smooth suborbifold of $X_i$ of dimension $d$, an element of $\mO^{-d}((X_i,\partial X_i)^{-TX_i})$, where $\mO$ is the global spectrum defined by Schwede \cite{schwedeglobal} (we define the category of orbispectra which includes expressions such as $(X_i,\partial X_i)^{-TX_i}$ as objects, and we show that global spectra define cohomology theories on orbispectra).
The homotopically unique embedding property of the sequence $X_0\hookrightarrow X_1\hookrightarrow X_2\hookrightarrow\cdots$ thus gives us a map $\Omega_*(R(*))\to\varinjlim_{i\to\infty}\mO^{-*}((X_i,\partial X_i)^{-TX_i})$.
Now Theorem \ref{duality} defines an involution $D$ on the category of orbispectra which sends $X_i$ to $(X_i,\partial X_i)^{-TX_i}$, so we may formulate the Pontryagin--Thom map more intrinsically as
\begin{equation}
\Omega_*(R(*))\to\mO^{-*}(D(R(*))),
\end{equation}
where to be completely precise we should remark that $D$ is defined only on the category of finite orbispectra, so $D(R(*))$ is really an inverse system of orbispectra, to which applying the contravariant functor $\mO^{-*}$ yields a directed system of graded abelian groups, and the right side above refers to its direct limit.
Theorem \ref{ptfinal} states that this Pontryagin--Thom map is an isomorphism (for any orbispectrum in place of $R(*)$).
We thus conclude that the group of closed orbifolds modulo bordism is $\mO^{-*}(D(R(*)))$.
\end{example}

\subsection{Categories of orbispaces}

We approach the homotopy theory of orbispaces from the point of view of orbi-CW-complexes; these are built like CW-complexes from cells $(D^k,\partial D^k)\times\BB G$ for integers $k\geq 0$ and finite groups $G$, which are attached along \emph{representable} maps (this is a slight adaptation of a definition given by Gepner--Henriques \cite{gepnerhenriques}).
We denote by $\Spc$ the category of CW-complexes and homotopy classes of maps, and we denote by $\OrbSpc$ (resp.\ $\RepOrbSpc$) the category of orbi-CW-complexes and homotopy classes of all (resp.\ representable) maps.
There are thus functors
\begin{equation}\label{cwcats}
\Spc\to\RepOrbSpc\to\OrbSpc
\end{equation}
with $\Spc$ being a full subcategory of both $\RepOrbSpc$ and $\OrbSpc$.
It was pointed out already in Gepner--Henriques \cite{gepnerhenriques} that the distinction between representable vs all maps leads to two distinct theories, both of which can legitimately be called `the homotopy theory of orbispaces'.

The functor $\Spc\to\OrbSpc$ has a left adjoint $X\mapsto\left|X\right|$ (the coarse space of $X$) and a right adjoint $X\mapsto\tilde X$ (the classifying space of $X$).
The functor $\RepOrbSpc\to\OrbSpc$ also has a right adjoint which we denote by $X\mapsto R(X)$.
The orbi-CW-complex $R(*)$ plays a recurring role in our discussion; it is the terminal object of $\RepOrbSpc$, and it is what Rezk \cite{rezkcohesion} calls the `normal subgroup classifier' $\mathcal N$.

\begin{remark}
We certainly expect, but do not pursue here, $\infty$-categorical refinements of all of our constructions.
This expectation is reflected in our notation: although all of the categories under consideration in this paper are homotopy categories, we do not include the prefix $\Ho$ in their notation.
\end{remark}

Of importance are also the categories of \emph{relative orbi-CW-complexes} $\RepOrbSpc_*$ and $\OrbSpc_*$ which are analogues of the category $\Spc_*$ of pointed CW-complexes.
It should be noted, however, that $\RepOrbSpc_*$ and $\OrbSpc_*$ are not the homotopy categories of pointed orbi-CW-complexes, rather their objects are orbi-CW-pairs $(X,A)$ (meaning $X$ is an orbi-CW-complex and $A\subseteq X$ is a subcomplex) with a nontrivial notion of morphism.
The essential reason this slightly complicated definition is needed is that for an orbi-CW-pair $(X,A)$, there is no good way to collapse $A$ to a point and form a quotient orbi-CW-complex $X/A$.
We have functors
\begin{equation}\label{pointedcwcats}
\Spc_*\to\RepOrbSpc_*\to\OrbSpc_*
\end{equation}
again with $\Spc_*$ being a full subcategory of the latter two, and there is a natural map from \eqref{cwcats} to \eqref{pointedcwcats} given by adjoining a disjoint basepoint.

The categories $\RepOrbSpc$ and $\RepOrbSpc_*$ are a natural setting for homotopy theory.
The category $\RepOrbSpc_*$ has a natural notion of a cofiber sequence $X\to Y\to Z$, and every morphism $X\to Y$ in $\RepOrbSpc_*$ extends to a half-infinite `Puppe' sequence $X\to Y\to Z\to\Sigma X\to\Sigma Y\to\Sigma Z\to\Sigma^2X\to\cdots$ in which every consecutive triple is a cofiber sequence.

The natural functor $\RepOrbSpc\to\PSh(\Rep\{\BB G\})$ (at the level of $\infty$-categories or model categories) is an equivalence by Gepner--Henriques \cite{gepnerhenriques}, where $\Rep\{\BB G\}\subseteq\RepOrbSpc$ denotes the full subcategory spanned by the objects $\BB G$ for finite groups $G$.
This means that $\RepOrbSpc$ is the free cocompletion of its full subcategory $\Rep\{\BB G\}$.
We conjecture that $\RepOrbSpc$ is the category of representable fibrations over $R(*)$ (here $R$ is the right adjoint to $\RepOrbSpc\to\OrbSpc$) with `reasonable' fibers (note that the data of a representable fibration over $R(*)$ is at least intuitively comparable to the data of a presheaf on $\Rep\{\BB G\}$).
Let us also remark that both these descriptions of $\RepOrbSpc$ (and the corresponding descriptions of $\RepOrbSpc_*$) are manifestly natural settings for doing homotopy theory, whereas proving this for $\RepOrbSpc_*$ as we define it requires a somewhat explicit argument.
On the other hand, it is somewhat less apparent from these descriptions what the full subcategory of finite orbi-CW-complexes $\RepOrbSpc^f\subseteq\RepOrbSpc$ is.

The categories $\OrbSpc$ and $\OrbSpc_*$ do not seem to be a natural setting for homotopy theory (for example, there are morphisms in $\OrbSpc_*$ which do not have a cofiber in any reasonable sense).
Rather, $\OrbSpc$ (similarly for $\OrbSpc_*$) is a full subcategory of the larger category, say denoted by $\overline\OrbSpc$, obtained by gluing cells $(D^k,\partial D^k)\times\BB G$ along all (not necessarily representable) maps, as constructed by Gepner--Henriques \cite{gepnerhenriques} (note that this takes us outside the realm of stacks admitting \'etale atlases).
Gepner--Henriques \cite{gepnerhenriques} further showed that $\overline\OrbSpc$ is equivalent (again at the level of $\infty$-categories or model categories) to $\PSh(\{\BB G\})$.
This latter category $\PSh(\{\BB G\})$ was shown by Schwede \cite{schwedeequivalence} to be equivalent to the global homotopy category $\GloSpc$ defined in \cite{schwedeglobal} (with respect to the `global family' of all finite groups); see also K\"orschgen \cite{korschgen} and Juran \cite{juran}.
We will not explain in detail (nor use) the precise relationship between $\OrbSpc$ and $\overline\OrbSpc=\PSh(\{\BB G\})=\GloSpc$, rather we describe just the little bit that we need.

\subsection{Geometric bordism theories}

We consider various flavors of geometric bordism groups, all of which are sequences of functors
\begin{equation}\label{orbihomology}
Z_i:\RepOrbSpc_*\to\Ab
\end{equation}
satisfying $Z_i(\Sigma X)=Z_{i+1}(X)$ and $\bigoplus_\alpha Z_i(X_\alpha)\xrightarrow\sim Z_i(\bigsqcup_\alpha X_\alpha)$ and which sends cofiber sequences to exact sequences (such a functor might be called a \emph{homology theory} for orbispaces).

The bordism group $\Omega_*(X)$ is the set of closed orbifolds with a representable map to $X$, modulo bordism (graded by dimension); this is an abelian group under disjoint union.
To define $\Omega_*(X,A)$ for a pair $(X,A)$, we consider compact orbifolds-with-boundary $M$ and representable maps of pairs $(M,\partial M)\to(X,A)$.
Note that our notation is consistent with the usual meaning of $\Omega_*(X)$ for spaces $X$ (namely bordism classes of closed manifolds mapping to $X$) since an orbifold with a representable map to a space is necessarily a manifold.
Moreover, $\Omega_*(X/G)$ is $G$-equivariant bordism for $G$-spaces $X$ (i.e.\ bordism of $G$-manifolds mapping equivariantly to $X$).

Note that there is no additional generality to be gained by considering arbitrary (not necessarily representable) maps here, since a map to $X$ is the same as a representable map to $R(X)$ (where $R:\OrbSpc\to\RepOrbSpc$ is the right adjoint to $\RepOrbSpc\to\OrbSpc$), so bordism of orbifolds with an arbitrary map to $X$ is given by $\Omega_*(R(X))$.
For example, the group of bordism classes of closed orbifolds is $\Omega_*(R(*))$.
Filtering $R(*)$ by subcomplexes gives a spectral sequence converging to $\Omega_*(R(*))$; see \'Angel \cite{angelss} for a similar spectral sequence.

There are also derived bordism groups $\Omega_*^\der(X)$, whose elements are represented by `derived orbifold charts' $(D,E,s)$ consisting of an orbifold $D$, a vector bundle $E$ over $D$, and a section $s:D\to E$ whose zero set is compact, together with a representable map $D\to X$ (grading by `virtual dimension' $\dim D-\dim E$).
These are considered modulo restriction (removing from $D$ a closed subset disjoint from $s^{-1}(0)$), stabilization (replacing $D$ with the total space of a vector bundle $V$ over $D$, replacing $E$ with $E\oplus V$, and replacing $s$ with $s\oplus\id_V$), and bordism.

The tautological map $\Omega_*\to\Omega_*^\der$ is not generally an isomorphism; in fact $\Omega_*^\der$ is often nonzero in negative degrees $*<0$ (see Example \ref{dertonondernonzero}).
This can be viewed as a strong measurement of the fact that a vector bundle over an orbifold need not have any section which is transverse to zero.

That these derived bordism groups $\Omega_*^\der$ define a homology theory for orbispaces requires enough vector bundles.
This is related to the fact that the `proper' definition of a derived orbifold is as something with an atlas of derived orbifold charts (it would be essentially obvious that bordism of these defines a homology theory for orbispaces), and enough vector bundles implies everything has a global chart.

For any vector bundle over $X$, there are so-called `inverse Thom maps' (terminology following Schwede \cite[\S 6]{schwedeglobal})
\begin{equation}
\Omega_*(X)\to\Omega_{*+\left|V\right|}(X^V)\quad\text{and}\quad\Omega^\der_*(X)\to\Omega^\der_{*+\left|V\right|}(X^V).
\end{equation}
For derived bordism, the inverse Thom maps are isomorphisms, whereas for ordinary bordism they are isomorphisms for vector bundles with trivial isotropy representations but not in general.
In fact, similar to the situation in global homotopy theory \cite[\S 6]{schwedeglobal}, there is a precise sense in which derived bordism is the localization of bordism at the inverse Thom maps.

A remarkable result of Wasserman implies that bordism is in fact a particular instance of derived bordism with tangential structure.
Specifically, $\Omega_*$ is bordism of derived orbifolds together with a vector bundle $V$ and a stable isomorphism of vector bundles $TD-E=V-\underline\RR^k$, modulo $(V,k)\mapsto(V\oplus\underline\RR,k+1)$.
This fact fundamentally underlies the Pontryagin--Thom isomorphism for $\Omega_*$ (homotopical cobordism theories are really all derived cobordism theories with some sort of tangential structure).

We can also consider bordism of orbifolds with tangential structure.
The sort of tangential structure $\fS$ permitted (`coarsely stable' or `stable') depends on whether we are considering $\Omega_*^\fS$ or $\Omega_*^{\fS,\der}$.
We leave a precise discussion of these theories for the main body of the paper.

Geometric bordism theories may be extended to the category of orbispectra (to be discussed shortly) by twisting.
Structured derived bordism of $(X,A)^{-\xi}$ is defined as bordism of derived orbifolds over $(X,A)$ with the given structure on their tangent bundle minus $\xi$ (so to extend underived bordism to orbispectra, the key is to think of it as structured derived bordism via Wasserman).
For example, $\Omega_0^\fr((X,A)^{-\xi})$ is bordism of derived orbifolds representable over $(X,A)$ with a stable isomorphism between their tangent bundle and $\xi$.
Such twistings are the natural home for the fundamental class: given a compact orbifold with boundary $X$, it has a fundamental class $[X]\in\Omega_0^\fr((X,\partial X)^{-TX})$; orienting $TX$ with respect to some structure allows to undo the twist after pushing forward to the corresponding structured bordism group.

\subsection{Homotopical cobordism theories}

Any global spectrum \cite{schwedeglobal} defines a cohomology theory for orbispaces, namely a sequence of functors
\begin{equation}
Z^i:\OrbSpc_*\to\Ab
\end{equation}
satisfying $Z^i(\Sigma X)=Z^{i+1}(X)$ and $Z^i(\bigsqcup_\alpha X_\alpha)\xrightarrow\sim\prod_\alpha Z^i(X_\alpha)$ and which sends cofiber sequences to exact sequences.
Namely, given an orthogonal spectrum $Z:\OO\to\Top_*$, the group $Z^0(X,A)$ is the direct limit over vector bundles $E/X$ of sections of $\Omega^EZ(E)\to X$ supported away from $A$, modulo homotopy.
In fact, we may define $Z^0((X,A)^{-\xi})$ to be the direct limit of sections of $\Omega^EZ(E\oplus\xi)$, which extends $Z^*$ to the category of orbispectra (which we will meet shortly).
The viability of this definition depends on enough vector bundles (though one could formulate a more complicated definition, involving patching together choices of local vector bundles, which would not require an appeal to enough vector bundles).
We expect, but do not prove, that this definition is equivalent to that obtained from the composition $\OrbSpc\hookrightarrow\overline\OrbSpc=\PSh(\{\BB G\})=\GloSpc\xrightarrow{\Sigma^\infty}\GloSp$.

The orthogonal spectra relevant for this paper are the global Thom spectra defined by Schwede \cite[\S 6]{schwedeglobal}.
These include the global sphere spectrum $\SSS$ and the two flavors of the Thom spectrum of the infinite orthogonal group $\mO$ and $\MO$.
The associated cohomology theories are called homotopical cobordism theories.

\subsection{Categories of orbispectra}

To relate geometric bordism and homotopical cobordism requires introducing the category of \emph{orbispectra}.
We will only ever discuss \emph{finite} orbispectra, namely desuspensions of finite orbi-CW-pairs by vector bundles.
The category of `naive orbispectra' has objects of the form $\Sigma^{-n}(X,A)$, with morphisms $\Sigma^{-n}(X,A)\to\Sigma^{-m}(Y,B)$ given by the direct limit over $k$ of the space of relative morphisms $\Sigma^{k-n}(X,A)\to\Sigma^{k-m}(Y,B)$.
We are more interested in the category of `genuine orbispectra', whose objects take the form $(X,A)^{-V}$ for $V$ a vector bundle (with possibly nontrivial isotropy representations) and whose morphisms are defined by a direct limit over passing to Thom spaces of arbitrary vector bundles.

We define two homotopy categories of finite (genuine) orbispectra $\RepOrbSp^f$ and $\OrbSp^f$, again depending on whether we use representable maps or not.
They fit into a diagram
\begin{equation}
\Sp^f\to\RepOrbSp^f\to\OrbSp^f,
\end{equation}
with $\Sp^f$ (the category of finite spectra) being a full subcategory of the latter two.
The definitions of these categories use enough vector bundles (though this could probably be eliminated if one took a more abstract approach).

The categories $\Sp^f$ and $\RepOrbSp^f$ are natural settings for stable homotopy theory.
For example, every morphism in $\RepOrbSp^f$ fits into an `exact triangle' (although we do not actually prove that $\RepOrbSp^f$ is triangulated).
We conjecture that $\RepOrbSp$ (a category we do not define, but at the level of $\infty$-categories it would be $\Ind\RepOrbSp^f$) is the category of parameterized spectra over $R(*)$.

As before, $\OrbSp^f$ does not seem to be natural setting for stable homotopy theory.
It seems likely there is a functor $\OrbSp^f\to\GloSp$ (the category of global spectra \cite{schwedeglobal}), though we do not quite construct it, nor is it clear if we should expect it to be fully faithful.

\subsection{Duality}

Now a key result is to define a contravariant involution $D$ (`duality') on the category $\RepOrbSp^f$.
The construction of this functor relies crucially on enough vector bundles.

\begin{theorem}\label{duality}
The category $\RepOrbSp^f$ admits a contravariant involution $D$ preserving cofiber sequences, defined by declaring that (1) for any compact orbifold with boundary $X$ and codimension zero suborbifold with boundary $A\subseteq\partial X$, we have
\begin{equation}
D((X,A)^{-\xi})=(X,\partial X-A^\circ)^{\xi-TX}
\end{equation}
and (2) for any smooth embedding of such pairs $f:(X,A)\hookrightarrow(Y,B)$ (so $X\subseteq Y$ is a smooth suborbifold of $Y$ meeting $\partial Y$ transversely precisely in $A=X\cap B$), we have that $(Df)^{TY}$ is the obvious collapse map $(Y,\partial Y-B^\circ)\to(X,\partial X-A^\circ)^{TY/TX}$.
\end{theorem}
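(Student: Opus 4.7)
The plan is to construct $D$ in three stages, leveraging enough vector bundles to replace every object and every morphism of $\RepOrbSp^f$ by a smooth geometric model, and then invoking equivariant tubular neighborhoods to define $D$ on these models via the Pontryagin--Thom collapse.

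First, I would use enough vector bundles to show that every object of $\RepOrbSp^f$ is isomorphic to one of the form $(X,A)^{-\xi}$ with $X$ a compact effective orbifold with boundary, $A\subseteq\partial X$ a codimension zero suborbifold with boundary, and $\xi$ an honest vector bundle on $X$. The prescription $D((X,A)^{-\xi}):=(X,\partial X-A^\circ)^{\xi-TX}$ then defines $D$ on such models, and one checks immediately on objects that $D^2=\id$: we have $\partial X-(\partial X-A^\circ)^\circ=A$, and the two $-TX$ contributions cancel so the outer twisting returns to $-\xi$. Second, for morphisms, after stabilizing by the Thom space of a sufficiently positive vector bundle I would represent every map $(X,A)^{-\xi}\to(Y,B)^{-\eta}$ by a smooth embedding of pairs $f\colon(X,A)\hookrightarrow(Y,B)$ meeting $\partial Y$ transversely in $A=X\cap B$, using enough vector bundles to embed $X$ into a disk bundle over $Y$ and then perturbing to general position. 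The theorem's prescription defines $(Df)^{TY}$ as the Pontryagin--Thom collapse $(Y,\partial Y-B^\circ)\to(X,\partial X-A^\circ)^{TY/TX}$ onto the Thom space of the normal bundle of $f$, and untwisting by $TY$ yields $Df\colon D((Y,B)^{-\eta})\to D((X,A)^{-\xi})$.

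Functoriality follows from compatibility of collapse maps for nested embeddings $X\hookrightarrow Y\hookrightarrow Z$ via tubular neighborhoods of tubular neighborhoods, and the involution property $D^2\simeq\id$ on morphisms reduces to identifying the composite of the collapse onto a normal bundle with the collapse onto the normal bundle of the normal bundle, which is the identity after the appropriate untwisting. The main obstacle is well-definedness: two smooth embedding representatives of the same morphism class in $\RepOrbSp^f$ must yield homotopic collapse maps. I would handle this by showing that any two such representatives become smoothly isotopic through embeddings of pairs transverse to $\partial Y$ after further Thom stabilization. This in turn rests on an orbi-analogue of the classical fact that embeddings into high-dimensional ambient manifolds are unique up to isotopy, where the role of ``high dimension'' is played by having each isotypic piece of the ambient tangent bundle be of large dimension relative to the corresponding piece on the source; this is exactly the kind of positivity that enough vector bundles provides, and representability is what ensures that the isotropy representations of source and target are controlled. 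A parameterized tubular neighborhood for such an isotopy then produces the required homotopy of collapse maps, with all constructions respecting the decomposition of $\partial X$ into $A$ and $\partial X-A^\circ$.

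Finally, preservation of cofiber sequences should be verified on the smooth model: any cofiber sequence in $\RepOrbSp^f$ can, after enough-vector-bundle enlargement, be presented by a triple $(X,A)\subseteq(Y,B)$ of compact orbifolds with boundary with $X$ a smooth suborbifold of $Y$ and appropriate transversality on $\partial Y$, and $D$ exchanges the subspace inclusion with the quotient collapse by swapping $A$ with $\partial X-A^\circ$. This is the geometric shadow of Spanier--Whitehead duality interchanging subspaces and quotients, and the check amounts to matching up the two cofiber presentations obtained by opening collar neighborhoods on either side.
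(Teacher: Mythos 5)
Your geometric construction is the same as the paper's: realize objects as compact orbifold pairs $(X,A)^{-\xi}$ via enough vector bundles, realize morphisms as smooth embeddings after stabilization, and define $Df$ by the Pontryagin--Thom collapse onto the normal bundle. Where you diverge, and where there is a genuine gap, is in how well-definedness and functoriality are established.

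The paper never proves, and never needs, an isotopy-uniqueness theorem for embeddings of orbifolds into high-dimensional targets. Instead it defines $D$ first as a functor $\RepOrbSpcPair^f \to (\RepOrbSp^f)^\op$ on the \emph{unlocalized} pair category (where morphisms are honest maps of pairs, so the only choice is the stabilizing bundle $E/Y$, handled by embedding any two choices into their direct sum), and then \emph{descends} through the localization functors of Proposition~\ref{excision} (inverting excision morphisms $W$) and Proposition~\ref{reporbsecondlocalization} (the direct-limit presentation of $\RepOrbSp^f$ over $\Vect^2(R(*)_{N,k})$). The universal property of localization then handles well-definedness automatically once one checks that the morphisms in $W$ and $S$ go to isomorphisms. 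Your plan to instead verify directly that any two smooth-embedding representatives of a morphism in $\RepOrbSp^f$ have homotopic collapse maps requires unwinding what equality in the localized category means, which brings back exactly the $W$- and $S$-bookkeeping you are trying to bypass; and the ``large isotypic dimension $\Rightarrow$ isotopy uniqueness'' statement you invoke is neither stated precisely nor proved, and would have to account for the twist $\xi$ and the boundary decomposition into $A$ and $\partial X - A^\circ$ in a way that is far from routine.

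There is a second, more structural issue your proposal does not confront: the dual $D((X,A)^{-\xi}) = (X,\partial X - A^\circ)^{\xi - TX}$ desuspends by $TX$, a vector bundle living on $X$ rather than pulled back along the canonical map $X\to R(*)$. Since the morphism in $\RepOrbSp^f$ is defined via a direct limit over vector bundles, one must check that twisting by $TX$ is coherent across morphisms, i.e.\ that for $f:X\to Y$ the bundles $TX$ and $f^*TY$ fit compatibly into the limiting system. This is precisely what Proposition~\ref{reporbsecondlocalization} is designed to handle, by giving a presentation of $\RepOrbSp^f$ in which all desuspensions are by bundles pulled back from $R(*)$, against which the ``extra'' desuspension by $TX - \eta_X$ can be compared. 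Without some version of this, ``untwisting by $TY$'' in your step two is not a well-defined operation on the localized category. I would recommend replacing your direct well-definedness argument with the descent-through-localization strategy, and in particular proving (or citing) the direct-limit presentation of $\RepOrbSp^f$; at that point your geometric content goes through essentially unchanged.
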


It is immediate from the definition that $D$ stabilizes the full subcategory $\Sp^f\subseteq\RepOrbSp^f$ and coincides on it with classical Spanier--Whitehead duality of finite spectra \cite{spanierwhiteheadduality} (the definition of $D$ is essentially identical to Atiyah's formulation \cite{atiyahduality}, just generalized to orbifolds).
However, whereas Spanier--Whitehead duality on $\Sp^f$ is characterized by the universal property of a map $X\wedge Y\to S^0$ being the same as a map $X\to DY$, we do not know a universal property characterization of the involution $D$ on $\RepOrbSp^f$.
There is at least a natural map from maps $X\to Y$ to maps $X\wedge DY\to R(*)$, but it is not an isomorphism and we do not know any sense in which it characterizes $D$; the essential reason for this is that $\wedge$ does not play well with representability (see Example \ref{dualizationasbordismmap}).
The identity map $X\to X$ thus corresponds to a canonical pairing $X\wedge DX\to R(*)$ which, upon passing to classifying spaces (note that $\widetilde{R(*)}=*$), gives a pairing $\tilde X\wedge\widetilde{DX}\to S^0$, hence a comparision map
\begin{equation}
\widetilde{DX}\to D\tilde X
\end{equation}
(where we should understand that the classifying space of an object of $\RepOrbSp^f$ is an object of $\Sp=\Ind\Sp^f$, not $\Sp^f$, so $D\tilde X$ is an object of $\Pro\Sp^f$).
The results of Greenlees--Sadofsky \cite[Corollary 1.2]{greenleessadofsky} and Cheng \cite{mccheng} may be viewed as the assertion that this comparison map is $K(n)$-local for all $n$, where $K(n)$ denotes Morava $K$-theory.

Duality allows us to define, for any global spectrum $E$, an $E$-homology functor $\RepOrbSp^f\to\Ab$ by taking
\begin{equation}
E_*(X):=E^{-*}(DX).
\end{equation}
Note that whereas $E$-cohomology is a functor on $\OrbSp^f$, we only define $E$-homology as a functor on $\RepOrbSp^f$.

Recall that in ordinary stable homotopy theory, the $E$-homology of a (finite) space $X$ is defined as $[S^0,X\wedge E]=[DX,E]$.
Due to $D$ not being the monoidal dual with respect to $\wedge$, this equality no longer holds in our context, so there are \emph{a priori} two reasonable notions of $E$-homology for orbispaces.
We consider the latter definition since it is the one which is relevant for the Pontryagin--Thom isomorphism.
The former definition (implemented in the context of global homotopy theory) is proposed by Schwede \cite{schwedeglobal} and is presumably quite different.

\subsection{Pontryagin--Thom isomorphism}

We may now state the Pontryagin--Thom isomorphism relating geometric bordism and homotopical cobordism on $\RepOrbSp^f$.

\begin{theorem}\label{ptfinal}
There are natural isomorphisms of functors on $\RepOrbSp^f$:
\begin{align}
\SSS_*&{}=\Omega^\fr_*\\
\mO_*&{}=\Omega_*\\
\MO_*&{}=\Omega_*^\der
\end{align}
\end{theorem}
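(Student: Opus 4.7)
The plan is to construct mutually inverse Pontryagin--Thom maps between $E_*(X) := E^{-*}(DX)$ and the corresponding geometric bordism groups, following the paradigm sketched in Example \ref{orbibordismex} but formulated abstractly. By enough vector bundles, every $X \in \RepOrbSp^f$ can be written in the form $X = (Y,B)^{-\xi}$ for $(Y,B)$ a compact orbifold-with-boundary pair and $\xi$ a vector bundle over $Y$, so I would reduce to this case. By Theorem \ref{duality}, $DX = (Y, \partial Y - B^\circ)^{\xi - TY}$, and by the definition of cohomology on orbispectra given in the introduction, $E^{-*}(DX)$ is the direct limit over vector bundles $F$ over $Y$ of homotopy classes of sections of $\Omega^F E(F \oplus \xi - TY) \to Y$ supported away from $\partial Y - B^\circ$.

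The derived case $\MO_* = \Omega^\der_*$ is the most conceptual and I would do it first. Given a derived orbifold chart $(D, E', s)$ with representable map $f \colon D \to (Y,B)$ representing a class in the geometric group, I would stabilize so that $D$ admits a smooth embedding into the total space of a vector bundle $F$ over $Y$ (possible by enough vector bundles, after enlarging $F$). The Pontryagin--Thom collapse produces a map $(Y, \partial Y - B^\circ)^F \to D^N$, where $N$ is the normal bundle of the embedding; composing with the classifying map $D^N \to \MO(N \oplus E')$ of the bundle $N \oplus E'$ (carrying the section induced by $s$) yields the desired element of $\MO^{-*}(DX)$ in the correct degree, since $N + E' = F \oplus f^*TY - TD + E'$ which accounts precisely for the $F \oplus \xi - TY$ twist. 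Conversely, given a section representing a class in $\MO^{-*}(DX)$, one pulls back the universal derived orbifold chart on the appropriate Grassmannian appearing in $\MO$; no transversality is needed precisely because the chart thereby produced is \emph{derived}.

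For the framed case $\SSS_* = \Omega^\fr_*$, the same construction applies with the framing corresponding to a factorization of the classifying map through the unit $S^0 \to \MO$, i.e., a lift to $\SSS$. For the ordinary case $\mO_* = \Omega_*$, I would invoke the Wasserman reformulation recalled in the introduction: $\Omega_*$ equals derived bordism equipped with an auxiliary vector bundle $V$ and a stable isomorphism $TD - E' = V - \underline{\RR}^k$, modulo $(V,k) \mapsto (V \oplus \underline{\RR}, k+1)$. This auxiliary datum $V$ is precisely what distinguishes $\mO$ from $\MO$ at the level of global Thom spectra (the relevant classifying spaces stabilize by representations rather than by trivial representations), so the derived Pontryagin--Thom map lifts to an isomorphism $\mO_* \cong \Omega_*$ by tracking $V$ through the construction.

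I expect the main obstacle to be twofold. First, functoriality on morphisms of $\RepOrbSp^f$ and independence from the numerous choices (tubular neighborhoods, stabilizing embeddings, and auxiliary bundles $V$) requires a careful bordism argument showing that any two choices are related through a bordism in one higher dimension; this is routine nonequivariantly but delicate in the orbifold setting since the bordism of choices must respect all isotropy data and isotypic decompositions, which is ultimately where enough vector bundles is used once more. Second, surjectivity of the Pontryagin--Thom map requires extracting geometric representatives from arbitrary sections: for $\MO$ this is effectively formal via the universal derived chart, but for $\mO$ and $\SSS$ one must verify that the Wasserman-style correspondence is bijective, checking that the composition (geometry $\to$ homotopy $\to$ geometry) recovers the original class up to bordism and stabilization of $V$.
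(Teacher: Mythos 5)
Your overall strategy — reduce to $(X,A)^{-\xi}$ with $(X,A)$ a compact orbifold pair via enough vector bundles, construct Pontryagin--Thom maps in both directions, and handle $\mO_*=\Omega_*$ by the Wasserman identification of bordism with coarsely-stably-structured derived bordism — matches the paper's (Propositions \ref{ptS} and \ref{ptThom}). The main procedural difference is the primary direction of the map. You construct it geometry\,$\to$\,homotopy in the classical way (embed the chart, Thom collapse, compose with a classifying map), which forces you to check independence of the embedding, the tubular neighborhood, and the stabilization; the paper instead constructs it homotopy\,$\to$\,geometry, where no choices are involved because a section of $\Omega^{E\oplus\xi}S^{E\oplus TX}$ over $X$ \emph{is}, literally, the data of a derived orbifold chart $(D,E\oplus TX,s)$ with $D$ open in the total space of $E\oplus\xi$. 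The embedding-and-stabilize argument then appears only on the surjectivity side, where it is a one-sided verification rather than a construction that must be shown well-defined. This is a cleaner organization, but your direction would also work. The minor reorganization of deriving $\SSS_*=\Omega^\fr_*$ from the $\MO$ case via lifting the classifying map through $\SSS\to\MO$ is fine in spirit, though the paper finds it simpler to do $\SSS$ first, directly.

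Where your proposal has a genuine gap is in the naturality statement. You flag this as the main obstacle, but your proposed remedy (``a careful bordism argument showing that any two choices are related through a bordism in one higher dimension'') mislocates the difficulty. The hard part is not the choices in the collapse construction for a fixed object, nor the isotypic bookkeeping; it is showing naturality with respect to \emph{morphisms of} $\RepOrbSp^f$, which is a double localization of $\RepOrbSpcPair^{f,-\Vect}$, so its morphisms are not geometric maps that one can directly feed into a Pontryagin--Thom transversality argument. The paper's key technical input here is Proposition~\ref{reporbsecondlocalization}: $\RepOrbSp^f$ is the localization of $\varinjlim_{N,k}\varinjlim_{\Vect^2(R(*)_{N,k+2})}\RepOrbSpcPair^f_{N,k}$ at excision morphisms, so by the universal property it suffices to check naturality on honest (representable, homotopy-class) maps of compact orbifold pairs, which by Corollary~\ref{pairispair} and Corollary~\ref{embedding} can further be arranged to be smooth embeddings — and for smooth embeddings the collapse-map naturality is direct. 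Without identifying this stabilization-over-$R(*)$ presentation (or an equivalent device) your plan has no route to establishing that the bijection is natural in $W\in\RepOrbSp^f$ rather than merely bijective object-by-object.
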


\begin{example}
Under the Pontryagin--Thom isomorphism, the unit $\1\in\SSS^0(X)$ is sent to the fundamental class $[X]\in\Omega_0^\fr((X,\partial X)^{-TX})$ for any compact orbifold-with-boundary $X$.
\end{example}

\begin{example}
The orbi-CW-complex $R(*)$ is not finite, but we may nevertheless define $\Omega_*(R(*))$ and $\mO_*(R(*))$ by taking the direct limit over finite subcomplexes, and we conclude they are isomorphic (compare Example \ref{orbibordismex}).
\end{example}

The Pontryagin--Thom construction also gives a description of the morphism groups in $\RepOrbSp^f$ and in $\OrbSp^f$ in terms of bordism.

\begin{theorem}\label{stablerepresentablemapsbordism}
Let $(X,A)$ and $(Y,B)$ be compact orbi-CW-pairs carrying stable vector bundles $\xi$ and $\zeta$.
The set of morphisms
\begin{equation}
D((X,A)^{-\xi})\to(Y,B)^{-\zeta}
\end{equation}
in $\OrbSp^f$ (resp.\ $\RepOrbSp^f$) is in canonical bijection with bordism classes of derived orbifolds $(C,\partial C)$ with a representable map $f:C\to X$, a (representable) map $g:C\to Y$, such that $\partial C\subseteq f^{-1}(A)\cup g^{-1}(B)$, and a stable isomorphism between $TC$ and $f^*\xi+g^*\zeta$.
\end{theorem}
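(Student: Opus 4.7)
The plan is to combine Theorem \ref{duality} with a direct Pontryagin--Thom argument, generalizing Theorem \ref{ptfinal} from the case of sphere spectrum coefficients to arbitrary orbispectrum coefficients. By Theorem \ref{duality}, $D((X,A)^{-\xi}) = (X, A')^{\xi - TX}$ with $A' := \partial X - A^\circ$, so the morphism set in question is $\Hom((X, A')^{\xi - TX}, (Y, B)^{-\zeta})$ in the appropriate category. The case $(Y,B)^{-\zeta} = \SSS$ recovers precisely Theorem \ref{ptfinal}: $[D((X,A)^{-\xi}), \SSS] = \SSS_0((X,A)^{-\xi}) = \Omega^{\fr}_0((X,A)^{-\xi})$, which is bordism of $C$ with representable map to $X$ and $TC \cong f^*\xi$; the generalization augments this with a map $g : C \to Y$ and the $g^*\zeta$ contribution to the stable framing.

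By definition of $\RepOrbSp^f$ and $\OrbSp^f$, a morphism in $\Hom((X, A')^{\xi - TX}, (Y, B)^{-\zeta})$ is represented, after a suitable stabilization, by a (representable) pointed homotopy class of a map $\phi : (X, A')^{\tilde\alpha} \to (Y, B)^{\tilde\beta}$ between honest orbi-CW Thom spaces, with $\tilde\alpha, \tilde\beta$ nonvirtual bundles representing $\xi - TX$ and $-\zeta$ respectively after adding a common stabilization $V$; this reduction uses enough vector bundles. Given such $\phi$, I would apply the Pontryagin--Thom construction: the restriction $\sigma := \phi|_X : X \to (Y, B)^{\tilde\beta}$, upon identifying a tubular neighborhood of $Y \subseteq Y^{\tilde\beta}$ with the total space of $\tilde\beta$, is the same data as a map $g : U \to Y$ on the preimage $U \subseteq X$ of this neighborhood together with a section $s$ of $g^*\tilde\beta$. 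The resulting derived orbifold chart $(U, g^*\tilde\beta, s)$ cuts out a derived orbifold $C \subseteq X$ equipped with a representable inclusion $f : C \hookrightarrow X$ (automatic since $C$ is a suborbifold), the map $g : C \to Y$ (representable iff $\phi$ is, matching the $\RepOrbSp^f$ vs.\ $\OrbSp^f$ distinction), the boundary condition $\partial C \subseteq f^{-1}(A) \cup g^{-1}(B)$ (coming from the collapses of $A'$ in the source and $B$ in the target), and a stable isomorphism $TC \cong f^*\xi + g^*\zeta$ obtained by combining the derived chart relation $TC = TX|_C - g^*\tilde\beta$ with the stabilizations $\tilde\alpha = \xi - TX + V$, $\tilde\beta = -\zeta + V$ and the bundle identifications on $C$ imposed by the fiber direction of $\phi$. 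The inverse construction turns $(C, f, g, \text{iso})$ back into $\phi$ via a Pontryagin--Thom collapse on a tubular neighborhood of $C$ in $X$ (which exists by enough vector bundles), and bordism invariance in both directions is verified by running the same constructions on bordisms.

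The principal obstacle will be handling the derived structure. Equivariant transversality can fail, so the chart $(U, g^*\tilde\beta, s)$ must be retained as genuine derived data rather than perturbed to a suborbifold --- this is exactly why the theorem involves derived rather than ordinary orbifold bordism. Carefully matching the equivalence relations (restriction, stabilization, bordism) defining derived orbifold bordism with those (vector bundle suspension, homotopy) defining morphisms in $\RepOrbSp^f$/$\OrbSp^f$, and tracking tangential data through all stabilizations to produce the asserted \emph{natural} isomorphism $TC \cong f^*\xi + g^*\zeta$, are the further technical points. Enough vector bundles is used pervasively: for the existence of nonvirtual stabilizations $\tilde\alpha, \tilde\beta$, for tubular neighborhood embeddings, and for realizing derived charts.
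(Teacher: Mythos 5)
Your overall Pontryagin--Thom strategy (extract a derived orbifold chart from the ``fiber direction'' of a map between Thom pairs, with the obstruction bundle coming from the target) is the same idea that drives the paper's proof, and your framing of this as a twisted version of Proposition \ref{ptS} is also how the paper presents it. But the execution as written has a concrete error plus an unnecessary detour that would cause real trouble.

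The error: you restrict $\phi$ to the zero section, $\sigma := \phi|_X : X \to (Y,B)^{\tilde\beta}$, and take $U \subseteq X$ to be the preimage of the tubular neighborhood of $Y$. The resulting derived chart $(U, g^*\tilde\beta, s)$ with $U \subseteq X$ has virtual dimension $\dim X - |\tilde\beta|$ and tangent class $TX - g^*\tilde\beta = TX + g^*\zeta - g^*V$. For this to equal $f^*\xi + g^*\zeta$ you are forced to take $V = TX - \xi$, which in turn forces $\tilde\alpha = \xi - TX + V = 0$; at that point there is no ``fiber direction of $\phi$'' to appeal to, and $V$ is not a freely choosable stabilization at all (nor does $TX - \xi$ have any reason to be nonvirtual, nor to be a bundle over $Y$). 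The fix is to take $U$ inside the total space of $\tilde\alpha$ over $X$ --- i.e., use all of $\phi$, not its restriction to the zero section. Then $\dim U = \dim X + |\tilde\alpha|$ and the dimension and tangent computations close up: $TU - g^*\tilde\beta = (TX + \tilde\alpha) - \tilde\beta \cong \xi + \zeta$ with no constraint on $V$. This is precisely what the paper's construction does: the data of a morphism is a bundle $E$ over $X$, an open subset $U$ of the total space of $E \oplus \xi$, a map $h: U \to Y$, an embedding $h^*\zeta \hookrightarrow TX \oplus E$, and a section of the quotient; the derived chart is $(U, (TX\oplus E)/h^*\zeta, s)$ and its tangent class is $\xi + h^*\zeta$.

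The detour: the paper does not stabilize $\zeta$ at all. Objects $(Y,B)^{-\zeta}$ in $\RepOrbSpcPair^{f,-\Vect}$ already carry $\zeta$ as an honest bundle, and morphisms in $\RepOrbSp^f$ are computed as a direct limit over bundles $E$ on the \emph{source} only. Your ``common stabilization $V$'' has no clear home: to be a single bundle on both $X$ and $Y$ it would have to be pulled back from something like $R(*)_{N,k}$, which (a) requires the nontrivial Proposition \ref{reporbsecondlocalization}, and (b) is unavailable in $\OrbSp^f$ where there is no canonical map to $R(*)$ to pull back along. The paper's asymmetric description avoids both issues and treats the two categories uniformly. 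You should also be aware that what you dismiss as a ``further technical point'' --- naturality in $(X,A)^{-\xi}$ --- is where Proposition \ref{reporbsecondlocalization} does real work in the paper, by reducing to smooth embeddings of orbifold pairs after stabilizing over $R(*)_{N,k}$; this is not a matter of equivalence-relation bookkeeping.
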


Note that this result gives multiple descriptions of the same stable mapping group, since a given object of $\OrbSp^f$ or $\RepOrbSp^f$ may be expressed as $(X,A)^{-\xi}$ in many different ways; in particular, passing from $(X,A)^{-\xi}$ to $((X,A)^V)^{-V-\xi}$ via the obvious isomorphism acts via Theorem \ref{stablerepresentablemapsbordism} on bordism classes of derived orbifolds by passing to the Thom space of the pullback of $V$ (and similarly for $(Y,B)^{-\zeta}$).
Also note that, in the case of $\RepOrbSp^f$, the description of morphisms is manifestly symmetric in $(X,A)^{-\xi}$ and $(Y,B)^{-\zeta}$, as it should be given that $D$ is an involution.

\begin{example}\label{dualizationasbordismmap}
As we remarked earlier, there is a canonical pairing $W\wedge DW\to R(*)$ in $\RepOrbSp^f$ which induces a natural transformation
\begin{equation}\label{dualitynattransnotquiteiso}
\Hom(Z,W)\to\Hom(Z\wedge DW,R(*)).
\end{equation}
Let us understand it via Theorem \ref{stablerepresentablemapsbordism}.
Set $Z=D((X,A)^{-\xi})$ and $W=(Y,B)^{-\zeta}$.
The domain of \eqref{dualitynattransnotquiteiso} consists of bordism classes of derived orbifolds $(C,\partial C)$ with representable maps $f:C\to X$ and $g:C\to Y$ such that $\partial C\subseteq f^{-1}(A)\cup g^{-1}(B)$ together with a stable isomorphism between $TC$ and $f^*\xi+g^*\zeta$.
The codomain consists of bordism classes of derived orbifolds $(C,\partial C)$ with a representable map $(C,\partial C)\to(X,A)\times(Y,B)$ and an isomorphism between $TC$ and the pullback of $\xi+\zeta$ (note that there is a unique up to homotopy representable map $C\to R(*)$, so we can simply ignore this piece of data).
The map from the domain to the codomain is the evident one: send $(f,g)$ to $f\times g$.
Of course, representability of $f\times g$ is a rather different (and weaker) condition from representability of both $f$ and $g$.
\end{example}

A notable omission in Theorem \ref{stablerepresentablemapsbordism} is an interpretation of the bordism group where both $f$ and $g$ are arbitrary (not necessarily representable).

\subsection{Acknowledgements}

We thank Andr\'e Henriques for discussions about orbispaces and Stefan Schwede for discussions about global homotopy theory.
The comments from both referees were also extremely helpful.

\section{Topology of orbispaces}

\subsection{Orbispaces as topological stacks}

We briefly recall some definitions and basic properties; for further background we refer the reader to \cite[\S 3]{orbibundle} and to \cite{haefligerorbiespaces,noohifoundations,behrendintroduction,metzler,behrendnoohi}.

We work in the $2$-category $\Shv(\Top,\Grpd)$, whose objects will simply be called `stacks'.
Morphisms between stacks do not form a set, rather a groupoid, which is the meaning of the prefix `$2$-'.

The Yoneda inclusion $\Top\hookrightarrow\Shv(\Top,\Grpd)$ is continuous, and we systematically identify objects of $\Top$ with their images in $\Shv(\Top,\Grpd)$.
Such stacks are called \emph{representable}.

A morphism of stacks $X\to Y$ is called representable iff for every topological space $Z$ and every map $Z\to Y$, the fiber product $X\times_YZ$ is representable.
For any property $\cP$ of morphisms of topological spaces which is preserved under pullback, a representable map of stacks $X\to Y$ is said to have $\cP$ iff $X\times_YZ\to Z$ has $\cP$ for every topological space $Z$ and every map $Z\to Y$.
Examples of such properties include being an open inclusion, a closed inclusion, \'etale, separated, proper (which by definition implies separated), and admitting local sections.

The inclusion $\Top\subseteq\Shv(\Top,\Grpd)$ admits a left adjoint $\left|\cdot\right|:\Shv(\Top,\Grpd)\to\Top$ known as passing to the \emph{coarse space} of a stack.
For a fixed stack $X$, open (resp.\ closed) inclusions $Y\hookrightarrow X$ are in bijection with open (resp.\ closed) subsets $\left|Y\right|\subseteq\left|X\right|$.

A stack $X$ is called \emph{topological} iff there exists a representable map admitting local sections $U\to X$ from a topological space $U$; such a map is called an \emph{atlas}.
A choice of atlas $U\to X$ gives rise to a topological groupoid $U\times_XU\righttwoarrows U$ presenting $X$.
Conversely, every topological groupoid $M\righttwoarrows O$ is uniquely of this form.
The coarse space of a topological stack $X$ is the quotient of any atlas $U$ by the image of $U\times_XU\to U\times U$ (which is an equivalence relation).
For a topological group $G$ acting continuously on a topological space $V$, the stack quotient $V/G$ is by definition the topological stack presented by the action groupoid $G\times V\righttwoarrows V$.

By a `point' $p$ of a stack $X$, we mean a map $p:*\to X$, i.e.\ an object of $X(*)$ (where $*$ denotes the one point space).
The automorphism group of this object of $X(*)$ is called the \emph{isotropy group} of $p$, denoted $G_p$.
Given a point $*\to X$, the fiber product $*\times_X*$ has trivial isotropy, and its points are in bijection with $G_p$.
If $X$ is a topological stack, then $*\times_X*$ is a topological space, which thus endows $G_p$ with the structure of a topological group.

A \emph{separated orbispace} is a stack $X$ which admits an \'etale atlas $U\to X$ and whose diagonal $X\to X\times X$ is proper.
Equivalently, $X$ is a separated orbispace iff $\left|X\right|$ is Hausdorff and there exists a cover of $X$ by open substacks of the form $Y/\Gamma$ where $\Gamma$ is a finite discrete group acting continuously on a Hausdorff topological space $Y$ \cite[Proposition 3.3]{orbibundle}.
In particular, a separated orbispace has an \'etale atlas $U\to X$ for which $U$ is Hausdorff.
Henceforth we will drop the prefix `separated' from `separated orbispace' and simply write `orbispace'.

The isotropy groups of an orbispace are all finite and discrete.
A map of orbispaces is representable iff it is injective on isotropy groups \cite[Corollary 3.6]{orbibundle}; in particular, an orbispace is a space iff its isotropy groups are all trivial.

The stack quotient $V/G$ is an orbispace provided $V$ is Hausdorff, $G$ is compact Hausdorff (these imply $V/G$ has proper diagonal), and there exists a map $W\to V$ such that the resulting map $G\times W\to V$ is \'etale (this implies that $W\to V/G$ is an \'etale atlas).
In particular, $V/G$ is an orbispace for $V$ Hausdorff and $G$ finite.

An orbispace is called paracompact iff its coarse space is paracompact.

For any finite group $G$ (we equip all finite groups with the discrete topology), the stack $\BB G:=*/G$ (the stack quotient of a point $*$ by the trivial action of $G$) is an orbispace.
The quotient map $*\to\BB G$ is the universal principal $G$-bundle: for any stack $Y$, the functor from maps $Y\to\BB G$ to principal $G$-bundles over $Y$ given by pulling back $*\to\BB G$ is an equivalence of groupoids.
The groupoid of maps $\BB G\to\BB H$ is (canonically equivalent to) the groupoid $\Hom(G,H)/H$ in which an object is a group homomorphism $\varphi:G\to H$ and in which an isomorphism $\varphi\xrightarrow\sim\varphi'$ is an element $h\in H$ satisfying $\varphi=h\varphi'h^{-1}$.
The full subcategory of stacks of the form $\BB G$ for some finite group $G$ is thus equivalent to the $2$-category $\FinGrp$ of finite groups, homomorphisms, and conjugations.
We will frequently restrict consideration to representable maps, in which case the category formed by $\BB G$ is denoted $\InjFinGrp$, which is the same as $\FinGrp$ except homomorphisms are required to be injective.

\begin{lemma}\label{coarseproduct}
For orbispaces $X$ and $Y$, the product $X\times Y$ is an orbispace and the natural map $\left|X\times Y\right|\to\left|X\right|\times\left|Y\right|$ is a homeomorphism.
\end{lemma}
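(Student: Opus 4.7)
The plan is to prove both assertions by reducing to the local model $V/G$ with $V$ Hausdorff and $G$ finite discrete, then globalizing via the correspondence between open substacks of an orbispace and open subsets of its coarse space.

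First I would cover $X$ and $Y$ by open substacks of the form $X_\alpha \simeq V_\alpha/G_\alpha$ and $Y_\beta \simeq W_\beta/H_\beta$ as given by the local characterization of orbispaces recalled in the excerpt. The general identity
\begin{equation*}
(V/G)\times(W/H)\simeq (V\times W)/(G\times H),
\end{equation*}
which holds at the level of action groupoids for arbitrary group actions, then exhibits $X\times Y$ as covered by open substacks of the required local form (with $V_\alpha\times W_\beta$ Hausdorff and $G_\alpha\times H_\beta$ finite). To conclude $X\times Y$ is an orbispace it remains to verify the proper diagonal condition, which follows because the diagonal of $X\times Y$ differs from the product of the diagonals of $X$ and $Y$ by a coordinate permutation, and products and compositions of proper maps are proper.

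For the coarse space claim I would first observe that for finite $G$ acting on Hausdorff $V$, the coarse space $\left|V/G\right|$ is the ordinary topological quotient $V/G$ (since $V$ itself is an atlas and $V\times_{V/G}V=G\times V$). On local pieces one must therefore compare $(V_\alpha\times W_\beta)/(G_\alpha\times H_\beta)$ with $(V_\alpha/G_\alpha)\times(W_\beta/H_\beta)$; the canonical continuous bijection between them is a homeomorphism because the quotient maps $V_\alpha\to V_\alpha/G_\alpha$ and $W_\beta\to W_\beta/H_\beta$ are open (as quotients by group actions), so their product is also a quotient map. To globalize, note that by the correspondence between open substacks and open subsets of coarse spaces, the sets $\left|X_\alpha\times Y_\beta\right|\subseteq\left|X\times Y\right|$ and $\left|X_\alpha\right|\times\left|Y_\beta\right|\subseteq\left|X\right|\times\left|Y\right|$ form compatible open covers, and the natural map $\left|X\times Y\right|\to\left|X\right|\times\left|Y\right|$ restricts to the local homeomorphisms just constructed. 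A map restricting to homeomorphisms on an open cover is a homeomorphism, finishing the proof. The only point requiring any thought is the point-set fact that quotients by finite group actions commute with products, which reduces to the general principle that the product of two open surjections is an open surjection; no serious obstacle arises.
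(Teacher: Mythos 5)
Your proposal is correct and takes essentially the same approach as the paper: the orbispace part proceeds by taking product atlases and product diagonals, and the coarse-space claim is reduced to the local model $V/G\times W/H$ with $G,H$ finite, finiteness being what makes the quotient maps open. Your treatment of the local coarse-space step is a bit slicker than the paper's (you invoke the standard fact that two open quotient maps with identical fibers induce a homeomorphism, whereas the paper checks directly that a $(G\times H)$-invariant open set is a union of products of a $G$-invariant open with an $H$-invariant open), but the substance is identical.
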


\begin{proof}
For \'etale atlases $U_X\to X$ and $U_Y\to Y$, the product $U_X\times U_Y\to X\times Y$ is an \'etale atlas, and the diagonal of $X\times Y$ is the product of the diagonals of $X$ and $Y$, hence is proper.
Thus $X\times Y$ is an orbispace.

The assertion that $\left|X\times Y\right|\to\left|X\right|\times\left|Y\right|$ is a homeomorphism can be checked locally on $\left|X\right|$ and $\left|Y\right|$.
It thus suffices to show that for actions of finite groups $G$ and $H$ on Hausdorff spaces $X$ and $Y$, the natural map $\left|(X\times Y)/(G\times H)\right|\to\left|X/G\right|\times\left|Y/H\right|$ is a homeomorphism.
This map is obviously a bijection.
Open subsets of the domain correspond to $(G\times H)$-invariant open subsets of $X\times Y$.
Open subsets of the target are generated by products of $G$-invariant open subsets of $X$ with $H$-invariant open subsets of $Y$.
Open subsets of the latter form are certainly of the former form (which is the obvious direction that $\left|X\times Y\right|\to\left|X\right|\times\left|Y\right|$ is continuous).
Conversely, suppose $U\subseteq X\times Y$ is a $(G\times H)$-invariant open set and let $(x,y)\in U$, and let us show that there exists a product of a $G$-invariant open subset of $X$ and an $H$-invariant open subset of $Y$ which contains $(x,y)$ and is contained in $U$.
Since $U$ is open in the product topology, it contains a neighborhood $V\times W$ of $(x,y)$ where $V\subseteq X$ and $W\subseteq Y$ are open.
Now since $U$ is $(G\times H)$-invariant, it also contains $(G\cdot V)\times(H\cdot W)$, so we are done.
\end{proof}

\begin{lemma}
If $X$ is an orbispace and $U\to X$ is an \'etale atlas with $U$ Hausdorff, then $U\to X$ is separated.
\end{lemma}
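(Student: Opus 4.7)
To show $f \colon U \to X$ is separated, I need the diagonal $\Delta \colon U \to U \times_X U$ to be proper, and since $\Delta$ is a section it will suffice to show it is a closed embedding. The strategy is to analyze the commutative triangle
\begin{equation*}
\Delta \colon U \xrightarrow{\Delta} U \times_X U \xrightarrow{p} U \times U,
\end{equation*}
where $p$ is the pullback of the (proper) diagonal $X \to X \times X$ along $U \times U \to X \times X$, hence $p$ is proper. Restrict to the diagonal $\Delta_U \subseteq U \times U$, which is closed since $U$ is Hausdorff, and set
\begin{equation*}
Z := p^{-1}(\Delta_U) \subseteq U \times_X U,
\end{equation*}
which is a closed subspace through which $\Delta$ factors.

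First I would check that $U \times_X U$ is Hausdorff: $p$ is proper and $U \times U$ is Hausdorff, and a standard argument (the pullback $(U \times_X U) \times_{U \times U}(U \times_X U) \to U \times_X U$ of a proper map is proper, and proper + base Hausdorff forces the source Hausdorff, by intersecting the closed subset $U \times_X U \times_{U \times U} U \times_X U \subseteq (U \times_X U)^2$ with the closed $\Delta_{U \times U}$) gives Hausdorffness. Hence $Z$ is Hausdorff as well.

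Next I would observe that the restriction $q \colon Z \to \Delta_U = U$ of $p$ is simultaneously proper (pullback of the proper map $p$) and étale: indeed $q$ coincides with the restriction to $Z$ of the projection $\pi_1 \colon U \times_X U \to U$, which is étale as the pullback of the étale atlas $f \colon U \to X$. A proper étale map with Hausdorff source is a (finite) covering map, so $q$ is a covering map.

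Finally, $\Delta \colon U \to Z$ is a continuous section of $q$. Any continuous section of a covering map with Hausdorff total space is a clopen embedding: openness is immediate from $q$ being a local homeomorphism, and closedness follows because if $\Delta(u_\alpha) \to z \in Z$ then $u_\alpha = q(\Delta(u_\alpha)) \to q(z) =: u$, so by continuity $\Delta(u_\alpha) \to \Delta(u)$, and Hausdorffness of $Z$ forces $z = \Delta(u) \in \Delta(U)$. Thus $\Delta(U)$ is closed in $Z$, which in turn is closed in $U \times_X U$, so $\Delta$ is a closed embedding and $f$ is separated. The only step that requires a moment of care is verifying that $U \times_X U$ is Hausdorff (everything else is a formal consequence of pullback stability of properness and étaleness and the elementary fact about sections of coverings); this is where the proper-diagonal hypothesis baked into the definition of an orbispace enters essentially.
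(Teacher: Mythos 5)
Your route is genuinely different from the paper's, and its core is sound, but one intermediate claim is false. The paper argues purely formally: $U\times_XU\to U\times U$ is separated as the pullback of the proper (hence separated) diagonal of $X$, the projection $U\times U\to U$ is separated because $U$ is Hausdorff, and separatedness is stable under composition, so $\pi_1\colon U\times_XU\to U$ is separated, which gives separatedness of $U\to X$ (checked along the atlas itself). You instead prove Hausdorffness of $U\times_XU$ and exhibit the diagonal of $U\to X$ as a section of a continuous map with Hausdorff total space. That is a perfectly good alternative; your Hausdorffness argument (properness, hence separatedness, of $p$ plus Hausdorffness of $U\times U$) is correct given the paper's convention that proper implies separated, and closed embeddings of topological spaces pull back to closed embeddings, so closedness of this one diagonal does yield separatedness in the stack-theoretic sense.

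The error is the assertion that $q\colon Z\to U$ is \'etale, hence a covering map. The restriction of a local homeomorphism to a non-open subspace of its source need not be a local homeomorphism, and $Z$ is genuinely not open in $U\times_XU$: in the local model $X=V/\Gamma$ with atlas $U=V$ one has $U\times_XU=\Gamma\times V$ and $Z=\bigsqcup_\gamma\{\gamma\}\times V^\gamma$, so $q$ is a disjoint union of inclusions of fixed-point loci, which are closed but typically not open (e.g.\ $\{0\}\hookrightarrow\RR$ for a reflection). Fortunately the claim is dispensable: your closedness argument for $\Delta(U)\subseteq Z$ uses only that $\Delta$ is a continuous section of the continuous map $q$ and that $Z$ is Hausdorff (equivalently, $\Delta(U)$ is the equalizer of $\Delta\circ q$ and $\id_Z$), and openness of $\Delta(U)$ is never needed, since a section is automatically an embedding onto its image. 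Indeed the whole detour through $Z$ can be deleted: $\Delta$ is already a section of $\pi_1\colon U\times_XU\to U$ and $U\times_XU$ is Hausdorff, so $\Delta$ is a closed embedding directly. With the \'etale/covering sentence removed, the proof stands.
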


\begin{proof}
The map $U\times_XU\to U\times U$ is separated since $X\to X\times X$ is separated, and the map $U\times U\to U$ is separated since $U$ is Hausdorff.
The composition $U\times_XU\to U$ is thus separated, hence $U\to X$ is separated.
\end{proof}

\begin{lemma}\label{orbiiso}
A map of orbispaces is an isomorphism iff it induces isomorphisms on isotropy groups and induces a homeomorphism on coarse spaces.
\end{lemma}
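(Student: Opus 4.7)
The forward implication is immediate: any isomorphism of orbispaces induces isomorphisms on isotropy groups and a homeomorphism on coarse spaces. For the converse, I will reduce the problem to a local question and solve it by a bijection-plus-properness argument.

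Since $f$ induces isomorphisms (hence injections) on isotropy, the characterization of representability cited above shows that $f$ is representable. The plan is then to show that $f$ is \'etale; once this is in place, I pull back along any \'etale atlas $V\to Y$ with $V$ Hausdorff to obtain a continuous map $U:=X\times_Y V\to V$ of topological spaces, which is \'etale (because $f$ is) and bijective: the fiber of $X\times_Y V\to V$ over $v\in V$ is the fiber of $X\to Y$ over the point $q:=[v]$, which has trivial automorphism group (by injectivity on isotropy) and a unique isomorphism class of objects (by the coarse space bijection together with the surjection on isotropy, which transitivizes the action of $\Aut(x)$ on the set of isomorphisms $f(x)\cong v$). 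An \'etale continuous bijection is a homeomorphism, and descent along the atlas $V\to Y$ then identifies $f$ itself with an isomorphism.

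To verify that $f$ is \'etale is a local question on $Y$. Given $x\in X$ with image $q=f(x)\in Y$, I choose neighborhoods modeled on stack quotients $M/G_x$ and $W/G_x$ (identifying $G_x$ with $G_q$ via $f$), where $M,W$ are Hausdorff $G_x$-spaces with distinguished fixed points $m_0,w_0$ lifting $x$ and $q$, and where $f$ is induced by a $G_x$-equivariant continuous map $\tilde f\colon M\to W$ with $\tilde f(m_0)=w_0$. Under this presentation, the two hypotheses translate to: the induced map of coarse spaces $|M/G_x|\to|W/G_x|$ is a homeomorphism (after suitable shrinking), and $G_m=G_{\tilde f(m)}$ as subgroups of $G_x$ for every $m\in M$.

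I then show that $\tilde f$ is itself a homeomorphism, which gives the desired \'etaleness. Bijectivity follows orbit-by-orbit: the coarse space homeomorphism provides a bijection on $G_x$-orbits, and the stabilizer equality $G_m=G_{\tilde f(m)}$ forces each orbit map $G_x\cdot m\to G_x\cdot\tilde f(m)$ to be a bijection of $G_x$-sets of the same cardinality. The main technical step is properness of $\tilde f$: the composite $M\xrightarrow{\tilde f}W\to|W/G_x|$ agrees with $M\to|M/G_x|\xrightarrow{\sim}|W/G_x|$, which is proper (a quotient by a finite group acting on a Hausdorff space, composed with a homeomorphism); since the quotient $W\to|W/G_x|$ is separated (its equivalence relation is $\bigcup_{g\in G_x}\mathrm{graph}(g)\subseteq W\times W$, a finite union of closed subsets), the standard fact that $h$ is proper whenever $g\circ h$ is proper and $g$ is separated yields properness of $\tilde f$. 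A continuous proper bijection onto a Hausdorff space is a homeomorphism, so $\tilde f$ is a homeomorphism, $f$ is locally an isomorphism of stacks, and the argument is complete.
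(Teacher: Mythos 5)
Your proof is correct and makes essentially the same reduction as the paper --- $f$ is representable since it is injective on isotropy, being an isomorphism is local on $|Y|$, so one pulls back to a $G$-equivariant map $\tilde f\colon M\to W$ of Hausdorff spaces and checks bijectivity orbit-by-orbit from the coarse-space homeomorphism together with the stabilizer equality --- but it diverges at the decisive step. The paper proves $\tilde f$ is \emph{open} by hand: using Hausdorffness of $W$, it constructs $G$-translated, pairwise disjoint neighborhoods $U_g$ of the points of the orbit of $\tilde f(x)$, and observes $\tilde f(V)=\tilde f(G\cdot V)\cap U_1$ is open. You instead prove $\tilde f$ is \emph{closed} via properness: $M\to|M/G_x|$ is proper (closed with finite fibers), so the composite down to $|W/G_x|$ through the coarse homeomorphism is proper; since $W\to|W/G_x|$ is separated, the cancellation property for proper maps gives $\tilde f$ proper, hence closed, hence (being a continuous bijection) a homeomorphism. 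This buys a somewhat more systematic argument at the cost of invoking the properness calculus, where the paper's version is self-contained point-set topology. One small correction: the separatedness of $W\to|W/G_x|$ follows from $W$ being Hausdorff --- the diagonal of $W$ is already closed in $W\times W$ and hence in the subspace $W\times_{|W/G_x|}W$ --- not from the equivalence relation $\bigcup_{g}\mathrm{graph}(g)$ being closed in $W\times W$; that latter condition is what makes $|W/G_x|$ Hausdorff. Also, you give two exits from the local analysis (\'etale plus atlas pullback, and ``locally an isomorphism plus descent''); the second alone suffices and makes the first redundant.
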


\begin{proof}
Let $f:X\to Y$ be a map of orbispaces which induces isomorphisms on isotropy groups $G_x\xrightarrow\sim G_{f(x)}$ and induces a homeomorphism on coarse spaces $\left|f\right|:\left|X\right|\to\left|Y\right|$, and let us show that $f$ is an isomorphism.
The property of $f$ being an isomorphism is local on $\left|Y\right|$, so we may assume without loss of generality that $Y=Y'/G$ for some finite group $G$ acting continuously on a Hausdorff space $Y'$.
Since $f$ is representable, $X':=Y'\times_YX$ is a space, and $X=X'/G$.
We thus have a $G$-equivariant map $X'\to Y'$ which induces a homeomorphism $\left|X'/G\right|\xrightarrow\sim\left|Y'/G\right|$ and which induces isomorphisms on stabilizer groups.
This implies that $X'\to Y'$ is a bijection.
It suffices to show that the map $f':X'\to Y'$ is open (and hence is a homeomorphism).
What we know is that $f'$ sends $G$-invariant open subsets to open subsets.
Let $x\in X'$.
Since $Y'$ is Hausdorff, there exist open neighborhoods $U_g\subseteq Y'$ of $g\cdot f'(x)$ for all $g\in G$ such that $g\cdot U_h=U_{gh}$ and $U_g\cap U_h=\varnothing$ for $g\cdot f'(x)\ne h\cdot f'(x)$ and $U_g=U_h$ for $g\cdot f'(x)=h\cdot f'(x)$.
Now let $V\subseteq(f')^{-1}(U_1)$ be any open neighborhood of $x$.
Its image $f'(V)\subseteq Y$ is the intersection of two open sets $f'(G\cdot V)\cap U_1$, so $f'(V)$ is open.
Thus $f'$ is open.
\end{proof}

A \emph{topological orbifold} is an orbispace $X$ which is \'etale locally homeomorphic to $\RR^n$, in the sense that for some (equivalently, every) \'etale atlas $U\to X$, the space $U$ is locally homeomorphic to $\RR^n$ (it may also be required paracompact if one so desires).
In other words, $X$ is locally isomorphic to $U/\Gamma$ for $U\subseteq\RR^n$ open and $\Gamma\acts U$ acting continuously.
A topological orbifold is called \emph{locally tame} iff we may take such actions $\Gamma\acts U$ to be restrictions of linear actions on $\RR^n$.
A smooth structure on a topological orbifold $X$ is a choice of atlas $U\to X$ together with a smooth structure on $U$ such that the two smooth structures on $U\times_XU$ obtained via pullback from the smooth structure on $U$ coincide (smooth structures relative to $U\to X$ and $U'\to X$ are equivalent iff they give rise to the same pullback smooth structure on $U\times_XU'$).
Smooth orbifolds are locally isomorphic to $U/\Gamma$ for $U\subseteq\RR^n$ open and $\Gamma\acts U$ acting smoothly (equivalently, linearly).

\subsection{Vector bundles and principal bundles over orbispaces}

A (real) vector bundle over a stack $X$ is a representable map $V\to X$ along with maps $\RR\times V\to V$ and $V\times_XV\to V$ over $X$, such that the pullback to any topological space $Z\to X$ is a vector bundle over $Z$ with its fiberwise scaling and addition maps.
Similarly, for a Lie group $G$, a principal $G$-bundle over a stack $X$ is a representable map $P\to X$ along with a map $G\times P\to P$ over $X$, such that the pullback to any topological space $Z\to X$ is a principal $G$-bundle with its $G$-action.
We only ever consider finite-dimensional vector bundles, so we will usually omit the adjective `finite-dimensional' for sake of brevity.
We will also only ever consider positive definite inner products, so we will also usually omit the adjective `positive definite'.

For a vector bundle $V\to X$ and a point $p:*\to X$, the fiber $V_p:=V\times_X*$ carries a linear action of the isotropy group $G_p$.
Similarly, given a principal $G$-bundle $P\to X$ and a point $p:*\to X$, the fiber $P_p$ carries a $G_p$-action compatible with the $G$-action (so, fixing an identification of $G$-spaces $P_p=G$, this becomes a homomorphism $G_p\to G$).

\begin{lemma}\label{mappingcts}
For any topological space $X$, the tautological bijection between (setwise) maps $X\times\RR^n\to\RR^m$ (resp.\ $X\times G\to G$) which for every fixed $x\in X$ are linear (resp.\ $G$-equivariant) and maps $X\to\Hom(\RR^n,\RR^m)$ (resp.\ $X\to G$) restricts to a bijection between the subsets of continuous maps.
\end{lemma}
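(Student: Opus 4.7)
The plan is to handle the two cases separately, noting that in each case the set-theoretic bijection is straightforward (a linear map is determined by a matrix; a $G$-equivariant self-map of $G$ acting by right translation is determined by its value at the identity), so the entire content is the equivalence of continuity on the two sides.

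For the linear case, write the bijection as $A \leftrightarrow f$ where $A(x)v = f(x,v)$. In one direction, if $A : X \to \Hom(\RR^n, \RR^m)$ is continuous, then $f$ factors as
\begin{equation*}
X \times \RR^n \xrightarrow{A \times \id} \Hom(\RR^n, \RR^m) \times \RR^n \xrightarrow{\mathrm{ev}} \RR^m,
\end{equation*}
and the evaluation map is continuous because it is bilinear between finite-dimensional real vector spaces. Conversely, if $f$ is continuous, then for each standard basis vector $e_i \in \RR^n$ the composition $x \mapsto f(x, e_i)$ is continuous $X \to \RR^m$; but this is precisely the $i$-th column of the matrix of $A(x)$, so all matrix entries of $A$ are continuous and hence $A$ itself is continuous.

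For the $G$-equivariant case, a continuous $G$-equivariant map $\phi : G \to G$ (with $G$ acting on itself by right translation, say) is determined by $\phi(e)$ via $\phi(g) = \phi(e) \cdot g$, and correspondingly the bijection sends $f : X \times G \to G$ to $h : X \to G$ with $h(x) = f(x, e)$. If $h$ is continuous, then $f(x, g) = h(x) \cdot g$ is the composition of $h \times \id_G$ with the multiplication map $G \times G \to G$, both of which are continuous. Conversely, if $f$ is continuous, then $h$ is the composition $X \to X \times G \xrightarrow{f} G$, where the first map is $x \mapsto (x, e)$.

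There is no real obstacle here: the only subtlety worth flagging is the implicit choice of topology on $\Hom(\RR^n, \RR^m)$ (the standard finite-dimensional vector space topology, equivalently the compact-open topology) and the convention for how $G$ acts on itself in the equivariance condition, which must be fixed consistently with the forward reference to this lemma in the discussion of vector bundles and principal bundles.
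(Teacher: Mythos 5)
Your proof is correct and follows essentially the same route as the paper: one direction via continuity of the evaluation/multiplication map, the other by recovering the matrix entries (resp.\ the value at the identity) through composition with point inclusions and coordinate projections. No issues.
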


\begin{proof}
For one direction, note that the map $\RR^n\times\Hom(\RR^n,\RR^m)\to\RR^m$ (resp.\ $G\times G\to G$) is continuous, so its pullback along a continuous map $X\to\Hom(\RR^n,\RR^m)$ (resp.\ $X\to G$) remains continuous.
For the other direction, note that the `matrix entries' of a map $X\to\Hom(\RR^n,\RR^m)$ can be recovered from the map $X\times\RR^n\to X\times\RR^m$ by appropriate pre- and post-composition with maps $*\to\RR^n$ and $\RR^m\to\RR$, and similarly for Lie groups $G$.
\end{proof}

It follows immediately from Lemma \ref{mappingcts} that for any topological space $X$, the functor from the groupoid of maps $X\to\bigsqcup_{n\geq 0}*/\GL_n(\RR)$ to vector bundles over $X$ defined by pulling back the vector bundle $\bigsqcup_{n\geq 0}\RR^n/\GL_n(\RR)\to\bigsqcup_{n\geq 0}*/\GL_n(\RR)$ is an equivalence, similarly for $X\to */G$ and principal $G$-bundles, and similarly for $X\to\bigsqcup_{n\geq 0}*/O(n)$ and vector bundles with inner product.
These statements automatically extend to arbitrary stacks $X$: a vector bundle $V\to X$ is the same as the specification, compatible with pullback, of a vector bundle $V_Z\to Z$ for every map $Z\to X$ from a topological space $Z$, which is, by the above result for topological spaces, the same as the specification, compatible with pullback, of a map $Z\to */\GL_n(\RR)$ for every map $Z\to X$ from a topological space $Z$, which is the same as a map of stacks $X\to\bigsqcup_{n\geq 0}*/\GL_n(\RR)$ (and similarly for $*/G$ and $*/O(n)$).

There is a standard deformation retraction from $\Inj(\RR^n,\RR^m)$ to the subspace of isometric injections given by $f\mapsto f(f^*f)^{-t/2}$ for $t\in[0,1]$.
Since this deformation retraction is $O(n)\times O(m)$-equivariant, by Lemma \ref{mappingcts} it induces, for any injective map of vector bundles with inner products over a topological space $X$, a canonical homotopy through injections to an isometric injection (moreover, the same holds for over arbitrary stacks $X$, by the reasoning as in the previous paragraph).

We now move on to some foundational results which are specific to orbispaces.

\begin{lemma}
Every principal $G$-bundle over an orbispace $X$ is locally of the form $(G\times Y)/\Gamma\to Y/\Gamma$ for $\Gamma\acts Y$ and $\Gamma\to G$.
\end{lemma}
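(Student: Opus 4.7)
The plan is to reduce to a local computation on an atlas of $X$ and then construct a $\Gamma$-equivariant trivialization of the pulled-back bundle. Given $x \in \left|X\right|$, I would use the local presentation of orbispaces to find an open substack $X' \subseteq X$ containing $x$ of the form $Y/\Gamma$. Replace $\Gamma$ by the stabilizer of a chosen preimage $y \in Y$ of $x$ and shrink $Y$ to a $\Gamma$-invariant open neighborhood of $y$ disjoint from its translates by the remaining elements of the original group (possible since $Y$ is Hausdorff and the original group is finite), so that $y$ becomes fixed by $\Gamma$. Pulling back yields the $\Gamma$-equivariant principal $G$-bundle $P_Y := P \times_X Y \to Y$, and it suffices to exhibit a $\Gamma$-invariant open neighborhood $V \ni y$ together with an equivariant isomorphism $P_Y|_V \cong G \times V$, where $\Gamma$ acts on $G \times V$ via $\gamma (g, v) = (\rho(\gamma) g, \gamma v)$ for some homomorphism $\rho: \Gamma \to G$.

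Next, pick $p \in P_y$ and define $\rho$ by the identity $\gamma \cdot p = p \cdot \rho(\gamma)$; this is well-defined because the commuting $G$-action on the fiber is free and transitive. Then choose a local section $\sigma: U \to P_Y$ with $\sigma(y) = p$ from the local triviality of $P_Y \to Y$ as a principal $G$-bundle over a topological space, and restrict to the $\Gamma$-invariant open neighborhood $U' := \bigcap_{\gamma \in \Gamma} \gamma U$ of $y$. In this trivialization the $\Gamma$-action is recorded by continuous maps $\psi_\gamma: U' \to G$ defined by $\gamma \cdot \sigma(u) = \sigma(\gamma u) \cdot \psi_\gamma(u)$; these satisfy $\psi_\gamma(y) = \rho(\gamma)$ and the cocycle relation $\psi_{\gamma_1 \gamma_2}(u) = \psi_{\gamma_1}(\gamma_2 u) \, \psi_{\gamma_2}(u)$.

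The main obstacle will be the final step: finding a continuous gauge transformation $h: V \to G$, on a possibly smaller $\Gamma$-invariant neighborhood $V \subseteq U'$ of $y$, with $h(y) = 1$ and $h(\gamma u) \, \psi_\gamma(u) \, h(u)^{-1} = \rho(\gamma)$ for all $u \in V$, $\gamma \in \Gamma$. Once such an $h$ is in hand, the gauged section $\tilde\sigma(u) := \sigma(u) \cdot h(u)$ satisfies $\gamma \cdot \tilde\sigma(u) = \tilde\sigma(\gamma u) \cdot \rho(\gamma)$ and gives the desired equivariant trivialization, which descends to $P|_{V/\Gamma} \cong (G \times V)/\Gamma$. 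To construct $h$ I would linearize via the exponential map of $G$: writing $\psi_\gamma(u) = \rho(\gamma) \exp(\xi_\gamma(u))$ with $\xi_\gamma(y) = 0$ and $h = \exp(\eta)$, the cocycle equation becomes a small perturbation of the linear $\Gamma$-cocycle equation $\eta(\gamma u) - \mathrm{Ad}_{\rho(\gamma)} \eta(u) = -\mathrm{Ad}_{\rho(\gamma)} \xi_\gamma(u)$; this linearized problem is solved by averaging over the finite group $\Gamma$ in the Lie algebra $\mathfrak{g}$, and the full nonlinear problem then succumbs on a sufficiently small $V$ via the implicit function theorem. An alternative approach is to invoke a slice theorem for the proper action of the Lie group $\Gamma \times G$ on $P_Y$ at the orbit $(\Gamma \times G) \cdot p = P_y$, whose stabilizer is the compact (finite) subgroup $\{(\gamma, \rho(\gamma)^{-1}) : \gamma \in \Gamma\} \cong \Gamma$, which yields the equivariant local normal form directly.
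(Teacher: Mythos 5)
Your reduction — localizing to $Y/\Gamma$ with $y$ fixed by $\Gamma$, pulling back to a $\Gamma$-equivariant principal $G$-bundle $P_Y\to Y$, choosing a local section, and recording the $\Gamma$-action as a $G$-valued cocycle $\psi_\gamma$ with $\psi_\gamma(y)=\rho(\gamma)$ — matches the setup underlying the paper's (very terse) proof, which cites \cite[Lemma 6.7]{orbibundle} for $G=\GL_n(\RR)$ and says the general case follows by the same argument. The divergence, and the place where your proposal has a genuine gap, is the final step of gauging $\psi_\gamma$ to the constant cocycle $\rho(\gamma)$.

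Route (a) is the problematic one. The implicit function theorem needs a Banach manifold of maps $V\to G$, but $V$ is an a priori unknown open subset of a space $Y$ about which you only know that it is Hausdorff; there is no norm, no completeness, not even paracompactness in the hypotheses of the lemma. A pointwise application of the IFT instead runs into the degeneration of the $\Gamma$-orbit structure: near a point with stabilizer $\Gamma_u\neq\1$ the ``orbit system'' you are solving changes size, and the continuity of $h$ across these strata is precisely what is at stake — it does not come for free from a fiberwise argument. One can try to salvage this over a compact neighborhood by a Newton iteration with uniform estimates, but that is real work and is not what the lemma needs. What the paper does instead (the content of its one-line remark) is to construct, once and for all, a \emph{nonlinear} averaging $A:U\to G$ on a neighborhood $U$ of the diagonal in $G^{|\Gamma|}$ that is simultaneously $G$-conjugation invariant, $G$-translation equivariant, and $S_{|\Gamma|}$-invariant and restricts to the identity on the diagonal (for $G=\GL_n(\RR)$ this is literally the arithmetic mean of matrices). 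Given such $A$, one simply sets $\bar\sigma(u):=\sigma(u)\cdot A\bigl((\psi_\gamma(u)^{-1}\rho(\gamma))_{\gamma\in\Gamma}\bigr)$; the three invariances of $A$ are exactly what is needed to check that $\bar\sigma$ is $\Gamma$-equivariant, and because $A$ is a fixed continuous map applied fiberwise, continuity in $u$ and the correct behavior at $y$ are automatic. Your linearize-and-average step is the right germ of the idea, but packaging it into a pointwise operation on $G^{n}$ \emph{before} applying it over $Y$ is what eliminates the function-space analysis entirely.

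Route (b), invoking a slice theorem for the proper action of $\Gamma\times G$ on $P_Y$, is a legitimate alternative in spirit, but it is a considerably heavier tool than needed and you would still have to verify its hypotheses: Palais' slice theorem for non-compact Lie groups requires the total space to be completely regular (Tychonoff), which is not guaranteed by ``Hausdorff.'' It also does not immediately give the stated local model without unwinding what $(\Gamma\times G)\times_{\Gamma'}S$ looks like after quotienting by $G$. So this route could be completed, but would be longer and would import hypotheses not present in the lemma. In short: your setup is correct, the slice-theorem route is plausible but heavier and incomplete, and the IFT route as written does not go through; the missing idea is the invariant averaging operation $A$ on $G^{n}$.
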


\begin{proof}
The case of $G=\GL_n(\RR)$ (i.e.\ vector bundles) was proven in \cite[Lemma 6.7]{orbibundle}.
The essential point in generalizing the proof given there to general $G$ is to note that there is a $G$-conjugation, $G$-translation, and $S_n$ invariant `averaging' operation giving a retraction onto the diagonal $G\subseteq G^n$ defined in its neighborhood.
\end{proof}

Some important properties of vector bundles and principal bundles require a paracompactness assumption.

\begin{lemma}[{\cite[Lemma 5.1]{orbibundle}}]\label{paracompactmetric}
Every vector bundle over a paracompact orbispace has a inner product.
\qed
\end{lemma}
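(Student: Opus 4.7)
The plan is the standard partition-of-unity construction, adapted to orbispaces. Since $X$ is an orbispace, it admits an open cover by substacks of the form $U_\alpha = Y_\alpha/\Gamma_\alpha$ where $\Gamma_\alpha$ is a finite group acting continuously on a Hausdorff space $Y_\alpha$. Refining the cover if necessary, I may assume that the pullback of $V$ to each $Y_\alpha$ is trivial and therefore carries a standard Euclidean inner product. Averaging this inner product over $\Gamma_\alpha$ yields a $\Gamma_\alpha$-invariant inner product on the pullback of $V$ to $Y_\alpha$, which, by the descent discussion following Lemma \ref{mappingcts} (applied to the subgroup $O(n)\subseteq\GL_n(\RR)$), is the same datum as an inner product $h_\alpha$ on the vector bundle $V|_{U_\alpha}$.

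Next, since $|X|$ is paracompact Hausdorff, I choose a partition of unity $\{\rho_\alpha\}$ on $|X|$ subordinate to the open cover $\{|U_\alpha|\}$. Each $\rho_\alpha$ pulls back along the continuous map $X\to|X|$ to a continuous function on $X$ (concretely, it pulls back on every atlas to a $\Gamma_\beta$-invariant continuous function, and these glue as the coarse-space map factors through each atlas). The product $\rho_\alpha h_\alpha$ makes sense as a continuous section of the bundle of symmetric bilinear forms on $V|_{U_\alpha}$, and since the support of $\rho_\alpha$ is a closed subset of $|X|$ contained in $|U_\alpha|$, it extends by zero to such a section defined globally over $X$. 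The locally finite sum $h:=\sum_\alpha \rho_\alpha h_\alpha$ is then a well-defined continuous section. Positive-definiteness at any point $p\in X$ follows because $\sum_\alpha \rho_\alpha(p)=1$ forces some $\rho_\alpha(p)>0$, and the corresponding $h_\alpha$ is positive-definite on $V_p$; since all other terms contribute non-negatively to $h(v,v)$, the sum is a true inner product.

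The only subtle points are stack-theoretic bookkeeping rather than any substantive obstacle. First, one must check that continuous maps $|X|\to\RR$ really do pull back to continuous maps of stacks $X\to\RR$; this is because such a map of stacks is the same data as a compatible system of $\Gamma$-invariant continuous functions on atlases, and continuous functions on $|X|$ evidently furnish such data. Second, one must check that a symmetric bilinear form on $V|_{U_\alpha}$ whose closed support sits inside $|U_\alpha|$ extends by zero to a continuous form on $V$ over all of $X$; this is local on $|X|$ and reduces to the corresponding elementary statement about $\Gamma$-equivariant vector bundles on Hausdorff topological spaces. The main (modest) thing to watch is that these descent-through-atlases steps go through cleanly; no new geometric input is required beyond the local model of an orbispace and paracompactness of its coarse space.
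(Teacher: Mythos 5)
The paper does not prove this lemma; it only cites it as {\cite[Lemma 5.1]{orbibundle}}, so there is no internal argument to compare against. Your proof is the expected standard partition-of-unity argument, adapted correctly to the orbispace setting, and it is complete and correct.

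To briefly affirm the two points you flag as needing care: the pullback of a continuous function $|X|\to\RR$ along the coarse-space map does indeed give a map of stacks $X\to\RR$ (simply compose with the tautological map $X\to|X|$ in the $2$-category of stacks, or equivalently observe that for every atlas $U\to X$ one obtains a compatible $\Gamma$-invariant continuous function on $U$, which is automatically descent data); and extension by zero of $\rho_\alpha h_\alpha$ works because if $K:=\operatorname{supp}\rho_\alpha\subseteq|U_\alpha|$ is closed in $|X|$, then $U_\alpha$ and the open substack corresponding to $|X|\setminus K$ cover $X$ and the section vanishes on the overlap, so the two pieces glue by Lemma~\ref{mordescent}. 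Local finiteness of the partition of unity makes $\sum_\alpha\rho_\alpha h_\alpha$ locally a finite sum, hence a genuine continuous section of $\operatorname{Sym}^2V^*$, and positive-definiteness follows exactly as you say. No gaps.
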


\begin{lemma}\label{intervalpullback}
For a paracompact orbispace $X$, every principal $G$-bundle over $X\times[0,1]$ is pulled back from $X$.
\end{lemma}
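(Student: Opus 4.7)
The plan is to adapt the classical homotopy-invariance argument for principal bundles over paracompact spaces (Husemoller-style) to the orbispace setting, using the local form provided by the preceding lemma as the essential input. Writing $\pi\colon X\times[0,1]\to X$ for the projection, the goal is to construct a bundle isomorphism $P\cong\pi^*(P|_{X\times\{0\}})$.

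First I would establish triviality in the interval direction locally on $X$. Given a point $(x_0,t_0)\in X\times[0,1]$, I would apply the preceding lemma to $P$ at $(x_0,t_0)$ and refine the resulting neighborhood to a standard product chart $W=(V'\times J)/\Gamma$ on $X\times[0,1]$, where $V'/\Gamma$ is a chart of $X$ at $x_0$ with $\Gamma=G_{x_0}$, and $J\ni t_0$ is an interval in $[0,1]$ on which $\Gamma$ acts trivially (because the interval factor carries trivial isotropy). After this refinement, $P|_W$ is given by $(G\times V'\times J)/\Gamma$ where the $\Gamma$-action is the given one on $V'$, trivial on $J$, and via a homomorphism $\phi\colon\Gamma\to G$ on the $G$-factor. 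Because this action is independent of the $J$-coordinate, the model is visibly pulled back from its slice at $t_0$, yielding a canonical isomorphism $P|_W\cong\pi^*(P|_{W\cap(X\times\{t_0\})})$.

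Second, compactness of $[0,1]$ lets me cover $\{x_0\}\times[0,1]$ by finitely many such neighborhoods and extract a common open $U_{x_0}\subseteq X$ containing $x_0$ together with a subdivision $0=s_0<s_1<\cdots<s_k=1$ such that $P|_{U_{x_0}\times[s_{i-1},s_i]}\cong\pi^*(P|_{U_{x_0}\times\{s_{i-1}\}})$ for each $i$. Concatenating these slice isomorphisms inductively yields a bundle isomorphism $\psi_{x_0}\colon P|_{U_{x_0}\times[0,1]}\cong\pi^*(P|_{U_{x_0}\times\{0\}})$. Paracompactness of $|X|$ then allows me to refine $\{U_{x_0}\}$ to a locally finite cover $\{U_\alpha\}_{\alpha\in A}$ equipped with a subordinate partition of unity $\{\rho_\alpha\}$ (pulled back from $|X|$ to $X$). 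Well-ordering $A$ and setting $\sigma_n(x,t)=(x,\min(t,\sum_{\alpha\leq n}\rho_\alpha(x)))$, the $\sigma_n$ interpolate between $\sigma_0(x,t)=(x,0)$ and $\sigma_\infty=\mathrm{id}$ (the latter reached eventually on any small neighborhood, by local finiteness and $\sum_\alpha\rho_\alpha=1$). Using $\psi_\alpha$ to trivialize the interval direction over $U_\alpha$, I would construct bundle isomorphisms $\sigma_n^*P\cong\sigma_{n-1}^*P$ restricting to the identity outside $U_n\times[0,1]$; the composition (locally a finite composition, hence well-defined) is the sought-after isomorphism $P=\sigma_\infty^*P\cong\sigma_0^*P=\pi^*(P|_{X\times\{0\}})$.

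The main obstacle is the first step: one must carefully use the local form lemma together with the observation that $[0,1]$ has trivial isotropy to see that the local model $(G\times V'\times J)/\Gamma$ is canonically pulled back from its slice at $t_0$. Once this slice-triviality is established, the remaining steps are a direct orbispace transcription of the classical paracompact-space argument, with the partition of unity on $|X|$ playing the usual role.
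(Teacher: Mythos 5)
Your argument is correct, but it is worth noting how it sits relative to the paper's. The paper's proof here is essentially a citation: it invokes \cite[Lemma 6.2]{orbibundle} for the case $G=\GL_n(\RR)$ and then asserts that the `averaging' operation (the conjugation-, translation-, and $S_n$-invariant retraction onto the diagonal $G\subseteq G^n$ introduced for the local-form lemma) upgrades that proof to general $G$ --- i.e.\ the cited gluing step combines several nearby local isomorphisms by averaging, which for vector bundles is linear interpolation and for general $G$ requires the retraction. Your proposal instead gives a self-contained transcription of the Husemoller well-ordering argument: the slice isomorphisms $\sigma_n^*P\cong\sigma_{n-1}^*P$ are each the identity off $\supp\rho_n\times[0,1]$, so they glue by descent (Lemma \ref{mordescent}) and compose locally finitely, and no averaging in $G$ is ever needed. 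This buys you a proof that is uniform in $G$ from the start. The one step you rightly flag as delicate --- producing a product chart $(V'\times J)/\Gamma$ inside the neighborhood supplied by the local-form lemma, on which $P=(G\times V'\times J)/\Gamma$ with $\Gamma$ acting through a \emph{constant} homomorphism $\Gamma\to G$ independent of the $J$-coordinate --- does go through: one lifts a small product chart of $X\times[0,1]$ into the étale atlas of the local-form neighborhood (uniquely in the $J$-direction, since the atlas is separated étale), and the restriction of $(G\times Y)/\Gamma$ to any $\Gamma$-invariant product open is visibly pulled back from its slice. With that in place the concatenation over a subdivision of $[0,1]$ and the partition-of-unity step on the (paracompact Hausdorff) coarse space $\left|X\right|$ are exactly as in the classical case.
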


\begin{proof}
The case of $G=\GL_n(\RR)$ (i.e.\ vector bundles) was proven in \cite[Lemma 6.2]{orbibundle}.
The same averaging operation as before allows this proof to apply to general $G$.
\end{proof}

\subsection{Gluing orbispaces}

We now explain how some basic topological gluing constructions are generalized to the orbispace context.
These constructions provide the foundation for doing algebraic topology with orbispaces.

We begin with a discussion of how to glue together stacks along open substacks.
The first step is to observe the following `descent for morphisms' property:

\begin{lemma}\label{mordescent}
Let $X=\bigcup_\alpha U_\alpha$ be a cover by open substacks.
The functor
\begin{equation}\label{descentequivalence}
\Hom(X,Y)\xrightarrow\sim\left\{\begin{matrix}f_\alpha\in\Hom(U_\alpha,Y)\hfill\\ g_{\alpha\beta}:f_\alpha|_{U_\alpha\cap U_\beta}\xrightarrow\sim f_\beta|_{U_\alpha\cap U_\beta}\hfill\end{matrix}\,\middle|\,g_{\alpha\beta}g_{\beta\gamma}=g_{\alpha\gamma}\textrm{ over }U_\alpha\cap U_\beta\cap U_\gamma\right\}
\end{equation}
is an equivalence for any stack $Y$ (on the right side, an isomorphism $(f_\alpha,g_{\alpha\beta})\to(f_\alpha',g_{\alpha\beta}')$ consists of $\pi_\alpha:f_\alpha\xrightarrow\sim f_\alpha'$ such that $g_{\alpha\beta}'\pi_\beta=\pi_\alpha g_{\alpha\beta}$ over $U_\alpha\cap U_\beta$).
\end{lemma}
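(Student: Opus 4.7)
The plan is to prove this by constructing an explicit quasi-inverse to the restriction functor \eqref{descentequivalence}, using as the only input the fact that $Y$, as an object of $\Shv(\Top,\Grpd)$, is a sheaf of groupoids on $\Top$ and therefore satisfies descent for open covers of topological spaces. In effect, the lemma is a reformulation of that sheaf condition, reindexed along an open cover of the stack $X$ rather than of a topological space.

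First, given descent data $(f_\alpha,g_{\alpha\beta})$ satisfying the cocycle condition, I would build a morphism $f:X\to Y$ as follows. By Yoneda, specifying $f$ amounts to giving, pseudo-naturally in every test map $\varphi:Z\to X$ from a topological space $Z$, an object $f(\varphi)\in Y(Z)$. Pulling back the open cover of $X$ along $\varphi$ yields an honest open cover $\{V_\alpha:=\varphi^{-1}(U_\alpha)\}$ of $Z$, together with objects $f_\alpha\circ\varphi|_{V_\alpha}\in Y(V_\alpha)$ and pulled-back transition isomorphisms $g_{\alpha\beta}|_{V_\alpha\cap V_\beta}$ which, by hypothesis, satisfy the cocycle condition on triple overlaps. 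The sheaf-of-groupoids axiom for $Y$ then produces a canonical object of $Y(Z)$, which I take as $f(\varphi)$; the uniqueness part of that axiom gives the pseudo-naturality in $Z$, so $f$ is indeed a morphism of stacks. Restricting $f$ along an open inclusion $U_\alpha\hookrightarrow X$ recovers $f_\alpha$ up to canonical 2-isomorphism (the descent datum over $U_\alpha$ being trivial), and comparing the restrictions along $U_\alpha\cap U_\beta\hookrightarrow U_\alpha,U_\beta$ recovers $g_{\alpha\beta}$. This gives essential surjectivity.

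For fullness and faithfulness, I would apply the sheaf condition one categorical level down: morphisms $f\Rightarrow f'$ in $\Hom(X,Y)$ correspond, test-object by test-object, to morphisms in the groupoids $Y(Z)$, and the sets of such morphisms form a sheaf on each $Z$. Hence a compatible family $\pi_\alpha:f_\alpha\xrightarrow\sim f_\alpha'$ satisfying the stated cocycle with the $g_{\alpha\beta}$ and $g_{\alpha\beta}'$ assembles, uniquely, into a global 2-morphism $f\Rightarrow f'$, and two global 2-morphisms with the same restrictions must agree.

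The main obstacle is purely bookkeeping: tracking the 2-categorical coherences supplied by the sheaf condition across the various open substacks, their pullbacks to test spaces, and their further pullbacks to double and triple overlaps, and verifying that all naturality squares of canonical isomorphisms commute. I would handle this either by choosing, once and for all, a strict presentation of $Y$ as a (strict) presheaf of groupoids on $\Top$ that satisfies descent up to canonical iso, reducing the coherences to checks on objects and morphisms in ordinary groupoids; or by invoking a general $2$-descent statement for sheaves of groupoids. Either way, no content beyond the sheaf axiom for $Y$ and the fact that pullbacks of open substacks to topological spaces give honest open covers is needed.
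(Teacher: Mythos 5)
Your proposal is correct and takes essentially the same approach as the paper: construct a quasi-inverse by, for each test map $Z\to X$ from a topological space, pulling back the open cover to an honest open cover of $Z$ and invoking the stack (sheaf-of-groupoids) property of $Y$ to glue the $f_\alpha$ along the $g_{\alpha\beta}$ into an object of $Y(Z)$, then checking compatibility with pullback. The paper is terser on full faithfulness and coherence, but the substance is identical.
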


\begin{proof}
We may construct an inverse to \eqref{descentequivalence} as follows.
Given an element of the right hand side, we may associate to any map $Z\to X$ (where $Z$ is a topological space) a map $Z\to Y$ as follows.
The map $Z\to X$ induces an open cover $Z=\bigcup_\alpha Z\times_XU_\alpha$.
Each map $Z\times_XU_\alpha\to U_\alpha$ may be composed with our chosen element on the right side of \eqref{descentequivalence} to a map $Z\times_XU_\alpha\to Y$.
The compatibility data on the right side of \eqref{descentequivalence} provides descent data to glue these maps together (using the stack property for $Y$) to a map $Z\to Y$.
We have thus associated to each map $Z\to X$ a map $Z\to Y$.
This construction is compatible with pullback, hence defines a map of stacks $X\to Y$.
Tracing through definitions, it can be checked that this map is a two-sided inverse to \eqref{descentequivalence}.
\end{proof}

Lemma \ref{mordescent} may be reformulated as saying that $X$ is the colimit of the diagram consisting of the open substacks $U_\alpha$, their pairwise intersections $U_\alpha\cap U_\beta$, and their triple intersections $U_\alpha\cap U_\beta\cap U_\gamma$ (and no higher intersections).

Going in the opposite direction, let us argue that pushouts of open inclusions of stacks always exist.
Namely, consider a pair of open inclusions $X\hookleftarrow U\hookrightarrow Y$.
Given such data, we may define a stack $X\cup_UY$ by the following natural mapping in property: a map $Z\to X\cup_UY$ ($Z$ a topological space) consists of an open cover $Z=Z_X\cup Z_Y$, maps $Z_X\to X$ and $Z_Y\to Y$ such that in both cases the inverse image of $U$ is $Z_X\cap Z_Y$, together with an isomorphism between the two resulting maps $Z_X\cap Z_Y\to U$.
It is immediate to check that the maps $X\to X\cup_UY\leftarrow Y$ are both open inclusions intersecting along $U$, so Lemma \ref{mordescent} implies that
\begin{equation}\label{opengluingpushout}
\begin{tikzcd}
U\ar[r]\ar[d]&X\ar[d]\\
Y\ar[r]&X\cup_UY
\end{tikzcd}
\end{equation}
is a pushout.
Since $X\hookrightarrow X\cup_UY\hookleftarrow Y$ are open inclusions, it follows that if $X$ and $Y$ both admit \'etale atlases, then so do $U$ and $X\cup_UY$.
Also, if $X$ and $Y$ are locally of the form $V/\Gamma$ for a finite group $\Gamma$ acting on a Hausdorff space $V$, then the same holds for $X\cup_UY$.
Thus if $X$ and $Y$ are orbispaces, to verify that $X\cup_UY$ is an orbispace, it suffices to show that $\left|X\cup_UY\right|$ is Hausdorff.
Since the coarse space functor $\left|\cdot\right|$ is a left adjoint, it preserves all colimits, so $\left|X\cup_UY\right|=\left|X\right|\cup_{\left|U\right|}\left|Y\right|$.
This gives an effective procedure to glue together a pair of orbispaces along a common open subspace and to show that the result is again an orbispace.

The next construction we wish to discuss is the formation of mapping cylinders for representable maps of orbispaces.
For a map of topological spaces $A\to X$, the mapping cylinder $\cyl(A\to X)$ is defined as the pushout
\begin{equation}\label{cylinderdiagram}
\begin{tikzcd}
A\ar[r]\ar[d,"\times\{0\}"]&X\ar[d]\\
A\times[0,1]\ar[r]&\cyl(A\to X).
\end{tikzcd}
\end{equation}
A basic property of colimits in the category of topological spaces is that the property of a diagram being a colimit diagram is local on the colimit object.
It follows that colimit diagrams are preserved under \'etale pullback: if $U\to\colim p$ is \'etale, then the natural map $\colim(p\times_{\colim p}U)\to U$ is an isomorphism.
Thus the formation of mapping cylinders commutes with \'etale pullback: if $U\to X$ is \'etale, then the natural map $\cyl(A\times_XU\to U)\to\cyl(A\to X)\times_XU$ is an isomorphism.
This fact allows us to define the mapping cylinder of any representable map of stacks $A\to X$ for which $X$ (hence also $A$) admits an \'etale atlas.
Indeed, let $A\to X$ be such a map.
Choose an \'etale atlas $U\to X$, which pulls back to an \'etale atlas $A\times_XU\to A$.
We thus obtain a topological groupoid $U\times_XU\righttwoarrows U$ presenting the stack $X$, and we obtain a topological groupoid $U\times_XA\times_XU\righttwoarrows A\times_XU$ presenting $A$.
The map $A\to X$ induces a map of topological groupoids from the latter to the former, which presents the map $A\to X$.
We may consider the `cylinder' of this map of groupoids, namely
\begin{equation}\label{cylindergroupoid}
\cyl(U\times_XA\times_XU\to U\times_XU)\righttwoarrows\cyl(A\times_XU\to U),
\end{equation}
and we define $\cyl(A\to X)$ to be the topological stack presented by this groupoid.
Note that $\cyl(U\times_XA\times_XU\to U\times_XU)=\cyl(A\times_XU\to U)\times_XU$ since $U\to X$ is \'etale.

\begin{lemma}
$\cyl(A\to X)$ is independent, up to canonical equivalence, of the choice of \'etale atlas $U\to X$.
\end{lemma}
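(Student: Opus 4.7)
The plan is to pass to a common refinement. Given two \'etale atlases $U, U' \to X$, form $V := U \times_X U' \to X$; since \'etale maps are stable under base change and composition, $V \to X$ is again an \'etale atlas, and both projections $V \to U$ and $V \to U'$ are \'etale surjective. It suffices to prove that the stack built from $U$ is canonically equivalent to the stack built from $V$, since by symmetry the same will hold for $U'$ and $V$, and composition then yields the desired canonical equivalence between the $U$- and $U'$-constructions.

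The key input is the \'etale-locality of mapping cylinders on topological spaces recorded in the paragraph preceding the definition: for any \'etale map $W \to Y$ of spaces and any map $Z \to Y$, the natural map $\cyl(Z \times_Y W \to W) \to \cyl(Z \to Y) \times_Y W$ is an isomorphism. Applying this to the \'etale map $V \to U$ and to its base change $V \times_X V \to U \times_X U$, the presenting groupoid
\begin{equation*}
\cyl(V \times_X A \times_X V \to V \times_X V) \righttwoarrows \cyl(A \times_X V \to V)
\end{equation*}
of the $V$-construction is canonically identified with the pullback of the presenting groupoid
\begin{equation*}
\cyl(U \times_X A \times_X U \to U \times_X U) \righttwoarrows \cyl(A \times_X U \to U)
\end{equation*}
of the $U$-construction along the \'etale surjection of objects $\cyl(A \times_X V \to V) \to \cyl(A \times_X U \to U)$. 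Pulling back a topological groupoid along an \'etale surjective map of objects yields a Morita equivalent groupoid, hence the same topological stack; this produces the desired canonical equivalence between the two constructions of $\cyl(A \to X)$.

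Coherence of the resulting pairwise comparisons --- that for three atlases $U, U', U''$ the composite of the $U$-to-$U'$ and $U'$-to-$U''$ equivalences agrees with the direct $U$-to-$U''$ equivalence --- follows by running the same argument once more with the triple fiber product $U \times_X U' \times_X U''$ as an even finer common refinement, and analogously for higher numbers of atlases. The main obstacle is the \'etale-locality input itself, which rests on the general fact that the colimit topology defining the pushout \eqref{cylinderdiagram} is preserved under \'etale base change, so that the cylinder of an \'etale pullback is canonically the \'etale pullback of the cylinder; once this ingredient is secured, the remaining argument is essentially formal manipulation of groupoid presentations.
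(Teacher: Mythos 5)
Your proof is correct, and it shares the essential ingredient with the paper's argument --- the \'etale-locality of mapping cylinders --- but the reduction to that ingredient is organized differently. The paper compares the $U$- and $U'$-constructions through the \emph{disjoint union} atlas $U\sqcup U'$: it shows that the inclusion of the $U$-groupoid (and, symmetrically, the $U'$-groupoid) into the groupoid built from $U\sqcup U'$ is an equivalence of stacks, reducing this to showing that a certain map of cylinders (e.g.\ $\cyl(U'\times_XA\times_XU\to U'\times_XU)\to\cyl(A\times_XU\to U)$) admits local sections, which follows directly from commutation of $\cyl$ with \'etale pullback (both sides being $(U'\times_XU)\times_U\cyl(A\times_XU\to U)$). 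You instead refine by the \emph{fiber product} $V=U\times_XU'$, verify it is again an \'etale atlas, and then identify the $V$-groupoid as the pullback of the $U$-groupoid along the \'etale surjection $\cyl(A\times_XV\to V)\to\cyl(A\times_XU\to U)$, invoking the general fact that pulling back a topological groupoid along a surjective map of objects admitting local sections yields a Morita-equivalent groupoid. The paper's route via $U\sqcup U'$ handles $U$ and $U'$ symmetrically in a single step and does not need the ``pullback gives Morita equivalence'' fact as a separate lemma; on the other hand, your version addresses coherence of the comparison maps explicitly via triple fiber products, a point the paper leaves implicit in the phrase ``canonical equivalence.''
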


\begin{proof}
It suffices to show that for any two atlases $U\to X\leftarrow U'$, the inclusions of the groupoids \eqref{cylindergroupoid} for $U$ and $U'$ into the groupoid for $U\sqcup U'$ induce equivalences of stacks.
To show this, it in turn suffices to show that the map
\begin{equation}
\cyl(U'\times_XA\times_XU\to U'\times_XU)\to\cyl(A\times_XU\to U)
\end{equation}
admits local sections.
This in turn is implied by the assertion that the natural map
\begin{equation}
\cyl(U'\times_XA\times_XU\to U'\times_XU)\to U'\times_X\cyl(A\times_XU\to U)
\end{equation}
is an isomorphism, which holds since formation of mapping cylinders of topological spaces commutes with \'etale pullback (both sides are $(U'\times_XU)\times_U\cyl(A\times_XU\to U)$).
\end{proof}

Formation of mapping cylinders commutes with passing to the coarse space: for any representable map of stacks $A\to X$ admitting \'etale atlases, the natural map $\cyl(\left|A\right|\to\left|X\right|)\to\left|\cyl(A\to X)\right|$ is an isomorphism (this can be checked by inspection, using the fact that for any topological stack $X$ with atlas $U\to X$, the map $U\to\left|X\right|$ is the topological quotient by the image of $U\times_XU\to U\times U$, and using the fact that $\left|A\times[0,1]\right|\to\left|A\right|\times[0,1]$ is an isomorphism for any topological stack $A$ \cite[Lemma 6.15]{orbibundle}).

It now follows that if $A$ and $X$ are both orbispaces, then so is $\cyl(A\to X)$.
Indeed, if $Y/\Gamma\hookrightarrow X$ is an open inclusion for $Y$ Hausdorff and $\Gamma$ finite, we obtain an open inclusion $(A\times_XY)/\Gamma=A\times_X(Y/\Gamma)\hookrightarrow A$.
Since $A$ is an orbispace, its diagonal is proper, so the action map $\Gamma\times(A\times_XY)\to(A\times_XY)\times(A\times_XY)$ is proper, hence its precomposition with $A\times_XY\xrightarrow{1\times}\Gamma\times(A\times_XY)$ is proper; this being the diagonal of $A\times_XY$, we conclude that $A\times_XY$ is Hausdorff.
We thus have an open inclusion $\cyl(A\times_XY\to Y)/\Gamma\hookrightarrow\cyl(A\to X)$ where $\cyl(A\times_XY\to Y)$ is Hausdorff.
The coarse space $\left|\cyl(A\to X)\right|=\cyl(\left|A\right|\to\left|X\right|)$ is Hausdorff since $\left|X\right|$ and $\left|A\right|$ are.

Our next task is to show that the mapping cylinder diagram \eqref{cylinderdiagram} is a pushout.
This gives another proof of the fact that formation of mapping cylinders commutes with passing to the coarse space.
We begin with an example to show that mapping cylinder diagrams, even of topological spaces, need not be pushouts in the category of \emph{all} stacks.
Our task is thus, more precisely, to identify a particular full subcategory of stacks in which mapping cylinder diagrams are pushouts (see Proposition \ref{mcpushout} below).

\begin{example}
Consider the pushout diagram
\begin{equation}\label{pushoutintopnotinstacks}
\begin{tikzcd}
\{1\}\ar[d]\ar[r]&{}[1,2]\ar[d]\\
{}[0,1]\ar[r]&{}[0,2]
\end{tikzcd}
\end{equation}
in the category of topological spaces.
Let $X$ denote the stack defined by the property that a map $Z\to X$ from a topological space $Z$ is a continuous map $f:Z\to[0,2]$ such that there exists an open cover $Z=U\cup V$ with $f(U)\subseteq[0,1]$ and $f(V)\subseteq[1,2]$ (note that $X$ is indeed a stack!).
Now there is a tautological diagram
\begin{equation}
\begin{tikzcd}
\{1\}\ar[d]\ar[r]&{}[1,2]\ar[d]\\
{}[0,1]\ar[r]&X
\end{tikzcd}
\end{equation}
which is not induced by a map $[0,2]\to X$ (there is no open covering $[0,2]=U\cup V$ with $U\subseteq[0,1]$ and $V\subseteq[1,2]$).
It follows that the diagram \eqref{pushoutintopnotinstacks} is not a pushout in the category of all stacks.
\end{example}

\begin{lemma}\label{pushoutff}
Let $\{X_\alpha\}_{\alpha\in A}$ be any diagram of topological spaces and $X:=\colim_{\alpha\in A}X_\alpha$ its colimit.
For any topological stack $T$, the map
\begin{equation}
\Hom(X,T)\to\lim_{\alpha\in A}\Hom(X_\alpha,T)
\end{equation}
is fully faithful.
\end{lemma}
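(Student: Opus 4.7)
The plan is to unpack the definition of fully faithfulness for the functor of groupoids in question: for every pair of maps $f, g : X \to T$, I must show that the natural restriction map from the set of $2$-isomorphisms $f \xrightarrow\sim g$ to the set of compatible systems of $2$-isomorphisms $f|_{X_\alpha} \xrightarrow\sim g|_{X_\alpha}$ is a bijection. My strategy is to represent $2$-isomorphisms as sections of a map of topological spaces, and then invoke the universal property of $X = \colim_\alpha X_\alpha$ in $\Top$.

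First I would observe that since $T$ is a topological stack, its diagonal $T \to T \times T$ is representable: given a representable atlas $U \to T$ admitting local sections, the fiber product of any $Z \to T \times T$ with the diagonal can be computed after locally lifting each component of $(Z \to T) \times (Z \to T)$ to $U$, where it becomes $Z \times_{U \times U} (U \times_T U)$. Consequently the fiber product
\[
I(f,g) := X \times_{T \times T,\,(f,g),\,\Delta} T
\]
is representable by a topological space equipped with a map $p : I(f,g) \to X$. By construction $I(f,g)$ represents the functor $(Z \to X) \mapsto \Iso(f|_Z, g|_Z)$ on spaces over $X$; in particular, sections of $p$ are in canonical bijection with $2$-isomorphisms $f \xrightarrow\sim g$. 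Pulling back along $X_\alpha \to X$ yields $I(f,g) \times_X X_\alpha \cong I(f|_{X_\alpha}, g|_{X_\alpha})$, whose sections biject with $2$-isomorphisms $f|_{X_\alpha} \xrightarrow\sim g|_{X_\alpha}$.

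Next I would exploit the colimit. By the universal property of $X = \colim_\alpha X_\alpha$ in $\Top$, a continuous map $\sigma : X \to I(f,g)$ is the same as a compatible system of continuous maps $\sigma_\alpha : X_\alpha \to I(f,g)$. The condition $p \circ \sigma = \id_X$ is equivalent to requiring $p \circ \sigma_\alpha$ to equal the structure map $X_\alpha \to X$ for every $\alpha$, which in turn is equivalent to each $\sigma_\alpha$ factoring through a section of $I(f|_{X_\alpha}, g|_{X_\alpha}) \to X_\alpha$. Compatibility of these sections across diagram morphisms $X_\alpha \to X_\beta$ is automatic, being inherited from compatibility of the $\sigma_\alpha$ as maps into the ambient space $I(f,g)$. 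Stringing the two bijections together produces the claimed identification, proving fully faithfulness.

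The main (mild) obstacle is bookkeeping: confirming that ``compatibility'' in the $2$-limit of groupoids $\lim_\alpha \Hom(X_\alpha, T)$ really is captured by compatibility of maps into $I(f,g)$ coming from the colimit, and separately that the diagonal of a topological stack is representable so that $I(f,g)$ is genuinely a topological space rather than merely a stack. Both points are essentially formal once definitions are tracked carefully, but either would leave a gap if omitted.
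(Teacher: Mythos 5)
Your proposal is correct, and it takes a genuinely different route from the paper's. The paper fixes a groupoid presentation $M\righttwoarrows O$ of $T$, unwinds a $2$-isomorphism $f\cong g$ into explicit transition data $\gamma_{ii'}:U_i\cap U_{i'}'\to M$ on overlaps of covers, and then applies the universal property of the colimit to these maps into the ordinary space $M$. You instead package the entire isomorphism datum intrinsically as a section of the single space
\[
I(f,g)=X\times_{T\times T,\,(f,g),\,\Delta}T\longrightarrow X,
\]
which is a topological space because the diagonal of a topological stack is representable (a point you correctly justify via the atlas, and which is not stated but is implicitly used in the paper's argument when it invokes $U\times_T U$ being a space). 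Then the colimit universal property is applied once, to maps into $I(f,g)$, and the section and compatibility conditions translate word-for-word into compatible families of sections of $I(f|_{X_\alpha},g|_{X_\alpha})\to X_\alpha$. Both arguments hinge on the same idea --- encode $\Iso(f,g)$ as maps into an actual topological space and exploit that $\Hom(\colim X_\alpha,-)=\lim\Hom(X_\alpha,-)$ on spaces --- but your Isom-space formulation avoids choosing a presentation and keeps the bookkeeping coordinate-free, whereas the paper's groupoid-data approach is more concrete and already sets up notation it reuses in the surrounding lemmas. Your flagged ``mild obstacle'' about $2$-limit compatibility is in fact handled correctly by your reduction: once the family $(\sigma_\alpha)$ is viewed as maps $X_\alpha\to I(f,g)$ over the structure maps to $X$, compatibility in the $2$-limit of groupoids over the diagram maps $X_\alpha\to X_\beta$ is exactly the cocycle condition defining a map out of the colimit.
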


\begin{proof}
We just need to recall the description of $\Hom(X,T)$ for $X$ a topological space and $T$ the stack associated to a topological groupoid $M\righttwoarrows O$.
An object of $\Hom(X,T)$ is an open cover $X=\bigcup_iU_i$ together with a collection of maps $\alpha_i:U_i\to O$ and $\beta_{ij}:U_i\cap U_j\to M$ projecting to $\alpha_i\times\alpha_j$ and satisfying $\beta_{ij}\beta_{jk}=\beta_{ik}$ over $U_i\cap U_j\cap U_k$.
An isomorphism between $(U_i,\alpha_i,\beta_{ij})$ and $(U_{i'}',\alpha_{i'}',\beta_{i'j'}')$ is a collection of maps $\gamma_{ii'}:U_i\cap U_{i'}'\to M$ projecting to $\alpha_i\times\alpha_{i'}'$ and satisfying $\beta_{ij}\gamma_{jj'}=\gamma_{ij'}$ over $U_i\cap U_j\cap U_{j'}$ and $\gamma_{ii'}\beta_{i'j'}=\gamma_{ij'}$ over $U_i\cap U_{i'}\cap U_{j'}$.
Composition of isomorphisms relies on the fact that $\Hom(-,M)$ is a sheaf.

Now fix two objects $(U_i,\alpha_i,\beta_{ij})$ and $(U_{i'}',\alpha_{i'}',\beta_{i'j'}')$ of $\Hom(X,T)$.
The set of isomorphisms between them is the set of collections of maps $\gamma_{ii'}:U_i\cap U_{i'}'\to M$ satisfying certain compatibility properties.
Now we note that for any open subset $U\subseteq X$, the map $\colim_{\alpha\in A}U_\alpha\xrightarrow\sim U$ is an isomorphism where $U_\alpha$ denotes the inverse image of $U$ inside $X_\alpha$.
Thus since $M$ is a topological space, the data of maps $U_i\cap U_{i'}'\to M$ is equivalent to giving a compatible collection of such maps over the inverse images of $U_i\cap U_{i'}'$ in each $X_\alpha$.
Such data is precisely the data of an isomorphism in $\lim_{\alpha\in A}\Hom(X_\alpha,T)$ between the images of $(U_i,\alpha_i,\beta_{ij})$ and $(U_{i'}',\alpha_{i'}',\beta_{i'j'}')$.
\end{proof}

\begin{lemma}\label{descentgroupoid}
For any topological stack $X$ with atlas $U\to X$, the functor
\begin{equation}\label{grpddescentequivalence}
\Hom(X,T)\xrightarrow\sim\Eq\bigl(\Hom(U,T)\righttwoarrows\Hom(U\times_XU,T)\rightthreearrows\Hom(U\times_XU\times_XU,T)\bigr)
\end{equation}
is an equivalence for any stack $T$ (concretely, an object on the right is a map $f:U\to T$ and an isomorphism $i:fp_1\xrightarrow\sim fp_2$ in $\Hom(U\times_XU,T)$ such that the composition of $i\circ p_{12}$ and $i\circ p_{23}$ agrees with $i\circ p_{13}$ in $\Hom(U\times_XU\times_XU,T)$, and an isomorphism $(f,i)\to(f',i')$ is an isomorphism $j:f\xrightarrow\sim f'$ such that $i'\circ jp_1=jp_2\circ i$).
\end{lemma}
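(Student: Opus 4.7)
The plan is to construct a quasi-inverse to the restriction functor. In the forward direction, a map $g:X\to T$ restricts to $(f,i):=(gp,\mathrm{id})$, where $i$ is the canonical isomorphism witnessing $gp\circ p_1=gp\circ p_2$ (both sides equal $g$ composed with the structural map $U\times_XU\to X$); the cocycle condition is automatic.

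For the quasi-inverse, I would exploit that the atlas $p:U\to X$ admits local sections. Given descent data $(f,i)$, choose an open cover $X=\bigcup_\alpha V_\alpha$ together with sections $s_\alpha:V_\alpha\to U$ of $p$, and set $g_\alpha:=f\circ s_\alpha:V_\alpha\to T$. On each overlap $V_\alpha\cap V_\beta$, the pair $(s_\alpha,s_\beta)$ defines a map to $U\times_XU$ (since $ps_\alpha=ps_\beta$ there), and pulling back $i$ along it yields an isomorphism $i_{\alpha\beta}:g_\alpha|_{V_\alpha\cap V_\beta}\xrightarrow\sim g_\beta|_{V_\alpha\cap V_\beta}$. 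The cocycle condition on $i$ over $U\times_XU\times_XU$ translates directly into $i_{\alpha\beta}\circ i_{\beta\gamma}=i_{\alpha\gamma}$ on triple overlaps, so Lemma \ref{mordescent} produces a map $g:X\to T$ together with a canonical identification $gp\cong f$ compatible with the descent data.

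For full faithfulness on morphisms: given $g,g':X\to T$ and an isomorphism $j:gp\xrightarrow\sim g'p$ intertwining the two canonical descent isomorphisms, I would restrict along the chosen sections to get isomorphisms $j_\alpha:=j\circ s_\alpha:g|_{V_\alpha}\xrightarrow\sim g'|_{V_\alpha}$. The intertwining condition on $j$ amounts to $j_\alpha=j_\beta$ on overlaps after applying the gluing isomorphisms, so Lemma \ref{mordescent}, applied to the (iso)morphism groupoid $\mathrm{Iso}(g,g')$, glues these to a unique global isomorphism $g\xrightarrow\sim g'$ restricting to $j$.

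The main subtlety will be verifying that the quasi-inverse is independent, up to canonical isomorphism, of the cover $\{V_\alpha\}$ and sections $\{s_\alpha\}$ chosen. Since $\Hom(X,T)$ is a groupoid this is all that is needed, and it follows by passing to a common refinement of any two choices: the two families of sections then combine into a single indexed family, and the same cocycle calculation using $i$ produces the required canonical isomorphism between the two output maps. Chasing through these identifications shows that the two composites of our functors are canonically naturally isomorphic to the respective identities.
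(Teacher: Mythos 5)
Your overall strategy (build a quasi-inverse out of descent data by composing with local sections of the atlas) is the right one in spirit, but there is a genuine gap at the very first step of your quasi-inverse construction. You choose an open cover $X=\bigcup_\alpha V_\alpha$ of the \emph{stack} $X$ by open substacks together with sections $s_\alpha:V_\alpha\to U$ of $p$. Such sections do not exist in general: ``admits local sections'' for a representable map of stacks means that for every topological space $Z$ and map $Z\to X$, the pullback $U\times_XZ\to Z$ admits local sections over an open cover of $Z$ --- it does \emph{not} mean that $U\to X$ has sections over an open cover of $X$ itself. Already for $X=\BB G=*/G$ with atlas $U=*$ there is no section over any nonempty open substack: a section $V_\alpha\to *$ would kill the isotropy groups of $V_\alpha$, so $p\circ s_\alpha$ could not be the identity wherever the isotropy is nontrivial. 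Since nontrivial isotropy is exactly the situation this lemma is needed for, your construction of $g$ (and likewise your full-faithfulness argument, which reuses the same sections) does not get off the ground.

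The repair is to define the inverse functor by its values on test spaces rather than by gluing over $X$: given descent data $(f,i)$ and a map $Z\to X$ from a topological space $Z$, use the local-sections property of the pullback $U\times_XZ\to Z$ to present $Z\to X$ as an open cover of $Z$ with maps to $U$ and transition data in $U\times_XU$; composing with $f$ and $i$ turns this into locally defined maps $Z\to T$ with isomorphisms on overlaps satisfying the cocycle condition, which glue by the \emph{stack property of $T$}. Doing this compatibly with pullback in $Z$ defines the map of stacks $X\to T$. This is exactly how the paper argues (mirroring the proof of Lemma \ref{mordescent}), and it is the step your proposal is missing. Your observation that independence of choices is automatic because everything lands in a groupoid is fine, but it must be applied to the choices of local sections over each test space $Z$, not to a (nonexistent) global system of sections over $X$.
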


\begin{proof}
This is similar to the proof of Lemma \ref{mordescent}.
Given a map $Z\to X$ from a topological space $Z$ and an element of the right side of \eqref{grpddescentequivalence}, we may define a map $Z\to T$ as follows.
Our map $Z\to X$ may be regarded as an open cover of $Z$, maps from the elements of the open cover to $U$, and maps from pairwise intersections to $U\times_XU$, satisfying a cocycle condition.
The element of the right side of \eqref{grpddescentequivalence} turns this into maps from the elements of the open cover to $T$ and isomorphisms between them on their pairwise overlaps, satisfying a cocycle condition.
The stack property for $T$ means that this data defines a map $Z\to T$.
One now checks that this is a two-sided inverse to \eqref{grpddescentequivalence}.
\end{proof}

\begin{proposition}\label{mcpushout}
For any representable map of stacks $A\to X$ admitting separated \'etale atlases, the mapping cylinder diagram \eqref{cylinderdiagram} is a pushout in the $2$-category of stacks which admit a separated \'etale atlas.
\end{proposition}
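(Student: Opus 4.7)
My plan is to verify the pushout universal property by testing against an arbitrary stack $T$ equipped with a separated étale atlas $V \to T$. Concretely, I want to show that the comparison functor
$$\Hom(\cyl(A \to X), T) \to \Hom(X, T) \times_{\Hom(A, T)} \Hom(A \times [0,1], T)$$
is an equivalence of groupoids. First, I would choose a separated étale atlas $U \to X$, which, since $A \to X$ is representable, yields a compatible separated étale atlas $A_U := A \times_X U \to A$, the product atlas $A_U \times [0,1] \to A \times [0,1]$, and (by the construction preceding the proposition) the atlas $\cyl(A_U \to U) \to \cyl(A \to X)$. Applying Lemma \ref{descentgroupoid} to each, both sides of the comparison become $2$-limits over the same simplicial-shape diagram, with the comparison induced level by level. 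Since mapping cylinders commute with étale pullback (as established above), the $n$-th level of the groupoid for $\cyl(A \to X)$ is literally a mapping cylinder of topological spaces, so the entire problem reduces to the analogous assertion for a mapping cylinder $Y = X' \sqcup_{A'} (A' \times [0,1])$ of topological spaces mapping into $T$.

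Fully faithfulness of the reduced comparison is immediate from Lemma \ref{pushoutff}, since $Y$ is by definition a pushout in $\Top$. For essential surjectivity, given a triple $(f \colon X' \to T,\; g \colon A' \times [0,1] \to T,\; \phi \colon g|_{A' \times \{0\}} \xrightarrow{\sim} f|_{A'})$, I would pull back the atlas $V \to T$ along $f$ and $g$ to obtain separated étale covers of $X'$ and of $A' \times [0,1]$, each with a chosen map to $V$, whose restrictions to $A'$ are identified by $\phi$. Because every point of $A' \subseteq Y$ has a neighborhood basis of the form (open set in $X'$) $\cup\, (A' \times [0, \epsilon))$ matching precisely the local form of an étale pullback to $Y$, these separated étale pieces glue into a separated étale cover of $Y$ carrying a map to $V$; the $V \times_T V$-valued cocycle data on double overlaps glues by the same procedure, producing the desired map $Y \to T$.

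The main obstacle is this essential surjectivity step, namely verifying that the local étale atlases and cocycle data on the two pieces really do assemble coherently along the gluing seam $A'$. This uses both the separatedness of $V \to T$ (ensuring that the overlap space $V \times_T V$ behaves well enough for descent on $Y$) and the compatibility of mapping cylinders with étale pullback from the preceding construction. Once essential surjectivity is established at each simplicial level, its compatibility with the face maps of the groupoid presentation follows by applying the same reasoning to the higher fiber products $U \times_X U$ and $U \times_X U \times_X U$; this then yields an equivalence at the level of $2$-limits, completing the proof.
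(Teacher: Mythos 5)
Your architecture is the same as the paper's: reduce to topological spaces by presenting $\cyl(A\to X)$ via the groupoid pulled back from an atlas $U\to X$ and commuting the descent equalizer of Lemma \ref{descentgroupoid} past the pushout (the paper phrases this as coequalizers commuting with pushouts, which is the same move seen covariantly), then handle the topological case with Lemma \ref{pushoutff} for full faithfulness plus a gluing argument for essential surjectivity.

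The step you leave vague, however, is exactly the crux. Gluing the pullbacks $X'\times_TV$ and $(A'\times[0,1])\times_TV$ along $A'\times_TV$ does give a space over $Y$ with a map to $V$, but the content is showing that this surjection admits local sections near the seam --- equivalently, that the open cover of $X'$ with its $V$-valued chart data can be thickened to an open cover of $Y$. An open set $U_i\subseteq X'$ is not open in $Y$: to enlarge it one must extend the section of $(A'\times[0,1])\times_TV\to A'\times[0,1]$ defined over the preimage of $U_i$ in $A'\times\{0\}$ to a genuine neighborhood in $A'\times[0,1]$. \'Etaleness provides such an extension over each small $V_p\times[0,\varepsilon_p)$, and it is separatedness of $V\to T$ together with connectedness of $[0,\varepsilon_p)$ that makes these local extensions \emph{unique}, hence mutually compatible, so that they glue to a single extension over an open neighborhood of the seam. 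That --- not good behavior of $V\times_TV$ for descent --- is the role separatedness plays, and your appeal to the ``local form of an \'etale pullback to $Y$'' only addresses the underlying open sets, not the extension of the $V$-valued data over them. Once this extension argument is supplied, your proof coincides with the paper's.
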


\begin{proof}
We are supposed to show that for any stack $T$ which admits a separated \'etale atlas, the map
\begin{equation}\label{pushmainequiv}
\Hom(\cyl(A\to X),T)\to\Hom(X,T)\times_{\Hom(A,T)}\Hom(A\times[0,1],T)
\end{equation}
is an equivalence of groupoids.

We begin with the case that $X$ and $A$ are topological spaces.
In this case, Lemma \ref{pushoutff} says that \eqref{pushmainequiv} is fully faithful, so it remains to prove essential surjectivity.
Thus suppose we have maps $X\to T$ and $A\times[0,1]\to T$ and an isomorphism between the respective induced maps $A\to T$.
We should glue these together into a map $\cyl(A\to X)\to T$.
Fix a separated \'etale atlas $O\to T$, hence a groupoid presentation $M\righttwoarrows O$ of $T$ with $M=O\times_TO$.
The map $X\to T$ thus may be regarded as an open cover $X=\bigcup_iU_i$, maps $U_i\to O$, and maps $U_i\cap U_j\to M$, which we may pull back under $f:A\to X$ to obtain the map $A\to T$.
This map is isomorphic to the restriction to $A=A\times\{0\}$ of the given map $A\times[0,1]\to T$, which is \emph{a priori} defined by a different open cover.
Now the key point is the following.
Consider one of the open sets $f^{-1}(U_i)\subseteq A$, which is equipped with a map $f^{-1}(U_i)\to O$.
This map may be regarded as a section over $f^{-1}(U_i)\subseteq A=A\times\{0\}$ of the separated \'etale map $O\times_T(A\times[0,1])\to A\times[0,1]$.
Since this map is \'etale, each point $p\in f^{-1}(U_i)$ has a neighborhood $V_p\times[0,\varepsilon_p)$ over which the section extends.
Since this map is separated and $[0,\varepsilon_p)$ is connected, these extensions are unique.
They hence glue together to give an open set $V_i\subseteq A\times[0,1]$ intersecting $A\times\{0\}$ in $f^{-1}(U_i)$ such that the map $f^{-1}(U_i)\to O$ admits a unique extension to $V_i$ together with an isomorphism of the resulting compostion to $T$ with the given map $A\times[0,1]\to T$.
Now $V_i$ and $U_i$ define together an open set $W_i\subseteq\cyl(A\to X)$, and we have defined thus a map $W_i\to O$.
These $W_i$, together with $A\times(0,1]$, cover $\cyl(A\to X)$, so this data defines for us a map $\cyl(A\to X)\to T$ lifting our given data on the right side of \eqref{pushmainequiv}.

Having treated the case that $X$ and $A$ are topological spaces, we deduce the general case using Lemma \ref{descentgroupoid}.
Fix an \'etale atlas $U\to X$, so that $\cyl(X\to A)$ is presented by the topological groupoid
\begin{equation}
\cyl(A\times_XU\to U)\times_XU\righttwoarrows\cyl(A\times_XU\to U).
\end{equation}
By Lemma \ref{descentgroupoid}, we conclude that $\cyl(X\to A)$ coincides with the coequalizer
\begin{equation}
\Coeq\bigl(\cyl(A\times_XU\to U)\lefttwoarrows\cyl(A\times_XU\to U)\times_XU\leftthreearrows\cyl(A\times_XU\to U)\times_XU\times_XU\bigr).
\end{equation}
Each term in the coequalizer is a cylinder (since $\times_XU$ is an \'etale pullback so can be brought inside $\cyl$) of a map of topological spaces.
Hence each of these terms is a pushout (in the $2$-category of stacks which admit a separated \'etale atlas).
Since coequalizers commute with pushouts, we conclude that $\cyl(X\to A)$ is the pushout of
\begin{equation}
\begin{tikzcd}
\Coeq(U\lefttwoarrows U\times_XU\leftthreearrows U\times_XU\times_XU)\\
\Coeq(A\times_XU\lefttwoarrows A\times_XU\times_XU\leftthreearrows A\times_XU\times_XU\times_XU)\ar[u]\ar[d]\\
\Coeq(A\times_XU\times[0,1]\lefttwoarrows A\times_XU\times_XU\times[0,1]\leftthreearrows A\times_XU\times_XU\times_XU\times[0,1]).
\end{tikzcd}
\end{equation}
The top two coequalizers are simply $X$ and $A$ by Lemma \ref{descentgroupoid}.
The bottom coequalizer is $A\times[0,1]$, not by pulling out the $\times[0,1]$ on general categorical principles, but rather by applying Lemma \ref{descentgroupoid} to the atlas $U\times[0,1]\to A\times[0,1]$.
\end{proof}

We now combine the results obtained thus far into a general gluing operation:

\begin{proposition}\label{collarpushout}
Let $B\to C$ be a representable map of orbispaces, and let $B\hookrightarrow A$ be a closed inclusion which is \emph{collared} in the sense that it factors as $B\xrightarrow{\times\{0\}}B\times[0,1)\hookrightarrow A$ where the second map is an open inclusion.
The pushout
\begin{equation}\label{generalgluing}
\begin{tikzcd}
B\ar[r]\ar[d,hook]&C\ar[d]\\
A\ar[r]&A\cup_BC
\end{tikzcd}
\end{equation}
exists in the category of stacks admitting a separated \'etale atlas, and $A\cup_BC$ is an orbispace.
\end{proposition}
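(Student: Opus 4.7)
My plan is to realize the pushout $A \cup_B C$ by combining Proposition \ref{mcpushout} (mapping cylinders) with the open gluing of \eqref{opengluingpushout}, using the collar to provide the open overlap along which a suitable piece of the mapping cylinder is attached to $A$.

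\emph{Construction.} Form $M := \cyl(B \to C)$ using Proposition \ref{mcpushout}; it is an orbispace by the discussion preceding that proposition. The free end $B = B \times \{1\} \hookrightarrow M$ is a closed inclusion (visible at the atlas level), so its complement
\begin{equation*}
\tilde C := M \setminus (B \times \{1\}),
\end{equation*}
which set-theoretically is $B \times [0, 1) \cup_B C$ (with $B = B \times \{0\}$ glued to its image in $C$), is an open substack of $M$, hence also an orbispace. The complement of $C$ inside $\tilde C$ is the open substack $B \times [0, 1) \subseteq \tilde C$, which coincides tautologically with the collar $B \times [0, 1) \subseteq A$. Apply the open gluing construction of \eqref{opengluingpushout} along this common open substack to form
\begin{equation*}
A \cup_B C := A \cup_{B \times [0, 1)} \tilde C,
\end{equation*}
a stack admitting a separated \'etale atlas (obtained by disjoint union of atlases of $A$ and $\tilde C$).

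\emph{Orbispace property.} Both $A$ and $\tilde C$ are locally of the form $V/\Gamma$ with $V$ Hausdorff and $\Gamma$ finite, so the same holds for $A \cup_B C$. Using that $\left|\cdot\right|$ preserves colimits (being a left adjoint), together with Lemma \ref{coarseproduct} and the identification $|M| = \cyl(|B| \to |C|)$ noted after the mapping cylinder construction, the coarse space is
\begin{equation*}
\left|A \cup_B C\right| = |A| \cup_{|B| \times [0, 1)} |\tilde C| = |A| \cup_{|B|} |C|,
\end{equation*}
which is Hausdorff as a pushout of Hausdorff spaces along a closed collared inclusion.

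\emph{Universal property.} For any stack $T$ admitting a separated \'etale atlas, Lemma \ref{mordescent} identifies a map $A \cup_B C \to T$ with a pair $(f \colon A \to T, h \colon \tilde C \to T)$ together with an isomorphism $\beta \colon f|_{B \times [0, 1)} \xrightarrow{\sim} h|_{B \times [0, 1)}$. The stack $\tilde C$ is itself a pushout of $B \times [0, 1) \xleftarrow{\times \{0\}} B \to C$ (by the same argument that establishes Proposition \ref{mcpushout}, specialized to the open half-interval), so $h$ decomposes further as $(k \colon B \times [0, 1) \to T, g \colon C \to T)$ with an isomorphism $\gamma \colon k|_{B \times \{0\}} \xrightarrow{\sim} g|_B$. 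Given $f$, the isomorphism $\beta$ forces $k = f|_{B \times [0, 1)}$ up to canonical isomorphism, so the data reduces to a cocone $(f, g, \alpha)$ with $\alpha \colon f|_B \xrightarrow{\sim} g|_B$ obtained from $\beta|_{B \times \{0\}}$ and $\gamma$. This is precisely the pushout universal property. The main obstacle is establishing the pushout universal property of $\tilde C$ itself, which requires adapting the argument in the proof of Proposition \ref{mcpushout} — whose essential ingredient is using separatedness of the \'etale atlas of $T$ to uniquely extend local sections across the seam $B = B \times \{0\}$ — to the open half-cylinder $B \times [0, 1)$ in place of $B \times [0, 1]$.
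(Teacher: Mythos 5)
Your overall strategy (build the mapping cylinder, then attach it to $A$ by an open gluing along the collar) is the same as the paper's, but the specific decomposition you use is not well-formed, and this is a genuine gap rather than a typo. The complement of $C$ inside $\tilde C=\cyl(B\to C)\setminus(B\times\{1\})$ is the open substack $B\times(0,1)$, \emph{not} $B\times[0,1)$: the end $B\times\{0\}$ of the cylinder is glued onto $f(B)\subseteq C$ and so lies in $C$. More seriously, $B\times[0,1)$ is not an open substack of $\tilde C$ at all. The canonical map $B\times[0,1)\to\tilde C$ restricts on $B\times\{0\}$ to the given map $f\colon B\to C$, which is in general neither injective nor open (take $B=*\sqcup *\to *=C$: then $\tilde C\cong(-1,1)$ and the map from $[0,1)\sqcup[0,1)$ folds the two endpoints together). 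Hence the open gluing \eqref{opengluingpushout} cannot be applied to form ``$A\cup_{B\times[0,1)}\tilde C$''. Nor can you repair this by gluing along the genuine common open substack $B\times(0,1)$ while keeping all of $A$: then $b\in B\subseteq A$ and $f(b)\in C\subseteq\tilde C$ survive as distinct points of the gluing, and both are limits of the sequence $(b,1/n)$, so the coarse space is not Hausdorff and the result is not an orbispace.

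The repair is to delete $B$ from $A$ before gluing, which is what the paper does: glue the open substack $A\setminus B$ to the mapping cylinder along the punctured collar $B\times(0,1)$ (using a slightly longer collar $B\times[0,2)\hookrightarrow A$ so that nothing is duplicated at the far end), so that the $C$-end of the cylinder caps off the deleted copy of $B$. With that construction, the Hausdorffness and local-quotient checks go through as you outline, and the universal property follows by pasting the pushout square $\cyl(B\to C)=B\times[0,1]\cup_BC$ of Proposition \ref{mcpushout} with the open-gluing pushout of Lemma \ref{mordescent}; this also avoids your deferred ``main obstacle'' of reproving Proposition \ref{mcpushout} for the half-open cylinder, since only the closed cylinder version is needed.
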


\begin{proof}
Define $A\cup_BC$ by gluing $A\setminus B$ (an open substack of $A$) to $\cyl(B\to C)$ along $B\times(0,1)$ using a choice of collar $B\times[0,2)\hookrightarrow A$.
The fact that $\cyl(B\to C)$ is the pushout of $B\times[0,1]\leftarrow B\to C$ (Proposition \ref{mcpushout}) and $A\cup_BC$ is the pushout of $\cyl(B\to C)\leftarrow B\times(0,1)\hookrightarrow A$ (Lemma \ref{mordescent}) implies that \eqref{generalgluing} is a pushout.
In particular, the gluing $A\cup_BC$ does not depend on the choice of collar used to construct it.
\end{proof}

We are also interested in countable iterations of such attachment operations.
Let us call a map of orbispaces $X\to Y$ a \emph{mapping cylinder inclusion} iff it is a closed inclusion and admits a factorization of the form $X\hookrightarrow(X\cup_A(A\times\RR_{\geq 0}))\hookrightarrow Y$ where the second map is an open inclusion and the first map is the natural inclusion of $X$ into the open substack $X\cup_A(A\times[0,1))\subseteq X\cup_A(A\times[0,1])=\cyl(A\to X)$ for some representable map of orbispaces $A\to X$.
Equivalently, $X\to Y$ is a mapping cylinder inclusion iff it is the right vertical map in some pushout diagram \eqref{generalgluing} (without specifying a choice of such presentation).

\begin{proposition}\label{mcinfinite}
Let $X_0\to X_1\to\cdots$ be a sequence of mapping cylinder inclusions of orbispaces.
The colimit $\colim_iX_i$ exists in the category of stacks admitting a separated \'etale atlas, and this colimit is an orbispace.
\end{proposition}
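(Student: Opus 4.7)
The plan is to construct $X_\infty$ explicitly as an orbispace by assembling open substacks through a countable iteration of the collar-gluing constructions already developed. The strategy mimics the finite case (Proposition \ref{collarpushout}) but applied countably many times, via Lemma \ref{mordescent} at the end.

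For each $n$, the mapping cylinder factorization $X_n \hookrightarrow Y_n \hookrightarrow X_{n+1}$ produces the orbispace $Y_n := X_n \cup_{A_n}(A_n \times \RR_{\geq 0})$ (using Proposition \ref{collarpushout}) as an open substack of $X_{n+1}$ containing $X_n$ closed. I would first convert the sequence of closed inclusions $X_n \hookrightarrow X_{n+1}$ into an ascending sequence of open inclusions of orbispaces $Z_0 \hookrightarrow Z_1 \hookrightarrow Z_2 \hookrightarrow \cdots$, where each $Z_n$ is a ``thickened'' version of $X_n$ admitting a deformation retraction onto $X_n$. The key technical step is that although $Y_n \subseteq X_{n+1}$ is open, the inclusion $Y_n \hookrightarrow Y_{n+1}$ is only locally closed (because $X_{n+1} \subseteq Y_{n+1}$ is closed); this is remedied by enlarging $Y_n$ into an open substack $Y_n^+ \subseteq Y_{n+1}$ obtained by attaching the portion $(Y_n \times_{X_{n+1}} A_{n+1}) \times \RR_{\geq 0}$ of the next collar lying above the open substack $Y_n \times_{X_{n+1}} A_{n+1} \subseteq A_{n+1}$. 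Iteratively applying this extension across all later stages produces orbispaces $Z_n$ such that $Z_n \hookrightarrow Z_{n+1}$ is a genuine open inclusion, with each $Z_n$ built from finitely many applications of Proposition \ref{collarpushout} and open-substack gluings from Lemma \ref{mordescent}.

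Given this sequence of open inclusions, the orbispace $X_\infty$ is defined via (the countable extension of) Lemma \ref{mordescent} as the stack with open cover $\{Z_n\}$ and the specified transition inclusions. It admits a separated \'etale atlas, since each $Z_n$ does and open gluings preserve this property, and its coarse space $\varinjlim_n \left|Z_n\right|$ is Hausdorff (an ascending union of Hausdorff spaces via open inclusions is Hausdorff, since two points lie in some $|Z_n|$ where they can be separated by open sets that remain open in the colimit), so $X_\infty$ is an orbispace. The universal property then follows: a map $X_\infty \to T$ into a stack $T$ admitting a separated \'etale atlas corresponds by Lemma \ref{mordescent} to a compatible family of maps $Z_n \to T$, which via the retractions $Z_n \to X_n$ corresponds to a compatible family $X_n \to T$, identifying $X_\infty$ as the colimit in the category of stacks admitting a separated \'etale atlas.

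The main obstacle is the inductive construction of the $Z_n$ together with the open inclusions. The fundamental difficulty is that the given inclusions $X_n \hookrightarrow X_{n+1}$ are closed rather than open, whereas the countable gluing via Lemma \ref{mordescent} naturally handles open covers; the collar-extension trick converts locally closed inclusions into open inclusions, but must be iterated cofinally so that each $Z_n$ absorbs enough of the tail of the sequence for $Z_n \hookrightarrow Z_{n+1}$ to be open. Carrying this out requires careful bookkeeping to verify that the enlarged open substacks glue coherently at every stage, but uses no technical tool beyond Lemma \ref{mordescent} and Proposition \ref{collarpushout}.
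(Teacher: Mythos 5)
Your plan rests on manufacturing a sequence $Z_0 \hookrightarrow Z_1 \hookrightarrow \cdots$ of genuinely open inclusions of orbispaces and then gluing them. Unfortunately there is a circularity in the construction of the $Z_n$. If each $Z_n$ is built from finitely many stages (say $Z_n := Y_n = X_n \cup_{A_n}(A_n \times \RR_{\geq 0})$, open in $X_{n+1}$), then $Z_n \hookrightarrow Z_{n+1}$ is \emph{not} open: $Z_n$ sits inside $X_{n+1}$, which is closed in $Z_{n+1}$, so the inclusion is only locally closed. To make it open you must saturate $Z_n$ under the retraction $r_{n+1}: Y_{n+1} \to X_{n+1}$, then again under $r_{n+2}$, and so on; but each of these preimages $Y_0^{(m)} = r_m^{-1}(Y_0^{(m-1)})$ is again a \emph{closed} (in fact mapping cylinder) extension of the previous one, so ``absorbing the tail'' is exactly the colimit along mapping cylinder inclusions that you set out to construct. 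Your two sentences ``each $Z_n$ built from finitely many applications of Proposition~\ref{collarpushout}'' and ``must be iterated cofinally so that each $Z_n$ absorbs enough of the tail'' are in direct tension, and the second one is what is actually required.

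Separately, the universal-property step via retractions does not go through even granting the $Z_n$. You need $\lim_n \Hom(Z_n,T) \simeq \lim_n \Hom(X_n,T)$, and you propose to invert the restriction map by precomposing a compatible family $h_n: X_n \to T$ with the retractions $r_n: Z_n \to X_n$. But the resulting family $f_n := h_n \circ r_n$ is not compatible: $f_{n+1}|_{Z_n} = h_{n+1}\circ r_{n+1}|_{Z_n}$ factors through the retraction of $Z_n$ onto $X_{n+1}$, whereas $f_n$ factors through the retraction onto $X_n$, and these two maps $Z_n \to T$ are homotopic but not isomorphic in the 2-category of stacks. (This is the same subtlety that forces Proposition~\ref{mcpushout} to be a pullback of groupoids rather than just $\Hom(X,T)$, and which its proof handles by the careful section-extension argument using the separated \'etale atlas, not by a retraction.)

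The paper's proof instead reduces to the topological case, where $\colim_i X_i$ exists in $\Top$ by fiat and the universal property is proved by the same section-extension argument as in Proposition~\ref{mcpushout}. In general, it first forms the colimit of coarse spaces $\colim_i |X_i|$ (which is a topological colimit and is Hausdorff), restricts to a small open neighborhood where $X_{i+1} = X_i \cup_{A_i}(A_i \times \RR_{\geq 0})$ literally, shrinks further so that $X_0 = Y_0/G$ with $Y_0$ Hausdorff and $G$ finite, pulls the whole sequence back to a sequence of $G$-spaces $Y_i$, applies the topological case to form $\colim_i Y_i$, and then identifies $\colim_i X_i = (\colim_i Y_i)/G$ via Lemma~\ref{descentgroupoid} (using finiteness of $G$ to commute $G \times -$ with colimits). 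This gives the colimit as a quotient of an honest topological colimit, sidestepping the circularity. Your proposal needs to either adopt this localize-and-descend structure or find some other way to break the self-reference in the definition of $Z_n$.
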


\begin{proof}
We begin with the case that all $X_i$ are Hausdorff topological spaces, where we show that the colimit in the category of topological spaces $X:=\colim_iX_i$ is the desired colimit.
Let us first note that $X$ is itself Hausdorff.
Indeed, let $p,q$ be distinct points of $X$, and choose $i$ large so that they both lie in $X_i$.
Since $X_i$ is Hausdorff, choose disjoint open subsets $U_p^i$ and $U_q^i$ of it containing $p$ and $q$, respectively.
A factorization of $X_i\to X_{i+1}$ witnessing that it is a mapping cylinder inclusion gives disjoint open subsets $U_p^{i+1}$ and $U_q^{i+1}$ of $X_{i+1}$ whose intersections with $X_i$ are $U_p^i$ and $U_q^i$, respectively.
Iterating in this way, we produce disjoint open subsets $U_p$ and $U_q$ of $X$ containing $p$ and $q$, respectively.

Now let us show that for any stack $T$ admitting a separated \'etale atlas, the map $\Hom(X,T)\to\lim_i\Hom(X_i,T)$ is an equivalence (still in the case $X_i$ are topological spaces and $X=\colim_iX_i$ is the colimit in the category of topological spaces).
It is fully faithful by Lemma \ref{pushoutff}, so it remains to show essential surjectivity.
Choose a separated \'etale atlas $U\to T$.
Fix an object of $\lim_i\Hom(X_i,T)$; this consists, in particular, of open covers $X_i=\bigcup_jU_{ij}$ and maps $U_{ij}\to U$, with certain descent data on intersections.
Now the key step is to note that, as was proven already during the proof of Proposition \ref{mcpushout}, the open sets $U_{ij}$ covering $X_i$ extend to $X_{i+1}$ along with their maps to $U$.
We may thus construct new open covers of the $X_i$ by induction as follows: the new open cover of $X_0$ is simply the one we are given to start with, and the new open cover of $X_i$ is obtained by taking the new open cover of $X_{i-1}$, extending it to a neighborhood of $X_{i-1}$ inside $X_i$ as in the proof of Proposition \ref{mcpushout}, and then adding $X_i\setminus X_{i-1}$ (open since $X_{i-1}\subseteq X_i$ is closed) intersected with all the open sets in the given open cover of $X_i$.
We thus obtain an open cover of $X$ and continuous maps from the elements of this open cover to $U$, along with the relevant descent data to define a map $X\to T$.
This completes the proof in the case that the $X_i$ are topological spaces.

We now move on to the general case.
Note that if $X\to Y$ is a mapping cylinder inclusion, then so is $\left|X\right|\to\left|Y\right|$, since passing to the coarse space preserves open inclusions and mapping cylinders.
Hence $\left|X_i\right|\to\left|X_{i+1}\right|$ is a mapping cylinder inclusion.
Thus the colimit $\colim_i\left|X_i\right|$ (which must be the coarse space of $\colim_iX_i$ if it exists) is Hausdorff as above.
Now every $X_i$ maps to $\colim_i\left|X_i\right|$, and since the latter is Hausdorff, it suffices to prove the statement after restricting to an open cover of $\colim_i\left|X_i\right|$.
Thus fix a point $p\in\colim_i\left|X_i\right|$ and let us prove the statement in a neighborhood of $p$.
We have $p\in\left|X_i\right|$ for some $i$, and let us construct an open neighborhood of $p$ as in the paragraph above, i.e.\ we begin with an open neighborhood $U^i\subseteq\left|X_i\right|$ of $p$, we consider $U^{i+1}\subseteq\left|X_{i+1}\right|$ the inverse image of $U_i$ in the mapping cylinder $X_i\cup_A(A\times\RR_{\geq 0})$, and iterating gives the desired open neighborhood in the colimit.
The effect of restricting to such an open subset is that we have reduced ourselves to the situation of a chain of closed inclusions $X_0\hookrightarrow X_1\hookrightarrow\cdots$ where $X_{i+1}=X_i\cup_{A_i}(A_i\times\RR_{\geq 0})$.

We may now treat this special case as follows.
By restricting further to an open subset of $X_0$ (and its inverse image in every $X_i$), we may assume without loss of generality that $X_0=Y_0/G$ for some Hausdorff space $Y_0$ acted on by a finite group $G$.
Pulling back under each projection, we obtain a sequence of inclusions of Hausdorff topological spaces $Y_0\hookrightarrow Y_1\hookrightarrow\cdots$ each with an action of $G$, where $Y_{i+1}=Y_i\cup_{B_i}(B_i\times\RR_{\geq 0})$, $G$-equivariantly, and $X_i=Y_i/G$.
Now it suffices to show that $\colim_i(Y_i/G)=(\colim_iY_i)/G$ (note that $\colim_iY_i$ is Hausdorff as shown above).
Express each $Y_i/G$ via the topological groupoid $G\times Y_i\righttwoarrows Y_i$, and appeal to Lemma \ref{descentgroupoid} to see that for any stack $T$, we have
\begin{multline}
\lim_i\Hom(X_i,T)\xrightarrow\sim\lim_i\Eq\bigl(\Hom(Y_i,T)\righttwoarrows\Hom(G\times Y_i,T)\rightthreearrows\Hom(G\times G\times Y_i,T)\bigr)\\
=\Eq\bigl(\lim_i\Hom(Y_i,T)\righttwoarrows\lim_i\Hom(G\times Y_i,T)\rightthreearrows\lim_i\Hom(G\times G\times Y_i,T)\bigr).
\end{multline}
Now for $T$ admitting a separated \'etale atlas, we have $\lim_i\Hom(Y_i,T)=\Hom(\colim_iY_i,T)$ since $Y_i\hookrightarrow Y_{i+1}$ are mapping cylinder inclusions, and the same holds for $G\times Y_i$ and $G\times G\times Y_i$ for the same reason.
We also have $\colim_i(G\times Y_i)=G\times\colim_iY_i$ since $G$ is finite (the functor $\times G$ on topological spaces is cocontinuous whenever $G$ is locally compact since it then has a right adjoint $\Maps(G,-)$).
We therefore have, for $T$ admitting a separated \'etale atlas,
\begin{multline}
\lim_i\Hom(X_i,T)=\\
\Eq\bigl(\Hom(\colim_iY_i,T)\righttwoarrows\Hom(G\times\colim_iY_i,T)\rightthreearrows\Hom(G\times G\times\colim_iY_i,T)\bigr).
\end{multline}
Applying Lemma \ref{descentgroupoid} once more, we see that the right side is $\Hom((\colim_iY_i)/G,T)$, as was to be shown.
\end{proof}

\subsection{Orbi-CW-complexes (topology)}

We now define \emph{orbi-CW-complexes} (the definition we give realizes in some form a proposal of Gepner--Henriques \cite{gepnerhenriques}, but differs on some key details).
An orbi-CW-complex $X$ is specified as follows.
We begin with the `$(-1)$-skeleton' $X_{-1}:=\varnothing$.
The $k$-skeleton $X_k$ is defined in terms of $X_{k-1}$ by attaching cells of the form $D^k\times\BB G$ for finite groups $G$ along representable attaching maps $\partial D^k\times\BB G\to X_{k-1}$.
In other words, $X_k$ is defined as the pushout
\begin{equation}\label{attachcell}
\begin{tikzcd}
\smash{\displaystyle\bigsqcup_\alpha\partial D^k\times\BB G_\alpha}\ar[r,"\bigsqcup_\alpha f_\alpha"]\ar[d]&X_{k-1}\ar[d]\\
\displaystyle\bigsqcup\limits_\alpha D^k\times\BB G_\alpha\ar[r]&X_k
\end{tikzcd}
\end{equation}
in the category of topological stacks admitting an \'etale atlas, which exists by Proposition \ref{collarpushout}, which also guarantees that $X_k$ is an orbispace.
The orbispace $X$ is now defined as the ascending union
\begin{equation}\label{unionofskeleta}
X:=\colim_kX_k
\end{equation}
which exists and is an orbispace by Proposition \ref{mcinfinite}.
Since the coarse space functor $\left|\cdot\right|$ preserves colimits (since it is a left adjoint), it follows that the coarse space of an orbi-CW-complex is a CW-complex, with exactly the same attaching maps.

An orbi-CW-complex is equivalently a pair $(X,u_{k,\alpha})$ where $X$ is an orbispace and $\{u_{k,\alpha}:D^k\times\BB G_\alpha\to X\}_{k,\alpha}$ is a collection of representable maps which satisfy the following inductive condition: the restriction $u_{k,\alpha}|_{\partial D^k\times\BB G_\alpha}$ has image contained in the closed substack $X_{k-1}\subseteq X$ (begin with $X_{-1}:=\varnothing$), the resulting map $X_k:=X_{k-1}\cup_{u_{k,\alpha}}\bigsqcup_\alpha D^k\times\BB G_\alpha\to X$ is a closed inclusion, and $X$ is the colimit of the closed substacks $X_k$.

\begin{lemma}\label{checkcwatcoarse}
A pair $(X,u_{k,\alpha})$ consisting of an orbispace $X$ and a collection of representable maps $u_{k,\alpha}:D^k\times\BB G_\alpha\to X$ is an orbi-CW-complex iff the pair $(\left|X\right|,\left|u_{k,\alpha}\right|)$ is a CW-complex and all $u_{k,\alpha}|_{(D^k)^\circ\times\BB G_\alpha}$ induce isomorphisms on isotropy groups.
\end{lemma}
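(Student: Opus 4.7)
The plan is to verify both directions by induction on the skeleton, converting the abstract definition into checks on coarse spaces and isotropy groups via Lemma \ref{orbiiso}.

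For the forward direction, suppose $(X, u_{k,\alpha})$ is built by the inductive definition. Since the coarse space functor $\left|\cdot\right|$ is a left adjoint, it preserves the pushouts \eqref{attachcell} and the colimit \eqref{unionofskeleta}, so $\left|X\right|$ is a CW-complex with attaching maps $\left|u_{k,\alpha}\right|_{\partial D^k}$. Moreover, the explicit construction used in Proposition \ref{collarpushout} exhibits $(D^k)^\circ \times \BB G_\alpha$ as an open substack of $X_k$ (namely the $A \setminus B$ piece of the gluing), hence as an open substack of $X$; the restriction $u_{k,\alpha}|_{(D^k)^\circ \times \BB G_\alpha}$ is therefore a representable open inclusion, and in particular induces isomorphisms on isotropy groups.

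For the reverse direction, I would inductively define $X_{-1} := \varnothing$ and $X_k := X_{k-1} \cup_{\partial D^k \times \BB G_\alpha} \bigsqcup_\alpha D^k \times \BB G_\alpha$, simultaneously proving that $X_k$ sits inside $X$ as a closed substack with coarse space the $k$-skeleton $\left|X\right|_k$. The factorization of $u_{k,\alpha}|_{\partial D^k \times \BB G_\alpha}$ through $X_{k-1}$, which is needed in order to form the pushout at all, is automatic because open substacks of $X$ are detected by open subsets of $\left|X\right|$: the preimage $u_{k,\alpha}^{-1}(X \setminus X_{k-1})$ is an open substack of $D^k \times \BB G_\alpha$ whose coarse space $\left|u_{k,\alpha}\right|^{-1}(\left|X\right| \setminus \left|X\right|_{k-1})$ is contained in $(D^k)^\circ$ by the CW hypothesis on $\left|X\right|$, so the restriction to the boundary lands in $X_{k-1}$. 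Letting $Z_k \subseteq X$ denote the unique closed substack with $\left|Z_k\right| = \left|X\right|_k$, the identification $X_k \cong Z_k$ follows from Lemma \ref{orbiiso}: coarse spaces match because $\left|\cdot\right|$ preserves pushouts, and isotropy groups match pointwise, since a point of $X_k$ lies either in $X_{k-1}$ (handled by the inductive hypothesis) or in an open cell $(D^k)^\circ \times \BB G_\alpha$ (where the hypothesis on $u_{k,\alpha}|_{(D^k)^\circ \times \BB G_\alpha}$ supplies the required isomorphism, and where the isotropy in $X_k$ is itself $G_\alpha$ by the forward-direction observation applied to $X_k$).

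Finally, the inclusions $X_{k-1} \hookrightarrow X_k$ are mapping cylinder inclusions by construction, so Proposition \ref{mcinfinite} guarantees $\colim_k X_k$ exists as an orbispace, and one last application of Lemma \ref{orbiiso} to the natural map $\colim_k X_k \to X$ finishes the argument: coarse spaces agree because $\left|X\right|$ is the ascending union of its skeleta $\left|X\right|_k$, and isotropy groups agree at every point of $X$ because every such point already lies in some $X_k$. The main delicate point is the inductive identification $X_k \cong Z_k$: one must crucially exploit that in the orbispace setting substacks are determined by their coarse spaces (so that the coarse CW-structure of $\left|X\right|$ lifts back to $X$) and that Lemma \ref{orbiiso} converts the orbispace isomorphism question into exactly the two hypotheses of the lemma being proven, thereby avoiding any direct manipulation of the topology of $X$.
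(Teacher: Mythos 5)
Your proof is correct and follows essentially the same strategy as the paper: inductively identify $X_k$ with the closed substack of $X$ lying over $\left|X\right|_k$ via Lemma \ref{orbiiso} (coarse spaces match because $\left|\cdot\right|$ is a left adjoint; isotropy groups match by the hypothesis on the open-cell restrictions), then run the same Lemma \ref{orbiiso} check once more on $\colim_k X_k \to X$. One small imprecision in the forward direction: $(D^k)^\circ \times \BB G_\alpha$ is an open substack of $X_k$, but $X_k$ is closed in $X$, so it is only locally closed in $X$; this does not affect the conclusion that $u_{k,\alpha}|_{(D^k)^\circ\times\BB G_\alpha}$ induces isomorphisms on isotropy groups, since both the open inclusion into $X_k$ and the closed inclusion $X_k \hookrightarrow X$ do.
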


\begin{proof}
We show by induction that $X_k\subseteq X$ is the closed substack corresponding to the closed subset $\left|X\right|_k\subseteq\left|X\right|$ induced by the CW-structure $(\left|X\right|,\left|u_{k,\alpha}\right|)$.
The image of $u_{k,\alpha}|_{\partial D^k\times\BB G_\alpha}$ is contained in the closed substack $X_{k-1}\subseteq X$ since this can be checked at the level of coarse spaces.
We have by assumption that $\left|X\right|_{k-1}\cup_{\mathopen|u_{k,\alpha}\mathclose|}\bigsqcup_\alpha D^k\to\left|X\right|$ is a closed inclusion, and we would like to show that $X_{k-1}\cup_{u_{k,\alpha}}\bigsqcup_\alpha D^k\times\BB G_\alpha\to X$ is a closed inclusion, with the same image.
The coarse space of the domain of the second map coincides with the domain of the first map since coarse space commutes with colimits and $\left|X\right|_{k-1}=\left|X_{k-1}\right|$ by the induction hypothesis.
The second map therefore factors through the closed substack of $\left|X\right|$ corresponding to $\left|X\right|_k\subseteq\left|X\right|$ (the image of the first map, by definition).
To check that the first map of this factorization is an isomorphism, it suffices by Lemma \ref{orbiiso} to note that it induces a homeomorphism on coarse spaces and isomorphisms on isotropy groups (by the hypothesis on $u_{k,\alpha}|_{(D^k)^\circ\times\BB G_\alpha}$).

Next, we should show that the map $\colim_kX_k\to X$ is an isomorphism.
Again by Lemma \ref{orbiiso}, it suffices to note that it induces isomorphisms on isotropy groups (immediate since $X_k\subseteq X$ are closed substacks) and induces a homeomorphism on coarse spaces (since $(\left|X\right|,\left|u_{k,\alpha}\right|)$ is a CW-complex).
\end{proof}

\begin{definition}
A \emph{subcomplex} $A$ of an orbi-CW-complex $X$ consists of a subset of the set of cells of $X$ such that the attaching map of any $k$-cell in $A$ lands inside $A_{k-1}\subseteq X_{k-1}$ (which is a closed substack by induction on $k$).
By Lemma \ref{checkcwatcoarse}, subcomplexes of $X$ are in bijection with subcomplexes of $\left|X\right|$.
\end{definition}

Given two orbi-CW-complexes $(X,u_{k,\alpha})$ and $(Y,v_{\ell,\beta})$, we may ask whether their product $(X\times Y,u_{k,\alpha}\times v_{\ell,\beta})$ is an orbi-CW-complex (note that a product of cells $D^k\times\BB G_\alpha\times D^\ell\times\BB G_\beta$ is indeed a cell $D^{k+\ell}\times\BB(G_\alpha\times G_\beta)$).
In view of Lemma \ref{checkcwatcoarse} and Lemma \ref{coarseproduct}, this reduces to the corresponding question for the ordinary CW-complexes obtained by passing to coarse spaces.
It is known that a product of CW-complexes is a CW-complex if at least one of the factors is locally finite \cite{whiteheadcwproduct} or if both factors are locally countable \cite{milnorcwproduct} (in fact, the question of when a product of two CW-complexes is a CW-complex is completely solved in \cite{brooketaylorcwproduct}).

\section{Homotopy theory of orbispaces}

\subsection{Homotopies}

Two maps of orbispaces $f,g:X\to Y$ are called homotopic iff there exists a map $h:X\times[0,1]\to Y$ such that $h(0,\cdot)$ and $h(1,\cdot)$ are isomorphic to $f$ and $g$, respectively.
Note that, in particular, if $f$ and $g$ are isomorphic, then they are homotopic.
Homotopy classes of maps form a set.
A map with a two-sided inverse up to homotopy is called a homotopy equivalence.

\begin{example}
Homotopy classes of maps $\BB G\to\BB H$ are in bijection with conjugacy classes of group homomorphisms $G\to H$.
The `space' (properly defined) of maps $\BB G\to\BB H$ would be the homotopy quotient $\Hom(G,H)\hq H$.
\end{example}

\begin{lemma}\label{homotopicrepresentable}
If $f$ and $g$ are homotopic, then $f$ is representable iff $g$ is representable.
\end{lemma}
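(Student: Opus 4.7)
By the characterization cited in the excerpt (a map of orbispaces is representable iff it is injective on isotropy groups), it suffices to show that a homotopy $h \colon X \times [0,1] \to Y$ with $h|_{t=0} \cong f$ and $h|_{t=1} \cong g$ preserves the kernel of the induced homomorphism on isotropy at every point: for each $x \in X$, the kernel of $G_x \to G_{h(x,t)}$ (well-defined up to conjugation of the target) should be independent of $t$. Restricting the homotopy along the $2$-morphism $\BB G_x \to X$ picking out $x$ reduces the problem to the case $X = \BB G$: we have $\tilde h \colon \BB G \times [0,1] \to Y$ with path $\gamma(t) := \tilde h(*, t)$ and homomorphisms $\psi_t \colon G \to G_{\gamma(t)}$, and the goal is to show that $\ker \psi_t$ is locally constant in $t$, whence constant on the connected interval $[0,1]$.

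To establish local constancy near $t_0 \in [0,1]$, choose a Hausdorff \'etale atlas $V \to Y$ near $\gamma(t_0)$ and lift $\gamma|_{(t_0 - \epsilon, t_0 + \epsilon)}$ to a continuous $\tilde \gamma \colon (t_0 - \epsilon, t_0 + \epsilon) \to V$; near $\tilde \gamma(t_0)$ the \'etale groupoid $V \times_Y V \righttwoarrows V$ is the action groupoid of $G_{\gamma(t_0)}$ on a neighborhood of $\tilde \gamma(t_0)$. The restriction of $\tilde h$ to $\BB G \times (t_0 - \epsilon, t_0 + \epsilon)$, presented through the atlas $V$, then amounts to a morphism from the action groupoid $G \times (t_0 - \epsilon, t_0 + \epsilon) \righttwoarrows (t_0 - \epsilon, t_0 + \epsilon)$ (which presents $\BB G \times (t_0 - \epsilon, t_0 + \epsilon)$) to $G_{\gamma(t_0)} \times V \righttwoarrows V$: on objects it is $\tilde \gamma$, and on arrows, for each $g \in G$, it is a continuous map $(t_0 - \epsilon, t_0 + \epsilon) \to G_{\gamma(t_0)}$. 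Since $G_{\gamma(t_0)}$ is finite and discrete, this map is constant. Hence the induced homomorphism $\hat \psi \colon G \to G_{\gamma(t_0)}$ is independent of $t$ on this neighborhood, and $\psi_t$ is its corestriction to the subgroup $G_{\gamma(t)} \subseteq G_{\gamma(t_0)}$; in particular $\ker \psi_t = \ker \hat \psi$ for $t$ near $t_0$, as required.

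The main subtlety will be the careful set-up of the local groupoid presentation of $\tilde h$ and the identification of its arrow-data with a continuous map $(t_0 - \epsilon, t_0 + \epsilon) \to G_{\gamma(t_0)}$ (this is essentially an application of the principle behind Lemma \ref{mappingcts}, adapted to the orbispace groupoid picture). Once that is in place, the proof is finished by the elementary observation that a continuous map from a connected space into a finite discrete group is constant.
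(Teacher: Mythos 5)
Your proposal is correct and follows essentially the same route as the paper: reduce via the isotropy-group criterion to maps $\BB G\times[0,1]\to Y$, pass to a local presentation $Y=U/\Gamma$ with $\Gamma$ finite, and observe that the resulting homomorphism $G\to\Gamma$ is locally constant in $t$ (the paper phrases this as injectivity being an open and closed condition on $[0,1]$). Your version merely spells out the groupoid bookkeeping and the kernel-tracking in more detail.
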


\begin{proof}
Since for maps of orbispaces, representability is equivalent to injectivity on isotropy groups, it suffices to consider the case of maps from $\BB G$.
Thus consider a map $\BB G\times[0,1]\to Y$.
Locally $Y=U/\Gamma$ for $\Gamma$ finite acting on $U$ Hausdorff.
So, a map $\BB G\times[0,1]\to Y$ is (locally) a map $G\to\Gamma$ and a map $[0,1]\to U$ landing in the $G$-fixed locus.
This is injective on isotropy groups iff $G\to\Gamma$ is injective, which is obviously an open and closed condition on $[0,1]$.
\end{proof}

\begin{corollary}\label{heqrepresentable}
A homotopy equivalence of orbispaces is representable.
\end{corollary}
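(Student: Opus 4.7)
The plan is to reduce the statement to Lemma \ref{homotopicrepresentable} via the standard functoriality of isotropy groups. Recall that the paper records that a map of orbispaces is representable if and only if it is injective on isotropy groups, so the whole question becomes one of checking injectivity of the induced maps $G_x \to G_{f(x)}$ at every point $x$.

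First, I would write down a homotopy inverse: by hypothesis there exists $g \colon Y \to X$ with $gf \simeq \id_X$ and $fg \simeq \id_Y$. Since $\id_X$ and $\id_Y$ are obviously representable (they induce the identity on isotropy groups, in particular an injection), Lemma \ref{homotopicrepresentable} immediately gives that both compositions $gf$ and $fg$ are representable.

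The remaining step is a purely group-theoretic observation. For any point $x \colon * \to X$, the map $gf$ induces an injection $G_x \hookrightarrow G_{gf(x)}$. But this map factors as the composition
\begin{equation}
G_x \xrightarrow{f_*} G_{f(x)} \xrightarrow{g_*} G_{gf(x)},
\end{equation}
and a composition can only be injective if its first factor is injective. Thus $f_* \colon G_x \to G_{f(x)}$ is injective for every point $x$, and invoking the isotropy characterization of representability once more finishes the proof. No serious obstacle is expected; the only subtlety is simply remembering that one needs to use the representability of $gf$ (not $fg$) to conclude injectivity of $f_*$ rather than $g_*$.
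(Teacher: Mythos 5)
Your proof is correct and coincides with the paper's argument: both invoke Lemma \ref{homotopicrepresentable} to show $g\circ f$ (and $f\circ g$) is representable, then use injectivity of $G_x\to G_{gf(x)}$ factoring through $G_{f(x)}$ to conclude $f$ is injective on isotropy groups. Nothing to add.
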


\begin{proof}
Let $f$ and $g$ be homotopy inverses of each other.
Since $f\circ g$ and $g\circ f$ are homotopic to identity maps, they are representable by Lemma \ref{homotopicrepresentable}.
Thus $f\circ g$ and $g\circ f$ are both injective on isotropy groups, from which it follows that $f$ and $g$, respectively, are injective on isotropy groups.
\end{proof}

\begin{lemma}\label{repfactor}
Let $f:Y\times\BB G\to X$ be a map where $Y$ is a topological space and $X$ is an orbispace.
There exists a partition of $Y$ into open subsets $Y_N$ indexed by the normal subgroups $N\trianglelefteq G$, \emph{representable} maps $Y_N\times\BB(G/N)\to X$, and an isomorphism between $f$ and the composition
\begin{equation}
Y\times\BB G\to\bigsqcup_{N\trianglelefteq G}Y_N\times\BB(G/N)\to X.
\end{equation}
Moreover, this data is unique up to unique isomorphism.
\end{lemma}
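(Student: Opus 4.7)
The plan is to define the partition pointwise via the kernel of the isotropy homomorphism, prove it is open by a discreteness argument in local charts on $X$, and construct the factorization canonically from the same local data.

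For each $y \in Y$, the restriction $f|_{\{y\} \times \BB G}$ induces a homomorphism $\varphi_y : G \to G_{f(y)}$ (defined up to conjugation in the target), with intrinsically well-defined kernel $N_y \trianglelefteq G$. I claim $y \mapsto N_y$ is locally constant. Picking a local chart $U/\Gamma \subseteq X$ covering $f(y_0, *)$ and a connected neighborhood $V \ni y_0$ with $f(V \times \BB G) \subseteq U/\Gamma$, the map $f|_{V \times \BB G}$ is equivalent to the data of a principal $\Gamma$-bundle $P \to V$ with a commuting $G$-action plus a $(G \times \Gamma)$-equivariant map $P \to U$ (with $G$ acting trivially on $U$). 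For each $g \in G$, the action $g_* : P \to P$ over $V$ is $\Gamma$-equivariant, hence gives a section of the bundle $\Aut_\Gamma(P) \to V$ whose fibers are torsors under the discrete group $\Gamma$; discreteness of $\Gamma$ makes the locus $\{v : g_*|_{P_v} = \id\}$ clopen. Thus $v \mapsto N_v$ is locally constant, $Y_N := \{y : N_y = N\}$ is clopen, and the $\{Y_N\}_N$ partition $Y$.

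For existence of the factorization, observe that over $Y_N$ every $n \in N$ acts trivially on each fiber of $P$, and hence acts trivially on $P$ outright; thus the $G$-action on $P|_{Y_N}$ descends to a $G/N$-action, promoting $P|_{Y_N}$ to a principal $\Gamma$-bundle on $Y_N \times \BB(G/N)$, with the $\Gamma$-equivariant map to $U$ descending as well. This yields a local map $(V \cap Y_N) \times \BB(G/N) \to U/\Gamma$, representable because its induced isotropy homomorphism $G/N \hookrightarrow \Gamma$ is injective. These local maps glue canonically into the desired representable $g_N : Y_N \times \BB(G/N) \to X$ since they all arise from the single globally-given datum $f$, and by construction the precomposition with the projection $Y_N \times \BB G \to Y_N \times \BB(G/N)$ recovers $f|_{Y_N \times \BB G}$.

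For uniqueness: given any factorization $f = (\bigsqcup_N g'_N) \circ \pi$ with each $g'_N$ representable, the kernel at $y \in Y'_N$ must be exactly $N$, forcing $Y'_N = Y_N$. Locally in a chart $U/\Gamma$, both $g_N$ and $g'_N$ present as principal $\Gamma$-bundles with $G/N$-action and equivariant maps to $U$, and an isomorphism between their $\pi$-precompositions is an isomorphism of $G$-objects where on both sides the $G$-action factors through $G/N$ --- hence automatically an isomorphism of $G/N$-objects, and the unique one compatible with the given data. I expect the main subtlety to be 2-categorical bookkeeping, ensuring these canonical local isomorphisms assemble coherently across chart overlaps, which is handled by the fact that every choice is pinned down by $f$ itself.
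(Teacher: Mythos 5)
Your proof is correct and follows essentially the same route as the paper's: both reduce the statement to showing that $y\mapsto\ker(G\to G_{f(y)})$ is locally constant via a discreteness/clopen argument (the paper phrases this as $X\to X\times_{X\times X}X$ being an open and closed inclusion, you as the identity locus of a section of the covering space $\Aut_\Gamma(P)\to V$ — the same fact), and both treat the factorization as canonical because the $G$-action killing $N$ is a property, not extra data. The only cosmetic difference is that the paper packages the descent step globally via Lemma \ref{descentgroupoid} (a map $Y\times\BB G\to X$ is $f:Y\to X$ plus $G\to\Aut(f)$), which makes your "2-categorical bookkeeping" concern evaporate, whereas you unwind everything in local charts $U/\Gamma$.
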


\begin{proof}
Applying Lemma \ref{descentgroupoid} to the atlas $Y\to Y\times\BB G$, we find that the data of a map $Y\times\BB G\to X$ is the same as the data of a map $f:Y\to X$ together with a homomorphism $G\to\Aut(f)$.
Moreover, for a point $y\in Y$, the homomorphism $G\to G_{f(y)}$ induced by restricting $G\to\Aut(f)$ coincides with the action of the corresponding map $Y\times\BB G\to X$ on isotropy groups.
Now we recall that for maps of orbispaces, representability is equivalent to injectivity on isotropy groups.
It thus suffices to show that for any map $f:Y\to X$ and any a homomorphism $G\to\Aut(f)$, the map $y\mapsto\ker(G\to G_{f(y)})$ is locally constant.

Thus fix $f:Y\to X$ and $g\in\Aut(f)$, and let us show that the set of $y\in Y$ for which $g|_y\in G_{f(y)}$ is the identity is open and closed.
The set of such $y$ is the fiber product of $Y$ and $X$ over $X\times_{X\times X}X$.
Thus it suffices to show that $X\to X\times_{X\times X}X$ is an open and closed inclusion.
We can check this locally, so we can assume that $X=Z/H$ for $Z$ Hausdorff and $H$ finite.
Then $X\times_{X\times X}X=(\bigsqcup_{h\in H}X^h)/H$ (the action of $H$ is by conjugation) and the map from $X$ is the inclusion of the component $h=\mathbf 1$.
\end{proof}

\subsection{Orbi-CW-complexes (homotopy)}

The basic objects with which we shall do homotopy theory are orbi-CW-complexes.
Many basic facts about CW-complexes generalize immediately to orbi-CW-complexes, with identical proofs.
For example, for $X$ an orbi-CW-complex and $A\subseteq X$ a subcomplex, the pair $(X,A)$ has the homotopy extension property (by the universal property of colimits, we may proceed by induction on cells, for which the statement is obvious since cells have boundary collars).
Every map of orbi-CW-complexes is homotopic to one which sends the $k$-skeleton of the domain to the $k$-skeleton of the target.
Also a homotopy equivalence between orbispaces $X\xrightarrow\sim X'$ and homotopies between attaching maps $\{f_\alpha:\partial D^k\times\BB G_\alpha\to X\}_\alpha$ and $\{f_\alpha':\partial D^k\times\BB G_\alpha\to X'\}_\alpha$ induces a homotopy equivalence
\begin{equation}
X\cup_{\{f_\alpha\}_\alpha}\bigsqcup_\alpha\partial D^k\times\BB G_\alpha\xrightarrow\sim X'\cup_{\{f_\alpha'\}_\alpha}\bigsqcup_\alpha\partial D^k\times\BB G_\alpha.
\end{equation}

\begin{proposition}
A compact orbifold is homotopy equivalent to a finite orbi-CW-complex.
\end{proposition}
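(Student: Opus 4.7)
The plan is to present $X$ as the $O(n)$-quotient of a compact smooth $O(n)$-manifold with finite point stabilizers, apply Illman's equivariant triangulation theorem to that manifold, and descend the resulting equivariant CW decomposition to a finite orbi-CW structure on $X$. First, using enough vector bundles \cite{orbibundle}, pick a faithful rank-$n$ vector bundle $V$ on $X$, equipped with an inner product by Lemma \ref{paracompactmetric}. The orthonormal frame bundle $P$ of $V$ is a representable principal $O(n)$-bundle over $X$; faithfulness of $V$ forces the $O(n)$-action on $P$ to have only finite point stabilizers (canonically equal to the isotropy groups of $X$), and in particular $P$ is a genuine smooth manifold rather than merely an orbifold. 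Compactness of $X$ and of $O(n)$ makes $P$ compact, and $X \simeq P/O(n)$ as stacks.

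Next, invoke Illman's smooth equivariant triangulation theorem for compact Lie group actions to give $P$ a finite $O(n)$-CW structure whose equivariant cells have the form $D^k \times O(n)/H_\alpha$ for finite subgroups $H_\alpha \subseteq O(n)$ (only finitely many conjugacy classes arise, since $P$ is compact), attached along $O(n)$-equivariant maps $\partial D^k \times O(n)/H_\alpha \to P_{k-1}$. Quotient everything by $O(n)$: each equivariant cell $D^k \times O(n)/H_\alpha$ descends to an orbi-cell $D^k \times \BB H_\alpha$ (using $(O(n)/H_\alpha)/O(n) = \BB H_\alpha$), and each equivariant attaching map descends to a map $\partial D^k \times \BB H_\alpha \to X_{k-1}$ which is representable because the isotropy on the cell is exactly $H_\alpha$, injecting into the ambient isotropy in $X$ (cf.\ \cite[Corollary 3.6]{orbibundle}). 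Iterated application of Proposition \ref{collarpushout}, together with Lemma \ref{checkcwatcoarse} to confirm on coarse spaces that the result is indeed an orbi-CW-complex, then exhibits a finite orbi-CW structure on $X$ (in fact, $X$ is homeomorphic to, and not merely homotopy equivalent to, the resulting orbi-CW-complex).

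The main obstacle I expect is justifying the descent step rigorously, namely that the $O(n)$-equivariant pushouts assembling $P$ cell-by-cell descend to the corresponding stack pushouts assembling $X = P/O(n)$. This should be checked locally on a single cell, where $P \to X$ restricts to the elementary principal bundle $D^k \times O(n)/H_\alpha \to D^k \times \BB H_\alpha$, and then globalized via groupoid descent along the lines of Lemma \ref{descentgroupoid}. A minor secondary issue, if the statement is intended to cover orbifolds with boundary, is to appeal to a relative version of Illman's theorem, or equivalently to double $X$ along $\partial X$, triangulate the double equivariantly for the involution, and then cut the resulting equivariant cell structure in half.
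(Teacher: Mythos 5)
Your argument is essentially correct, but it is a genuinely different route from the paper's. The paper's proof is Morse-theoretic and internal to the orbifold: one builds a Morse function by induction over the stratification by order of the stabilizer, extending off each stratum by a \emph{positive definite} quadratic form in the normal directions; this forces the isotropy group at each critical point to act trivially on the negative eigenspace of the Hessian, so that passing a critical point of index $k$ attaches exactly a cell $(D^k,\partial D^k)\times\BB G$ with trivial action on the disk factor. Your proof instead presents $X$ as a global quotient $P/O(n)$ of a compact manifold with finite stabilizers and outsources the cell decomposition to Illman's equivariant triangulation theorem, descending it via Lemma \ref{checkcwatcoarse}; the descent works because the isotropy is constant (equal to $H_\alpha$) on each open equivariant cell, so the quotient cells are $D^k\times\BB H_\alpha$ attached along representable maps, and the coarse space inherits an honest finite CW structure. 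What your approach buys is a stronger conclusion (a finite orbi-CW structure on $X$ itself, not merely up to homotopy equivalence) at the cost of two external inputs: Illman's theorem, and the existence of a faithful vector bundle on $X$ from \cite{orbibundle} (the Morse argument needs neither). Two small points you should tighten: the faithful vector bundle produced by \cite{orbibundle} is a priori only topological, and you need $P$ to be a \emph{smooth} compact $O(n)$-manifold for Illman's theorem, so you must first replace $V$ by an isomorphic smooth vector bundle (a standard approximation argument over a compact smooth orbifold); and, as you note, the boundary case requires either the relative form of Illman's theorem or the doubling trick.
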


\begin{proof}[Proof Sketch]
Let $X$ be a compact orbifold, and choose a Morse function $f:X\to\RR$ as follows.
We define $f$ by induction on the stratification of $X$ by the order of the stabilizer group.
A given stratum is a purely ineffective smooth suborbifold, so we just choose any Morse function on it (generic ones are Morse), and we extend it in the normal directions by a \emph{positive definite} quadratic form.
Note that in this way, at any critical point, the isotropy group acts trivially on the negative eigenspace of the Hessian.
Thus the change in the homotopy type of sublevel sets when passing a critical point of index $k$ is precisely to attach a cell $(D^k,\partial D^k)\times\BB G$.
\end{proof}

We now discuss homotopy groups of orbi-CW-complexes.
For an orbispace $X$, we have a set $\pi_0^G(X)$ of homotopy classes of maps $\BB G\to X$.
These are functorial in $X$ and (contravariantly) in $G$.
More generally, we define $\pi_k^G(X,p)$ for a `basepoint' $p:\BB G\to X$ as the set of maps $f:S^k\times\BB G\to X$ together with an isomorphism between $f|_{*\times\BB G}$ and $p$ (where $*\in S^k$ is a fixed basepoint), modulo homotopy.
A homotopy here means a map $h:[0,1]\times S^k\times\BB G\to X$ together with an isomorphism between $h|_{[0,1]\times *\times\BB G}$ and $p\circ\pi_{*\times\BB G}$.
The sets $\pi_k^G(X,p)$ are functorial in $(X,p)$ and $G$.
As with ordinary homotopy groups, $\pi_k^G(X,p)$ is a pointed set for $k=0$, a group for $k=1$, and an abelian group for $k=2$.

\begin{example}
We have $\pi_0^G(\BB H)=\Hom(G,H)/H$ (quotient by the conjugation action).
For a map $\varphi:G\to H$ (inducing a basepoint $\BB\varphi:\BB G\to\BB H$), we have $\pi_1^G(\BB H,\BB\varphi)=Z_H(\im(\varphi))$ (the centralizer of $\varphi(G)\subseteq H$) and $\pi_k^G(\BB H,\BB\varphi)=0$ for $k\geq 2$.
\end{example}

In view of Lemma \ref{homotopicrepresentable}, there is a distinguished subset $\pi_0^{G,\rep}(X)\subseteq\pi_0^G(X)$ of homotopy classes of representable maps $\BB G\to X$.
Evidently $\pi_0^{G,\rep}(X)$ is functorial under representable maps of $X$ and injective maps of $G$.
The sets $\pi_0^G(X)$ and $\pi_0^{G,\rep}(X)$ contain the same information, in the sense that
\begin{align}
\label{reprecover}\pi_0^{G,\rep}(X)&{}=\pi_0^G(X)\setminus\bigcup_{\1\ne N\trianglelefteq G}\im(\pi_0^{G/N}(X)\to\pi_0^G(X)),\\
\pi_0^G(X)&{}=\bigsqcup_{N\trianglelefteq G}\pi_0^{G/N,\rep}(X).
\end{align}
A basepoint $p:\BB G\to X$ factors uniquely as $\BB G\to\BB(G/\ker p)\xrightarrow{p^\rep}X$ where the second map is representable, and by Lemma \ref{repfactor} we have $\pi_k^G(X,p)=\pi_k^{G/\ker p}(X,p^\rep)$ for $k\geq 1$ (so in this sense the information in the homotopy groups of an orbispace $X$ is already contained in the case of representable basepoints).

We will also make use of relative homotopy groups $\pi_k^G(X,Y,p)$ for a map $Y\to X$ and a basepoint $p:\BB G\to Y$.
For $k\geq 1$, an element of $\pi_k(X,Y,p)$ is represented by a diagram
\begin{equation}\label{relativehtpydiagram}
\begin{tikzcd}
\partial D^k\times\BB G\ar[r]\ar[d]&Y\ar[d]\\
D^k\times\BB G\ar[r]&X
\end{tikzcd}
\end{equation}
together with an isomorphism between the restriction of $\partial D^k\times\BB G\to Y$ to $*\times\BB G$ and the basepoint $p$.
These are considered up to homotopy, i.e.\ diagrams in which the orbispaces on the left are replaced with their product with $[0,1]$, and we specify an isomorphism with $p\circ\pi_{*\times\BB G}$ over $[0,1]\times *\times\BB G$.
Now $\pi_k^G(X,Y,p)$ is a pointed set for $k=1$, a group for $k=2$, and an abelian group for $k\geq 3$.

It is essentially immediate from the definitions that for a map $Y\to X$ and a basepoint $p:\BB G\to Y$, there is a long exact sequence (of pointed sets)
\begin{equation}\label{htpyles}
\cdots\to\pi_2^G(X,Y,p)\to\pi_1^G(Y,p)\to\pi_1^G(X,p)\to\pi_1^G(X,Y,p)\to\pi_0^G(Y,p)\to\pi_0^G(X,p).
\end{equation}
It is thus natural to define $\pi_0^G(X,Y)$ as the pointed set $\pi_0^G(X)/\pi_0^G(Y)$.
We now have the following version of Whitehead's theorem.

\begin{proposition}\label{whitehead}
A map of orbi-CW-complexes is a homotopy equivalence iff it induces isomorphisms on $\pi_k^G$ for all basepoints and on $\pi_0^G$.
\end{proposition}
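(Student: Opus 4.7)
The forward direction is immediate from functoriality of homotopy groups. For the converse, I would follow the classical two-step pattern: reduce to a relative statement via the mapping cylinder, then build a deformation retraction by induction on skeleta.

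Given $f:X\to Y$ inducing isomorphisms on all $\pi_k^G$ and $\pi_0^G$, cellular approximation (stated at the start of \S3.2) lets me assume $f$ is cellular, so that the mapping cylinder $\cyl(f)$ (which exists by Proposition~\ref{mcpushout}) carries a natural orbi-CW structure in which $X$ is a subcomplex and $Y$ is a deformation retract. The long exact sequence \eqref{htpyles} applied to the pair $(\cyl(f),X)$ together with the hypothesis gives $\pi_k^G(\cyl(f),X,p)=0$ for all $k\ge 1$ and all basepoints $p:\BB G\to X$, and surjectivity of $\pi_0^G(X)\to\pi_0^G(\cyl(f))$ for every $G$. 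It thus suffices to show: \emph{if $(Z,A)$ is an orbi-CW-pair with $\pi_k^G(Z,A,p)=0$ for every $k\ge 1$, every $G$, and every basepoint $p$, and $\pi_0^G(A)\to\pi_0^G(Z)$ surjective for every $G$, then $A\hookrightarrow Z$ is a homotopy equivalence.}

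To prove this relative statement, I would build a strong deformation retraction of $Z$ onto $A$ by inductively constructing homotopies $H_k:(A\cup Z_k)\times[0,1]\to Z$ which fix $A$, start at the identity, and end inside $A$. For $k=0$, surjectivity of $\pi_0^G$ provides, for each $0$-cell $\BB G\to Z$, a homotopy into $A$. Inductively, for each $k$-cell $u:D^k\times\BB G\to Z$ whose boundary has already been moved into $A$ by $H_{k-1}$, the combined data $(u,H_{k-1}|_{\partial u})$ represents, after the standard reshuffling of $(D^k,\partial D^k)\times[0,1]$, an element of $\pi_k^G(Z,A,p)$ at the basepoint inherited from $u$; its vanishing yields a homotopy of $u$ extending $H_{k-1}|_{\partial u}$ and terminating in $A$. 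Running this over all $k$-cells simultaneously and gluing via the homotopy extension property of the pair $(A\cup Z_k,A\cup Z_{k-1})$ (which holds because cells have boundary collars, as noted at the start of \S3.2) produces $H_k$. Rescaling $H_k$ to live on $[1-2^{-k},1-2^{-k-1}]$ and extending by the identity, the homotopies concatenate to a single continuous homotopy on $Z=\colim_k Z_k$ by the universal property from Proposition~\ref{mcinfinite}, since on each $Z_j$ only finitely many $H_k$ are nonstationary.

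The main obstacle is the inductive extension step: one must verify that the pair $(u,H_{k-1}|_{\partial u})$ genuinely represents an element of the relative homotopy group whose vanishing provides an \emph{extension} of $H_{k-1}$ over $u$, rather than a mere unrelated null-homotopy. It is here that the hypothesis of vanishing at \emph{every} basepoint is essential — the basepoint inherited from a cell is not one we can fix in advance — and also here that orbispace subtleties enter, since the definition of $\pi_k^G$ carries extra isomorphism data at the basepoint that must be carefully tracked through the reshuffling. Once this interpretation is set up, the remainder is a routine adaptation of the classical argument for ordinary CW-complexes.
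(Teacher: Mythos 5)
There is a genuine gap, and it is exactly the point the paper's proof opens with. You begin by forming the mapping cylinder $\cyl(f)$, citing Proposition~\ref{mcpushout}; but that proposition (and the construction of $\cyl(A\to X)$ that precedes it) applies only to \emph{representable} maps. For a non-representable map of orbispaces, the mapping cylinder cannot be formed as an orbispace (or even as a stack with \'etale atlas), since the construction relies on \'etale pullback commuting with colimits of the groupoid presentation. You never establish that $f$ is representable, and this does not follow from cellular approximation alone.

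The fix is the first paragraph of the paper's argument: since $f$ induces a bijection on $\pi_0^G$ for every $G$, and since by equation \eqref{reprecover} the subset $\pi_0^{G,\rep}(X)\subseteq\pi_0^G(X)$ is determined purely by the functorial structure of $\pi_0^{(\cdot)}(X)$ in the group variable (as the complement of the images from proper quotients $G/N$), the map $f$ automatically respects the subsets $\pi_0^{G,\rep}$, hence is injective on isotropy groups, hence representable. Only then is $\cyl(f)$ well-defined. (Note this is consistent with Corollary~\ref{heqrepresentable}: every homotopy equivalence is necessarily representable, so the hypothesis had better force representability.)

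Once representability is in hand, the rest of your argument works and is, up to packaging, the paper's: you phrase it as a strong deformation retraction built cell by cell, while the paper phrases it as a lifting problem for the identity $\cyl(f)\to X$ rel $Y$, reduced cell by cell to vanishing of the relative $\pi_k^G$'s; these are the same induction. Your caution about needing vanishing at all (representable) basepoints and about the basepoint-isomorphism data in $\pi_k^G$ is well placed but does not introduce new difficulties beyond the classical case.
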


\begin{proof}
Let $Y\to X$ be a map of orbi-CW-complexes which induces an isomorphism on all $\pi_k^G$.
In view of \eqref{reprecover}, the map $Y\to X$ respects the subsets $\pi_0^{G,\rep}\subseteq\pi_0^G$.
It follows that $Y\to X$ is representable.

Since $Y\to X$ is representable, we may form its mapping cylinder $\cyl(Y\to X)$; moreover, by first applying cellular approximation to $Y\to X$ to homotope it to be cellular, we may ensure that $\cyl(Y\to X)$ is again an orbi-CW-complex.

It now suffices to construct a dotted lift in the diagram
\begin{equation}
\begin{tikzcd}
Y\ar[r]\ar[d]&Y\ar[d]\\
\cyl(Y\to X)\ar[r]\ar[ru,dashed]&X
\end{tikzcd}
\end{equation}
after possibly homotoping the bottom map rel $Y$.
Since $Y\subseteq\cyl(Y,X)$ is a subcomplex, it is equivalent to solve the homotopy lifting problem
\begin{equation}
\begin{tikzcd}
\partial D^k\times\BB G\ar[r]\ar[d]&Y\ar[d]\\
D^k\times\BB G\ar[r]\ar[ru,dashed]&X
\end{tikzcd}
\end{equation}
which is equivalent to the vanishing of all relative homotopy groups of $Y\to X$.
We are thus done by the long exact sequence \eqref{htpyles}.
\end{proof}

\begin{conjecture}
Any metrizable locally tame topological orbifold is homotopy equivalent to an orbi-CW-complex.
\end{conjecture}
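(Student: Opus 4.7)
The plan is to adapt the classical argument that a metrizable topological manifold has the homotopy type of a CW-complex, using local tameness to handle the orbifold structure and the Whitehead theorem for orbispaces (Proposition \ref{whitehead}) to verify the resulting homotopy equivalence. I would build an orbi-CW-complex $K$ by a nerve-type construction on a suitably chosen equivariant good cover, together with a comparison map $K \to X$.

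First, using that $|X|$ is metrizable and hence paracompact, together with local tameness, I would produce a locally finite open cover of $X$ by orbi-charts $U_\alpha \simeq V_\alpha / \Gamma_\alpha$, where each $V_\alpha \subseteq \RR^{n_\alpha}$ is a $\Gamma_\alpha$-invariant open convex neighborhood of the origin (after averaging an inner product to make $\Gamma_\alpha$ act by isometries). Each $V_\alpha/\Gamma_\alpha$ then $\Gamma_\alpha$-equivariantly deformation retracts onto $\BB\Gamma_\alpha$ via radial contraction, and on overlaps these linear contractions can be interpolated.

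Next, I would refine the cover so that every non-empty finite intersection $U_{\alpha_0} \cap \cdots \cap U_{\alpha_k}$ carries a well-defined minimal isotropy subgroup $\Gamma_{\alpha_0 \cdots \alpha_k}$, which embeds (up to conjugation) into each $\Gamma_{\alpha_i}$, together with a $\Gamma_{\alpha_0\cdots\alpha_k}$-equivariant contraction compatible with the face maps. From this data I would build an orbi-CW-complex $K$ whose $k$-cells are of the form $\Delta^k \times \BB\Gamma_{\alpha_0\cdots\alpha_k}$, attached along face maps using the inclusions of isotropy groups; this is an orbi-CW-complex essentially by construction (in the formalism of the paper, the cells are attached along representable maps witnessed by the group inclusions). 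Simultaneously, I would construct a map $f : K \to X$ by lifting a $\Gamma_\alpha$-equivariant partition of unity subordinate to the cover to a map out of the orbi-simplices, compatibly with the isotropy data.

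Finally, I would apply Proposition \ref{whitehead} to reduce the verification that $f$ is a homotopy equivalence to checking that it induces isomorphisms on $\pi_0^G$ and $\pi_k^G$ for every finite $G$ and every basepoint. Because the cover members and their intersections are $\Gamma$-equivariantly contractible (onto their isotropy strata), the relative homotopy groups vanish by a straightforward Mayer--Vietoris / colimit argument over the nerve, using the long exact sequence \eqref{htpyles}.

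The hard part will be the refinement step. In the smooth category one could appeal to equivariant triangulation theorems à la Illman, but here $X$ is only topological, so equivariant partitions of unity and equivariant extension results must be established from scratch in the locally tame setting. Establishing a satisfactory orbifold analogue of the Dugundji extension theorem — or, equivalently, showing that metrizable locally tame topological orbifolds are orbifold ANRs — is where I expect the genuine content of the conjecture to lie; once such an equivariant ANR theory is in place, the nerve construction and Whitehead-type verification above should go through essentially formally.
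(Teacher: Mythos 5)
This statement is labeled as a \emph{conjecture} in the paper: the author does not prove it, and only sketches a strategy (a representable map from $X$ to the realization of a simplicial complex of groups via \cite[Proposition 4.6]{orbibundle}, a reverse map built from equivariant contractibility of linear local models, and Proposition \ref{whitehead} to conclude). Your outline is essentially that same strategy, so you have not produced something the paper lacks; and, as you yourself concede in your final paragraph, the outline is not a proof.

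The gaps are genuine and sit exactly where the content of the conjecture lies. First, the ``refinement step'' you defer is not a technicality: for a merely topological locally tame orbifold there is no equivariant triangulation or handle theory available, so the existence of a locally finite cover all of whose finite intersections $U_{\alpha_0}\cap\cdots\cap U_{\alpha_k}$ (which need not be connected, need not be convex in any chart, and whose isotropy can vary from point to point) carry a single well-defined conjugacy class of minimal isotropy group together with equivariant contractions compatible with all face maps is precisely the orbifold ANR/good-cover statement you would need to establish from scratch. Without it, neither the orbi-CW-complex $K$ nor the map $f:K\to X$ is defined. Second, even granting such a cover, verifying that $f$ induces isomorphisms on all $\pi_k^G$ requires knowing that the \emph{representable} mapping spaces from $\BB G$ into these intersections are contractible (not just that the coarse intersections are contractible), which is an equivariant statement about fixed-point strata that your Mayer--Vietoris sketch does not address. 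In short: the proposal correctly identifies the strategy and correctly identifies the obstruction, but the obstruction is the theorem.
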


A first step towards proving this was taken in \cite[Proposition 4.6]{orbibundle} which shows that there always exists a representable map $f$ from any paracompact orbispace to the geometric realization of a simplicial complex of groups (which is particular case of an orbi-CW-complex).
The next step would be to argue that, choosing the cover in the proof of this result to be sufficiently fine, it is possible to construct a map $g$ in the reverse direction (using equivariant contractibility of $\RR^n$ with respect to a linear $G$-action) and moreover that $g\circ f$ is homotopic to the identity.
One can then add cells to the target of $f$ (extending $g$ appropriately) until $f$ and $g$ both induce isomorphisms on all $\pi_k^G$ and then apply Proposition \ref{whitehead} to $f\circ g$.
This is how the standard proof of the corresponding assertion for topological manifolds goes.
That result also extends to absolute neighborhood retracts, so it is natural to ask whether this extension has a generalization to the orbispace setting.
One could also reasonably conjecture that a \emph{compact} locally tame topological orbifold is homotopy equivalent to a \emph{finite} orbi-CW-complex.
This is true for manifolds by the work of Kirby--Siebenmann \cite{ksbull,ksessays}, and other proofs were given later by West \cite{west} and Chapman \cite{chapman}; see Ferry--Ranicki \cite{ferryranicki} for further discussion.

\subsection{Homotopy categories of orbispaces}

We denote by $\OrbSpc$ (resp.\ $\RepOrbSpc$) the category whose objects are orbi-CW-complexes and whose morphisms are (resp.\ representable) homotopy classes of maps.
Note that by Lemma \ref{homotopicrepresentable}, representability is preserved by homotopies, and a homotopy between representable maps is itself representable.
The tautological functor $\RepOrbSpc\to\OrbSpc$ is thus faithful and conservative.
The homotopy category of CW-complexes is denoted $\Spc$, which is a full subcategory of both $\RepOrbSpc$ and $\OrbSpc$.

The functor $\Spc\hookrightarrow\OrbSpc$ has a left adjoint, namely the coarse space functor $\left|\cdot\right|:\OrbSpc\to\Spc$ which, as noted earlier, sends orbi-CW-complexes to CW-complexes.

We use $\Spc^f\subseteq\Spc$, $\RepOrbSpc^f\subseteq\RepOrbSpc$, and $\OrbSpc^f\subseteq\OrbSpc$ to denote the full subcategories spanned by finite (orbi-)CW-complexes.
Note that the adjectives `finite'/`compact' and the resulting notation for full subcategories is sometimes taken to instead indicate those objects which are compact objects in the categorical sense (finite orbi-CW-complexes are compact objects categorically, but the converse is not true).

\subsection{Classifying space}\label{classifyingspace}

The inclusion $\Spc\hookrightarrow\OrbSpc$ has a right adjoint denoted $X\mapsto\tilde X$, where $\tilde X$ is known as the \emph{classifying space} of $X$.
This right adjoint may be constructed as follows.
It suffices to show that for any orbi-CW-complex $X$, there exists a CW-complex $\tilde X$ and a map $\tilde X\to X$ such that the homotopy lifting problem
\begin{equation}\label{classifylift}
\begin{tikzcd}
\partial D^k\ar[r]\ar[d]&\tilde X\ar[d]\\
D^k\ar[r]\ar[ru,dashed]&X
\end{tikzcd}
\end{equation}
always has a solution.
Indeed, this implies that the map $\tilde X\to X$ induces a bijection between homotopy classes of maps $Y\to\tilde X$ and homotopy classes of maps $Y\to X$ for any CW-complex $Y$.
We may construct $\tilde X$ as follows.
We begin with $\tilde X_{-1}=\varnothing$.
We define $\tilde X_k$ from $\tilde X_{k-1}$ by attaching a $k$-cell for every element of $\pi_k(X,\tilde X_{k-1})$ (for every basepoint when $k>0$).
By cellular approximation and induction, we have $\pi_i(X,\tilde X_k)=0$ for all $i\leq k$.
It follows that $\pi_i(X,\tilde X)=0$ for every $i$, which is equivalent to solvability of the above lifting problem.

\begin{example}\label{bgtilde}
For a CW-complex $X$, the set of homotopy classes of maps $X\to\BB G$ is in natural bijection with the set of isomorphism classes of principal $G$-bundles over $X$, which is in turn in bijection with the set of homotopy classes of maps $X\to BG$.
It follows that $\widetilde{\BB G}=BG$.
\end{example}

The natural map $(X\times Y)^\sim\to\tilde X\times\tilde Y$ is an isomorphism since $X\mapsto\tilde X$ is a right adjoint.

\begin{lemma}\label{classifyglue}
Let $X$ be an orbi-CW-complex covered by subcomplexes $P,Q\subseteq X$ intersecting in $A:=P\cap Q$, so $X=P\cup_AQ$.
Fix classifying spaces $\tilde P\to P$, $\tilde Q\to Q$, and $\tilde A\to A$, with subcomplex inclusions $\tilde A\to\tilde P$ and $\tilde A\to\tilde Q$ such that the diagram
\begin{equation}
\begin{tikzcd}
\tilde P\ar[d]&\tilde A\ar[l]\ar[r]\ar[d]&\tilde Q\ar[d]\\
P&A\ar[r]\ar[l]&Q
\end{tikzcd}
\end{equation}
strictly commutes (this may be achieved by replacing $\tilde P$ and $\tilde Q$ by the mapping cylinders of $\tilde A\to\tilde P$ and $\tilde A\to\tilde Q$).
Then $\tilde X=\tilde P\cup_{\tilde A}\tilde Q$ with the obvious map to $P\cup_AQ=X$.
\end{lemma}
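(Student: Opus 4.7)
Set $\tilde X':=\tilde P\cup_{\tilde A}\tilde Q$. The plan is to verify the two features from the construction in \S\ref{classifyingspace} which characterize a classifying space: first, that $\tilde X'$ is a CW-complex, and second, that the natural map $\pi:\tilde X'\to X$ admits a solution to the lifting problem \eqref{classifylift} for every $k$ (equivalently, that $\pi_k(X,\tilde X')$ vanishes for all $k$ and basepoints). The first is immediate, since $\tilde A\hookrightarrow\tilde P$ and $\tilde A\hookrightarrow\tilde Q$ are subcomplex inclusions, so the pushout inherits a CW-structure in the usual way.

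For the lifting problem, fix $f:(D^k,\partial D^k)\to(X,\tilde X')$. Since $A$ is a subcomplex of each of $P$ and $Q$, it is collared in both, so one can produce open neighborhoods $U\supseteq P$ and $V\supseteq Q$ in $X$ together with deformation retractions $U\to P$ and $V\to Q$ whose restrictions to $U\cap V$ form a deformation retraction $U\cap V\to A$. By compactness, triangulate $D^k$ so finely that each closed simplex is sent by $f$ into $U$ or into $V$, and color each simplex accordingly. Using the deformation retractions, together with the homotopy extension property (and a refinement so that $\partial D^k$ is a subcomplex), homotope $f$ rel $\partial D^k$ so that each simplex maps into $P$ or into $Q$ according to its color, with faces shared between $P$- and $Q$-colored simplices landing in $A$. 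Now build the lift $\tilde f:D^k\to\tilde X'$ by induction over the skeleta of the triangulation, using on each new simplex $\sigma$ the lifting property of $\tilde A\to A$, $\tilde P\to P$, or $\tilde Q\to Q$ (all of which hold by hypothesis) according to the color of $\sigma$.

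The main obstacle, and the point at which the hypothesis of strict (rather than merely homotopy-) commutativity of the classifying-space diagram becomes essential, is ensuring that these lifts glue correctly across shared $\tilde A$-faces: the $\tilde P$-lift and the $\tilde Q$-lift on adjacent simplices must literally agree inside the pushout $\tilde X'$. This is handled by a careful ordering of the inductive extension — lift first on all $A$-colored simplices via $\tilde A\to A$, and only then extend over the $P$- and $Q$-colored simplices, taking as boundary data the already-chosen $\tilde A$-lifts (now viewed as $\tilde P$- or $\tilde Q$-valued via the strict inclusions $\tilde A\hookrightarrow\tilde P$ and $\tilde A\hookrightarrow\tilde Q$). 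Completing this induction produces the desired lift $\tilde f$, verifying the lifting property and identifying $\tilde X'$ with $\tilde X$ under $X$.
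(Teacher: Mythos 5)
Your approach is essentially the same as the paper's: subdivide, homotope the map so each cell lands in $P$ or $Q$, and then lift inductively using the lifting property of $\tilde A\to A$, $\tilde P\to P$, $\tilde Q\to Q$ in that order. You do, however, aim for the strictly stronger statement (the lifting property for $\tilde P\cup_{\tilde A}\tilde Q\to X$ itself, rather than the adjointness property that the paper verifies by showing the map induces a bijection on homotopy classes $[Z,-]$), and this ambition exposes a gap you have not closed.

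The gap is in the handling of the given boundary lift $g:\partial D^k\to\tilde P\cup_{\tilde A}\tilde Q$. You homotope only the bottom map $D^k\to X$, rel $\partial D^k$, and then propose to ``lift first on all $A$-colored simplices via $\tilde A\to A$.'' But a boundary simplex $\sigma\subseteq\partial D^k$ with $f(\sigma)\subseteq A$ may well have $g(\sigma)\subseteq\tilde P\setminus\tilde A$: nothing forces the preimage of $A$ in $\tilde P$ to equal $\tilde A$, only that $\tilde A$ is a subcomplex of $\tilde P$ lying over $A$. An interior $A$-colored simplex $\tau$ sharing a face with such a $\sigma$ then cannot be lifted to $\tilde A$ with the prescribed boundary data, and an attempt to lift it to $\tilde P$ instead propagates the inconsistency to any adjacent $Q$-colored simplex. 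The fix is to first cellularly approximate $g$ itself (after subdividing $\partial D^k$ so that each cell is sent into $\tilde P$ or into $\tilde Q$, using neighborhoods of these subcomplexes in $\tilde P\cup_{\tilde A}\tilde Q$), thereby producing a decomposition $\partial D^k=B_{\tilde P}\cup B_{\tilde Q}$ with $B_{\tilde P}\cap B_{\tilde Q}=B_{\tilde A}$ compatible with the map to $X$; then propagate this decomposition inward along a collar and color interior cells so as to extend it, rather than coloring purely by where $f$ lands. In particular, the coloring must be driven by the lift on $\partial D^k$, not just by $f$, and you cannot afford to work strictly rel $\partial D^k$ --- you must homotope the entire relative homotopy class in $\pi_k(X,\tilde P\cup_{\tilde A}\tilde Q)$, which (as you note yourself) is the actual content of the lifting property. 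The paper sidesteps all of this in the surjectivity half by working with an arbitrary map $Z\to X$ with no prescribed boundary data, and is admittedly terse in the injectivity half where a version of the same issue arises.
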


\begin{proof}
Given a CW-complex $Z$ and a map $Z\to X=P\cup_AQ$, let us lift it (up to homotopy) to $\tilde P\cup_{\tilde A}\tilde Q$.
By subdividing $Z$ and homotoping the map $Z\to X$, we may assume that each cell of $Z$ maps either entirely to $P$ or entirely to $Q$.
Now we first lift the cells which map to $A=P\cap Q$ to $\tilde A$.
A cell which maps to $P$ (resp.\ $Q$) but not entirely to $A$ is now lifted to $\tilde P$ (resp.\ $\tilde Q$).
This shows that the map from homotopy classes of maps $Z\to\tilde P\cup_{\tilde A}\tilde Q$ to homotopy classes of maps $Z\to X=P\cup_AQ$ is surjective.
To show injectivity, apply the same argument rel boundary to a homotopy between the compositions of two maps $Z\to\tilde P\cup_{\tilde A}\tilde Q$ with $\tilde P\cup_{\tilde A}\tilde Q\to P\cup_AQ$.
Note that in this proof we used the lifting property \eqref{classifylift} which is \emph{a priori} stronger than (albeit \emph{a posteriori} equivalent to) the adjointness property of the classifying space at the level of homotopy categories (the lifting property instead corresponds to a universal property at the $\infty$-category level).
\end{proof}

One can similarly argue by induction on cells that one can construct $\tilde X$ by taking each cell $(D^k,\partial D^k)\times\BB G$ of $X$ and replacing it with $(D^k,\partial D^k)\times BG$ and attaching as appropriate.

\begin{remark}
There are other, more point set topological, definitions of the classifying space of more general topological stacks.
These include taking $\tilde X$ to be the nerve of the simplicial space $[p]\mapsto U\times_X\cdots\times_XU$ ($p+1$ times) where $U\to X$ is a suitable atlas.
It is also possible to define $\tilde X\to X$ by the universal property that $\tilde X\times_XZ\to Z$ should be `fiberwise contractible' for any topological space $Z$ mapping to $X$ (compare Noohi \cite{noohiclassifying}).
We will not make either of these definitions precise, nor prove that they give the right adjoint of $\Spc\hookrightarrow\OrbSpc$, though this is also possible.
\end{remark}

\subsection{Right adjoint}\label{rightadjoint}

Here is another adjoint.

\begin{proposition}\label{proprightadjoint}
The functor $\RepOrbSpc\to\OrbSpc$ has a right adjoint $R$.
\end{proposition}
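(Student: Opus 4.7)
The plan is to construct, for every orbi-CW-complex $Y$, an orbi-CW-complex $RY$ together with a map $\pi : RY \to Y$ in $\OrbSpc$ (the counit), by induction on skeleta, in close parallel with the classifying space construction of Section \ref{classifyingspace}. Start with $RY_{-1} := \varnothing$. Given $RY_{k-1}$ equipped with $\pi : RY_{k-1} \to Y$, form $RY_k$ by attaching, for every finite group $G$ and every representable-homotopy class of triples $(f, g, \iota)$ with $f : \partial D^k \times \BB G \to RY_{k-1}$ representable, $g : D^k \times \BB G \to Y$ arbitrary, and $\iota$ an isomorphism $g|_{\partial D^k \times \BB G} \xrightarrow\sim \pi \circ f$, a $k$-cell $D^k \times \BB G$ attached along $f$; the datum $(g, \iota)$ extends $\pi$ over the new cell. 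This is a valid orbi-CW-attachment by Proposition \ref{collarpushout}, and since $\partial D^k \times \BB G$ and $D^k \times \BB G$ are compact, only a set of classes arises at each stage. Set $RY := \colim_k RY_k$, which is an orbi-CW-complex with a map $\pi$ to $Y$ by Proposition \ref{mcinfinite}.

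To establish the adjunction, we must show that for every orbi-CW-complex $X$ the map $\Hom_{\RepOrbSpc}(X, RY) \to \Hom_{\OrbSpc}(X, Y)$ sending $[\ell] \mapsto [\pi \circ \ell]$ is a bijection. For surjectivity, given $f : X \to Y$, we build a representable lift $\ell : X \to RY$ by induction on cells of $X$: assuming $\ell$ is defined on $X_{k-1}$, each $k$-cell of $X$ with (representable) attaching map $a : \partial D^k \times \BB G \to X_{k-1}$ presents a lifting problem
\begin{equation}
\begin{tikzcd}
\partial D^k \times \BB G \ar[r, "\ell \circ a"] \ar[d] & RY \ar[d, "\pi"] \\
D^k \times \BB G \ar[r] \ar[ur, dashed] & Y
\end{tikzcd}
\end{equation}
with representable top arrow. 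Applying cellular approximation (which preserves representability, using Lemma \ref{homotopicrepresentable}) we may representably homotope the top arrow to land in $RY_{k-1}$, so the resulting triple agrees, up to representable homotopy, with the attaching data of some cell added at stage $k$; the dashed lift is then the inclusion of that cell. For injectivity, given representable lifts $\ell_0, \ell_1$ of $f$ and a homotopy $h : \pi \circ \ell_0 \simeq \pi \circ \ell_1$ in $\OrbSpc$, apply the same cell-by-cell argument to the pair $(X \times [0,1], X \times \partial[0,1])$, observing that each extension step is now isomorphic to $(D^{k+1}, \partial D^{k+1}) \times \BB G$ and so is resolved by cells added at stage $k+1$. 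Functoriality of $R$ and the triangle identities follow from uniqueness of the representable lift.

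The main obstacle is the cellular-approximation step in the existence argument: the representable map $\ell \circ a$ a priori lands in some finite skeleton of $RY$ not necessarily equal to $RY_{k-1}$, and one must homotope it down while preserving representability. Once this is in place, matching with the attaching data of an added cell is automatic by construction. The overall strategy is a representable analogue of the construction of the classifying space $\tilde X$ in Section \ref{classifyingspace}, with the representability constraint playing the role that spacehood of $\tilde X$ plays there.
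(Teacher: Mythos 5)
Your construction and argument are essentially identical to the paper's proof: both build $R(Y)$ skeleton by skeleton by attaching a cell $D^k\times\BB G$ for each homotopy class of lifting square with representable top arrow landing in the previous skeleton, then verify the universal property cell by cell via cellular approximation (keeping the lift cellular inductively so that $\ell\circ a$ already lands in $R(Y)_{k-1}$). The paper phrases the conclusion in terms of the lifting property \eqref{rlifting} (with its ``after possibly homotoping the bottom map rel boundary'' clause) and then deduces the bijection on hom-sets by induction on cells, whereas you unfold that induction explicitly into surjectivity and injectivity — a purely expository difference.
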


\begin{proof}
It suffices to show that for every orbi-CW-complex $X$, there exists an orbi-CW-complex $R(X)$ and a map $R(X)\to X$ such that for every commuting diagram of solid arrows
\begin{equation}\label{rlifting}
\begin{tikzcd}
\partial D^k\times\BB G\ar[r,"\rep"]\ar[d]&R(X)\ar[d]\\
D^k\times\BB G\ar[r]\ar[ru,"\rep",dashed]&X
\end{tikzcd}
\end{equation}
there exists, after possibly homotoping the bottom map rel boundary, a dotted lift.
Indeed, given such a map $R(X)\to X$, it follows that the induced map $\RepOrbSpc(Z,R(X))\to\OrbSpc(Z,X)$ is a bijection (proof by induction on cells), which implies that the adjoint $R$ exists as a functor.

We may now construct the orbi-CW-complex $R(X)$ inductively just as we constructed $\tilde X$ above.
We begin with $R(X)_{-1}=\varnothing$, and we define $R(X)_k$ by attaching copies of $D^k\times\BB G$ to $R(X)_{k-1}$ along the upper horizontal map in diagrams \eqref{rlifting} with $R(X)_{k-1}$ in place of $R(X)$ representing every homotopy class of such (note that the attaching maps are by definition representable).
Using cellular approximation and induction, it follows that $R(X)_r$ satisfies the desired lifting property \eqref{rlifting} for all $k\leq r$; hence $R(X)$ is as desired.
\end{proof}

The proof of Proposition \ref{proprightadjoint} given above shows existence, but is not so amenable to computation.
So, let us sketch what we expect is a more concrete definition of the functor $R:\OrbSpc\to\RepOrbSpc$, without claiming to give a complete proof.
We define a functor $R:\OrbSpc\to\RepOrbSpc$ as
\begin{equation}\label{Rformula}
R(X):=\bigsqcup_{G_0\hookrightarrow\cdots\hookrightarrow G_p}\Delta^p\times\BB G_0\times\Mapstilde(\BB G_p,X)\Bigm/{\sim}.
\end{equation}
The reader may recognize this formula as a `homotopy coend'.
Here $\Mapstilde(\BB G,X)$ denotes the classifying space of the mapping orbispace $\Maps(\BB G,X)$, which is defined by the universal property that a map $Y\to\Maps(\BB G,X)$ is the same as a map $Y\times\BB G\to X$.
When $X$ is an orbi-CW-complex, one can instead be much more concrete: $\Mapstilde(\BB G,X)$ may be defined by replacing each cell $D^k\times\BB H$ in $X$ with $D^k\times\Hom(G,H)\hq H$, where $H\acts\Hom(G,H)$ by conjugation and $\hq$ denotes the homotopy quotient.
Now \eqref{Rformula} is meant to be modelled on the nerve of the $2$-category $\InjFinGrp$ of finite groups, injective homomorphisms, and conjugations; the quotient $\sim$ indicates the colimit over the natural face and degeneracy identifications.

\begin{conjecture}\label{Rconjecture}
The expression \eqref{Rformula} defines a functor $R:\OrbSpc\to\RepOrbSpc$ which is right adjoint to $\RepOrbSpc\to\OrbSpc$.
\end{conjecture}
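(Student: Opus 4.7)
The plan is to identify the functor $R$ defined by \eqref{Rformula} with the abstract right adjoint from Proposition \ref{proprightadjoint}. By uniqueness of adjoints, it suffices to construct a natural counit $\varepsilon_X \colon R(X) \to X$ and verify the lifting characterization \eqref{rlifting} for it. First I would define $\varepsilon_X$ piecewise: on each component $\Delta^p \times \BB G_0 \times \Mapstilde(\BB G_p, X)$ appearing in \eqref{Rformula}, project away $\Delta^p$, use the chain to produce $\BB G_0 \to \BB G_p$, and then apply the tautological evaluation $\BB G_p \times \Mapstilde(\BB G_p, X) \to X$ from the universal property of $\Mapstilde(\BB G_p, X)$ as the classifying space of the mapping orbispace $\Maps(\BB G_p, X)$. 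A direct check shows this is compatible with the face and degeneracy identifications in $\sim$. An orbi-CW structure on $R(X)$ is obtained by combining the cellular structure of $\Delta^p \times \BB G_0$ (whose isotropy comes solely from $\BB G_0$, hence is finite) with the CW structure on $\Mapstilde(\BB G_p, X)$ from Section \ref{classifyingspace}.

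The key reduction is that it suffices to show that for every finite group $G$ the map
\begin{equation}
\RepMapstilde(\BB G, R(X)) \to \Mapstilde(\BB G, X)
\end{equation}
induced by $\varepsilon_X$ is a weak equivalence; granting the analogous statement for all pairs $(\partial D^k \times \BB G, D^k \times \BB G)$, a standard obstruction theory argument (as in the proof of Proposition \ref{proprightadjoint}) supplies the dotted lift in \eqref{rlifting}, with representability of the lift automatic by Lemma \ref{homotopicrepresentable}. Using $\RepMapstilde(\BB G, \BB G_0) = \Inj(G, G_0)/G_0$ and absorbing the disjoint union over $G_0$, the source is
\begin{equation}
\bigsqcup_{G \hookrightarrow G_0 \hookrightarrow \cdots \hookrightarrow G_p} \Delta^p \times \Mapstilde(\BB G_p, X) \Bigm/ {\sim},
\end{equation}
which is precisely the homotopy colimit over $(G/\InjFinGrp)^{\op}$ of the functor carrying $(G \hookrightarrow G')$ to $\Mapstilde(\BB G', X)$, with face maps propagating the coefficient by pullback along chain morphisms.

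To evaluate this hocolim I would apply cofinality. The identity injection is an initial object of $G/\InjFinGrp$ in the $2$-categorical sense: the $\Hom$-groupoid from this object to any $(G \hookrightarrow G')$ is contractible, since conjugation in $G'$ supplies a unique $2$-cell identifying any representative injection with the fixed one. Equivalently, it is a terminal object of $(G/\InjFinGrp)^{\op}$, and the inclusion of that terminal object is homotopy cofinal for colimits, so the hocolim is identified with the value of the functor there, namely $\Mapstilde(\BB G, X)$. Unwinding the construction of $\varepsilon_X$ confirms that this identification is the one induced by $\varepsilon_X$. The main obstacle will be justifying this cofinality argument within the explicit $2$-categorical framework used in the paper (which emphasizes bar constructions and homotopy categories rather than $\infty$-categories): concretely, one must build the deformation retraction by hand, extending each $p$-chain $G \hookrightarrow G_0 \hookrightarrow \cdots \hookrightarrow G_p$ to the $(p+1)$-chain prepending the identity $G \hookrightarrow G$ and pulling $\Mapstilde(\BB G_p, X)$ back to $\Mapstilde(\BB G, X)$ along the composite, then verifying coherence of these contractions across all face and degeneracy identifications simultaneously.
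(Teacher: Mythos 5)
Your proposal follows essentially the same route as the paper's own proof sketch: construct the tautological counit $R(X)\to X$, reduce the adjunction to showing $\RepMapstilde(\BB G,R(X))\to\Mapstilde(\BB G,X)$ is an equivalence, rewrite the source as the bar-construction homotopy colimit over chains prepended by $G$, and collapse it using the (co)finality of the identity object of the under-category $G/\InjFinGrp$. The paper only records this as a sketch of the conjecture, and your added detail (the obstruction-theoretic reduction and the explicit extra-degeneracy contraction) fills in exactly the steps it asserts rather than taking a different path.
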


\begin{proof}[Proof Sketch]
There is a tautological map $R(X)\to X$, and it suffices to show that the induced map
\begin{equation}
\smash{\RepMapstilde}(\BB G,R(X))\to\Mapstilde(\BB G,X)
\end{equation}
is a homotopy equivalence.
Now $\smash{\RepMapstilde}(\BB G,R(X))$ is given by
\begin{equation}
\bigsqcup_{G_0\hookrightarrow\cdots\hookrightarrow G_p}\Delta^p\times\smash{\RepMapstilde}(\BB G,\BB G_0)\times\Mapstilde(\BB G_p,X)\Bigm/{\sim}.
\end{equation}
Now $\smash{\RepMapstilde}(\BB G,\BB G_0)$ is just the classifying space of the groupoid of morphisms $G\to G_0$ in $\InjFinGrp$, so we can equivalently write this as
\begin{equation}
\bigsqcup_{G\hookrightarrow G_0\hookrightarrow\cdots\hookrightarrow G_p}\Delta^p\times\Mapstilde(\BB G_p,X)\Bigm/{\sim}.
\end{equation}
This being a homotopy colimit over a category with an initial object simply reduces to $\Mapstilde(\BB G,X)$, as desired.
\end{proof}

The space $R(*)\in\RepOrbSpc$ is the terminal object, and so may be expected to play a role in the homotopy theory of orbispaces (for example, see Conjectures \ref{spacesoverR} and \ref{spectraoverR}).
It is characterized by the universal property that the space of representable maps to it is contractible (in the precise sense that $\pi_0^{G,\rep}(R(*))=*$ for all $G$, and $\pi_k^G(R(*),p)=0$ for representable, hence all, basepoints $p$).
Combining this with Lemma \ref{repfactor} implies that the space of all (not necessarily representable) maps from $\BB G$ to $R(*)$ is homotopy equivalent to the (discrete) set of normal subgroups of $G$.
It follows that $R(*)$ is, in Rezk's language, the normal subgroup classifier $\mathcal N$ \cite{rezkcohesion} (more precisely, the tautological functor $\RepOrbSpc\to\OrbSpc$ sends $R(*)\in\RepOrbSpc$ to $\mathcal N\in\OrbSpc$; from our perspective, the categories $\RepOrbSpc$ and $\OrbSpc$ have the `same' objects, namely orbi-CW-complexes, so it makes sense to simply say that $R(*)$ is $\mathcal N$, however in Rezk's setup the functor $R:\OrbSpc\to\RepOrbSpc$ is the more natural one, being given by a restriction of presheaves, so the use of its left adjoint $\RepOrbSpc\to\OrbSpc$ becomes more significant).

Specializing \eqref{Rformula} gives
\begin{equation}\label{rpoint}
R(*)=\bigsqcup_{G_0\hookrightarrow\cdots\hookrightarrow G_p}\Delta^p\times\BB G_0\Bigm/{\sim},
\end{equation}
which is an orbi-CW-complex, and one can follow the proof sketch above to see that it is indeed $R(*)$.
Since every object of $\RepOrbSpc$ admits a unique up to homotopy representable map to $R(*)$, we may think of objects of $\RepOrbSpc$ informally as being `representable over $R(*)$'.
More precisely, we make the following conjecture (a form of which is proven by Rezk \cite[Proposition 4.6.1]{rezkcohesion}):

\begin{conjecture}\label{spacesoverR}
The category $\RepOrbSpc$ is equivalent to the category of representable fibrations over $R(*)$ (with reasonable fibers).
\end{conjecture}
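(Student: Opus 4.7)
The plan is to combine three ingredients: the explicit description \eqref{rpoint} of $R(*)$ as the nerve of $\InjFinGrp$; the Gepner--Henriques equivalence $\RepOrbSpc\simeq\PSh(\Rep\{\BB G\})$ recalled in the introduction; and a descent principle saying that a representable fibration over $R(*)$ is recovered from its fibers together with coherent data along the simplices of \eqref{rpoint}. First one must pin down what "representable fibration" means---a representable map $E\to R(*)$ having the homotopy lifting property against representable maps from orbi-CW-pairs---and what "reasonable fibers" means---presumably, fibers having the homotopy type of a CW-complex.

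The forward functor sends a representable fibration $E\to R(*)$ to its associated presheaf on $\InjFinGrp$. For each finite group $G$, the vertex inclusion $\BB G\hookrightarrow R(*)$ (the $0$-simplex indexed by $G_0=G$ in \eqref{rpoint}) pulls back $E$ to a representable map $E_G\to\BB G$, equivalently to a free $G$-space $Y_G$. An injection $\iota\colon G\hookrightarrow H$ produces a $1$-simplex of $R(*)$, i.e.\ a path from $\BB G$ to $\BB H$, and the homotopy lifting property yields an $\iota$-equivariant comparison map between $Y_G$ and $Y_H$; higher simplices impose the required coherences. The resulting presheaf on $\Rep\{\BB G\}$ corresponds by Gepner--Henriques to an object $X_E\in\RepOrbSpc$. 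The inverse functor sends $X\in\RepOrbSpc$ to its unique up to homotopy representable map $X\to R(*)$ (which exists because $R(*)$ is terminal in $\RepOrbSpc$), fibrantly replaced by the usual mapping path construction; its fibers over $\BB G$ are then the representable mapping spaces $\RepMapstilde(\BB G,X)$, matching the Gepner--Henriques description.

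One then shows the two composites are naturally equivalent to the identity. One composite takes the fibration $E$, extracts its fibers to form $X_E$, then rebuilds the fibration $X_E\to R(*)$; the natural map back to $E$ is a fiberwise equivalence by construction. The other composite takes $X$ to its fibration and then reads off the presheaf of fibers, recovering $X$ up to the Gepner--Henriques equivalence. A fiberwise Whitehead theorem in the spirit of Proposition \ref{whitehead} completes the argument.

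The main obstacle is establishing the descent principle rigorously: showing that a representable fibration over the homotopy colimit presentation \eqref{rpoint} of $R(*)$ really is determined by its restrictions to the cells $\Delta^p\times\BB G_0$ and their boundary gluing data. This amounts to setting up a well-behaved model or $\infty$-categorical structure on representable fibrations in which the inclusions $\partial\Delta^p\times\BB G_0\hookrightarrow\Delta^p\times\BB G_0$ behave as cofibrations with good lifting properties; without this the higher coherence data will be difficult to control. A secondary difficulty is making "reasonable fibers" precise in a way compatible with the Gepner--Henriques equivalence, which is stated at the level of homotopy categories---at the $\infty$-categorical level both sides should agree automatically, but a homotopy-category statement requires some care in restricting to the correct subclass of fibrations so that the fiber extraction functor is well-defined up to homotopy.
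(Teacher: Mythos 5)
First, a point of order: the statement you are proving is a \emph{Conjecture} in the paper, not a theorem. The paper deliberately leaves it open, noting only that a form of it is proven by Rezk and that the formula \eqref{Rformula} together with the sketch of Conjecture \ref{Rconjecture} ``would apply to define an inverse of the restricted Yoneda functor $\RepOrbSpc\to\PSh(\Rep\{\BB G\})$.'' So there is no proof in the paper to compare against, and the standard your write-up must meet is that of an actual proof of an open statement.

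Measured against that standard, there is a genuine gap, and you have named it yourself: the entire content of the conjecture is the ``descent principle'' that a representable fibration over the colimit presentation \eqref{rpoint} of $R(*)$ is equivalent to the coherent system of its fibers over the cells $\Delta^p\times\BB G_0$ --- in other words, a straightening/unstraightening equivalence between fibrations over $R(*)$ and presheaves on $\Rep\{\BB G\}$. Deferring this to a ``to be established rigorously'' step is not a proof strategy; it is a restatement of the conjecture. Everything else in your outline (restricting a fibration to vertices, using Gepner--Henriques, fibrant replacement of $X\to R(*)$) is the easy formal shell around that equivalence. Beyond this structural issue there are two technical errors worth flagging. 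First, the pullback of a representable fibration along $\BB G\hookrightarrow R(*)$ is $Y_G/G$ for a $G$-space $Y_G$ with \emph{arbitrary} (not free) action: representability of $E_G\to\BB G$ says only that $E_G\times_{\BB G}*$ is a space, and the isotropy of $E_G$ is exactly the stabilizers of the $G$-action on $Y_G$. Calling $Y_G$ a free $G$-space would force $E$ to have trivial isotropy. Second, and more seriously, the presheaf on $\Rep\{\BB G\}$ attached to $X$ by Gepner--Henriques sends $\BB G$ to the representable mapping space $\RepMapstilde(\BB G,X)$, which is an Elmendorf-style ``fixed points'' datum, \emph{not} the underlying $G$-space $Y_G$ of the fiber. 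Identifying the fiber data of the fibration with the restricted-Yoneda presheaf therefore requires an Elmendorf-type comparison in addition to the descent step; your forward functor as written produces the wrong presheaf. Note also that the paper's own gloss on the conjecture says the fiber over a \emph{generic} point of $R(*)$ should be the classifying space $\tilde X$, which is yet another piece of the dictionary your construction would need to recover.
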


We can specify this further: the equivalence should send a representable fibration over $R(*)$ to its total space, and the fiber over a generic point of $R(*)$ should be the classifying space $\tilde X$ of the orbispace $X$.
There are also interesting functors to $G$-spaces given by pulling back under the unique up to contractible choice representable map $\BB G\to R(*)$.
In fact, it seems that fibrations over $R(*)$ should be the same (in the $\infty$-categorical context) as $\PSh(\Rep\{\BB G\})$ where $\Rep\{\BB G\}\subseteq\RepOrbSpc$ denotes the full subcategory spanned by the objects $\BB G$, and $\PSh$ denotes presheaves.
The proposed formula \eqref{Rformula} and the sketch of proof of Conjecture \ref{Rconjecture} in fact would apply to define an inverse of the restricted Yoneda functor $\RepOrbSpc\to\PSh(\Rep\{\BB G\})$.

\subsection{Homotopy categories of relative orbispaces}

We now introduce categories $\OrbSpc_*$ and $\RepOrbSpc_*$ of `relative orbispaces'.
These categories should be thought of as analogues of the category $\Spc_*$ of pointed CW-complexes and homotopy classes of pointed maps.
They are, however, \emph{not} the (representable) homotopy categories of pointed orbi-CW-complexes and homotopy classes of pointed maps.
The reason that pointed orbi-CW-complexes and pointed maps is not what we want to consider may be traced back to the fact that there is no well-defined quotient orbi-CW-complex $X/A$ of a given orbi-CW-pair $(X,A)$.

We begin with the categories of orbi-CW-pairs $\OrbSpcPair$ and $\RepOrbSpcPair$, whose objects are orbi-CW-pairs $(X,A)$ (meaning $X$ is an orbi-CW-complex and $A\subseteq X$ is a subcomplex), and whose morphisms $(X,A)\to(Y,B)$ are (representable) commutative squares.
Product of pairs is defined as usual $(X,A)\times(Y,B):=(X\times Y,(A\times Y)\cup(X\times B))$, as is the notion of homotopies between maps of pairs.

Now the categories $\OrbSpc_*$ and $\RepOrbSpc_*$ of `relative orbispaces' are defined as follows.
The objects are again orbi-CW-pairs $(X,A)$.
A `relative map' of orbi-CW-pairs $(X,A)\dashrightarrow(Y,B)$ consists of a closed set $A\subseteq A^+\subseteq X$, an open set $U\subseteq X$ with $X=U\cup(A^+)^\circ$, and a map of pairs $(U,U\cap A^+)\to(Y,B)$.
The composition of two relative maps
\begin{equation}
\begin{tikzcd}[column sep = large]
(X,A)\ar[r,dashed,"{(A^+,U,f)}"]&(Y,B)\ar[r,dashed,"{(B^+,V,g)}"]&(Z,C)
\end{tikzcd}
\end{equation}
is the triple $(A^+\cup f^{-1}(B^+),f^{-1}(V),g\circ f)$; composition is associative.
A homotopy between relative maps is a relative map $(X,A)\times[0,1]\to(Y,B)$.
The morphisms in $\OrbSpc_*$ and $\RepOrbSpc_*$ are (representable) relative maps modulo (representable) homotopy.
Note that it is not true that representability is preserved under homotopy, nor that a homotopy between representable maps is necessarily representable.
There is a tautological functor
\begin{equation}
\RepOrbSpc_*\to\OrbSpc_*
\end{equation}
which is not faithful.
The homotopy category of pointed CW-complexes $\Spc_*$ is a full subcategory of both $\RepOrbSpc_*$ and $\OrbSpc_*$.

\begin{proposition}[Excision]\label{excision}
The functors
\begin{align}
\OrbSpcPair&{}\to\OrbSpc_*\\
\RepOrbSpcPair&{}\to\RepOrbSpc_*
\end{align}
are localizations at the collection $W$ of morphisms of the form $(P,P\cap Q)\to(X,Q)$ where $X=P\cup Q$ is a cover by subcomplexes.

The same holds if we restrict both sides to the full subcategories spanned by finite orbi-CW-pairs.
\end{proposition}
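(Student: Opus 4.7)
The strategy is to verify the universal property of localization in two steps: (a) check that $\OrbSpcPair\to\OrbSpc_*$ sends each morphism in $W$ to an isomorphism, and (b) show that any functor $F\colon\OrbSpcPair\to\mathcal C$ inverting $W$ extends uniquely to $\bar F\colon\OrbSpc_*\to\mathcal C$. The same argument will handle the representable variant and the finite full subcategories simultaneously, because all of the auxiliary maps I construct below are representable whenever the input is and preserve finiteness.

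For step (a), given subcomplexes $P,Q\subseteq X$ with $X=P\cup Q$ and the excision $\iota\colon(P,P\cap Q)\to(X,Q)$, I would construct an inverse relative map $r\colon(X,Q)\dashrightarrow(P,P\cap Q)$ by choosing a collared open neighborhood $N\supseteq P$ in $X$ together with a cellular retraction $\rho\colon N\to P$ sending $N\cap Q$ into $P\cap Q$; such $N$ and $\rho$ exist by a skeletal induction using the collared cell attachments. Setting $A^+=Q$, $U=N$, and $f=\rho$ gives a valid relative map, because $X\setminus N\subseteq X\setminus P$ is open in $X$ and contained in $Q$, hence in $Q^\circ$, whence $X=N\cup Q^\circ$. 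The composition $r\circ\iota$ is computed by the composition formula to be the identity relative map on $(P,P\cap Q)$, while $\iota\circ r=(Q,N,\iota\circ\rho)$ is homotopic to $\id_{(X,Q)}$ via the straight-line homotopy from $\iota\circ\rho$ to the inclusion $N\hookrightarrow X$ supplied by the collar.

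For step (b), the key construction is to factor each relative map $(A^+,U,f)\colon(X,A)\dashrightarrow(Y,B)$ as a zigzag
\begin{equation*}
(X,A)\xrightarrow{\id_X}(X,Q)\xleftarrow{\iota}(P,P\cap Q)\xrightarrow{f|_P}(Y,B),
\end{equation*}
where $P,Q\subseteq X$ are subcomplexes (possibly after refining the cellular structure of $X$) satisfying $X=P\cup Q$, $A\subseteq Q\subseteq A^+$, and $P\subseteq U$. Such $P$ and $Q$ exist by subdividing $X$ subordinate to the open cover $\{U,(A^+)^\circ\}$ and then taking $Q$ to be the subcomplex hull of $A$ together with every closed cell missing $U$, and $P$ the subcomplex hull of the remaining closed cells. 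Since $\iota$ lies in $W$, I set $\bar F(A^+,U,f):=F(f|_P)\circ F(\iota)^{-1}\circ F(\id_X)$.

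The main obstacle will be showing that $\bar F$ is well-defined on homotopy classes of relative maps and respects composition. Independence of the choices of $P$, $Q$, and the subdivision will be proved by observing that any two valid factorizations admit a common refinement with $P=P_1\cap P_2$ and $Q=Q_1\cup Q_2$ (which is again a valid subcomplex pair, since $X=(P_1\cap P_2)\cup(Q_1\cup Q_2)$) connected to both via honest pair maps in $\OrbSpcPair$; applying $F$ to the resulting commuting diagram forces the two values of $\bar F$ to agree. Invariance under homotopy will be handled by applying the factorization construction to a relative homotopy $(X,A)\times[0,1]\dashrightarrow(Y,B)$, exploiting that the cylinder admits compatible subdivisions extending any pair of endpoint subdivisions. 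Compatibility with composition is similar, using simultaneously refined subdivisions of the domain, intermediate, and target orbi-CW-pairs. Uniqueness of $\bar F$ follows from the explicit factorization, since any such extension of $F$ must invert $\iota$ and must agree with $F$ on the two honest pair maps. The representable and finite cases require no modification.
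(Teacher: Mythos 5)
Your proposal follows the same two-step strategy as the paper (show $W$ is inverted, then verify the universal property of localization by factoring each relative map through a $W$-morphism), and the factorization you choose is essentially the paper's: your $Q$ is the shrunken $A^+$ and your $P$ is the shrunken $U$, so that $(X,A)\to(X,Q)\xleftarrow{W}(P,P\cap Q)\xrightarrow{f|_P}(Y,B)$ is the same zigzag as $(X,A)\to(X,A^+)\xleftarrow{W}(U,U\cap A^+)\xrightarrow{f}(Y,B)$ after subdivision. Step (a) is fine modulo minor bookkeeping: the composite $\iota\circ r$ has $A^+=Q\cup\rho^{-1}(P\cap Q)$ rather than $Q$, though it is still homotopic to the identity.

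The genuine gap is in the well-definedness argument. You propose to compare two factorizations by forming a ``common refinement'' with $P=P_1\cap P_2$ and $Q=Q_1\cup Q_2$. But $P_1$ and $P_2$ are subcomplexes with respect to possibly \emph{different} cellular subdivisions of $X$, and two CW-subdivisions of a CW-complex need not admit a common refinement, so $P_1\cap P_2$ is not known to be a subcomplex of any CW structure on $X$; hence the $W$-morphism you rely on is not available. Furthermore, even granting a common subdivision, the ``honest pair maps'' $(P,P\cap Q)\to(P_i,P_i\cap Q_i)$ you invoke do not exist as maps of pairs, since $P\cap Q\not\subseteq P_1\cap Q_1$ in general (a point of $P_1\cap P_2\cap Q_2$ need not lie in $Q_1$); one must instead pass through the intermediate pair $(P_1,P_1\cap Q)$, and this requires a more careful diagram than what you sketch. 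The paper avoids both problems: well-definedness and homotopy invariance are reduced to the single tautological fact that the time-$0$ and time-$1$ inclusions $(X,A)\to(X\times[0,1],A\times[0,1])$ already coincide in $\OrbSpcPair$ (being homotopic maps of pairs), and compatibility with composition is exhibited by one fixed commuting diagram in $\OrbSpcPair$ whose outer vertical compositions become equal once $W$ is inverted—an argument that never needs to compare unrelated subdivisions.
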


\begin{proof}
First note that the morphisms $W$ in $\RepOrbSpcPair$ do indeed become isomorphisms in $\RepOrbSpc_*$ (hence also in $\OrbSpc_*$).
Indeed, such morphisms are, up to isomorphism in $\RepOrbSpcPair$, of the form
\begin{equation}
(X,A)\to(X\cup_{B\times\{0\}}(B\times[0,1])\cup_{B\times\{1\}}Y,A\cup_{B\times\{0\}}(B\times[0,1])\cup_{B\times\{1\}}Y)
\end{equation}
and these have an evident inverse up to homotopy in $\RepOrbSpc_*$.

We now show that $\OrbSpcPair\to\OrbSpc_*$ satisfies the universal property of localization at $W$, namely that for any functor $\OrbSpcPair\to\C$ which sends all morphisms in $W$ to isomorphisms factors uniquely up to unique isomorphism through $\OrbSpc_*$ (and the same for $\RepOrbSpcPair\to\RepOrbSpc_*$).
Let $F:\OrbSpcPair\to\C$ be given.
The action of $\tilde F$ on objects is fixed since $\OrbSpcPair\to\OrbSpc_*$ is essentially surjective.
We are thus reduced to showing that there exists a unique collection of maps $\tilde F:\OrbSpc_*((X,A),(Y,B))\to\C(F(X,A),F(Y,B))$ factoring $F$ which are compatible with composition.

Given a map $(A^+,U,f):(X,A)\to(Y,B)$ in $\OrbSpc_*$, it factors as
\begin{equation}
(X,A)\to(X,A^+)\xleftarrow W(U,U\cap A^+)\xrightarrow f(Y,B).
\end{equation}
By subdividing $X$, we may shrink $U\subseteq X$ and $A^+\subseteq X$ to be subcomplexes covering $X$ (so $U$ is, in particular, likely no longer open).
Since $F$ sends $W$ to isomorphisms it follows that $\tilde F$ applied to this map is determined uniquely by $F$.
It is a tautology that $\tilde F(A^+,U,f)$ defined in this way is invariant under homotopy of $(A^+,U,f)$, simply because the two maps $(X,A)\to(X\times[0,1],A\times[0,1])$ coincide in $\RepOrbSpcPair$.

Finally, we should check that $\tilde F$ respects composition, which follows from the following commuting diagram:
\begin{equation}
\begin{tikzcd}
{}&(X,A)\ar[d]\ar[ld]\\
(X,A^+\cup f^{-1}(B^+))&\ar[l](X,A^+)\\
(U,U\cap(A^+\cup f^{-1}(B^+)))\ar[u,"W"]\ar[rd,"f"]&(U,U\cap A^+)\ar[u,"W"]\ar[l]\ar[d,"f"]\\
(f^{-1}(V),f^{-1}(V)\cap(A^+\cup f^{-1}(B^+)))\ar[u,"W"]\ar[rd,"f"]&(Y,B^+)\\
&\ar[u,"W"](V,V\cap B^+)\ar[d,"g"]\\
&(Z,C)
\end{tikzcd}
\end{equation}
The point here is that once the maps $W$ are declared to be isomorphisms, commutativity of the diagram implies (being careful about the directions of the maps) that the rightmost vertical composition coincides with the leftmost vertical composition.

To see that the same holds after restricting to finite orbi-CW-pairs, we just need to observe that if then input orbi-CW-pairs in the above proof are all finite, then the additional orbi-CW-pairs appearing in the intermediate constructions can also be taken to be finite.
\end{proof}

The functor $\Spc_*\to\OrbSpc_*$ has both adjoints.
The existence of the left adjoint (the coarse space) is immediate (send an orbi-CW-pair $(X,A)$ to the CW-pair $(\left|X\right|,\left|A\right|)$).
For the existence of the right adjoint (the classifying space), we argue as in Lemma \ref{classifyglue}.
Given an orbi-CW-complex $(X,A)$, we may find classifying spaces $\tilde X\to X$ and $\tilde A\to A$ so that $\tilde A\subseteq\tilde X$ is a subcomplex and the classifying maps together define a map of pairs $(\tilde X,\tilde A)\to(X,A)$.
The argument of the proof of Lemma \ref{classifyglue} then shows that this map exhibits $(\tilde X,\tilde A)$ as the classifying space of $(X,A)$.

There is a symmetric monoidal `smash product' $\wedge$ on $\RepOrbSpc_*^f$ and $\OrbSpc_*^f$ defined as follows.
Product of finite orbi-CW-pairs $(X,A)\times(Y,B):=(X\times Y,(A\times Y)\cup(X\times B))$ is a symmetric monoidal structure on $\RepOrbSpcPair^f$ and $\OrbSpcPair^f$.
To see that it descends to $\RepOrbSpc_*^f$ and $\OrbSpc_*^f$, it suffices by Lemma \ref{excision} to note that $(P,P\cap Q)\times(Y,B)\to(X,Q)\times(Y,B)$ is again of the form $(P',P'\cap Q')\to(X',Q')$, namely $X'=X\times Y$, $P'=P\times Y$, and $Q'=(Q\times Y)\cup(X\times B)$.

Let us argue that there is a natural isomorphism $(Z\wedge W)^\sim=\tilde Z\wedge\tilde W$.
If $Z=(X,A)$ and $W=(Y,B)$, then (recalling that the classifying space of $(X,A)$ is $(\tilde X,\tilde A)$) this is the assertion that
\begin{equation}
((X\times Y)^\sim,((A\times Y)\cup_{A\times B}(X\times B))^\sim)=(\tilde X\times\tilde Y,(\tilde A\times\tilde Y)\cup_{\tilde A\times\tilde B}(\tilde X\times\tilde B))
\end{equation}
which follows from Lemma \ref{classifyglue} and the fact that classifying space commutes with products.

Given any $\infty$-category such as (the $\infty$-categorical refinement of) $\RepOrbSpc$, there is an $\infty$-category of `pointed objects of $\RepOrbSpc$' namely the under-category of the terminal object, in this case $R(*)$.
It is reasonable to expect this yields the same result as our explicit geometric definition of the category of relative orbispaces:

\begin{conjecture}
There is an equivalence $\RepOrbSpc_*=\RepOrbSpc_{R(*)/}$ as $\infty$-categories.
\end{conjecture}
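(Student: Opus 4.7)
The plan is to produce quasi-inverse functors
\begin{align*}
F \colon \RepOrbSpc_* &{}\longrightarrow \RepOrbSpc_{R(*)/}, & (X,A) &{}\longmapsto \bigl(X \cup_A R(*),\ R(*) \hookrightarrow X \cup_A R(*)\bigr), \\
G \colon \RepOrbSpc_{R(*)/} &{}\longrightarrow \RepOrbSpc_*, & (r \colon R(*) \to Y) &{}\longmapsto \bigl(\cyl(r),\ R(*)\bigr).
\end{align*}
Here $X \cup_A R(*)$ is the pushout in $\RepOrbSpc$ along the essentially unique representable map $A \to R(*)$ (which exists because $R(*)$ is terminal in $\RepOrbSpc$); and in $G$ one first applies cellular approximation to $r$ so that $\cyl(r)$ inherits an orbi-CW-structure in which $R(*)$ is a subcomplex. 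Both constructions are legitimate by Propositions \ref{collarpushout} and \ref{mcinfinite}.

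First I would construct $F$ at the level of $\RepOrbSpcPair$ and verify that it inverts the localization class $W$ of Proposition \ref{excision}: if $X = P \cup Q$, then $(P \cup Q) \cup_Q R(*) = P \cup_{P \cap Q} (Q \cup_Q R(*)) = P \cup_{P \cap Q} R(*)$, so $F$ applied to $(P, P \cap Q) \hookrightarrow (P \cup Q, Q)$ is an isomorphism. By the universal property furnished by Proposition \ref{excision}, $F$ descends to $\RepOrbSpc_*$. The key combinatorial step is to match morphism sets. Given a relative map $(A^+, U, f) \colon (X, A) \dashrightarrow (Y, B)$, use $f$ on $U$ and the (unique up to homotopy) representable map $A^+ \to R(*) \hookrightarrow Y \cup_B R(*)$ on $A^+$; these agree on $U \cap A^+$ because both factor through $B \to R(*)$ in $Y \cup_B R(*)$, and glue to a map $X \cup_A R(*) \to Y \cup_B R(*)$ over $R(*)$. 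Conversely, given a map $h \colon X \cup_A R(*) \to Y \cup_B R(*)$ over $R(*)$, the closed substacks $h^{-1}(Y) \supseteq h^{-1}(B)$ and $h^{-1}(R(*)) \supseteq A$ of $X$ together with a suitable collared thickening recover the triple $(A^+, U, f)$. A diagram chase along the lines of the composition verification in Proposition \ref{excision} shows these are mutually inverse on homotopy classes and respect composition; combined with the easy verifications that $G \circ F \simeq \id$ (using a $W$-morphism to identify $(X \cup_A R(*), R(*))$ with $(X, A)$) and $F \circ G \simeq \id$ (using the deformation retraction $\cyl(r) \simeq Y$), this yields an equivalence of $1$-categories.

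The main obstacle will be the $\infty$-categorical upgrade, which the conjecture requires but for which the paper deliberately sets up no foundations. One would first enrich $\RepOrbSpc$ over simplicial sets (for instance by taking the Kan complex whose $n$-simplices are representable maps $X \times \Delta^n \to Y$) and form $\RepOrbSpc_{R(*)/}$ as the simplicial under-category. The crucial technical point is that $R(*) \hookrightarrow X \cup_A R(*)$ is a cofibration, being the pushout of $A \hookrightarrow X$, so the homotopy fiber
\[
\operatorname{hofib}\bigl(\Hom(X \cup_A R(*), Y \cup_B R(*)) \to \Hom(R(*), Y \cup_B R(*))\bigr)
\]
over the basepoint is computed strictly and should coincide with the simplicial mapping space in $\RepOrbSpc_*$ coming from our explicit definition of relative maps. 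Rigorously producing such a simplicial enrichment (or equivalently a model structure on representable orbi-CW-pairs) and verifying that $F$ and $G$ are homotopical for it is the principal technical task; once this is in place, essential surjectivity and full faithfulness on mapping spaces follow from the $1$-categorical arguments above.
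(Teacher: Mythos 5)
This is stated in the paper as a conjecture with no proof, so there is nothing to compare against; the useful thing to say is where your sketch falls short of settling it.

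Your choice of $F$ and $G$ is a sensible opening move, and the verification that $W$-morphisms are inverted (and that $G\circ F$ is the identity via excision, $F\circ G$ via retracting the cylinder) is on the right track. But the argument has two real gaps, one of which you half-acknowledge and one of which you don't. First, the gluing step defining $F$ on morphisms is not as automatic as written: on $U\cap A^+$ you are comparing $f|_{U\cap A^+}\colon U\cap A^+\to B\to R(*)$ with the restriction of whatever representable map $A^+\to R(*)$ you chose, and terminality of $R(*)$ in $\RepOrbSpc$ gives only that these two maps are \emph{homotopic}, not equal; to glue to an actual map $X\cup_A R(*)\to Y\cup_B R(*)$ you must choose $A^+\to R(*)$ so as to strictly extend the first map, and then check that the resulting homotopy class is independent of this choice, that it respects composition, and — most delicately — that the compatibility homotopy with the base points is itself well-defined up to coherent higher homotopy. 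Your converse recovery of $(A^+,U,f)$ from $h$ faces the dual problem: the data in $\RepOrbSpc_{R(*)/}$ is not just $h$ but $h$ together with a path in the mapping space witnessing commutativity over $R(*)$, and the pair $(A^+,U,f)$ must encode that path.

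Second, and more fundamentally, ``equivalence of $1$-categories'' is not a meaningful intermediate goal here: because $R(*)$ is terminal in $\RepOrbSpc$, the naive $1$-categorical undercategory $\RepOrbSpc_{R(*)/}$ (taken inside the homotopy category) is literally $\RepOrbSpc$ itself, which is manifestly not $\RepOrbSpc_*$ (for instance, $\RepOrbSpc_*$ has a suspension endofunctor and cofiber sequences). The object you must compare with $\RepOrbSpc_*$ is $\Ho\bigl(\RepOrbSpc^\infty_{R(*)/}\bigr)$, and the difference between these two $1$-categories is exactly the content of the conjecture. So the ``$\infty$-categorical upgrade'' you defer is not an upgrade of an already-proven $1$-categorical statement — it \emph{is} the statement. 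The paper offers a possible shortcut you don't mention: Gepner--Henriques identify $\RepOrbSpc$ with $\PSh(\Rep\{\BB G\})$ at the $\infty$-categorical level, and under that equivalence $R(*)$ corresponds to the terminal presheaf, so the right-hand side becomes presheaves of pointed spaces on $\Rep\{\BB G\}$; proving the conjecture would then reduce to identifying $\RepOrbSpc_*$ as built in the paper with pointed presheaves, which is a sibling conjecture the paper also leaves open. A proof along those lines would at least bypass the ad hoc simplicial enrichment you are trying to build by hand.
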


\begin{conjecture}
The category $\RepOrbSpc_*$ is equivalent to the category of pointed representable fibrations over $R(*)$ and to the category of presheaves of pointed spaces on $\Rep\{\BB G\}$.
\end{conjecture}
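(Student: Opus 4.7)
The plan is to deduce this conjecture formally from the preceding conjecture $\RepOrbSpc_*\simeq\RepOrbSpc_{R(*)/}$, combined with the two descriptions of the unpointed category $\RepOrbSpc$ recalled earlier in the introduction and in \S\ref{rightadjoint}: Conjecture \ref{spacesoverR} identifying $\RepOrbSpc$ with representable fibrations over $R(*)$, and the Gepner--Henriques equivalence $\RepOrbSpc\simeq\PSh(\Rep\{\BB G\})$. The general observation to invoke is that for any $\infty$-category $\cC$ with terminal object $t$, the under-category $\cC_{t/}$ is the category of ``pointed objects,'' and this construction is preserved by any equivalence of $\infty$-categories (since terminal objects are).

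First I would treat the fibration model. Under Conjecture \ref{spacesoverR}, an object $X\in\RepOrbSpc$ corresponds to the fibration $X\to R(*)$ given by the essentially unique representable map to the terminal object, and $R(*)$ itself corresponds to the identity fibration. A map $R(*)\to X$ in $\RepOrbSpc$ becomes, after passing to associated fibrations, a section of $X\to R(*)$, so the under-category $\RepOrbSpc_{R(*)/}$ is the category of representable fibrations over $R(*)$ equipped with a distinguished section. This is exactly a pointed representable fibration over $R(*)$, giving the first half of the conjecture.

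Next I would treat the presheaf model. The Gepner--Henriques equivalence $\RepOrbSpc\simeq\PSh(\Rep\{\BB G\})$ sends $R(*)$ to the terminal presheaf, since $R(*)$ represents the constant functor with value a point (this is the universal property characterization of $R(*)$ used throughout \S\ref{rightadjoint}, namely that the space of representable maps into it from each $\BB G$ is contractible). Hence an object of the under-category $\PSh(\Rep\{\BB G\})_{*/}$ is a presheaf $F$ on $\Rep\{\BB G\}$ together with a section of $F\to *$, which is the same as a pointing of each value $F(\BB G)$ compatible with the restriction maps, i.e.\ a presheaf of pointed spaces.

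The main obstacle is not this formal matching but rather the prerequisites. All three inputs (the preceding conjecture, Conjecture \ref{spacesoverR}, and the appropriate $\infty$-categorical version of Gepner--Henriques) are themselves stated without proof here, and one must fix a single $\infty$-categorical enhancement of $\RepOrbSpc$ compatible with all of them. The most delicate step is the preceding conjecture itself: $\RepOrbSpc_*$ is defined in this paper by an explicit geometric procedure involving pairs $(X,A)$, collar neighborhoods $A^+$, and excision (Proposition \ref{excision}), and one must show this recovers the intrinsic under-category of the terminal object. The natural approach would be to build a comparison functor $\RepOrbSpc_{R(*)/}\to\RepOrbSpc_*$ sending a pointed object $R(*)\to X$ to the pair $(X,\text{image of }R(*))$ (or, more rigidly, to the mapping cylinder relative pair), verify it is well-defined up to (representable) homotopy using the collar-based excision of Proposition \ref{excision}, and then check it is an equivalence by inducting on cells of orbi-CW-pairs against the inductive construction of $R(X)$ given in Proposition \ref{proprightadjoint}.
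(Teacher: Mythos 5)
The statement you are addressing is labelled a conjecture in the paper and is given no proof there, so there is nothing to compare your argument against; the paper's author evidently intends exactly the formal logic you describe (it is stated immediately after the conjecture $\RepOrbSpc_*=\RepOrbSpc_{R(*)/}$ and after the two unpointed descriptions of $\RepOrbSpc$). Your formal reductions are correct: an equivalence of $\infty$-categories preserves terminal objects and hence under-categories of them; $R(*)$ corresponds to the identity fibration in the fibration model and to the terminal presheaf in the presheaf model (the latter because $\RepMapstilde(\BB G,R(*))$ is contractible for every $G$); and the under-category of the terminal presheaf in $\PSh(\Rep\{\BB G\})$ is computed objectwise, giving presheaves of pointed spaces.

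The genuine gap is the one you yourself flag, and it should be stated plainly: your proposal is not a proof but a reduction of one open conjecture to three others that the paper also leaves open (the identification $\RepOrbSpc_*\simeq\RepOrbSpc_{R(*)/}$, Conjecture \ref{spacesoverR}, and an $\infty$-categorical Gepner--Henriques equivalence compatible with a fixed enhancement of $\RepOrbSpc$). The hard content is entirely in the first of these: $\RepOrbSpc_*$ is defined by an explicit geometric device (pairs $(X,A)$, enlargements $A^+$, relative maps, excision as in Proposition \ref{excision}) precisely because orbi-CW-pairs admit no quotient $X/A$, and showing that this category agrees with the intrinsic under-category of $R(*)$ requires constructing and verifying a comparison functor, not just observing that both ``should be'' pointed objects. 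Your sketch of that comparison (send $R(*)\to X$ to a mapping-cylinder pair and induct on cells against Proposition \ref{proprightadjoint}) is a plausible starting point, but note that a map $R(*)\to X$ in $\RepOrbSpc$ need not have image a subcomplex, and $R(*)$ is an infinite complex while the objects of $\RepOrbSpc_*$ are arbitrary orbi-CW-pairs $(X,A)$ with $A$ not required to be weakly terminal; so the proposed functor does not obviously hit, say, the pair $(X,\varnothing)$ or pairs where $A$ has several components with varying isotropy. Any actual proof would have to address how general pairs $(X,A)$ arise from pointed objects, which your sketch does not do.
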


\subsection{Cofiber sequences}

A \emph{cofiber sequence} in $\RepOrbSpc_*$ is a three term sequence isomorphic to
\begin{equation}\label{cofiber}
(Y,B)\to(X,A)\to(X,A\cup_BY)
\end{equation}
for an orbi-CW-complex $X$ with two subcomplexes $A,Y\subseteq X$ and $B:=A\cap Y$.

\begin{proposition}\label{makecofibration}
Every morphism $X\to Y$ in $\RepOrbSpc_*$ extends to a cofiber sequence $X\to Y\to Z$.
\end{proposition}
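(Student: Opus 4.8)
The plan is to start with an arbitrary morphism in $\RepOrbSpc_*$ and first reduce it to a more rigid form before applying the cofiber-sequence construction \eqref{cofiber}. By definition a morphism $(X,A)\to(Y,B)$ in $\RepOrbSpc_*$ is represented by a relative map, i.e.\ a triple $(A^+,U,f)$ with $A\subseteq A^+\subseteq X$ closed, $U\subseteq X$ open with $X=U\cup(A^+)^\circ$, and a representable map of pairs $f:(U,U\cap A^+)\to(Y,B)$. Using Proposition \ref{excision}, this morphism factors as $(X,A)\to(X,A^+)\xleftarrow{W}(U,U\cap A^+)\xrightarrow{f}(Y,B)$, and by subdividing $X$ we may take $U$ and $A^+$ to be subcomplexes covering $X$. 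Since the maps in $W$ are isomorphisms in $\RepOrbSpc_*$, it suffices to produce a cofiber sequence for each of the three constituent maps; but the map $(X,A)\to(X,A^+)$ is an inclusion of the form appearing in \eqref{cofiber} (take $Y=A^+$, so the cofiber is $(X,A^+)$ itself in a degenerate way — more precisely we should handle it as a genuine instance below), and the maps in $W$ have cofiber a point, so the real content is the representable cellular map $f:(U,U\cap A^+)\to(Y,B)$.

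So I reduce to the case of an actual representable map of pairs $f:(X,A)\to(Y,B)$, and after cellular approximation I may assume $f$ is cellular. Now I would form the (double) mapping cylinder construction inside orbi-CW-complexes. Since $f$ is representable, Proposition \ref{collarpushout} lets me build the mapping cylinder $\cyl(f)$ as an orbi-CW-complex containing $X$ (as the domain end) and $Y$ (as a deformation retract), with $\cyl(A\to B)\subseteq\cyl(f)$ a subcomplex. Concretely, set $X':=\cyl(f)$, let $A':=\cyl(A\to B)\subseteq X'$ be one subcomplex, and let $Y':=X\times\{0\}$ (a copy of the domain, together with $A\times\{0\}$) be the other subcomplex; then $B':=A'\cap Y'=A\times\{0\}$. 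The cofiber sequence \eqref{cofiber} for this data reads
\begin{equation}
(Y',B')\to(X',A')\to(X',A'\cup_{B'}Y').
\end{equation}
The first map $(Y',B')\to(X',A')$ is, up to the isomorphism $(Y',B')\cong(X,A)$ in $\RepOrbSpcPair$ and the homotopy equivalence $(X',A')\simeq(Y,B)$ collapsing the cylinder, precisely the given map $(X,A)\to(Y,B)$; so setting $Z:=(X',A'\cup_{B'}Y')$ gives the desired cofiber sequence $X\to Y\to Z$. I would need to check that these identifications are isomorphisms in $\RepOrbSpc_*$ — the collapse $(X',A')\to(Y,B)$ is a homotopy equivalence of pairs (standard mapping-cylinder retraction, valid for orbi-CW-complexes by the homotopy extension property noted after the definition of orbi-CW-complexes, since $f$ is representable so $\cyl(f)$ makes sense), and compatibility with $f$ up to homotopy is built into the cylinder.

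The main obstacle I anticipate is bookkeeping the representability and relative-map structure throughout the reduction: a morphism in $\RepOrbSpc_*$ is genuinely a triple $(A^+,U,f)$ modulo homotopy, not an honest map, so I must be careful that after replacing it by the zigzag through $W$ and then forming mapping cylinders, the resulting three-term sequence is isomorphic \emph{as a sequence in $\RepOrbSpc_*$} to something of the form \eqref{cofiber} — in particular that the identification of the first map with the original morphism uses only isomorphisms in $\RepOrbSpc_*$ (which include the $W$-maps by Proposition \ref{excision}) and not ad hoc constructions. A secondary point is ensuring cellular approximation can be applied to a representable map so that the mapping cylinder is again an orbi-CW-complex; this is exactly the device already used in the proof of Proposition \ref{whitehead}, so I would invoke it in the same way. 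Everything else — that $\cyl(f)$ is an orbispace, that the cylinder deformation-retracts, that the subcomplexes $A',Y'$ are as claimed — follows from Proposition \ref{collarpushout}, Proposition \ref{mcpushout}, and the elementary homotopy theory of orbi-CW-complexes developed earlier.
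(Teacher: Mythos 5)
Your plan correctly identifies the zigzag decomposition $(X,A)\to(X,A^+)\xleftarrow{\in W}(U,U\cap A^+)\xrightarrow{f}(Y,B)$ and the role of mapping cylinders, but the reduction step is a non-sequitur. You write ``it suffices to produce a cofiber sequence for each of the three constituent maps,'' but the cofiber of a composite morphism is not determined by cofibers of its factors, so producing each of the three separately does nothing toward producing a cofiber of the given morphism. The map $(X,A)\to(X,A^+)$ is genuinely not an isomorphism in $\RepOrbSpc_*$ (e.g.\ taking $A^+=X$ lands in the trivial object), so it cannot be discarded. As a result, the mapping cylinder you build, $\cyl\bigl(f:(U,U\cap A^+)\to(Y,B)\bigr)$, would give the cofiber of $f$ alone, not the cofiber of the composite that represents your morphism; the closed set $X\setminus U\subseteq A^+$ and hence the ``excision direction'' of the morphism have simply been dropped. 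You in fact flagged this concern yourself (``more precisely we should handle it as a genuine instance below'' and the final paragraph about making the identification ``as a sequence in $\RepOrbSpc_*$'') but never resolved it.

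The fix --- and what the paper does --- is to keep all of $X$ in the construction: instead of $\cyl(f)$, form the larger gluing
\begin{equation}
(X,A^+)\cup\bigl((U,U\cap A^+)\times I\bigr)\cup(Y,B),
\end{equation}
i.e.\ $X\cup_{U\times\{0\}}\cyl(f)$ at the level of underlying complexes. By Proposition \ref{excision}, the inclusion of $(Y,B)$ into this pair is an isomorphism in $\RepOrbSpc_*$, and the honest inclusion of orbi-CW-pairs $(X,A)\hookrightarrow\bigl(\text{gluing}\bigr)$ is then isomorphic, as a morphism in $\RepOrbSpc_*$, to the one you started with. This realizes the morphism as an inclusion of subcomplexes $(Y',B')\hookrightarrow(X',A')$ with $B'=Y'\cap A'$, which is exactly the form \eqref{cofiber}. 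Your final construction has the right flavor, but it must be applied to the full double-mapping-cylinder over $X$, not to $\cyl(f)$ over $U$ alone.
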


\begin{proof}
Equivalently, we are to show that every morphism in $\RepOrbSpc_*$ is isomorphic to an inclusion $(Y,B)\hookrightarrow(X,A)$ where $X$ is an orbi-CW-complex, $A,Y\subseteq X$ are subcomplexes, and $B=Y\cap A$.

First, note that any map of orbi-CW-pairs $(X,A)\to(Y,B)$ may be replaced by a map of the desired form by first homotoping it to be cellular and then considering $(X,A)\to(\cyl(X\to Y),\cyl(A\to B))$.
Thus it suffices to show that every morphism in $\RepOrbSpc_*$ is isomorphic to the image of a morphism in $\RepOrbSpcPair$.

A general morphism in $\RepOrbSpc_*$ may be expressed in terms of morphisms of orbi-CW-pairs as
\begin{equation}\label{tobemadecofibration}
(X,A)\to(X,A^+)\xleftarrow\sim(V,V\cap A^+)\to(Y,B)
\end{equation}
where $X$ is an orbi-CW-complex, $A^+,V\subseteq X$ are subcomplexes, and $X=V\cup A^+$.
We now consider the gluing
\begin{equation}\label{cofibergluing}
(X,A^+)\cup((V,V\cap A^+)\times I)\cup(Y,B).
\end{equation}
The inclusion of $\cyl((V,V\cap A^+)\to(Y,B))$ (hence also of $(Y,B)$) into this orbi-CW-pair is an isomorphism in $\RepOrbSpc_*$ by Proposition \ref{excision}.
Thus the natural map from $(X,A)$ to \eqref{cofibergluing} is a map of orbi-CW-pairs which becomes isomorphic to our given morphism in $\RepOrbSpc_*$.
\end{proof}

In fact, a quadruple $(X,A,Y,B)$ as above determines not only a three term sequence \eqref{cofiber}, but a half-infinite sequence, each of whose consecutive pairs of morphisms form cofiber sequences.
This so-called `Puppe sequence' takes the form
\begin{equation}\label{puppe}
\cdots\dashrightarrow(Y,B)\times(I^k,\partial I^k)\to(X,A)\times(I^k,\partial I^k)\to(X,A\cup Y)\times(I^k,\partial I^k)\dashrightarrow\cdots
\end{equation}
where the `connecting maps'
\begin{equation}
(X,A\cup Y)\times(I^k,\partial I^k)\dashrightarrow(Y,B)\times(I^{k+1},\partial I^{k+1})
\end{equation}
are defined as $(r,\id_{I^k},\varphi)$ where $r:(X,A\cup Y)\to(Y,B)$ is a retraction defined in a neighborhood of $(Y,B)$, and $\varphi:X\to[0,1]$ is a map which equals $1$ on $Y$ and equals $0$ outside a small neighborhood of $Y$.

\begin{proposition}\label{puppecofiber}
Every consecutive triple in the Puppe sequence is a cofiber sequence.
\end{proposition}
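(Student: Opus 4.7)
Since the smash product $\wedge(I^k,\partial I^k)$ preserves cofiber sequences---a direct check on the quadruple definition shows that smashing $(X,A,Y,B)$ with $(I^k,\partial I^k)$ yields the quadruple $(X\times I^k,(A\times I^k)\cup(X\times\partial I^k),Y\times I^k,(B\times I^k)\cup(Y\times\partial I^k))$ whose associated three-term sequence is the $k$-th smash of the original cofiber sequence---the triples in \eqref{puppe} of the form $(Y,B)\wedge(I^k,\partial I^k)\to(X,A)\wedge(I^k,\partial I^k)\to(X,A\cup Y)\wedge(I^k,\partial I^k)$ are cofiber sequences immediately.  Moreover, iterating the assertion that any cofiber sequence $P\to Q\to R$ extends to a cofiber sequence $Q\to R\to P\wedge(I,\partial I)$ (with connecting map as in \eqref{puppe}) produces all remaining triples, so it suffices to prove this single extension statement for the initial triple, namely that $(X,A)\to(X,A\cup Y)\dashrightarrow(Y,B)\wedge(I,\partial I)$ is a cofiber sequence.

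To analyze this triple I would first apply Proposition~\ref{makecofibration} to replace the morphism $(X,A)\to(X,A\cup Y)$ by its inclusion into a mapping cylinder: $(X,A)\cong(X\times\{1\},A\times\{1\})\hookrightarrow(X\times I,\cyl(A\to A\cup Y))$, where $\cyl(A\to A\cup Y)=(A\cup Y)\times\{0\}\cup A\times I$ sits inside $X\times I$.  By the definition of cofiber sequence the cofiber is then
\[
(X\times I,\,Q)\qquad\text{where}\qquad Q:=(A\cup Y)\times\{0\}\cup A\times I\cup X\times\{1\},
\]
and the heart of the proof is to exhibit a natural isomorphism $(X\times I,Q)\cong(Y\times I,\,B\times I\cup Y\times\partial I)=(Y,B)\wedge(I,\partial I)$ in $\RepOrbSpc_*$.

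I would establish this in two stages.  First, the inclusion $(Y\times I,\,B\times I\cup Y\times\partial I)\hookrightarrow((A\cup Y)\times I\cup X\times\{1\},\,Q)$ is of the form $(P,P\cap Q)\to(X',Q)$ with $P=Y\times I$; a direct computation gives $P\cap Q=B\times I\cup Y\times\partial I$ and $P\cup Q=(A\cup Y)\times I\cup X\times\{1\}$, so Proposition~\ref{excision} makes this an isomorphism.  Second, the inclusion $((A\cup Y)\times I\cup X\times\{1\},Q)\hookrightarrow(X\times I,Q)$ must also be shown to be an isomorphism.  I would do this by induction on the cells of $X$ lying outside $A\cup Y$: attaching such a cell $e=D^n\times\BB G$ adjoins to the ambient the ``mailbox'' local pair $(e\times I,\,\partial e\times I\cup e\times\{1\})$, and this pair is contractible in $\RepOrbSpc_*$, witnessed by a pair-homotopy that first retracts $e\times I$ onto $e\times\{1\}$ via $(x,t)\mapsto(x,t+s(1-t))$ and then contracts $e\times\{1\}$ to a point, each step preserving the mailbox subcomplex.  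Consequently each such cell attachment leaves the isomorphism class of the pair unchanged.

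The remaining verification is that the connecting map defined in the text via the retraction $r$ (from a neighborhood of $Y$ onto $Y$) and cutoff $\varphi$ agrees up to homotopy of relative maps with the canonical morphism from $(X,A\cup Y)$ to the constructed cofiber $(X\times I,Q)$; this is the standard geometric identification of a collapse-type connecting map with its categorical counterpart.  I expect the principal obstacle to be the cellular absorption argument in the second stage: while the ``free-face collapse'' picture is transparent, executing it through the specific excision zigzags permitted by Proposition~\ref{excision}, and in the presence of nontrivial isotropy representations, requires careful bookkeeping of the attaching maps during the induction.
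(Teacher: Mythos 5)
Your strategy is in the same spirit as the paper's---exhibit an explicit quadruple whose cofiber sequence is isomorphic (via excision) to the triple in the Puppe sequence---but you frame it as a reduction to a single ``rotation'' statement plus iteration, whereas the paper simply writes down the quadruples directly for the shift-by-one and shift-by-two cases (using a clean split $X\times I=X\times[0,\tfrac12]\cup X\times[\tfrac12,1]$ for the latter, which makes the second excision explicit). Both routes work, and both leave the verification that the maps induced on the cofibers agree with the Puppe connecting maps implicit; you at least acknowledge this, so that is not a complaint.

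There is, however, a genuine flaw in your second stage. To show $\bigl((A\cup Y)\times I\cup X\times\{1\},\,Q\bigr)\hookrightarrow(X\times I,Q)$ is an isomorphism, you propose a cell-by-cell homotopy that ``first retracts $e\times I$ onto $e\times\{1\}$ via $(x,t)\mapsto(x,t+s(1-t))$ and then contracts $e\times\{1\}$ to a point.'' The second step does not make sense: for a cell $e=D^n\times\BB G$ with $G\ne\1$, the map $e\times\{1\}\to *$ is not representable (it is not injective on isotropy), so there is no such contraction in $\RepOrbSpc_*$, and even if there were it would destroy the attaching data and take you out of the mailbox subcomplex you claim to preserve. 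The first step is also not what is needed, since $(x,t)\mapsto(x,t+s(1-t))$ moves $\partial e\times I$ rather than fixing it. What you actually want is the standard ``mailbox'' deformation retraction of $e\times I$ onto $\partial e\times I\cup e\times\{1\}$ \emph{rel} that subcomplex (radial projection from a point of $e\times\{-\infty\}$ in the standard picture); since $Q$ sits inside the smaller subspace and is fixed throughout, this exhibits the inclusion as a homotopy equivalence of pairs, hence an isomorphism in $\RepOrbSpcPair$ and so in $\RepOrbSpc_*$. This is also cleaner done in one shot from the homotopy extension property of $(X,A\cup Y)$ rather than cell by cell. With that repair, and with the connecting-map verification you flagged carried out, the argument goes through; as written, the contracting-to-a-point step is wrong.
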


\begin{proof}
The three consecutive terms appearing in \eqref{puppe} certainly form a cofiber sequence.
Shifting forward by one, we have a cofiber sequence
\begin{multline}
(X,A)\times(I^k,\partial I^k)\xrightarrow{\times\{0\}}(X\times I,(A\times I)\cup(Y\times\{1\}))\times(I^k,\partial I^k)\\
\to(X\times I,(X\times\{0\})\cup(A\times I)\cup(Y\times\{1\}))\times(I^k,\partial I^k)
\end{multline}
into whose third term $(Y,B)\times(I,\partial I)\times(I^k,\partial I^k)$ includes isomorphically.
Shifting forward by one again, we have a cofiber sequence
\begin{multline}
(X\times[{\textstyle\frac 12},1],(A\times[{\textstyle\frac 12},1])\cup(Y\times\{1\}))\times(I^k,\partial I^k)\\
\to(X\times I,(X\times\{0\})\cup(A\times I)\cup(Y\times\{1\}))\times(I^k,\partial I^k)\\
\dashrightarrow(X\times[0,{\textstyle\frac 12}],(X\times\{0\})\cup(A\times[0,{\textstyle\frac 12}])\cup(X\times\{{\textstyle\frac 12}\}))\times(I^k,\partial I^k)
\end{multline}
This concludes the proof.
\end{proof}

\begin{example}\label{nocones}
Here is an example to show that there is no similar notion of cofiber sequences in $\OrbSpc_*$.
Consider the map $\BB G\to *$ (this is a map in $\OrbSpc$, and we consider its image in $\OrbSpc_*$ under the natural map $\OrbSpc\to\OrbSpc_*$ given by `disjoint union with a basepoint'), and suppose it has a cofiber $\BB G\to *\to X$ where $X\in\OrbSpc_*$.
Now the defining property of the cofiber is that for any $Y\in\OrbSpc_*$, a map $X\to Y$ is the same thing as a map $*\to Y$ and a null homotopy of the composition $\BB G\to *\to Y$.
On the other hand, a null homotopy of this composition induces a null homotopy of the original map $*\to Y$ by Lemma \ref{repfactor}.
Thus there is a unique map $X\to Y$, namely the zero map (sending everything to the basepoint).
It follows that $X=\varnothing$ is the terminal object of $\OrbSpc_*$.
Now if we additionally suppose that our cofiber sequence extends as the Puppe sequence to give $\BB G\to *\to X\to\BB G\times(I,\partial I)\to(I,\partial I)$, we obtain a contradiction, since $X=\varnothing$, so the cofiber of $X\to Z$ is $Z$ for any $Z\in\OrbSpc_*$.
The key point in this argument was the use of Lemma \ref{repfactor}.
\end{example}

\subsection{Enough vector bundles}

We now recall the `enough vector bundles property' proved in \cite{orbibundle}, which underlies most of our subsequent work in this paper.
We also derive some corollaries which we will also need.

We begin with some definitions.
By `vector bundle' we always mean a finite-dimensional vector bundle.
Recall that for any vector bundle $V$ over an orbispace $X$, the fiber over a point $p:*\to X$ is a vector space $V_p$ which carries a linear action of the isotropy group $G_p$ of $p$.
A vector bundle is called \emph{coarse} iff these isotropy representations $G_p\acts V_p$ are all trivial (this is equivalent to $V$ being pulled back from the coarse space $\left|X\right|$, hence the terminology).
A vector bundle is called \emph{faithful} iff the isotropy representations are all faithful.
A vector bundle is called \emph{module faithful} iff each $V_p$ is faithful $\RR[G_p]$-module (equivalently, every irreducible representation of $G_p$ occurs inside $V_p$).
The pullback of a coarse vector bundle is coarse, and the pullback of a (module) faithful vector bundle under a \emph{representable} map is (module) faithful.

\begin{lemma}\label{faithfulhasfree}
If $V$ is a faithful representation of a finite group $G$, then the open set on which $G$ acts freely $V^\mathrm{free}\subseteq V$ is open and dense.
\end{lemma}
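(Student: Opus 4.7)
The plan is to describe $V \setminus V^{\mathrm{free}}$ as the finite union of the fixed subspaces of the non-identity elements of $G$ and then use faithfulness to conclude these are all proper.

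More precisely, I would first observe that for each $g \in G$, the fixed set $V^g := \{v \in V : gv = v\}$ is a linear subspace of $V$ (it is the kernel of $g - \mathrm{id}$), hence closed. Since $V^{\mathrm{free}} = V \setminus \bigcup_{g \neq e} V^g$ and $G$ is finite, its complement is a finite union of closed sets, hence closed, so $V^{\mathrm{free}}$ is open.

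For density, the key step is to note that faithfulness of the representation $G \to \mathrm{GL}(V)$ means precisely that no non-identity $g \in G$ acts as the identity on $V$, so $V^g \subsetneq V$ is a proper linear subspace whenever $g \neq e$. Then $V \setminus V^{\mathrm{free}}$ is a finite union of proper linear subspaces of $V$. Any proper linear subspace of a finite-dimensional real vector space has empty interior (e.g. by dimension, or because it has Lebesgue measure zero), and a finite union of nowhere-dense sets is nowhere dense, so its complement $V^{\mathrm{free}}$ is dense.

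I expect no serious obstacle; the only subtlety is making sure to invoke faithfulness in the right place (to guarantee $V^g \neq V$ for $g \neq e$) rather than at the level of individual fibers, which is automatic here since we have been handed a faithful representation at the outset.
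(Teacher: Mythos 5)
Your proof is correct and follows essentially the same route as the paper: the paper writes the complement as $\bigcup_{1\ne H\leq G}V^H$, a finite union of proper subspaces, which is the same decomposition as your $\bigcup_{g\ne e}V^g$ (indexed by subgroups rather than elements), with faithfulness invoked in the same place to guarantee properness. Your write-up just spells out the openness and density steps that the paper leaves implicit.
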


\begin{proof}
The complement of $V^\mathrm{free}\subseteq V$ is the locus $\bigcup_{1\ne H\leq G}V^H$, which is a finite union of proper subspaces.
\end{proof}

\begin{lemma}\label{faithfulpowerall}
If $V$ is a faithful representation of a finite group $G$, then every irreducible representation of $G$ is a direct summand of a tensor power of $V$.
\end{lemma}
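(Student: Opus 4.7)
The plan is to argue via character theory, reducing the statement to an inner product computation and then exploiting faithfulness through a Vandermonde trick. Let $\chi$ denote the character of $V$. By Schur orthogonality, for an irreducible representation $W$ with character $\psi$, the multiplicity of $W$ in $V^{\otimes n}$ is $\langle\psi,\chi^n\rangle = \frac{1}{|G|}\sum_{g\in G}\psi(g)\overline{\chi(g)^n}$. So I want to show that for every irreducible $\psi$, there exists $n\geq 0$ with $\langle\psi,\chi^n\rangle>0$. Since the statement is about real representations in the paper, I first reduce to complex ones: a real irreducible $W$ is a summand of $V^{\otimes n}$ iff some complex irreducible constituent of $W\otimes_\RR\CC$ appears in $V^{\otimes n}\otimes_\RR\CC$, via the identity $\Hom_{G,\RR}(W,V^{\otimes n})\otimes_\RR\CC = \Hom_{G,\CC}(W\otimes\CC,V^{\otimes n}\otimes\CC)$. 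So it suffices to treat complex irreducibles.

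Suppose for contradiction that $\langle\psi,\chi^n\rangle=0$ for every $n\geq 0$. Let $a_1,\ldots,a_r\in\CC$ be the distinct values taken by $\chi$ on $G$, and let $S_i:=\chi^{-1}(a_i)\subseteq G$. Then the assumed vanishing reads
\begin{equation}
\sum_{i=1}^{r}\overline{a_i}^{\,n}\biggl(\sum_{g\in S_i}\psi(g)\biggr)=0 \qquad\text{for all }n\geq 0.
\end{equation}
Since the values $\overline{a_i}$ are pairwise distinct, the Vandermonde matrix $(\overline{a_i}^{\,n})_{0\leq n<r,\,1\leq i\leq r}$ is invertible, so each partial sum $\sum_{g\in S_i}\psi(g)$ vanishes.

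The key input is then faithfulness. Since the eigenvalues of $g$ acting on $V\otimes\CC$ are roots of unity, one has $\chi(g)=\dim V$ if and only if every such eigenvalue equals $1$, i.e.\ if and only if $g$ acts trivially on $V$. Faithfulness of $V$ then forces $g=1$. Hence one of the $S_i$, say $S_1=\chi^{-1}(\dim V)$, equals $\{1\}$, and the vanishing of the corresponding partial sum gives $\psi(1)=0$, contradicting that $\psi$ is the character of a nonzero representation.

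The main (only) substantive point is this last observation that faithfulness pins down $\chi^{-1}(\dim V)=\{1\}$; the rest is linear algebra and bookkeeping. The real-versus-complex bookkeeping should be stated carefully but is routine, and nothing beyond elementary character theory of finite groups is needed.
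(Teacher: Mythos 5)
Your proof is correct, but it takes a genuinely different route from the paper's. You give the classical character-theoretic argument (essentially Burnside's): if $\langle\psi,\chi^n\rangle=0$ for all $n$, grouping group elements by the value of $\chi$ and inverting a Vandermonde matrix forces each level-set sum $\sum_{g\in\chi^{-1}(a)}\psi(g)$ to vanish, and faithfulness pins down $\chi^{-1}(\dim V)=\{1\}$, contradicting $\psi(1)\neq 0$. All the steps check out, including the equality case of the triangle inequality for roots of unity and the reduction from real to complex irreducibles via $\Hom_{G,\RR}(W,V^{\otimes n})\otimes_\RR\CC=\Hom_{G,\CC}(W_\CC,V_\CC^{\otimes n})$ together with semisimplicity. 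The paper instead argues geometrically: by Lemma \ref{faithfulhasfree} there is a point of $V^*$ with $|G|$ distinct translates, and a Weierstrass-approximated bump function at that point has linearly independent $G$-translates, exhibiting a copy of the regular representation inside $\bigoplus_i\operatorname{Sym}^iV\subseteq\bigoplus_iV^{\otimes i}$. The paper's argument works directly over $\RR$ and produces the regular representation explicitly (hence every irreducible at once), reusing the free-locus lemma already at hand; your argument is more elementary in the sense of needing only finite-group character orthogonality and linear algebra, at the cost of the real-versus-complex bookkeeping, which you handle correctly. Either proof suffices for the way the lemma is used later.
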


\begin{proof}
This is a classical fact with many known proofs whose correct attribution is not known to me.
By Lemma \ref{faithfulhasfree}, there exists a point $x\in V^*$ all of whose translates by $G$ are distinct.
By Weierstrass, there exists a polynomial function on $V^*$ (that is, an element of $\bigoplus_{i=0}^\infty\operatorname{Sym}^iV$) which is approximately a bump function supported around $x$.
The translates of this element under $G$ are thus linearly independent, so their span is a copy of the regular representation of $G$ inside $\bigoplus_{i=0}^\infty\operatorname{Sym}^iV$ (which is in turn contained in $\bigoplus_{i=0}^\infty V^{\otimes i}$).
\end{proof}

It follows from Lemma \ref{faithfulpowerall} that given a faithful vector bundle $E$ over a compact orbispace (or, more generally, an orbispace with isotropy groups of bounded order), there exists an $N<\infty$ such that $\bigoplus_{i=1}^NE^{\otimes i}$ is module faithful.
If $E$ is a module faithful vector bundle over a compact $X$ and $F$ is arbitrary (more generally, $X$ could be paracompact and $F$ bounded dimensional), there exists an $N<\infty$ and an embedding $F\hookrightarrow E^{\oplus N}$.

It was shown in \cite{orbibundle} that every orbispace satisfying certain mild hypotheses admits a faithful vector bundle.
In particular, all compact orbispaces admit faithful vector bundles.
In fact, the construction gives somewhat more precise control on these faithful vector bundles, however for us all we need is the following:

\begin{theorem}[\cite{orbibundle}]\label{enough}
Every finite orbi-CW-complex admits a faithful vector bundle.
\end{theorem}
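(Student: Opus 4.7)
The plan is to proceed by induction on the cells of the finite orbi-CW-complex $X$. The base case $X=\varnothing$ is trivial. For the inductive step, write $X=X'\cup_f(D^k\times\BB G)$ where $f:\partial D^k\times\BB G\to X'$ is a representable attaching map, and assume a faithful vector bundle $E'$ on $X'$ has already been produced by the inductive hypothesis. The pullback $f^*E'$ is then a $G$-equivariant vector bundle on $\partial D^k=S^{k-1}$ (with trivial $G$-action on the base) whose fiberwise representation is faithful, since representability of $f$ means that $G$ injects into the isotropy group at each point of $\im(f)$.

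To extend $E'$ across the cell, note that since $D^k$ is contractible and $G$ acts trivially on it, every $G$-equivariant vector bundle on $D^k\times\BB G$ is determined by a single $G$-representation, i.e., is of the form $D^k\times V$. So a direct extension of $E'$ requires $f^*E'$ to be isomorphic to a constant $G$-bundle $S^{k-1}\times V$, which in general can fail. The remedy is to stabilize: find an auxiliary vector bundle $F$ on $X'$ such that $f^*(E'\oplus F)$ becomes isomorphic to a constant $G$-bundle on $S^{k-1}$. Granted such an $F$, gluing $E'\oplus F$ on $X'$ with the corresponding constant bundle on the cell yields a vector bundle on $X$ which is faithful on $X'$ (because $E'$ is faithful) and faithful on the open cell (because the summand coming from $f^*E'$ already carries a faithful $G$-representation).

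The main obstacle is the realizability question: given the faithful $G$-bundle $W=f^*E'$ on $S^{k-1}$, find a stabilizing $G$-bundle $W'$ on $S^{k-1}$ so that $W\oplus W'$ is constant \emph{and} $W'$ is the pullback of a genuine vector bundle on $X'$. The existence of some $W'$ making $W\oplus W'$ constant is the classical stabilization theorem for equivariant vector bundles on spheres; the subtlety is realizing $W'$ as a pullback from $X'$, where it must extend coherently across the entire previously constructed subcomplex, not merely near the cell boundary. A natural approach is first to replace $E'$ by a module-faithful bundle on $X'$, e.g., $\bigoplus_{i=1}^N(E')^{\otimes i}$ for suitable $N$, which by Lemma \ref{faithfulpowerall} ensures that every irreducible $G$-representation appears in the fibers of $f^*E'$ and is therefore available to absorb the obstruction class governing non-constancy of $W$. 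Executing this realizability step carefully — combining equivariant obstruction theory on $S^{k-1}$ with the inductive structure of $X'$ — is where the substantive content of the argument in \cite{orbibundle} lies.
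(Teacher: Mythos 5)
The paper does not prove Theorem \ref{enough}; it is quoted from \cite{orbibundle}, a separate paper of the author devoted to this result and its refinements, so there is no in-text proof to compare against. What follows is therefore an assessment of your sketch on its own terms.

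The cell-by-cell induction is the natural first attack, and you correctly isolate the real obstacle: after pulling back $E'$ to the attaching sphere $S^{k-1}\times\BB G$, the issue is not to find \emph{some} $G$-bundle $W'$ over $S^{k-1}\times\BB G$ making $f^*E'\oplus W'$ constant (easy by compactness: complement inside a large constant bundle), but to realize $W'$ as $f^*F$ for a vector bundle $F$ living globally on $X'$. However, the route you propose for closing this gap does not look viable, and the difficulty is more than a matter of careful execution. Stabilizing by pullbacks $f^*F$ only lets you move the class $[f^*E']$ by elements of the sub-\emph{monoid} of $\widetilde{KO}^0(S^{k-1}\times\BB G)$ hit by honest bundles on $X'$ under $f^*$, and there is no reason whatsoever for a class that cancels the nonconstant part of $[f^*E']$ to lie in that sub-monoid. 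Taking $E'$ module faithful guarantees that every irreducible $G$-representation occurs in the fibers, but it does nothing to control the \emph{topological} twisting of the isotypic pieces $(f^*E')_\rho$ over $S^{k-1}$, which is what actually obstructs extension across the cell. More fundamentally, the inductive hypothesis ``$X'$ admits a faithful vector bundle'' is too weak to be self-propagating: the obstruction is detected on the attaching sphere, but the only available tool for killing it (the stabilizing bundle $F$) must exist over all of $X'$, and the hypothesis says essentially nothing about which bundles on $X'$ exist beyond one faithful one. No amount of equivariant obstruction theory on $S^{k-1}$ alone repairs this mismatch. To make an induction of this shape go through one must induce on a substantially stronger invariant --- roughly, that every vector bundle on $X'$ can be complemented to one that is equivariantly trivializable on charts, or a connectivity statement about classifying/embedding spaces --- and this, together with a gluing argument whose base case is the global-quotient setting handled by an averaging trick, is the shape of the actual argument in \cite{orbibundle}.
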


Note that the same thus holds for any space homotopy equivalent to a finite orbi-CW-complex (such as a compact orbifold-with-boundary), since homotopy equivalences of orbispaces are representable.

\begin{corollary}\label{embedding}
Let $f:X\to Y$ be a representable smooth map of smooth compact orbifolds.
There exists a vector bundle $E/Y$ such that $f$ lifts to a smooth embedding of $X$ into the total space of $E$.
\end{corollary}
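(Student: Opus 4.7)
The plan is to construct a vector bundle $E$ on $Y$ together with a smooth section $\sigma \colon X \to f^*E$ of its pullback such that the composition $X \xrightarrow{\sigma} f^*E \to E$ is a smooth embedding of orbispaces covering $f$.

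First, using Theorem \ref{enough} applied to $Y$, I would pick a faithful vector bundle $W_0/Y$ and form $W := \bigoplus_{i=1}^N W_0^{\otimes i}$ with $N$ large enough that $W$ is module faithful on $Y$ (possible by Lemma \ref{faithfulpowerall} together with the uniform bound on orders of isotropy groups coming from compactness of $Y$). Since $f$ is representable, hence injective on isotropy groups, and since the regular representation of $G_{f(x)}$ restricted to $G_x$ is a sum of copies of the regular representation of $G_x$, the pullback $f^*W$ is module faithful on $X$ as well.

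Next, cover $Y$ by finitely many open charts $V_i = U_i/G_i$ on which $W|_{V_i} = (U_i \times W_i)/G_i$ for module faithful $G_i$-representations $W_i$. Set $X_i := X \times_Y U_i$: since $f$ is representable, $X_i$ is a smooth manifold carrying a smooth $G_i$-action and a $G_i$-equivariant smooth proper map $f_i \colon X_i \to U_i$. The central step, which I expect to be the main obstacle, is to produce for each $i$ a $G_i$-equivariant smooth map $\phi_i \colon X_i \to W_i^{\oplus N_i}$ such that $(f_i, \phi_i) \colon X_i \to U_i \times W_i^{\oplus N_i}$ is a smooth embedding. For this I would invoke the relative equivariant Mostow--Palais theorem (for proper equivariant maps of smooth $G_i$-manifolds) to first obtain an equivariant embedding over $U_i$ into $U_i \times V$ for some finite-dimensional $G_i$-representation $V$; because $W_i$ is module faithful, $V$ then embeds $G_i$-equivariantly into $W_i^{\oplus N_i}$ for $N_i$ large, and post-composing yields $\phi_i$. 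This is the point at which Theorem \ref{enough} is essential, as it is what forces the representation-theoretic room needed to fit every local $V$ into a bundle pulled back from $Y$.

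Finally, I would assemble the local embeddings using a smooth partition of unity $\{\rho_i \colon Y \to [0,1]\}$ subordinate to $\{V_i\}$, which exists by pulling back a partition of unity from the paracompact coarse space $\left|Y\right|$. Let $E := \underline{\RR}^k \oplus \bigoplus_i W^{\oplus N_i}$ where $k$ is the number of charts, and define $\sigma \colon X \to f^*E$ componentwise to be $\rho_i \circ f$ in the $i$-th trivial summand and $(\rho_i \circ f) \cdot \tilde\phi_i$ in the $i$-th $W^{\oplus N_i}$-summand, where $\tilde\phi_i$ extends $\phi_i$ by zero outside $f^{-1}(\mathrm{supp}\,\rho_i)$ and is smooth because $\rho_i$ vanishes there. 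The resulting map $X \to E$ is an embedding: at any point $x \in X$, the components $\rho_i(f(x))$ detect which chart indices have $\rho_i \circ f > 0$ in a neighborhood, and on such an open set the component $(\rho_i \circ f) \cdot \tilde\phi_i$ is a nonvanishing smooth multiple of $\phi_i$, so injectivity on points and tangent vectors follows from the local embedding property of $\phi_i$ together with representability of $f$. Compactness of $X$ finally promotes this injective immersion to a closed embedding.
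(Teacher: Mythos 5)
Your argument is correct, but it follows a genuinely different route from the paper's. The paper keeps everything global: it fixes one module faithful bundle $E$ on $Y$, builds a single section $s$ of $f^*E^{\oplus N}$ with everywhere-injective covariant derivative by summing cut-offs of local $G_x$-equivariant linear injections $\RR^n\to V^{\oplus N}$ (this is where ``every irreducible occurs'' enters), observes that the lift is then an immersion, and restores global injectivity by adjoining a separating map $\left|X\right|\to\RR^M$, i.e.\ a coarse summand $\underline\RR^M$. You instead produce honest equivariant embeddings chart by chart (your ``relative Mostow--Palais'' is elementary here: Whitney-embed the manifold $X_i$, which is proper over $U_i$, and average over the finite group $G_i$), use module faithfulness of $W_i$ only to re-route the local target representation into a bundle defined on all of $Y$, and glue with a partition of unity exactly as in Whitney's embedding theorem. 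What your version buys: since each local piece $(f_i,\phi_i)$ is an injective equivariant map, the stabilizer condition an orbifold embedding must satisfy --- the stabilizer of $\sigma(x)$ in $G_{f(x)}$ must equal $G_x$, not merely contain it --- comes for free from injectivity, whereas in the paper's construction this has to be arranged separately (the section built there can vanish at points where $G_x$ is a proper subgroup of $G_{f(x)}$, and the coarse summand $\underline\RR^M$ cannot break the excess isotropy). What the paper's version buys is that it only ever glues sections, never embeddings, so there is no chart bookkeeping. Two small repairs to your write-up: the partition of unity should be smooth on the orbifold $Y$ (a continuous one pulled back from $\left|Y\right|$ does not suffice; construct it by averaging over the local groups), and your claim that $f^*W$ is module faithful on $X$ is both unnecessary (the central step only uses module faithfulness of $W_i$ as a $G_i$-representation, i.e.\ on $Y$) and misjustified as stated --- module faithful means every irreducible occurs, not that the regular representation occurs; the correct argument is Frobenius reciprocity, or simply the paper's standing assertion that module faithfulness is preserved under pullback along representable maps.
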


(Recall that a smooth embedding of orbifolds is locally modelled on $V/G\subseteq W/G$ for an inclusion $V\subseteq W$ of $G$-representations.)

\begin{proof}
Let $E$ be any module faithful vector bundle over $Y$.
Choose arbitrarily a connection on $E$, and equip $f^*E$ with the pullback connection.
We claim that there exists a section $s$ of $f^*E^N$ whose derivative $ds:TX\to f^*E^N$ is injective.
Indeed, in local coordinates $X=\RR^n/G$ and $E=(\RR^n\times V)/G$ for some actions of $G$ on $\RR^n$ and $V$, consider the map $s$ given by a $G$-equivariant linear map $\RR^n\to V^N$.
Since $V$ contains all irreducible representations of $G$, by taking $N$ large enough we can choose $\RR^n\to V^N$ to be injective.
Thus $ds$ is injective at zero, hence in a neighborhood; cutting it off we can make it compactly supported.
By compactness, we can take the direct sum of finitely many such $s$ to obtain a section $s:X\to f^*E^N$ whose derivative is injective everywhere.
Such $s$ is a smooth immersion.
A smooth immersion may be `separated' by a map from $X$ (necessarily factoring through $\left|X\right|$!) to $\RR^M$, so our desired vector bundle is $E^N\oplus\underline\RR^M$.
\end{proof}

Recall that an orbifold (resp.\ with boundary) is locally modelled on $\RR^n/G$ (resp.\ $(\RR^{n-1}\times\RR_{\geq 0})/G$), and that it is called effective when the homomorphisms $G\to\GL_n(\RR)$ are \emph{injective}.

\begin{corollary}\label{orbicwisorbifold}
Every finite orbi-CW-complex is homotopy equivalent to a compact effective orbifold-with-boundary.
\end{corollary}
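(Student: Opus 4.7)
The plan is to proceed by induction on the number of cells of the orbi-CW-complex $X$. The base case $X=\varnothing$ is vacuous. For the inductive step, write $X=X'\cup_f(D^k\times\BB G)$ where $f:\partial D^k\times\BB G\to X'$ is a representable attaching map, and assume by induction that $X'$ is homotopy equivalent to a compact effective orbifold-with-boundary $M'$. By Corollary~\ref{heqrepresentable}, the homotopy equivalence $X'\simeq M'$ is representable, so composing with $f$ yields a representable map $g:\partial D^k\times\BB G\to M'$, which we may homotope to be smooth.

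The task now is to realize the cell attachment by gluing a thickened cell onto $M'$. Choose a faithful $G$-representation $V$; then $D^k\times D(V)/G$ is a compact effective orbifold whose boundary contains the codimension-zero face $\partial D^k\times D(V)/G$. By Corollary~\ref{embedding} and stabilization by pulled-back trivial line bundles, we may enlarge the target so that $g$ lifts to a smooth embedding $\tilde g:\partial D^k\times\BB G\hookrightarrow S(E)\subseteq\partial D(E)$ into the sphere bundle of a sufficiently large vector bundle $E/M'$. After further stabilizing $E$ by summing with pulled-back copies of $V$ and reselecting the embedding so as to absorb the new summands into the normal directions, we may arrange that the normal bundle of $\tilde g$ inside $S(E)$ is the trivial $G$-equivariant bundle with fiber $V$. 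The tubular neighborhood of $\tilde g$ in $\partial D(E)$ is then identified with $\partial D^k\times D(V)/G$, and we form the pushout
\begin{equation*}
M:=D(E)\cup_{\partial D^k\times D(V)/G}(D^k\times D(V)/G),
\end{equation*}
rounding the resulting codimension-two corners along $\partial D^k\times S(V)/G$ to obtain a compact orbifold-with-boundary.

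Effectiveness of $M$ is immediate: $D(E)$ inherits effectiveness from $M'$ since its tangent bundle contains $TM'$, on which the isotropy acts faithfully; and $D^k\times D(V)/G$ is effective since the faithful $G$-representation $V$ acts faithfully on the tangent bundle along the core $D^k\times\BB G$. The homotopy equivalence $M\simeq X$ follows from the orbispace pushout description: each thickened piece deformation-retracts onto the underlying orbi-cell, the gluing region $\partial D^k\times D(V)/G$ retracts onto $\partial D^k\times\BB G$ compatibly, and hence $M$ deformation-retracts onto $M'\cup_g(D^k\times\BB G)\simeq X$. The principal obstacle is arranging the embedding into the sphere bundle with normal bundle trivialized as $V$; this relies on the existence of sufficiently large faithful (and in fact module-faithful) vector bundles guaranteed by Theorem~\ref{enough} together with Lemma~\ref{faithfulpowerall}, which provides enough room to absorb all obstructions to trivializing the normal bundle.
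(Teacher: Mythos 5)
Your overall strategy is the paper's: induct on cells, use Corollary \ref{embedding} to replace the ambient orbifold by a disk bundle so that the attaching map becomes a smooth embedding of $\partial D^k\times\BB G$ into the boundary, and realize the cell attachment as a handle attachment. The gap is the step where you claim that, after stabilizing $E$, the normal bundle of $\tilde g(\partial D^k\times\BB G)$ in $S(E)$ can be arranged to be the \emph{trivial} bundle $\underline V$ pulled back from $\BB G$. Stably, that normal bundle is $g^*(TM'\oplus E)$ minus a trivial summand, so making it pulled back from $\BB G$ requires every isotypic piece of $g^*(TM'\oplus E\oplus F)$ (for whatever bundle $F$ on $M'$ you stabilize by) to be a stably trivial bundle on $S^{k-1}$. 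Nothing in your argument forces this: it depends on the topology of $g$ over $S^{k-1}$, and the phrase ``summing with pulled-back copies of $V$'' has no meaning here, since $V$ is a $G$-representation and does not define a bundle on $M'$ to stabilize by. ``Reselecting the embedding to absorb the new summands into the normal directions'' only adds $g^*F$ to the normal bundle; it cannot cancel a nontrivial class in $\widetilde{KO}$ of the isotypic pieces unless $F$ is chosen with care that you have not supplied.

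The missing idea — and the one move in the paper's proof that your sketch omits — is to work with a vector bundle $\eta$ defined on the \emph{glued} orbispace $Z=X'\cup_{\partial D^k\times\BB G}(D^k\times\BB G)$, not merely on $M'$. Using Theorem \ref{enough} on $Z$, one embeds $TM'$ (after the usual stabilizations) into $\eta|_{M'}$ and replaces $M'$ by the disk bundle of the quotient, so that the ambient tangent bundle becomes $\eta|_{M'}$. Since $D^k\times\BB G$ deformation retracts to $\BB G$, the restriction $\eta|_{D^k\times\BB G}$ is necessarily of the form $\underline{V'}$ for a $G$-representation $V'$, so its further restriction to $\partial D^k\times\BB G$ \emph{is} pulled back from $\BB G$; only then does the normal bundle become stably, hence after adding trivial summands genuinely, of the form $\underline{V''}$, and your handle $D^k\times D(V'')/G$ makes sense. (The paper in fact never trivializes the normal bundle at all: it extends the embedding $T(D^k\times\BB G)\hookrightarrow\eta$ over the whole cell and attaches the disk bundle of the cokernel, which extends $\nu$; your trivialized version is a special case once the $Z$-bundle trick is in place.) Without introducing $\eta$ on $Z$, the trivialization step is unjustified and the construction of the handle does not go through.
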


In fact, Corollary \ref{orbicwisorbifold} is equivalent to Theorem \ref{enough} since every effective orbifold-with-boundary admits a faithful vector bundle, namely its tangent bundle.

\begin{proof}
We proceed by induction on the number of cells.
Thus suppose that $X$ is a compact effective orbifold-with-boundary and that $Z=X\cup_{\partial D^k\times\BB G}(D^k\times\BB G)$ for some representable map $\partial D^k\times\BB G\to X$, and let us show that $Z$ is homotopy equivalent to a compact effective orbifold-with-boundary.
The strategy is to realize the cell attachment to $X$ as a handle attachment.

By Corollary \ref{embedding}, after replacing $X$ with the total space of (the unit disk bundle of) a vector bundle over it (and smoothing its corners), we may assume that our map $\partial D^k\times\BB G\to X$ is a smooth embedding into $\partial X$.
By the tubular neighborhood theorem, this smooth embedding is locally modelled on the inclusion of the zero section into the total space of a vector bundle $\nu$ over $\partial D^k\times\BB G$.
Now we have $\nu\oplus T(\partial D^k\times\BB G)=T\partial X$ over $\partial D^k\times\BB G$, and identifying the outward normal along $\partial X$ with the inward normal along $\partial D^k\times\BB G$, we obtain an identification $\nu\oplus T(D^k\times\BB G)=TX$ over $\partial D^k\times\BB G$.
By Theorem \ref{enough}, there exists a vector bundle $\eta$ on $Z$ and an embedding $TX\hookrightarrow\eta|_X$.
By replacing $X$ with the total space of $\eta|_X/TX$, we may assume that $TX=\eta|_X$.
We thus have an embedding $T(D^k\times\BB G)\hookrightarrow\eta$ defined over $\partial D^k\times\BB G$.
By further enlarging $\eta$ (and modifying $X$ as this requires), we may ensure that this embedding $T(D^k\times\BB G)\hookrightarrow\eta$ extends to all of $D^k\times\BB G$.
The cokernel of this embedding is thus an extension of $\nu$ to $D^k\times\BB G$, so we can perform a handle attachment to construct our desired compact effective orbifold-with-boundary.
Finally, we should note that in the case $k=0$, we should not take $\BB G$ as this is not effective, rather we can take the unit ball in any faithful $G$-representation modulo $G$.
\end{proof}

We will need an analogue of the the previous corollary for orbi-CW-pairs.
Let us define an \emph{orbifold pair} $(X,A)$ to consist of an orbifold-with-boundary $X$ and a codimension zero suborbifold-with-boundary $A\subseteq\partial X$.
In this paper, we only ever deal with compact orbifold pairs.

\begin{corollary}\label{pairispair}
Every finite orbi-CW-pair is homotopy equivalent to a compact effective orbifold pair.
\end{corollary}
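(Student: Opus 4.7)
The plan is to adapt the inductive handle-attachment argument of Corollary \ref{orbicwisorbifold} to the setting of pairs. The central observation will be that once $A$ has been realized as a codimension-zero face of the boundary, all subsequent handles can be attached so as to leave this face intact, by first using the collar of $A$ to push attaching maps off of $A$ before performing the embedding and thickening step of Corollary \ref{orbicwisorbifold}.

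First I would apply Corollary \ref{orbicwisorbifold} to the subcomplex $A$ to obtain a compact effective orbifold-with-boundary $A^\star \simeq A$, and take as the base case the compact effective orbifold pair $(X^\star_0, A^\star) := (A^\star \times [0,1], A^\star \times \{0\})$, which is homotopy equivalent to the pair $(A,A)$. I would then proceed by induction on the cells of $X \setminus A$, maintaining the invariant that the current $(X^\star, A^\star)$ is a compact effective orbifold pair homotopy equivalent to the partial orbi-CW-pair $(X', A)$, where $X'$ denotes $A$ together with the cells of $X \setminus A$ attached so far.

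For the inductive step, given a new cell with representable attaching map $f : \partial D^k \times \BB G \to X'$, the procedure will be: first, compose with the homotopy equivalence of pairs $(X', A) \simeq (X^\star, A^\star)$ to obtain a representable map $\partial D^k \times \BB G \to X^\star$; second, homotope this map so that its image lies in the open subset $X^\star \setminus A^\star \subseteq X^\star$, using that the inclusion $X^\star \setminus A^\star \hookrightarrow X^\star$ is a deformation retract (from the collar $A^\star \times [0, \epsilon) \hookrightarrow X^\star$ coming from $A^\star \subseteq \partial X^\star$); third, apply Corollary \ref{embedding} to produce a vector bundle $V \to X^\star$ and a smooth embedding of $\partial D^k \times \BB G$ into $V$ lifting $f$, which by construction lies in $V|_{X^\star \setminus A^\star}$; fourth, normalize so that this embedding lies in the unit sphere bundle $SV$ and replace $X^\star$ by the unit disk bundle $DV$ (smoothing corners as in Corollary \ref{orbicwisorbifold}), with new distinguished boundary piece $DV|_{A^\star}$; fifth, perform the handle attachment along the embedding exactly as in Corollary \ref{orbicwisorbifold}. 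Since the embedded attaching region lies in $SV|_{X^\star \setminus A^\star}$, which is disjoint from $DV|_{A^\star}$, the distinguished face is untouched by the attachment; it remains codimension zero in the new boundary and retains a collar in $DV$ coming from the original collar of $A^\star$ in $X^\star$, so the invariant is preserved. As in Corollary \ref{orbicwisorbifold}, a $0$-cell of $X \setminus A$ must be realized as the quotient of the unit ball of a faithful $G$-representation rather than $\BB G$ itself, in order to preserve effectiveness.

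I expect the main obstacle to be step two, homotoping $f$ off $A^\star$. This relies on the fact that $A^\star \subseteq \partial X^\star$ admits a collar, which yields a deformation retraction of $X^\star$ onto $X^\star \setminus (A^\star \times [0, \epsilon/2))$ and then, via the collar itself, a further deformation retraction onto $X^\star \setminus A^\star$. Once $f$ has been pushed off $A^\star$, the remainder of the argument is essentially the proof of Corollary \ref{orbicwisorbifold}, applied with the extra bookkeeping needed to verify that the distinguished face $A^\star$ persists as a codimension-zero subset of the boundary throughout.
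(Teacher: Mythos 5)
Your proposal is correct and follows essentially the same route as the paper: realize $(A,A)$ as the orbifold pair $(A^\star\times[0,1],A^\star)$ and then attach handles away from the marked boundary face exactly as in the proof of Corollary \ref{orbicwisorbifold}. The paper states this in one line; your write-up merely fills in the (correct) bookkeeping of how the collar of $A^\star$ lets you push attaching maps off the marked face so that it persists through each handle attachment.
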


\begin{proof}
Corollary \ref{orbicwisorbifold} implies the result for finite orbi-CW-pairs of the form $(X,X)$ (namely realize $X$ as a compact orbifold-with-boundary $Z$ and take $(Z\times[0,1],Z)$).
Now given a finite orbi-CW-pair $(X,A)$, we begin with an orbifold pair homotopy equivalent to $(A,A)$, and we successively attach handles (away from the marked part of the boundary) as in the proof of Corollary \ref{orbicwisorbifold} to make it homotopy equivalent to $(X,A)$.
\end{proof}

\subsection{Stable homotopy categories of orbispaces}

We now describe how to ``stabilize'' the categories $\OrbSpc_*^f$ and $\RepOrbSpc_*^f$ to obtain categories of finite orbispectra.
The categories of finite `naive orbispectra' are defined by taking the direct limit of $\OrbSpc_*^f$ and $\RepOrbSpc_*^f$ under successive applications of the suspension operation $\Sigma$ (namely $\times(I,\partial I)$).
The formal desuspension $(X,A)^{-V}$ for any coarse vector bundle $V$ makes sense as a naive orbispectrum, since any such $V$ embeds into a trivial vector bundle.
We are more interested in the categories $\OrbSp^f$ and $\RepOrbSp^f$ of finite `genuine orbispectra', whose objects take the form $(X,A)^{-V}$ for any vector bundles $V$, with morphisms being a suitable direct limit over passing to Thom spaces of arbitrary vector bundles.

We first discuss naive orbispectra.
The suspension operation $\times(I,\partial I)$ defines an endofunctor $\Sigma$ of both $\RepOrbSpc_*$ and $\OrbSpc_*$.
The direct limit of successive applications of this endofunctor defines stable homotopy categories
\begin{align}
\RepOrbSpc_*[\Sigma^{-1}]:={}&\varinjlim\bigl(\RepOrbSpc_*\xrightarrow\Sigma\RepOrbSpc_*\xrightarrow\Sigma\cdots\bigr),\\
\OrbSpc_*[\Sigma^{-1}]:={}&\varinjlim\bigl(\OrbSpc_*\xrightarrow\Sigma\OrbSpc_*\xrightarrow\Sigma\cdots\bigr).
\end{align}
Concretely, the objects of both these categories are formal symbols $\Sigma^{-n}(X,A)$ for orbi-CW-pairs $(X,A)$ and integers $n\geq 0$, and the set of morphisms $\Sigma^{-n}(X,A)\to\Sigma^{-m}(Y,B)$ is the direct limit over $k\to\infty$ of morphisms in $\RepOrbSpc_*$ and $\OrbSpc_*$, respectively, from $\Sigma^{k-n}(X,A)$ to $\Sigma^{k-m}(Y,B)$ (which makes sense for $k\geq\max(m,n)$).
It is immediate that $\Sigma$ defines autoequivalences of $\RepOrbSpc_*[\Sigma^{-1}]$ and $\OrbSpc_*[\Sigma^{-1}]$, and that there is a natural isomorphism $\Sigma^{-1}((X,A)\times(I,\partial I))=(X,A)$ in both these categories.
There is a functor $\RepOrbSpc_*[\Sigma^{-1}]\to\OrbSpc_*[\Sigma^{-1}]$.
We can make sense out of symbols $(X,A)^{-V}$ ($(X,A)$ a compact orbi-CW-pair and $V$ a coarse vector bundle over $X$) as objects of $\RepOrbSpc_*[\Sigma^{-1}]$ and $\OrbSpc_*[\Sigma^{-1}]$, namely by embedding $V\hookrightarrow\underline\RR^n$ and taking $(X,A)^{-V}:=\Sigma^{-n}((X,A)^{\underline\RR^n/V})$ (which is independent of the choice of embedding $V\hookrightarrow\underline\RR^n$ up to canonical isomorphism).

The category $\Spc_*[\Sigma^{-1}]:=\varinjlim(\Spc_*\xrightarrow\Sigma\Spc_*\xrightarrow\Sigma\cdots)$ lies as a full subcategory inside both $\RepOrbSpc_*[\Sigma^{-1}]$ and $\OrbSpc_*[\Sigma^{-1}]$.

\begin{lemma}\label{additive}
The categories $\OrbSpc_*[\Sigma^{-1}]$ and $\RepOrbSpc_*[\Sigma^{-1}]$ are additive.
\end{lemma}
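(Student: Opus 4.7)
The plan is to verify the defining conditions of additivity: a zero object, abelian-group-valued hom-sets with bilinear composition, and finite biproducts.

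First, the empty orbi-CW-pair $(\varnothing,\varnothing)$ is a zero object in both $\OrbSpc_*$ and $\RepOrbSpc_*$: inspecting the definition of a relative map $(A^+, U, f)$, the only such map with codomain $(\varnothing,\varnothing)$ must have $U = \varnothing$ and $A^+ = X$, and the unique map from $(\varnothing,\varnothing)$ is similarly trivial; this persists after stabilizing. Second, since $\Sigma$ is an equivalence on the stabilization, every hom-set can be written as $[W, \Sigma^2 \Sigma^{-2} Z]$, which carries the classical abelian group structure obtained from the two suspension coordinates on the target (addition by pinching along one coordinate, inverses by reversing the interval, commutativity by the Eckmann--Hilton interchange of the two coordinates).

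The main work is to show that the disjoint union $(X,A)\sqcup(Y,B):=(X\sqcup Y, A\sqcup B)$ gives a biproduct. It is a coproduct because the data $(A^+,U,f)$ defining a relative map out of a disjoint union decomposes uniquely into such data for each summand. For the product structure, I exhibit explicit ``projection'' relative maps: $\pi_X : (X \sqcup Y, A \sqcup B) \to (X, A)$ is given by $A^+ := A \sqcup Y$, $U := X$ (the cover condition $X \sqcup Y = U \cup (A^+)^\circ$ holds because $Y$ is clopen in $X \sqcup Y$), together with the identity map $(X, A) \to (X, A)$ (manifestly representable); $\pi_Y$ is defined symmetrically. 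A direct computation with the composition formula for relative maps yields $\pi_X \iota_X = \mathrm{id}$, $\pi_Y \iota_Y = \mathrm{id}$, and $\pi_X \iota_Y = \pi_Y \iota_X = 0$ (the zero map factoring through $(\varnothing,\varnothing)$). By Proposition \ref{excision} the natural map $(Y,B) \xrightarrow{\sim} (X\sqcup Y, X\sqcup B)$ is an isomorphism, so Proposition \ref{puppecofiber} furnishes a Puppe sequence
\[
(X,A)\xrightarrow{\iota_X}(X\sqcup Y, A\sqcup B)\to(Y,B)\to\Sigma(X,A)\to\cdots
\]
in which $\iota_X$ is split by $\pi_X$. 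Applying $[Z,-]$ produces a long exact sequence of abelian groups; the splittings of $\iota_X$ and of $\Sigma\iota_X$ force both connecting maps $[Z,\Sigma^{-1}(Y,B)]\to[Z,(X,A)]$ and $[Z,(Y,B)]\to[Z,\Sigma(X,A)]$ to vanish, degenerating the sequence into a split short exact sequence and yielding the natural isomorphism $[Z,(X\sqcup Y, A\sqcup B)] \cong [Z,(X,A)]\oplus[Z,(Y,B)]$. Bilinearity of composition is then automatic from the identification $f+g=\nabla\circ(f\oplus g)\circ\Delta$ via the biproduct's diagonal and fold, which coincides with the suspension-defined sum by Eckmann--Hilton.

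The main obstacle is this biproduct step: the ``projection'' relative maps are unusual and exist only because of the disjoint-union shape; verifying the identities $\pi_X\iota_Y = 0$, etc., requires patient unpacking of the composition formula in the relative-map calculus, and extracting the product decomposition from the Puppe sequence relies on combining the splitting of $\iota_X$ with that of its suspension to annihilate both connecting maps.
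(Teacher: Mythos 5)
Your argument follows the paper's proof for its first two steps: the paper also gets the abelian group structure on hom-sets from the double suspension $(X,A)\times(I^2,\partial I^2)$ and observes that disjoint unions are finite coproducts. At that point the paper simply invokes the standard categorical fact that a category enriched over abelian groups with finite coproducts (and hence a zero object) is additive, whereas you try to verify the biproduct property by hand. Your explicit projections $\pi_X,\pi_Y$ and the identities $\pi_X\iota_X=\id$, $\pi_X\iota_Y=0$, etc., are correct as computations in the relative-map calculus (and are a nice concrete check), but they are not where the work lies.

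The step that is not justified is ``Applying $[Z,-]$ produces a long exact sequence of abelian groups.'' Cofiber sequences in $\RepOrbSpc_*$ yield exact sequences under the \emph{contravariant} functor $[-,W]$, via the universal property of the cofiber and the Puppe sequence; the corresponding \emph{covariant} statement amounts to saying that cofiber sequences are also fiber sequences, i.e.\ that the stabilized category is stable/triangulated, which the paper explicitly declines to prove even for $\RepOrbSp^f$. So as written, the degeneration of your long exact sequence has no foundation. Fortunately the detour is unnecessary: once hom-sets are abelian groups with bilinear composition and $(\varnothing,\varnothing)$ is a zero object, the coproduct universal property forces $\iota_X\pi_X+\iota_Y\pi_Y=\id$ (both sides precompose with $\iota_X$ to give $\iota_X$ and with $\iota_Y$ to give $\iota_Y$, using bilinearity and your four identities), and the equations $\pi_X\iota_X=\id$, $\pi_Y\iota_Y=\id$, $\pi_X\iota_Y=\pi_Y\iota_X=0$, $\iota_X\pi_X+\iota_Y\pi_Y=\id$ make $(X\sqcup Y,A\sqcup B)$ a product by the usual direct argument (send $f,g$ to $\iota_Xf+\iota_Yg$). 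Replacing the Puppe-sequence paragraph with this purely formal step closes the gap and recovers exactly the lemma the paper cites.
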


\begin{proof}
There is a natural abelian group structure on the morphism space in $\OrbSpc_*$ and $\RepOrbSpc_*$ from $(X,A)\times(I^2,\partial I^2)$ to $(Y,B)$.
This gives an enrichment of $\OrbSpc_*[\Sigma^{-1}]$ and $\RepOrbSpc_*[\Sigma^{-1}]$ over abelian groups.
It is immediate that finite disjoint unions are finite coproducts.
A category enriched over abelian groups and which has finite coproducts is additive.
\end{proof}

The categories $\OrbSpc_*[\Sigma^{-1}]$ and $\RepOrbSpc_*[\Sigma^{-1}]$ may be described alternatively as localizations as follows.
We consider the `Grothendieck construction'
\begin{equation}\label{grothconstr}
\operatorname{Groth}\bigl(\OrbSpc_*\xrightarrow\Sigma\OrbSpc_*\xrightarrow\Sigma\cdots\bigr),
\end{equation}
namely the category whose objects are formal symbols $\Sigma^{-n}(X,A)$ and whose morphisms $\Sigma^{-n}(X,A)\to\Sigma^{-m}(Y,B)$ are morphisms $(X,A)\to\Sigma^{n-m}(Y,B)$ in $\OrbSpc_*$ for $n\geq m$ (and there are no morphisms otherwise).
There is a class $A$ of morphisms $\Sigma^{-n}(\Sigma^{n-m}(X,A))\to\Sigma^{-m}(X,A)$ corresponding to the identity map of $\Sigma^{n-m}(X,A)$ for $n\geq m$.
It is immediate that this class $A$ forms a right multiplicative system, a notion whose definition we now recall.

\begin{definition}[Right multiplicative system]
A class of morphisms $W$ in a category $\C$ is called a \emph{right multiplicative system} iff it satisfies the following three axioms:
\begin{itemize}
\item $W$ contains all identities and is closed under composition.
\item(Right Ore condition) For every pair of solid arrows
\begin{equation}\label{rore}
\begin{tikzcd}
A\ar[r,dashed]\ar[d,dashed,swap,"\in W"]&B\ar[d,"\in W"]\\
C\ar[r]&D
\end{tikzcd}
\end{equation}
there exist an object $A$ and dotted arrows so that the diagram commutes.
\item(Right cancellability) For every commuting diagram of solid arrows
\begin{equation}\label{rcancel}
\begin{tikzcd}
A\ar[r,dashed]\ar[d,dashed,swap,"\in W"]&B\ar[d,"\in W"]\\
C\ar[ru,shift left]\ar[ru,shift right]\ar[r]&D
\end{tikzcd}
\end{equation}
there exists an object $A$ and dotted arrows so that the diagram commutes.
\end{itemize}
Note that $W$ is \emph{not} required to contain all isomorphisms.
This is rather antithetical to the philosophy of category theory, however this generality is significant for us.
\end{definition}

For any right multiplicative system $W$ in a category $\C$, the localization $\C\to\C[W^{-1}]$ exists, provided a certain smallness condition is satisfied.
Furthermore, for $X,Y\in\C$, the set of morphisms $X\to Y$ in $\C[W^{-1}]$ admits the following concrete description.
Given $X\in\C$, consider the category $\{Z\xrightarrow WX\}$ whose objects are arrows $Z\xrightarrow WX$ and whose morphisms are morphisms over $X$ (i.e.\ $\{Z\xrightarrow WX\}$ is a full subcategory of the over-category $\C_{/X}$).
The fact that $W$ is a right multiplicative system implies that $\{Z\xrightarrow WX\}$ is filtered.
Provided each category $\{Z\xrightarrow WX\}$ is essentially small, the localization $\C[W^{-1}]$ exists and the set of morphisms $X\to Y$ in $\C[W^{-1}]$ is the direct limit over $\{Z\xrightarrow WX\}$ of the set of morphisms $Z\to Y$.

The functor from the Grothendieck construction \eqref{grothconstr} to $\OrbSpc_*[\Sigma^{-1}]$ sends $A$ to isomorphisms, hence factors uniquely through the localization.
Using the explicit description of morphisms in the localization by a right multiplicative system, it is immediate that this functor is an equivalence.

We now define the categories of finite \emph{genuine orbispectra} $\OrbSp^f$ and $\RepOrbSp^f$.
Let us begin with the categories $\OrbSpcPair^{f,-\Vect}$ and $\RepOrbSpcPair^{f,-\Vect}$, whose objects are $(X,A)^{-\xi}$ where $(X,A)$ is a finite orbi-CW-pair and $\xi$ is a vector bundle over $X$.
A morphism in these categories $(X,A)^{-\xi}\to(Y,B)^{-\zeta}$ consists of a (representable) map $f:X\to Y$, an embedding $i:f^*\zeta\hookrightarrow\xi$, and a section $s:X\to\xi/i(f^*\zeta)$ such that $A\subseteq f^{-1}(B)\cup s^{-1}(\{\left|\cdot\right|\geq\varepsilon\})$ for some $\varepsilon>0$ (these triples $(f,i,s):(X,A)^{-\xi}\to(Y,B)^{-\zeta}$ are considered up to homotopy, namely the equivalence relation of there being such a morphism $(X\times[0,1],A\times[0,1])^{-\xi}\to(Y,B)^{-\zeta}$).
To compose $(X,A)^{-\xi}\xrightarrow{(f,i,s)}(Y,B)^{-\zeta}\xrightarrow{(g,j,t)}(Z,C)^{-\eta}$ we take $g\circ f$ and $j\circ i$; now there is an extension
\begin{equation}
0\to\zeta/i(\xi)\xrightarrow j\eta/j(i(\xi))\to\eta/j(\zeta)\to 0,
\end{equation}
and choosing a splitting (which is unique up to homotopy) allows us to take $tf\oplus js$ as our section for the composition.
Composition is associative.

We now define categories $\OrbSpc_*^{f,-\Vect}$ and $\RepOrbSpc_*^{f,-\Vect}$ by modifying the definition above by declaring that a morphism $(X,A)^{-\xi}\dashrightarrow(Y,B)^{-\zeta}$ consists of $A\subseteq A^+\subseteq X$ closed and $U\subseteq X$ open with $X=U\cup(A^+)^\circ$ and a morphism $(U,U\cap A^+)^{-\xi}\to(Y,B)^{-\zeta}$ in $\OrbSpcPair^{f,-\Vect}$ or $\RepOrbSpcPair^{f,-\Vect}$; these are considered modulo homotopy as usual.
Composition of
\begin{equation}
\begin{tikzcd}[column sep = large]
(X,A)^{-\xi}\ar[r,dashed,"{(A^+,U,f,i,s)}"]&(Y,B)^{-\zeta}\ar[r,dashed,"{(B^+,V,g,j,t)}"]&(Z,C)^{-\eta}
\end{tikzcd}
\end{equation}
is given by $(A^+\cup f^{-1}(B^+),f^{-1}(V),g\circ f,j\circ i,tf\oplus js)$ as before.
The proof of Proposition \ref{excision} applies without modification to show that:

\begin{proposition}[Excision]\label{excisionspectra}
The functors
\begin{align}
\OrbSpcPair^{f,-\Vect}&{}\to\OrbSpc_*^{f,-\Vect}\\
\RepOrbSpcPair^{f,-\Vect}&{}\to\RepOrbSpc_*^{f,-\Vect}
\end{align}
are localizations at the collection $W$ of morphisms of the form $(P,P\cap Q)^{-\xi}\to(X,Q)^{-\xi}$ where $X=P\cup Q$ is a cover by subcomplexes and $\xi$ is a vector bundle over $X$.
\qed
\end{proposition}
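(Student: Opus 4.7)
The plan is to copy the proof of Proposition \ref{excision} essentially verbatim, the key observation being that the vector bundle data $\xi$ is carried along passively and does not interact with any step of the localization argument.

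First I would verify that every morphism in $W$ becomes an isomorphism in $\OrbSpc_*^{f,-\Vect}$ (and in $\RepOrbSpc_*^{f,-\Vect}$). A representative $(P,P\cap Q)^{-\xi}\to(X,Q)^{-\xi}$ is, up to isomorphism in $\OrbSpcPair^{f,-\Vect}$, of the mapping-cylinder form appearing in the proof of Proposition \ref{excision}; the same collapse-of-the-cylinder serves as the explicit homotopy inverse, since every triple $(f,i,s)$ constructed along the way consists of the identity embedding of (the pullback of) $\xi$ and the zero section.

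Next I would verify the universal property. Let $F:\OrbSpcPair^{f,-\Vect}\to\C$ be any functor sending morphisms of $W$ to isomorphisms. Essential surjectivity of $\OrbSpcPair^{f,-\Vect}\to\OrbSpc_*^{f,-\Vect}$ forces the action of the induced functor $\tilde F$ on objects. For morphisms, a relative morphism $(A^+,U,f,i,s):(X,A)^{-\xi}\dashrightarrow(Y,B)^{-\zeta}$ factors as
\[
(X,A)^{-\xi}\to(X,A^+)^{-\xi}\xleftarrow{W}(U,U\cap A^+)^{-\xi}\xrightarrow{(f,i,s)}(Y,B)^{-\zeta},
\]
and after subdividing $X$ we may shrink $U$ and $A^+$ to subcomplexes covering $X$, placing the middle arrow into $W$ (with $P=U$, $Q=A^+$). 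Thus $\tilde F$ is forced on morphisms. Invariance of this assignment under homotopy of $(A^+,U,f,i,s)$ reduces to the tautology that the two inclusions of $(X,A)^{-\xi}$ into $(X\times[0,1],A\times[0,1])^{-\xi}$ agree in $\OrbSpcPair^{f,-\Vect}$ (being a homotopy category), and compatibility with composition is verified by the identical commutative diagram as in Proposition \ref{excision}, each arrow now carrying its own triple $(f,i,s)$ rather than a single map.

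The only point requiring any new verification -- and the closest thing to an obstacle -- is that the subdivision making $U$ and $A^+$ into subcomplexes of $X$ must interact correctly with the vector bundle data. This is automatic: the bundle $\xi$ restricts to any subcomplex, the embedding $i:f^*\zeta\hookrightarrow\xi|_U$ restricts to any subcomplex of $U$, the section $s$ restricts, and the support condition $A\subseteq f^{-1}(B)\cup s^{-1}(\{\left|\cdot\right|\geq\varepsilon\})$ is preserved by restriction. Consequently no obstruction arises and the proof of Proposition \ref{excision} goes through word for word in both the unrestricted and representable settings.
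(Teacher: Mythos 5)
Your proposal matches the paper exactly: the paper's own proof of Proposition \ref{excisionspectra} consists entirely of the remark that "the proof of Proposition \ref{excision} applies without modification," which is precisely your observation that the vector bundle data $\xi$, the embedding $i$, and the section $s$ are all carried along passively through the factorization, the subdivision, and the composition diagram. Your extra sentence identifying why no obstruction arises (restriction of $\xi$, $i$, $s$ to subcomplexes behaves trivially, and the support condition is preserved) is a fair unpacking of what "without modification" implicitly asks the reader to check.
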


We now localize $\RepOrbSpc_*^{f,-\Vect}$ and $\OrbSpc_*^{f,-\Vect}$ at the class of morphisms $S$ given by \emph{the images of} the isomorphism classes in $\RepOrbSpcPair^{f,-\Vect}$ of the tautological morphisms $((X,A)^\xi)^{-(\xi\oplus\zeta)}\to(X,A)^{-\zeta}$.
The following deserves emphasis: the objects of $\RepOrbSpc_*^{f,-\Vect}$ and $\OrbSpc_*^{f,-\Vect}$ remain symbols $(X,A)^{-\xi}$, and while two different symbols $(X,A)^{-\xi}$ and $(X',A')^{-\xi'}$ may be \emph{isomorphic} in the localizations $\RepOrbSpc_*^{f,-\Vect}$ or $\OrbSpc_*^{f,-\Vect}$, they need not be isomorphic in $\RepOrbSpcPair^{f,-\Vect}$, and hence are regarded as \emph{completely different} when it comes to the question of whether a morphism is or is not in $S$.

\begin{lemma}\label{rms}
The morphisms $S$ form a right multiplicative system in $\RepOrbSpc_*^{f,-\Vect}$ and $\OrbSpc_*^{f,-\Vect}$.
\end{lemma}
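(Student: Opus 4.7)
The plan is to verify the three axioms of a right multiplicative system for the class $S$ of tautological Thom-suspension morphisms. The key technical input throughout is the canonical identification of iterated Thom spaces
\[
((X,A)^V)^{\pi^*W}=(X,A)^{V\oplus W},
\]
where $\pi:(X,A)^V\to(X,A)$ is the projection; this identification holds in $\RepOrbSpcPair^{f,-\Vect}$, and pulls back tautological sections correctly.

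Closure of $S$ under identities and composition follows almost directly from this. The identity morphism on $(X,A)^{-\zeta}$ is the tautological morphism for the rank-zero bundle, so $\id\in S$. For composition, under the Thom identification above, the composite
\[
((X,A)^{V\oplus W})^{-(V\oplus W\oplus\zeta)}\to((X,A)^V)^{-(V\oplus\zeta)}\to(X,A)^{-\zeta}
\]
coincides, up to canonical isomorphism in $\RepOrbSpcPair^{f,-\Vect}$, with the tautological morphism for $V\oplus W$, hence lies in $S$.

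For the right Ore condition, given a morphism $g:(X,A)^{-\xi}\to(Y,B)^{-\eta}$ represented (after excision, Proposition \ref{excisionspectra}) by a triple $(f,i,s)$ in $\RepOrbSpcPair^{f,-\Vect}$ and a tautological $\tau:((Y,B)^\zeta)^{-(\zeta\oplus\eta)}\to(Y,B)^{-\eta}$, I would complete the Ore square by placing $((X,A)^{f^*\zeta})^{-(f^*\zeta\oplus\xi)}$ at the upper left. The map down to $(X,A)^{-\xi}$ is tautological, hence in $S$; the map across to $((Y,B)^\zeta)^{-(\zeta\oplus\eta)}$ is assembled from the Thom-space lift $(X,A)^{f^*\zeta}\to(Y,B)^\zeta$ of $f$, the embedding $\id_{f^*\zeta}\oplus i$ on pulled-back bundles, and the pullback of $s$. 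Commutativity of the square is then a direct diagram chase in the definition of composition of triples $(f,i,s)$.

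The step I expect to be the main obstacle is right cancellability. Suppose $\tau g_1=\tau g_2$ for parallel morphisms $g_1,g_2:(Z,D)^{-\mu}\rightrightarrows((Y,B)^\zeta)^{-(\zeta\oplus\eta)}$, represented by triples $(h_i,j_i,s_i)$ with $h_i:Z\to(Y,B)^\zeta$. The plan is to use the decomposition of a map into a Thom space: locally on $Z$ (and after shrinking using excision), $h_i$ is the data of a map $f_i:Z\to(Y,B)$ together with a fiberwise section of $f_i^*\zeta$ of norm $\leq 1$. The hypothesis $\tau g_1=\tau g_2$ forces the $f_i$ to agree up to homotopy, as well as the restriction of $j_i$ to the $\eta$-summand and the section data that projects to $(Y,B)$; only the $\zeta$-valued fiber data remains as potential ambiguity. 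Precomposing with a sufficiently large tautological morphism of the form $((Z,D)^{\mu'})^{-(\mu'\oplus\mu)}\to(Z,D)^{-\mu}$ in $S$ absorbs this ambiguity into the added Thom direction, producing the required equalisation $g_1t=g_2t$. The bookkeeping of how the embeddings $j_i$ interact with the $\zeta$-summand through the Thom-space decomposition of $h_i$ is the technical heart of the argument, and is what makes this step genuinely more delicate than the first two. Finally, the proofs in $\RepOrbSpc_*^{f,-\Vect}$ and $\OrbSpc_*^{f,-\Vect}$ are verbatim identical, since tautological Thom projections $(X,A)^V\to(X,A)$ are automatically representable.
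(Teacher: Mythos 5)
Your verification that $S$ is closed under identities and composition, and your Ore square for morphisms that genuinely come from $\OrbSpcPair^{f,-\Vect}$, both match the paper. But there are two genuine gaps.

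First, in the Ore condition you assert that a general morphism of $\OrbSpc_*^{f,-\Vect}$ is ``represented (after excision) by a triple $(f,i,s)$ in $\OrbSpcPair^{f,-\Vect}$''. It is not: Proposition \ref{excisionspectra} says the functor is a localization, so a general morphism is a zig-zag $(X,A)^{-\xi}\to(X,A^+)^{-\xi}\leftarrow(U,U\cap A^+)^{-\xi}\to(Y,B)^{-\zeta}$ with the map $f$ defined only on the open subcomplex $U$. Your proposed upper-left corner $((X,A)^{f^*\zeta})^{-(f^*\zeta\oplus\xi)}$ therefore does not exist as an object over $X$, because $f^*\zeta$ is a bundle on $U$ only, while a morphism of $S$ into $(X,A)^{-\xi}$ must be the Thom morphism of a bundle defined on all of $X$. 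The paper fixes this by invoking enough vector bundles (Theorem \ref{enough}) to embed $f^*\zeta|_U$ into the restriction of a bundle $\tau$ on all of $X$ and pulling back $\tau$ instead; this appeal to Theorem \ref{enough} is essential and is missing from your argument.

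Second, your right-cancellability step does not go through as sketched. The hypothesis $\tau g_1=\tau g_2$ is an equality of homotopy classes of entire triples, and a homotopy of the composites mixes the section data $s_i$ with the $\zeta$-coordinate of $h_i$; it does not ``force the $f_i$ to agree up to homotopy'' with the ambiguity confined to the $\zeta$-valued fiber data --- that decomposition is precisely what would need proof, and no mechanism is offered for why precomposing with a large tautological morphism absorbs the residue. The paper sidesteps analyzing the hypothesis altogether: it shows that the Ore construction applied to the cospan $C\to D\xleftarrow{S}B$ produces an arrow $A\to B$ depending only on the composite $C\to D$ and satisfying $A\to B=A\to C\to B$. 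Establishing that identity is the real content, and it requires a nontrivial explicit homotopy: a linear interpolation between two embeddings $f^*\zeta\oplus f^*\eta\hookrightarrow f^*\zeta\oplus f^*\eta\oplus\xi\oplus f^*\eta$, which introduces a sign $-1$ on the cokernel that must then be corrected by a rotation in $O(2)$, followed by a stabilization argument reducing arbitrary pullbacks to a common one. None of this appears in your proposal, so the step you yourself flag as the main obstacle remains unproved.
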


\begin{proof}
Closedness under composition holds because any vector bundle on the total space of a vector bundle is pulled back from the base, so a Thom space of a Thom space is a Thom space.
(Note that isomorphisms in $\RepOrbSpc_*^{f,-\Vect}$ and $\OrbSpc_*^{f,-\Vect}$ need not be in $S$, and that we \emph{have not} shown that $S$ is not closed under composing with such isomorphisms!)

We verify the right Ore condition.
In the case that the morphism $C\to D$ in $\OrbSpc_*^{f,-\Vect}$ comes from $\OrbSpcPair^{f,-\Vect}$ (resp.\ with `Rep' prefixes), we can simply pull back the bundle involved in $B\to D$:
\begin{equation}\label{firstSpullback}
\begin{tikzcd}
((X,A)^{f^*\eta})^{-(\xi\oplus f^*\eta)}\ar[r,dashed]\ar[d,dashed,swap,"\in S"]&((Y,B)^\eta)^{-(\zeta\oplus\eta)}\ar[d,"\in S"]\\
(X,A)^{-\xi}\ar[r]&(Y,B)^{-\zeta}
\end{tikzcd}
\end{equation}
In the general case, the bottom row becomes $(X,A)^{-\xi}\to(X,A^+)^{-\xi}\leftarrow(U,U\cap A^+)^{-\xi}\to(Y,B)^{-\zeta}$.
Now we may pull back $\eta$ to $U$, but not to $X$.
Instead, we appeal to Theorem \ref{enough} (enough vector bundles) to embed the pullback of $\eta$ to $U$ into the restriction of a vector bundle $\tau$ on $X$.
We thus obtain the following diagram:
\begin{equation}\label{secondSpullback}
\begin{tikzcd}[column sep = tiny, row sep = small]
((X,A)^\tau)^{-(\xi\oplus\tau)}\ar[r,dashed]\ar[dd,dashed,swap,"\in S"]&((X,A^+)^\tau)^{-(\xi\oplus\tau)}\ar[dd,dashed,swap,"\in S"]&\ar[l,dashed,swap,"\in W"]((U,U\cap A^+)^\tau)^{-(\xi\oplus\tau)}\ar[d,dashed,"\in S"]\\
&&((U,U\cap A^+)^{f^*\eta})^{-(\xi\oplus f^*\eta)}\ar[r,dashed]\ar[d,dashed,"\in S"]&((Y,B)^\eta)^{-(\zeta\oplus\eta)}\ar[d,"\in S"]\\
(X,A)^{-\xi}\ar[r]&(X,A^+)^{-\xi}&\ar[l,swap,"\in W"](U,U\cap A^+)^{-\xi}\ar[r]&(Y,B)^{-\zeta}
\end{tikzcd}
\end{equation}
The desired result follows.

We verify right cancellability.
It suffices to show that given maps $C\to B\xrightarrow{\in S}D$, applying the pullback procedure above to $C\to D\xleftarrow{\in S}B$ results in $C\xleftarrow{\in S}A\to B$ for which $A\to B$ and $A\to C\to B$ coincide (in this way, the dotted arrows in \eqref{rcancel} that we produce depend only on the maps $C\to D\xleftarrow{\in S}B$).
As above, we first consider the situation of a morphism $C\to D$ coming from $\OrbSpcPair^{f,-\Vect}$ (resp.\ $\RepOrbSpcPair^{f,-\Vect}$), i.e.\ we consider \eqref{firstSpullback}.
Even in this setting, the desired commutativity is not obvious and requires the following calculation.
We reproduce the relevant diagram, rewriting it in a more convenient way:
\begin{equation}
\begin{tikzcd}
((X,A)^{f^*\eta})^{-(f^*\zeta\oplus f^*\eta\oplus\xi\oplus f^*\eta)}\ar[r]\ar[d]&((Y,B)^\eta)^{-(\zeta\oplus\eta)}\ar[d]\\
(X,A)^{-(f^*\zeta\oplus f^*\eta\oplus\xi)}\ar[r]\ar[ru]&(Y,B)^{-\zeta}
\end{tikzcd}
\end{equation}
Let the diagonal map $(X,A)^{-\xi}\to((Y,B)^\eta)^{-(\zeta\oplus\eta)}$ be given by $f:X\to Y$, $g:X\to f^*\eta$, the obvious inclusion $f^*(\zeta\oplus\eta)\hookrightarrow f^*\zeta\oplus f^*\eta\oplus\xi$, and $s:X\to\xi$.
Now we have two maps $((X,A)^{f^*\eta})^{-(f^*\zeta\oplus f^*\eta\oplus\xi\oplus f^*\eta)}\to((Y,B)^\eta)^{-(\zeta\oplus\eta)}$ which we would like to show are homotopic.
The top horizontal arrow is given by $f^\eta:X^{f^*\eta}\to Y^\eta$, the inclusion $f^*\zeta\oplus f^*\eta\hookrightarrow f^*\zeta\oplus f^*\eta\oplus\xi\oplus f^*\eta$ where $f^*\eta$ goes to the last copy, and the section $X^{f^*\eta}\to X\xrightarrow{s\oplus g}\xi\oplus f^*\eta$.
The composition of the left vertical arrow and the diagonal arrow is given by $(f\circ\pi_X,g):X^{f^*\eta}\to Y^\eta$, the inclusion $f^*\zeta\oplus f^*\eta\hookrightarrow f^*\zeta\oplus f^*\eta\oplus\xi\oplus f^*\eta$ where $f^*\eta$ goes to the first copy, and $s\oplus\pi_{f^*\eta}:X^{f^*\eta}\to\xi\oplus f^*\eta$.
These maps are evidently not the same, but they are homotopic as follows.
We first apply the obvious linear homotopy from one inclusion $f^*\zeta\oplus f^*\eta\hookrightarrow f^*\zeta\oplus f^*\eta\oplus\xi\oplus f^*\eta$ to the other, noting that the induced action on the cokernel, naturally identified in both cases with $f^*\eta$, is multiplication by $-1$.
Now our two maps coincide except that we need to transform $(g,\id_{f^*\eta})$ into $(\id_{f^*\eta},-g)$ (note the sign picked up from the first homotopy), which we can do using rotation matrices for $\theta\in[0,\pi/2]$.

Finally, we should show right cancellability in the case of general maps $C\to D$; we use the same strategy as above.
Note that a pullback along $C\to D$ involves a choice of vector bundle $\xi$ over $C$ (let us call this a $\xi$-pullback), and that a choice of embedding $\xi\hookrightarrow\zeta$ determines a map from $\xi$-pullbacks to $\zeta$-pullbacks (`stabilization').
Now it is evident from the definition that given a $\xi$-pullback and a $\xi'$-pullback, there exist embeddings $\xi\hookrightarrow\zeta$ and $\xi'\hookrightarrow\zeta$ such that the induced $\zeta$-pullbacks coincide.
Note that this includes the assertion that pullbacks induced by homotopic maps are equivalent, which is shown by considering the pullback along the homotopy itself.
It now suffices to show the same commutativity as before, namely that $A\to C\to B$ and $A\to B$ agree.
To see this, note that the present situation is that of the solid arrows in \eqref{secondSpullback} plus a single additional diagonal arrow $(U,U\cap A^+)^{-\xi}\to((Y,B)^\eta)^{-(\zeta\oplus\eta)}$ in the lower rightmost square.
The resulting dotted arrows in that square commute by the reasoning in the previous paragraph, which combined with the commutativity of the rest of the diagram imply that everthing commutes.
\end{proof}

The right multiplicative system $S$ also satisfies the smallness condition needed to localize: the category of $S$-morphisms over $(X,A)^{-\xi}$ has as its objects all vector bundles over $X$, and the isomorphism classes of these form a set.

It follows from Lemma \ref{rms} and the smallness condition that the localizations of $\RepOrbSpc_*^{f,-\Vect}$ and $\OrbSpc_*^{f,-\Vect}$ at $S$ exist, and we denote these localizations by $\RepOrbSp^f$ and $\OrbSp^f$, respectively.
Morphisms $(X,A)^{-\xi}\to(Y,B)^{-\zeta}$ in $\RepOrbSp^f$ and $\OrbSp^f$ are thus described as the direct limit over vector bundles $\eta$ over $X$ of morphisms $((X,A)^\eta)^{-(\eta\oplus\xi)}\to(Y,B)^{-\zeta}$ in $\RepOrbSpc_*^{f,-\Vect}$ and $\OrbSpc_*^{f,-\Vect}$, respectively.
Now it makes sense to write $(X,A)^V$ for an object of $\RepOrbSp^f$ for any finite orbi-CW-pair $(X,A)$ and any stable vector bundle $V$ over $X$, since a stable isomorphism $E-F=E'-F'$ induces an isomorphism $((X,A)^E)^{-F}=((X,A)^{E'})^{-F'}$ in $\RepOrbSp^f$.

There are functors $\RepOrbSpc_*^f[\Sigma^{-1}]\to\RepOrbSp^f$ and $\OrbSpc_*^f[\Sigma^{-1}]\to\OrbSp^f$.
To construct them, note that $\Sigma^{-n}(X,A)\mapsto(X,A)^{-\underline\RR^n}$ defines functors out of the relevant the Grothendieck constructions which send $A$ to isomorphisms.

\begin{lemma}
The categories $\RepOrbSp^f$ and $\OrbSp^f$ are additive.
\end{lemma}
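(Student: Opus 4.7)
The plan is to adapt the proof of Lemma \ref{additive} to the stable setting: enrich $\RepOrbSp^f$ and $\OrbSp^f$ over abelian groups, check that finite disjoint unions are finite coproducts, and invoke the standard fact that an $\Ab$-enriched category with finite coproducts is additive.

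For the abelian group enrichment, first note that every object $(X,A)^{-\xi}$ in $\RepOrbSp^f$ and $\OrbSp^f$ is canonically isomorphic to $((X,A)\times(I^2,\partial I^2))^{-(\xi\oplus\underline\RR^2)}$ via a morphism in the right multiplicative system $S$ (using the identification of the Thom pair $(X,A)^{\underline\RR^2}$ with $(X,A)\times(I^2,\partial I^2)$). Morphisms out of this presentation, computed in $\RepOrbSpc_*^{f,-\Vect}$ or $\OrbSpc_*^{f,-\Vect}$, carry two commuting binary operations induced by the pinch maps $(I,\partial I)\to(I,\partial I)\vee(I,\partial I)$ in the two $I$-factors, with the constant morphism as a two-sided unit and inverses coming from the flip automorphism of $(I,\partial I)$. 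The Eckmann--Hilton argument forces the two operations to coincide and to be associative and commutative. Passing to the direct limit defining $\RepOrbSp^f$ and $\OrbSp^f$ preserves this structure, and independence from the choice of $\underline\RR^2$-stabilization follows because further stabilization by $\underline\RR^k$ introduces yet another commuting pinch operation. Bilinearity of composition is a direct consequence of the naturality of pinch maps.

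For finite coproducts, the empty orbispectrum $\varnothing^{-0}$ is a zero object (initial and terminal in $\RepOrbSpcPair^{f,-\Vect}$, and this property descends through the localizations at $W$ and $S$), and the evident morphism
\begin{equation}
(X,A)^{-\xi}\sqcup(X',A')^{-\xi'}\longrightarrow(X\sqcup X',A\sqcup A')^{-(\xi\sqcup\xi')}
\end{equation}
is a coproduct. Both assertions reduce to the corresponding claims in $\RepOrbSpcPair^{f,-\Vect}$ and $\OrbSpcPair^{f,-\Vect}$, where the universal property is clear: a relative map out of a disjoint union decomposes canonically into a pair of relative maps out of the components, and likewise for morphisms of Thom pairs.

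The main obstacle is the bookkeeping: verifying that the abelian group operation is well-defined modulo all the identifications, namely the excision relations from $W$ (Proposition \ref{excisionspectra}), the stabilization relations from $S$, and the homotopy relations on relative maps of Thom pairs. Each check is routine because the pinch operations are natural with respect to the maps defining these relations; the stabilization step in particular requires noting that the pinch operation on $((X,A)\times(I^2,\partial I^2))^{-(\xi\oplus\underline\RR^2)}$ commutes with the further Thom stabilizations used to compute morphisms as a direct limit, and hence descends. Once this is established, the biproduct structure --- and therefore additivity --- is automatic, combining the $\Ab$-enrichment with the existence of finite coproducts and the zero object.
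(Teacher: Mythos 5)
Your proof is correct and follows the same route as the paper, which simply says ``Same as Lemma \ref{additive}'': both arguments produce the $\Ab$-enrichment from the pinch maps on a double suspension (in the genuine setting, realized cofinally via stabilization by $\underline\RR^2$), observe that disjoint unions are coproducts, and invoke the standard criterion that an $\Ab$-enriched category with finite coproducts is additive. You have merely spelled out the bookkeeping that the paper leaves implicit.
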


\begin{proof}
Same as Lemma \ref{additive}.
\end{proof}

For any covering space $(X',A')\to(X,A)$ (meaning $X'\to X$ is a covering space and $A'=A\times_XX'$), there is an induced map $(X,A)\to(X',A')$ in $\RepOrbSp^f$.
It is defined by embedding $X'$ into the total space of a vector bundle over $X$ (which exists by enough vector bundles Theorem \ref{enough}) and taking the usual collapse map.

There is a functor $\RepOrbSp^f\to\OrbSp^f$, and the category of finite spectra $\Sp^f$ is a full subcategory of both.

There is a classifying space functor $\OrbSp^f\to\Sp$ defined as follows.
We first define the corresponding functor $\OrbSpcPair^{f,-\Vect}\to\Sp$, which sends $(X,A)^{-V}$ to $(\tilde X,\tilde A)^{-V}\in\Sp$; concretely, $(\tilde X,\tilde A)^{-V}$ is the direct limit over finite subcomplexes $(\tilde X_0,\tilde A_0)$ and embeddings $V|_{\tilde X_0}\hookrightarrow\underline\RR^N$ of $\Sigma^{-N}((\tilde X_0,\tilde A_0)^{\underline\RR^N/V})$).
Given a map $(X,A)^{-V}\to(Y,B)^{-W}$ in $\OrbSpcPair^{f,-\Vect}$ consisting of a map $f:X\to Y$, an inclusion $f^*W\hookrightarrow V$, and a section $s:X\to V/f^*W$, we define a map $(\tilde X,\tilde A)^{-V}\to(\tilde Y,\tilde B)^{-W}$ in $\Sp$ as follows.
Choose classifying spaces $\tilde A\subseteq\tilde X$ and $\tilde B\subseteq\tilde Y$ and the map $\tilde f:\tilde X\to\tilde Y$ fitting into a strictly commutative diagram with $f$ such that $\tilde f(f^{-1}(B))\subseteq\tilde B$ (this can be done by induction on the cells of $(X,A)$ and $(Y,B)$).
Now the map $\tilde f:\tilde X\to\tilde Y$, the pullback inclusion $\tilde f^*W\hookrightarrow V$, and the pullback of $s$ define a map $(\tilde X,\tilde A)^{-V}\to(\tilde Y,\tilde B)^{-W}$ in $\SpcPair^{-\Vect}$, hence in $\Sp$; concretely, the induced map in $\Sp$ is given (over a given finite subcomplex of $(\tilde X,\tilde A)$) by taking $N$ large enough so that the map $f^*W\hookrightarrow V\hookrightarrow\underline\RR^N$ is homotopic to the pullback of $W\hookrightarrow\underline\RR^N$, so $f^*(\underline\RR^N/W)=(\underline\RR^N/V)\oplus V/f^*W$ so we may use the identity on the first factor and the section $s$ on the second factor to define a map $(\tilde X,\tilde A)^{\underline\RR^N/V}\to(\tilde Y,\tilde B)^{\underline\RR^N/W}$ (strictly speaking, only on arbitrary finite subcomplexes thereof) which we desuspend by $N$.
It is immediate to check that morphisms $W$ are sent to isomorphisms, and morphisms $S$ are also by inspection.
We therefore have a classifying space functor $\OrbSp^f\to\Sp$.

There is a symmetric monoidal `smash product' $\wedge$ on $\RepOrbSp^f$ and $\OrbSp^f$ defined as follows.
The product $(X,A)^{-\xi}\times(Y,B)^{-\zeta}:=(X\times Y,(A\times Y)\cup(X\times B))^{-\xi-\zeta}$ is a symmetric monoidal structure on $\RepOrbSpcPair^{f,-\Vect}$ and $\OrbSpcPair^{f,-\Vect}$.
To see that it descends to $\RepOrbSp^f$ and $\OrbSp^f$, it suffices to show that a morphism in $W$ or $S$ times a fixed $(Y,B)^{-\zeta}$ again lies in $W$ or $S$.
For $S$ this is obvious, and for $W$ this follows by inspection exactly as in the construction of the smash product on $\RepOrbSpc_*^f$ and $\OrbSpc_*^f$.

There is a natural isomorphism $(Z\wedge W)^\sim=\tilde Z\wedge\tilde W$ for $Z,W\in\OrbSp^f$; to define this, it suffices to define it on the corresponding functors out of $\OrbSpcPair^{f,-\Vect}\times\OrbSpcPair^{f,-\Vect}$, where it is defined in the same way as the corresponding isomorphisms for $Z,W\in\OrbSpc_*$.

\subsection{Exact triangles}

A triple of morphisms in $\RepOrbSp^f$ is called a cofiber sequence iff it is isomorphic to
\begin{equation}
(Y,B)^{-\xi}\to(X,A)^{-\xi}\to(X,A\cup_BY)^{-\xi}
\end{equation}
for an orbi-CW-complex $X$ with two subcomplexes $A,Y\subseteq X$ and $B:=A\cap Y$ and a vector bundle $\xi$ over $X$ (compare \eqref{cofiber}).
We now show that every morphism in $\RepOrbSp^f$ has a cofiber, from which it follows that every morphism can be extended to a bi-infinite cofiber sequence by desuspending the Puppe sequence.

\begin{proposition}
Every morphism in $\RepOrbSp^f$ is isomorphic to one of the form $(Y,B)^{-\xi}\to(X,A)^{-\xi}$ for $X$ an orbi-CW-complex carrying a vector bundle $\xi$ and $A,Y\subseteq X$ subcomplexes with $B=A\cap Y$.
\end{proposition}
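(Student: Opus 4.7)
The plan is to adapt the proof of Proposition \ref{makecofibration} to the stable setting, combining Thom-space stabilizations (which are isomorphisms in $\RepOrbSp^f$ via $S$) with enough vector bundles (Theorem \ref{enough}) to build, for any morphism $\phi:(Y,B)^{-\xi}\to(X,A)^{-\zeta}$ in $\RepOrbSp^f$, an enveloping orbi-CW-pair $(\hat X,\hat A)$ carrying a single vector bundle $\hat\xi$, together with subcomplexes $Y^*,\hat A\subseteq\hat X$ satisfying $Y^*\cap\hat A=B^*$, such that $\phi$ gets presented as the inclusion $(Y^*,B^*)^{-\hat\xi}\hookrightarrow(\hat X,\hat A)^{-\hat\xi}$. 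The hardest step will be symmetrizing the source and target bundles while simultaneously absorbing the auxiliary section $s$ (the third piece of data defining a morphism in $\RepOrbSpcPair^{f,-\Vect}$) into an affine modification of the underlying map.

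First I would reduce $\phi$ to a morphism in $\RepOrbSpcPair^{f,-\Vect}$. By construction, a morphism in $\RepOrbSp^f$ is represented, after an $S$-stabilization of its source, by a morphism in $\RepOrbSpc_*^{f,-\Vect}$; applying excision (Proposition \ref{excisionspectra}) together with a gluing argument analogous to the proof of Proposition \ref{makecofibration}---invoking Theorem \ref{enough} to arrange the bundles on the auxiliary cylinders to extend appropriately---then reduces $\phi$ to a single triple $\tilde\phi=(f,i,s):(Y,B)^{-\xi}\to(X,A)^{-\zeta}$ in $\RepOrbSpcPair^{f,-\Vect}$, with $f:Y\to X$ representable and cellular (by cellular approximation), $i:f^*\zeta\hookrightarrow\xi$, and $s:Y\to\xi/i(f^*\zeta)$.

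Next I would symmetrize the bundles and absorb the section. By Theorem \ref{enough} and Lemma \ref{faithfulpowerall}, pick a vector bundle $\tau$ on $X$ together with a vector bundle embedding $\xi/i(f^*\zeta)\hookrightarrow f^*\tau$; letting $\nu$ be a complement, $f^*(\zeta\oplus\tau)\cong\xi\oplus\nu$. The tautological $S$-isomorphisms $(Y,B)^{-\xi}\cong((Y,B)^\nu)^{-(\xi\oplus\nu)}$ and $(X,A)^{-\zeta}\cong((X,A)^\tau)^{-(\zeta\oplus\tau)}$ in $\RepOrbSp^f$ will then let me represent $\phi$ in $\RepOrbSpcPair^{f,-\Vect}$ by a triple $(g,\id,0)$, where $g:(Y,B)^\nu\to(X,A)^\tau$ is the map of Thom pairs $(y,v)\mapsto(f(y),(v,s(y)))$---the section $s$ thereby being absorbed into the affine structure of $g$ via the chosen splitting $f^*\tau=\nu\oplus\xi/i(f^*\zeta)$---and the bundle identification $g^*(\zeta\oplus\tau)=\xi\oplus\nu$ is tautological. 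Checking this agrees with the original morphism reduces to matching the two composite sections on the two sides of the square of $S$-isomorphisms, using that each $S$-morphism contributes the tautological section of $\nu$ (resp.\ $\tau$) over $D\nu$ (resp.\ $D\tau$).

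Finally, I would apply the mapping cylinder. Setting $\bar\xi:=\zeta\oplus\tau$ and homotoping $g$ to be cellular, form $(\hat X,\hat A):=(\cyl(g),A^\tau\cup\cyl(B^\nu\to A^\tau))$ and pull $\bar\xi$ back along the deformation retraction $\cyl(g)\to X^\tau$ to obtain a bundle $\hat\xi$ on $\hat X$. Then $Y^\nu,\hat A\subseteq\hat X$ are subcomplexes with $Y^\nu\cap\hat A=B^\nu$; the cylinder collapse yields an isomorphism $(\hat X,\hat A)^{-\hat\xi}\cong(X^\tau,A^\tau)^{-\bar\xi}\cong(X,A)^{-\zeta}$ in $\RepOrbSp^f$; and under this identification the inclusion of pairs $(Y^\nu,B^\nu)^{-\hat\xi}\hookrightarrow(\hat X,\hat A)^{-\hat\xi}$ realizes $\phi$ in the required form.
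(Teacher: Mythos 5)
Your steps 2 and 3 are a correct way to convert a single morphism $(f,i,s)\in\RepOrbSpcPair^{f,-\Vect}$ into an inclusion of pairs: the Thom-space stabilizations $((Y,B)^\nu)^{-(\xi\oplus\nu)}$ and $((X,A)^\tau)^{-(\zeta\oplus\tau)}$ do absorb the section $s$ into the map $g$ and render the bundle embedding an isomorphism, after which the relative mapping cylinder yields the desired $(Y^\nu,B^\nu)^{-\hat\xi}\hookrightarrow(\hat X,\hat A)^{-\hat\xi}$ since $\hat\xi$ pulls back from the deformation retraction and $g^*(\zeta\oplus\tau)\cong\xi\oplus\nu$.

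The gap is in step 1, and it is not a presentational one. After $S$-stabilization and excision you are reduced to a zig-zag $(X,A)^{-\xi}\to(X,A^+)^{-\xi}\leftarrow(U,U\cap A^+)^{-\xi}\xrightarrow{(f,i,s)}(Y,B)^{-\zeta}$, and to collapse this to a single morphism in $\RepOrbSpcPair^{f,-\Vect}$ via the gluing $\hat Y:=X\cup_U(U\times I)\cup_U Y$ one needs a vector bundle on $\hat Y$ restricting to something involving $\xi$ on $X$ and $\zeta$ on $Y$. But $\xi|_U$ and $f^*\zeta$ are related only by the embedding $i$ (which is generally not an isomorphism), and $f^*\zeta$ has no reason to extend from $U$ to $X$, so no such bundle exists before stabilizing. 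The paper resolves this by choosing $\tau$ \emph{on the entire gluing} $\hat Y$ (Theorem \ref{enough} applied to $\hat Y$) admitting embeddings $\xi\hookrightarrow\tau|_X$ and $\zeta\hookrightarrow\tau|_Y$ compatible over $U$, and Thom-stabilizing every term of the zig-zag by the appropriate quotient so that a single $\tau$ is carried throughout; only then does the Proposition \ref{makecofibration} construction apply, desuspended by $\tau$. Your proposal instead places the bundle alignment in step 2, which takes $\tau$ only on the target $X$ and presupposes that step 1 has already produced a single morphism $(f,i,s)$; this is circular, because producing that single morphism is precisely what requires $\tau$ on the gluing. Said differently: the phrase ``invoking Theorem \ref{enough} to arrange the bundles on the auxiliary cylinders'' in your step 1 is hiding the entire content of the argument, and the alignment you perform in step 2 (on $X$ alone) is not the alignment that is needed (on $\hat Y$). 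If you move the $\tau$-on-$\hat Y$ construction to the front, your steps 2--3 become an unwinding of the final gluing and the proof works; as written, the reduction in step 1 is unjustified.
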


\begin{proof}
Since the localization $\RepOrbSpc_*^{f,-\Vect}\xrightarrow{S^{-1}}\RepOrbSp^f$ is by a right multiplicative system, every morphism in the target is isomorphic to one coming from the source.
In other words, every morphism in $\RepOrbSp^f$ is (up to isomorphism) a formal composition $(X,A)^{-\xi}\to(X,A^+)^{-\xi}\leftarrow(U,U\cap A^+)^{-\xi}\xrightarrow f(Y,B)^{-\zeta}$.
As in the proof of Proposition \ref{makecofibration}, we may assume that $U,A^+\subseteq X$ are subcomplexes covering $X$.
Form the gluing $X\cup_U(U\times[0,1])\cup_UY$, and find, using enough vector bundles Theorem \ref{enough}, a vector bundle $\tau$ over it together with embeddings $\xi\hookrightarrow\tau|_X$ and $\zeta\hookrightarrow\tau|_Y$.
By replacing $\tau$ with $\tau\oplus\tau$, we may ensure that the composition $f^*\zeta\hookrightarrow\xi|_U\hookrightarrow\tau|_U$ is homotopic to the pullback under $f$ of the embedding $\zeta\hookrightarrow\tau|_Y$.
We thus obtain a commutative diagram
\begin{equation}
\begin{tikzcd}[column sep = tiny]
((X,A)^{\tau|_X/\xi})^{-\tau|_X}\ar[d,"\in S"]\ar[r]&((X,A^+)^{\tau|_X/\xi})^{-\tau|_X}\ar[d,"\in S"]&\ar[l]\ar[d,"\in S"]\ar[r]((U,U\cap A^+)^{\tau|_U/\xi|_U})^{-\tau|_U}\ar[r]&((Y,B)^{\tau|_Y/\zeta})^{-\tau|_Y}\ar[d,"\in S"]\\
(X,A)^{-\xi}\ar[r]&(X,A^+)^{-\xi}&\ar[l](U,U\cap A^+)^{-\xi}\ar[r]&(Y,B)^{-\zeta}
\end{tikzcd}
\end{equation}
Now the top row is just the desuspension by $\tau$ of maps $(X,A)^{\tau|_X/\xi}\to(X,A^+)^{\tau|_X/\xi}\leftarrow(U,U\cap A^+)^{\tau|_U/\xi|_U}\to(Y,B)^{\tau|_Y/\zeta}$ all of which respect the vector bundle $\tau$ being desuspended by.
Now take this as \eqref{tobemadecofibration} and apply the construction of that proof to it, and then desuspend by $\tau$ (which we crucially must note does indeed make sense on the result).
\end{proof}

\subsection{Stabilizing over \texorpdfstring{$R(*)$}{R(*)}}

Let us now give an alternative definition of $\RepOrbSp^f$ (but not of $\OrbSp^f$).
Morally, we would like to simply say that
\begin{equation}\label{stabilizewrong}
\RepOrbSp^f=\varinjlim_{\Vect(R(*))}\RepOrbSpc,
\end{equation}
in the sense that every orbi-CW-complex $X$ admits a unique up to contractible choice representable map to $R(*)$, so vector bundles on $R(*)$ act by endofunctors on $\RepOrbSpc$ by pulling back and passing to Thom spaces.
There is a problem with taking \eqref{stabilizewrong} literally: there are not enough vector bundles on $R(*)$, so instead we will filter $R(*)$ by subcomplexes.
Here are the details.

For $N\geq 1$, let $R(*)_N$ denote the image of $*\in\OrbSpc_N$ under the right adjoint to $\RepOrbSpc_N\to\OrbSpc_N$ (which exists by the same argument as in Proposition \ref{proprightadjoint}) where the subscript $N$ indicates restricting to orbi-CW-complexes with isotropy groups of order $\leq N$.
There are representable maps $R(*)_N\to R(*)_M$ for $N\leq M$ by abstract nonsense (the functor $\RepOrbSpc_N\to\RepOrbSpc_M$ induces a map between their terminal objects), and the infinite mapping cylinder of $R(*)_1\to R(*)_2\to R(*)_3\to\cdots$ is $R(*)$.
Concretely, $R(*)_N$ is given by \eqref{rpoint} restricted to groups of order $\leq N$.

Fix orbi-CW-complexes $R(*)_N$ and cellular maps $R(*)_N\to R(*)_{N+1}$, and let $R(*)$ denote their infinite mapping cylinder.
Let $R(*)_{N,k}$ denote the $k$-skeleton of $R(*)_N$.
Note that, whereas $R(*)_N$ has an intrinsic functorial description, $R(*)_{N,k}$ does not: it depends on the chosen orbi-CW-complex realization of $R(*)_N$.
Concretely, we may (but are not obliged to) take $R(*)_{N,k}$ to be the subcomplex of \eqref{rpoint} spanned by groups of order $\leq N$ and simplices of dimension $\leq k$.
Now every map $\partial D^r\times\BB G\to R(*)_{N,k}$ extends to $D^r\times\BB G$ provided $r\leq k$ and $\left|G\right|\leq N$.

Now suppose $X$ is an orbi-CW-complex of dimension $\leq k$ and with isotropy groups of order $\leq N$.
Then there exists a representable map $X\to R(*)_{N,k+2}$, any two such maps are homotopic, and any two homotopies are homotopic rel endpoints.
In particular, for every vector bundle $\xi$ over $R(*)_{N,k+2}$, we obtain a vector bundle $\xi_X$ which is well-defined up to unique homotopy class of isomorphism.
Moreover, for any representable map $X\to Y$, the pullback of $\xi_Y$ is isomorphic to $\xi_X$ by an isomorphism which is well-defined up to homotopy, and this rule is compatible with composition $X\to Y\to Z$.

Now let $\RepOrbSpcPair_{N,k}\subseteq\RepOrbSpcPair$ denote the full subcategory spanned by those orbi-CW-pairs $(X,A)$ for which $X$ (though not necessarily $A$) is homotopy equivalent to an orbi-CW-complex of dimension $\leq k$ with isotropy groups of size $\leq N$.
Given any vector bundle $\xi$ over $R(*)_{N,k+2}$, suspension by the pullback of $\xi$ defines a functor from $\RepOrbSpcPair_{N,k}$ to itself.
Let $\Vect^2(R(*)_{N,k})$ denote the $2$-category whose objects are vector bundles over $R(*)_{N,k}$, whose morphisms are inclusions $V\hookrightarrow V'$, and whose $2$-morphisms are homotopy classes of paths in $\Emb(V,V')$.
We may consider the direct limit
\begin{equation}\label{Rpointdirlimsingle}
\varinjlim_{\Vect^2(R(*)_{N,k+2})}\RepOrbSpcPair_{N,k}
\end{equation}
where to an inclusion of vector bundles $\xi\hookrightarrow\xi'$ on $R(*)_{N,k+2}$, we associate the endofunctor of $\RepOrbSpcPair_{N,k}$ given by suspending by the pullback of $\xi'/\xi$.
We may also define \eqref{Rpointdirlimsingle} more concretely (without discussing direct limits of categories over filtered $2$-categories): its objects are triples $(X,A,\xi)$ where $(X,A)\in\RepOrbSpcPair_{N,k}$ and $\xi$ is a vector bundle on $R(*)_{N,k+2}$, and the set of morphisms $(X,A,\xi)\to(Y,B,\zeta)$ is the direct limit over $\eta\in\Vect(R(*)_{N,k+2})$ of the set of pairs of embeddings $\xi\hookrightarrow\eta\hookleftarrow\zeta$ and maps $(X,A)^{(\eta/\xi)_X}\to(Y,B)^{(\eta/\zeta)_Y}$, modulo simultaneous homotopy of the embeddings and the map.

Now we note that increasing $N$ and $k$ induces a full faithful inclusion of categories \eqref{Rpointdirlimsingle}, since restriction of vector bundles between these subcomplexes of $R(*)$ is cofinal by enough vector bundles Theorem \ref{enough} (or, rather, the stronger version \cite[Theorem 1.1]{orbibundle} which applies since $R(*)_{N,k+2}$ has bounded dimension and bounded isotropy groups).
We therefore obtain a category
\begin{equation}\label{Rpointdirlim}
\varinjlim_{N,k}\varinjlim_{\Vect^2(R(*)_{N,k+2})}\RepOrbSpcPair_{N,k}.
\end{equation}
Restricting to the full subcategory spanned by finite orbi-CW-pairs (i.e.\ replacing $\RepOrbSpcPair_{N,k}$ with $\RepOrbSpcPair_{N,k}^f$), we obtain a natural functor to $\RepOrbSp^f$, namely the map given by sending the object $(X,A)$ in the $\eta$-term of the direct limit to $(X,A)^{-\eta_X}$ (with the obvious action on morphisms); this is a map out of each copy of $\RepOrbSpcPair_{N,k}^f$, and descends by the natural coherences.

\begin{proposition}\label{reporbsecondlocalization}
The functor
\begin{equation}\label{reporbsplocaliz}
\varinjlim_{N,k}\varinjlim_{\Vect^2(R(*)_{N,k+2})}\RepOrbSpcPair^f_{N,k}\to\RepOrbSp^f
\end{equation}
is the localization at the morphisms $(P,P\cap Q)^{-\eta_P}\to(X,Q)^{-\eta_X}$ for $X=P\cup Q$ and $\eta$ a vector bundle on $R(*)_{N,k+2}$.
\end{proposition}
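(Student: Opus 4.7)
The plan is to verify the universal property of the localization directly. First, the functor \eqref{reporbsplocaliz} sends excision morphisms to isomorphisms: the image of $(P, P\cap Q, \eta) \to (X, Q, \eta)$ is $(P, P\cap Q)^{-\eta_P} \to (X, Q)^{-\eta_X}$ (noting $\eta_X|_P = \eta_P$ up to canonical homotopy class of isomorphism), which lies in the class $W$ of Proposition \ref{excisionspectra} and so becomes an isomorphism already in $\RepOrbSpc_*^{f,-\Vect}$, a fortiori in $\RepOrbSp^f$.

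For the universal property, given a functor $F$ from the source of \eqref{reporbsplocaliz} to a category $\C$ inverting excision, I would construct a factorization $\tilde F : \RepOrbSp^f \to \C$ as follows. For an object $(X,A)^{-\xi}$ of $\RepOrbSp^f$, choose $N, k$ bounding the isotropy orders and dimension of $X$; enough vector bundles (Theorem \ref{enough}, in the stronger form of \cite[Theorem 1.1]{orbibundle}, applicable to $R(*)_{N,k+2}$ because of its bounded isotropy and dimension) provides a vector bundle $\eta$ on $R(*)_{N,k+2}$ and an embedding $\xi \hookrightarrow \eta_X$, where $\eta_X$ is the pullback along a representable map $X \to R(*)_{N,k+2}$. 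The Thom pair $((X,A)^{\eta_X/\xi}, \eta)$ is then an object of the source (after possibly increasing $k$ to accommodate its dimension), and we set $\tilde F((X,A)^{-\xi}) := F((X,A)^{\eta_X/\xi}, \eta)$. On morphisms, we use the right-multiplicative-system description of $\RepOrbSp^f$: a morphism is represented by a Thom-stabilized relative map in $\RepOrbSpc_*^{f,-\Vect}$, and enlarging the ambient vector bundles to pullbacks from $R(*)_{N,k+2}$ lifts this representative to the source modulo excision, which $F$ inverts by hypothesis.

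The main technical obstacle is the coherence of this construction: its independence from the many choices and its compatibility with composition and homotopy. This rests on two cofinality statements, both consequences of enough vector bundles. First, representable maps $X \to R(*)_{N,k+2}$ are homotopically unique, giving canonical comparisons between different pullbacks $\eta_X$ and $\eta_X'$. Second, within $\Vect^2(R(*)_{N,k+2})$, any two embeddings of $\xi$ are dominated by a common one after passing to $\eta \oplus \eta'$, with the resulting homotopies tracked by the $2$-morphisms of $\Vect^2$. Applying these to the homotopy case $(X,A) \times [0,1]$ and to composable pairs of morphisms yields the needed coherence; any residual incompatibilities from refining covers or subdividing pair decompositions are absorbed by the excision morphisms that $F$ inverts. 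Uniqueness of $\tilde F$ extending $F$ is automatic, since every object of $\RepOrbSp^f$ lies in the image of \eqref{reporbsplocaliz}, and a direct trace through the definitions verifies that $\tilde F$ so defined indeed factors $F$.
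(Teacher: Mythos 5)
Your approach --- verifying the universal property of the localization directly --- is genuinely different from the paper's, which instead reverses the order of the two localizations defining $\RepOrbSp^f$: since the class $S$ (of Thom-stabilizations $((X,A)^V)^{-V-\xi}\to(X,A)^{-\xi}$) is already a right multiplicative system in $\RepOrbSpcPair^{f,-\Vect}$ (the ``easier'' variant of Lemma~\ref{rms}), the intermediate category $\RepOrbSpcPair^{f,-\Vect}[S^{-1}]$ exists and has morphism sets given by explicit filtered colimits over vector bundles $E/X$. The proposition is then reduced to exhibiting an \emph{equivalence} from the source of~\eqref{reporbsplocaliz} onto $\RepOrbSpcPair^{f,-\Vect}[S^{-1}]$, before any excision is imposed; essential surjectivity is immediate and fully faithfulness is a comparison of two filtered-colimit descriptions of morphism sets. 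This route is cleaner than yours because it never requires constructing a functor out of $\RepOrbSp^f$ by hand.

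The gap in your proposal is precisely the coherence you flag as ``the main technical obstacle'' and then do not resolve. Your definition of $\tilde F$ on morphisms requires lifting a Thom-stabilized relative map in $\RepOrbSpc_*^{f,-\Vect}$ to the source, and the well-definedness and functoriality of that lift is not a formality: it is exactly the content of the cofinality computation in the paper. The crucial point, which your proposal gestures at but does not carry out, is that a representable map $f\colon X\to Y$ gives a \emph{canonical} identification $f^*\eta_Y=\eta_X$ (both are pullbacks along the homotopically unique representable map to $R(*)_{N,k+2}$), and one must show that, after enlarging the stabilizing bundle $E$, any embedding $f^*\eta_Y\hookrightarrow\eta_X\oplus E$ occurring in the data of a morphism can be replaced by this canonical one, and moreover any homotopy from the canonical embedding to itself becomes null after further enlargement. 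That computation is what identifies the two morphism sets; saying residual incompatibilities are ``absorbed by excision'' is not a substitute for it. Without it, $\tilde F$ is not shown to be well-defined on morphisms, let alone a functor factoring $F$.
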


\begin{proof}
The morphisms $(P,P\cap Q)^{-\eta}\to(X,Q)^{-\eta}$ for $\eta$ any vector bundle on $X$ (not necessarily pulled back from $R(*)_{N,k+2}$) are sent to isomorphisms by Proposition \ref{excisionspectra}.

First, we note that in the definition of $\RepOrbSp^f$ as the double localization
\begin{equation}
\RepOrbSpcPair^{f,-\Vect}\xrightarrow{W^{-1}}\RepOrbSpc_*^{f,-\Vect}\xrightarrow{S^{-1}}\RepOrbSp^f,
\end{equation}
(i.e.\ first localizing at the class $W$ of morphisms $(P,P\cap Q)^{-\xi}\to(X,Q)^{-\xi}$ and then at the class $S$ of morphisms $((X,A)^V)^{-V-\xi}\to(X,A)^{-\xi}$), we could instead localize in the reverse order.
Indeed, given that the localization of $\RepOrbSpcPair^{f,-\Vect}$ at $W\sqcup S$ exists, it suffices to argue that its localization at $S$ exists.
In fact, $S$ forms a right multiplicative system in $\RepOrbSpcPair^{f,-\Vect}$ (this `easier' variant of Lemma \ref{rms} was the first step in its proof, in fact).
Thus the localization $\RepOrbSpcPair^{f,-\Vect}[S^{-1}]$ exists, and morphisms $(X,A)^{-\xi}\to(Y,B)^{-\zeta}$ in it are the direct limit over $\eta/X$ of morphisms $((X,A)^\eta)^{-\eta-\xi}\to(Y,B)^{-\zeta}$ in $\RepOrbSpcPair^{f,-\Vect}$.

Now it suffices to show that there is a natural equivalence
\begin{equation}\label{reporbspbeforelocaliz}
\varinjlim_{N,k}\varinjlim_{\Vect^2(R(*)_{N,k+2})}\RepOrbSpcPair^f_{N,k}\to\RepOrbSpcPair^{f,-\Vect}[S^{-1}].
\end{equation}
First, let us describe the functor: the copy of $\RepOrbSpcPair_{N,k}^f$ over $\eta\in\Vect^2(R(*))_{N,k+2}$ maps to $\RepOrbSpcPair^{f,-\Vect}[S^{-1}]$ as $(X,A)\mapsto(X,A)^{-\eta_X}$.
This functor is obviously essentially surjective, since $(X,A)^{-\xi}$ in the target is isomorphic to $((X,A)^{\eta_X/\xi})^{-\eta_X}$ for any embedding $\xi\hookrightarrow\eta_X$.
It thus remains to show that \eqref{reporbspbeforelocaliz} is fully faithful.

In both the source and target of \eqref{reporbspbeforelocaliz}, the morphisms sets are expressed as direct limits.
The set of morphisms $(X,A)^{-\eta_X}\to(Y,B)^{-\eta_Y}$ in the domain of \eqref{reporbspbeforelocaliz} (where $(X,A),(Y,B)\in\RepOrbSpcPair_{N,k}^f$ and $\eta\in\Vect^2(R(*)_{N,k+2})$) is the direct limit over $\tau\in\Vect(R(*)_{N,k+2})$ of the set of representable morphisms $(X,A)^{\tau_X}\to(Y,B)^{\tau_Y}$.
The set of morphisms between their images under \eqref{reporbspbeforelocaliz} is the direct limit over vector bundles $E$ over $X$ of tuples consisting of a representable map $f$ from the total space of $E$ to $Y$, an embedding $f^*\eta_Y\hookrightarrow\eta_X\oplus E$, and a section of $(\eta_X\oplus E)/i(f^*\eta_Y)$ such that the `relative part' of $(X,A)^E$ is contained in $f^{-1}(B)\cup s^{-1}(\{\left|\cdot\right|\geq\varepsilon\})$.
Since $f$ is representable, there is a natural identification $f^*\eta_Y=\eta_X$, which gives a canonical choice of embedding $f^*\eta_Y\hookrightarrow\eta_X\oplus E$ which need not coincide with the embedding which is chosen as part of the data.
However for the purposes of calculating the direct limit over $E$, we may assume that the chosen embedding $f^*\eta_Y\hookrightarrow\eta_X\oplus E$ is the canonical one: indeed, given any such embedding, passing to an appropriate $E'\supseteq E$ makes it homotopic to the canonical one, and similarly any homotopy from the canonical embedding to itself can be made null homotopic rel endpoints by enlarging $E$.
Hence the set of morphisms in the target of \eqref{reporbspbeforelocaliz} is the direct limit over vector bundles $E$ over $X$ of tuples consisting of a representable map $f$ from the total space of $E$ to $Y$ and a section of $E$ such that the relative part of $(X,A)^E$ is contained in $f^{-1}(B)\cup s^{-1}(\{\left|\cdot\right|\geq\varepsilon\})$.
By cofinality, we may instead declare that $E=\tau_X$ and take the direct limit over $\tau\in\Vect(R(*)_{N,k+2})$ of the set of maps $(X,A)^{\tau_X}\to(Y,B)^{\tau_Y}$ in $\RepOrbSpcPair_{N,k}^f$ (since $\tau_X=f^*\tau_Y$ canonically for any representable map $f$).
\end{proof}

Given the description of $\RepOrbSp^f$ as a direct limit over suspension by vector bundles pulled back from $R(*)$, it is natural to make the following conjecture parallel to Conjecture \ref{spacesoverR}.

\begin{conjecture}\label{spectraoverR}
The category $\RepOrbSp^f$ is a generating full subcategory of the category of parameterized spectra over $R(*)$.
\end{conjecture}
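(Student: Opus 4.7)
The plan is to interpret both sides via the filtration $R(*) = \colim_{N,k} R(*)_{N,k+2}$ used in Proposition \ref{reporbsecondlocalization}, so that the conjecture becomes a statement one can in principle verify level by level. First I would fix a rigorous definition of parameterized spectra over a fixed compact base $B$, say as the stabilization (in the $\infty$-categorical or model-categorical sense) of retractive orbispaces over $B$ with respect to suspension by pullbacks of vector bundles on $B$. Applying this to each $B = R(*)_{N,k+2}$ gives categories $\mathrm{ParSp}(R(*)_{N,k+2})$, and the natural definition of parameterized spectra over $R(*)$ would be $\varinjlim_{N,k} \mathrm{ParSp}(R(*)_{N,k+2})$, with transition functors given by pushforward along the closed inclusions $R(*)_{N,k+2} \hookrightarrow R(*)_{N',k'+2}$.

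Next I would construct the fully faithful embedding. Every object $(X,A)^{-\xi}$ of $\RepOrbSp^f$ lives in some $\RepOrbSpcPair^f_{N,k}$ and is equipped with a contractibly unique representable map $X \to R(*)_{N,k+2}$, which makes $(X,A)$ into a (cofibrantly replaced) retractive orbispace over $R(*)_{N,k+2}$; the twisting bundle $\xi$ is, by Proposition \ref{reporbsecondlocalization}, the pullback of a bundle on $R(*)_{N,k+2}$. The universal property of the localization in that proposition then produces a well-defined functor $\RepOrbSp^f \to \mathrm{ParSp}(R(*))$. Full faithfulness would be extracted directly from Proposition \ref{reporbsecondlocalization}: both morphism sets are expressed as direct limits, over $\tau \in \Vect(R(*)_{N,k+2})$, of homotopy classes of maps of $\tau$-suspended Thom pairs, with the match using that representable orbispace maps are exactly those compatible with the reference maps to $R(*)$ up to contractible choice.

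The main obstacle, which I expect to be substantial, is the \emph{generation} claim; this has two components, both essentially foundational rather than computational. First, one must pin down a definition of $\mathrm{ParSp}(R(*))$ for which it makes sense to ask that a subcategory generate; presumably generation means that every object is a (filtered) homotopy colimit of objects in the image, in the spirit of the analogous classical fact that $\Sp^f$ generates $\Sp$. Second, one needs a cellular decomposition argument: every parameterized spectrum $E$ over $R(*)$ should admit an exhaustion by finite retractive subcomplexes built from cells of the form $(D^k \times \BB G)^{-V}$ for vector bundles $V$ pulled back from $R(*)$, each of which is manifestly a finite orbispectrum. Carrying this out rigorously requires developing a full model- or $\infty$-categorical theory of parameterized orbispectra, including analogues of CW-approximation and small object arguments compatible with enough vector bundles (Theorem \ref{enough}), which the paper leaves for future work and which is exactly why the statement is phrased as a conjecture rather than a theorem.
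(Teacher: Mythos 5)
The statement you are addressing is labelled a \emph{conjecture} in the paper, and the paper offers no proof: the surrounding discussion (Proposition~\ref{reporbsecondlocalization} and its aftermath) only motivates the expectation that $\RepOrbSp^f$ sits inside something deserving the name ``parameterized spectra over $R(*)$.'' You correctly recognize this, and your write-up is an outline of what a proof would require rather than a proof, so there is nothing in the paper to compare it against; the real question is whether your plan is sound.

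On that point there is one issue worth flagging concretely. In your step of defining the target category you set $\mathrm{ParSp}(R(*)) := \varinjlim_{N,k} \mathrm{ParSp}(R(*)_{N,k+2})$ with transition functors given by pushforward along the closed inclusions. A filtered colimit of categories (in the $1$- or $\infty$-categorical sense) has the property that every object lives at some finite stage; so with this definition every object of your $\mathrm{ParSp}(R(*))$ would already be supported on a finite subcomplex $R(*)_{N,k+2}$. This would make the ``generation'' half of the conjecture nearly definitional while failing to capture the intended category. The natural definition (mirroring Conjecture~\ref{spacesoverR}, or the expected identification with $\Fun(\Rep\{\BB G\}^{\op},\Sp)$, or with the stabilization of the slice $\RepOrbSpc_{/R(*)}$) should contain large parameterized spectra that are not pushed forward from any finite subcomplex, and it is for that category that the generation claim has real content. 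So you should either take the limit over $N,k$ with pullback transition functors, or a Kan-extension/sheaf-theoretic definition, and then the cellular exhaustion argument in your last paragraph is genuinely the crux; as you note, the technology for it (CW-approximation and small-object arguments for parameterized orbispectra compatible with Theorem~\ref{enough}) is not developed in the paper, which is precisely why the statement remains a conjecture. Your identification of the embedding via Proposition~\ref{reporbsecondlocalization} and of full faithfulness via the matching direct-limit descriptions of morphism sets is, by contrast, well aligned with the paper's motivation and looks like the right starting point.
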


If Conjecture \ref{spectraoverR} is valid, it is natural then to ask whether fiberwise Spanier--Whitehead duality makes sense and, if so, how it is related to the duality involution from Theorem \ref{duality} which we prove immediately below.

\subsection{Duality}

We now define the contravariant involution $D:\RepOrbSp^f\to(\RepOrbSp^f)^\op$ as stated in Theorem \ref{duality}.

\begin{proof}[Proof of Theorem \ref{duality}]
To begin, we define $D:\RepOrbSpcPair^f\to(\RepOrbSp^f)^\op$.
By Corollary \ref{pairispair}, we may regard $\RepOrbSpcPair^f$ as the category of compact orbifold pairs (and morphisms thereof).
For any compact orbifold pair $(X,A)$, we set
\begin{equation}
D(X,A):=(X,\partial X-A^\circ)^{-TX}.
\end{equation}
The functoriality of $D$ under maps of orbifold pairs $(X,A)\to(Y,B)$ is defined as follows.
First, for any vector bundle $E$ over $Y$, denote by $(Y^E,B^E)$ the pair consisting of the total spaces of the unit disk bundles of $E$ over $Y$ and $B$.
Obviously $(Y^E,B^E)\to(Y,B)$ is an isomorphism in $\RepOrbSpcPair^f$, and there is also a natural identification $D(Y^E,B^E)=D(Y,B)$ (the effect on duals of passing from $(Y,B)$ to $(Y^E,B^E)$ is to suspend and desuspend by $E$).
Now, choose $E$ so that our map $X\to Y$ lifts to a smooth embedding $X\to Y^E$, meeting the boundary of $Y^E$ transversely precisely in $A$.
There is now an obvious collapse map $(Y^E,\partial Y^E-(B^E)^\circ)\dashrightarrow(X,\partial X-A^\circ)^{TY^E/TX}$ independent up to homotopy of the choice of lift of $X\to Y$ to $X\to Y^E$, and this collapse map is our desired map $D(Y,B)=D(Y^E,B^E)\to D(X,A)$.
By embedding any two choices of $E$ into a third, we see that the map $D(Y,B)\to D(X,A)$ thus defined is independent of the choice of $E$.
Making the same construction in a family over $[0,1]$ shows that it is invariant under homotopy.
One also checks that this recipe is compatible with composition, hence defines a functor $D:\RepOrbSpcPair^f\to(\RepOrbSp^f)^\op$.

To descend this functor to $D:\RepOrbSpc_*^f\to(\RepOrbSp^f)^\op$, it suffices, by Proposition \ref{excision}, to check that $D$ sends certain maps to isomorphisms.
Specifically, let $(X,A)$ be a compact orbifold pair, let $P$ be a compact orbifold-with-boundary, and let $\partial A\hookleftarrow Q\hookrightarrow\partial P$ be an identification between compact codimension zero suborbifolds-with-boundary of $\partial A$ and $\partial P$.
We may form $(X,A)\#_Q(P\times[0,1],P)$ and consider the inclusion $(X,A)\hookrightarrow(X,A)\#_Q(P\times[0,1],P)$.
These inclusions are precisely the morphisms inverted by the localization $\RepOrbSpcPair^f\to\RepOrbSpc_*^f$ from Proposition \ref{excision}.
Now, it is evident that the dual of the inclusion $(X,A)\hookrightarrow(X,A)\#_Q(P\times[0,1],P)$ is a map $(X,\partial X-A^\circ)^{-T}\leftarrow((X,\partial X-A^\circ)\#_Q(P\times[0,1],P))^{-T}$ (the superscript $^{-T}$ denotes desuspension by the tangent bundle) which is also an isomorphism in $\RepOrbSp^f$, so we are done.

We now define
\begin{equation}\label{Dbeforelocalization}
D:\varinjlim_{N,k}\varinjlim_{\Vect^2(R(*)_{N,k+2})}\RepOrbSpcPair_{N,k}^f\to(\RepOrbSp^f)^\op
\end{equation}
As above, we define $D((X,A)^{-\eta_X}):=(X,\partial X-A^\circ)^{\eta_X-TX}$ for $(X,A)$ a compact orbifold pair.
The functoriality under maps in $\RepOrbSpcPair_{N,k}^f$ is the same as before: the space of stable maps $(Y,\partial Y-B^\circ)^{\eta_Y-TY}\to(X,\partial X-A^\circ)^{\eta_X-TX}$ is the same as the space of stable maps $(Y,\partial Y-B^\circ)^{-TY}\to(X,\partial X-A^\circ)^{-TX}$, so we simply take the same map $D(Y,B)\to D(X,A)$ associated to our original map $(X,A)\to(Y,B)$.
This recipe is compatible with the morphisms in the direct limit over $\Vect^2(R(*)_{N,k+2})$ by inspection (and then obviously with the direct limit over $N$ and $k$), so we obtain the functor \eqref{Dbeforelocalization}.

By Proposition \ref{reporbsecondlocalization}, to descend $D$ from \eqref{Dbeforelocalization} to $\RepOrbSp^f$, it suffices to verify that it sends certain maps to isomorphisms, but these are exactly the same as we already saw above.
We therefore obtain the desired functor $D$.

By inspection, $D$ sends cofiber sequences to cofiber sequences.

There is an obvious identification $DDX=X$ for every $X\in\RepOrbSp^f$, directly from the definition.
To check that this defines a natural isomorphism of functors $D^2=\1$, we just need to show that it is compatible with morphisms.
Any morphism in $\RepOrbSp^f$ can be expressed as $(X,A)^{-\eta_X}\to(Y,B)^{-\eta_Y}$ for $(X,A),(Y,B)\in\RepOrbSpcPair_{N,k-2}$ compact orbifold pairs and $\eta\in\Vect^2(R(*)_{N,k})$ and some map $f^\eta:(X,A)\to(Y,B)$ which is a smooth embedding of orbifold pairs (meaning $A=X\cap B$ meeting the boundary of $Y$ transversely).
We then have a collapse map $(Y,\partial Y-B^\circ)\to(Y,Y-N_\varepsilon X)$ the target of which is relatively homotopy equivalent to $(X,\partial X-A^\circ)^{TY/TX}$; this collapse map is $(Df)^{TY-\xi}$.
Now we may realize this collapse map as an embedding
\begin{equation}
\begin{tikzcd}(Y,\partial Y-B^\circ)\ar[r,hook,"\times\{\frac 12\}"]&(Y\times[0,1],((\partial Y-B^\circ)\times[0,1])\cup((Y-N^\varepsilon X)\times\{1\})).\end{tikzcd}
\end{equation}
We may now dualize it again to obtain
\begin{align}
\Sigma(X,A)&{}=(N_\epsilon X\times[0,1],(N_\varepsilon X\times\partial[0,1])\cup(A\times[0,1]))\\
&=(Y\times[0,1],(B\times[0,1])\cup(Y\times\{0\})\cup(N_\varepsilon X\times\{1\}))\to\Sigma(Y,B),
\end{align}
which is indeed (the suspension of) the map we started with.
\end{proof}

Duality commutes with smash product: there are natural isomorphisms $D(Z\wedge W)=DZ\wedge DW$ for $Z,W\in\RepOrbSp^f$.
Indeed, it suffices to define such a natural isomorphism of functors of $Z$ and $W$ in the left side of \eqref{reporbsplocaliz} (where we may assume all objects are compact orbifold pairs), and such an isomorphism is evident by inspection.

There is a natural pairing $Z\wedge DZ\to R(*)$ defined as follows.
Let $Z=(X,A)^{-\xi}$ be an orbifold pair desuspended by a vector bundle.
The diagonal gives a map
\begin{equation}
(X,\partial X)\to(X,A)^{-\xi}\wedge(X,\partial X-A^\circ)^\xi.
\end{equation}
We desuspend to obtain $(X,\partial X)^{-TX}\to(X,A)^{-\xi}\wedge(X,\partial X-A^\circ)^{\xi-TX}$ and then dualize to obtain a map $Z\wedge DZ\to X$.
Composing with the canonical map $X\to R(*)$ defines the desired map $Z\wedge DZ\to R(*)$.

\section{Vector bundles}

\subsection{Classifying spaces}

For any compact Lie group $G$, let us argue that there is an object $\BB G\in\OrbSpc$ which classifies principal $G$-bundles, in the sense that it carries a principal $G$-bundle $\EE G\to\BB G$ such that the induced map from homotopy classes of maps $X\to\BB G$ to isomorphism classes of principal $G$-bundles over $X$ is a bijection for any orbi-CW-complex $X$.
(Note that by Lemma \ref{intervalpullback}, for any orbi-CW-complex $X$, necessarily paracompact, every principal $G$-bundle over $X\times[0,1]$ is pulled back from $X$, so there is indeed such a map.)
Note that $*/G$ has this representing property for all stacks, not just orbi-CW-complexes, however it is not itself an orbi-CW-complex unless $G$ is finite, so it is not (in the present context) $\BB G$.

\begin{lemma}\label{bgexist}
The classifying space $\BB G\in\OrbSpc$ exists.
\end{lemma}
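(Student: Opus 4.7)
The plan is to construct $\BB G$ together with a principal $G$-bundle $\EE G \to \BB G$ by a cellular small-object argument that solves every representable lifting problem for principal $G$-bundles with injective classifying homomorphisms, and then to verify the representing property by an obstruction-theoretic cellular induction using Lemma~\ref{repfactor} to reduce generally non-representable classifying maps to the representable case.

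I would first build $\BB G$ skeleton by skeleton. Set $\BB G_{-1} := \varnothing$. Given an orbi-CW-complex $\BB G_{k-1}$ carrying a principal $G$-bundle $\EE G_{k-1}$ whose classifying homomorphism at each point is injective on isotropy, form $\BB G_k$ by attaching, for every finite group $K$, every representable map $\varphi\colon \partial D^k \times \BB K \to \BB G_{k-1}$, and every principal $G$-bundle $Q$ on $D^k \times \BB K$ with injective classifying homomorphism $K \to G$ together with an isomorphism $Q|_{\partial D^k \times \BB K} \cong \varphi^*\EE G_{k-1}$, a $k$-cell $D^k \times \BB K$ along $\varphi$, extending $\EE G_{k-1}$ over the new cell via $Q$. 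For $k = 0$ the attaching data is vacuous, so $\BB G_0 = \bigsqcup_{[K \hookrightarrow G]} \BB K$ is indexed by $G$-conjugacy classes of injective homomorphisms from finite groups. By Proposition~\ref{mcinfinite}, $\BB G := \colim_k \BB G_k$ is an orbi-CW-complex carrying the principal $G$-bundle $\EE G := \colim_k \EE G_k$.

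Next I would verify that for every orbi-CW-complex $X$, pulling back $\EE G$ gives a bijection from $[X,\BB G]$ to isomorphism classes of principal $G$-bundles on $X$. For surjectivity, given $P \to X$, build a classifying map $f\colon X \to \BB G$ by induction on the cells of $X$: assuming $f_{k-1}\colon X_{k-1} \to \BB G$ has been constructed together with an isomorphism $f_{k-1}^*\EE G \cong P|_{X_{k-1}}$, consider a $k$-cell $D^k \times \BB H$ of $X$ with representable attaching map $\alpha$. By Lemma~\ref{intervalpullback}, $P|_{D^k \times \BB H}$ is pulled back from $\BB H$ and is classified by a single conjugacy class of homomorphism $H \to G$ with some fixed kernel $N \trianglelefteq H$. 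Applying Lemma~\ref{repfactor} to $f_{k-1}\circ\alpha$ factors it as $\partial D^k \times \BB H \to \partial D^k \times \BB(H/N') \to \BB G$ with the second map representable; because every isotropy group appearing in $\BB G$ injects into $G$ via $\EE G$, the value $N'$ at any point equals the kernel of the classifying homomorphism of the pulled-back bundle, which is the constant $N$, so the partition of $\partial D^k$ produced by the lemma is trivial. The resulting representable boundary map, together with the bundle $Q$ on $D^k \times \BB(H/N)$ classified by the injective $H/N \hookrightarrow G$, is exactly a lifting problem of the type our construction of $\BB G$ solves; precomposing the resulting extension with $D^k \times \BB H \to D^k \times \BB(H/N)$ gives $f_k$. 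Injectivity follows by the same induction applied to $X \times [0,1]$ relative to $X \times \partial[0,1]$.

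The main obstacle is reconciling the representability of attaching maps forced by the orbi-CW-structure of $\BB G$ with the generally non-representable classifying maps $X \to \BB G$. This is exactly the role of Lemma~\ref{repfactor}, but for it to produce a single-piece representable factorization (rather than a complicated open partition of each cell) one needs the classifying bundle on each cell of $X$ to be uniform (Lemma~\ref{intervalpullback}) and every isotropy of $\BB G$ to inject into $G$, the latter being enforced throughout the inductive construction by restricting attention to injective classifying homomorphisms.
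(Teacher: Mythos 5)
Your proposal is correct and follows essentially the same route as the paper: a cellular small-object construction of $\BB G$ attaching cells for all homotopy classes of (representable attaching map, faithful bundle extension) data, followed by a verification that reduces a general (non-representable) classifying problem on each cell to a representable one by passing to $\BB(H/N)$ via Lemma~\ref{repfactor}, using that $\EE G$ is faithful so the kernel produced by that lemma agrees with the kernel of the bundle's classifying homomorphism. The only cosmetic difference is that the paper phrases the verification as a single extension property for triples on $(D^k,\partial D^k)\times\BB\Gamma$ rather than as an explicit induction over the cells of $X$, but these are the same argument.
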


\begin{proof}
We argue as in Proposition \ref{proprightadjoint}.
We construct, by induction, an orbi-CW-complex $\BB G$ carrying a faithful principal $G$-bundle $\EE G\to\BB G$.
Begin with $(\BB G)_{-1}=\varnothing$.
Consider triples consisting of a map $\partial D^k\times\BB\Gamma\to(\BB G)_{k-1}$, a faithful principal $G$-bundle $P$ over $D^k\times\BB\Gamma$, and an isomorphism over $\partial D^k\times\BB\Gamma$ between the restriction of $P$ and the pullback of $(\EE G)_{k-1}$.
Note that since $P\to D^k\times\BB\Gamma$ is faithful, the map $\partial D^k\times\BB\Gamma\to(\BB G)_{k-1}$ must necessarily be representable.
To define $(\BB G)_k$, we attach a cell to $(\BB G)_{k-1}$ for each homotopy class of such triple (we may omit trivial homotopy classes, i.e.\ those which are induced by maps $D^k\times\BB\Gamma\to(\BB G)_{k-1}$); the data of each triple tells us how to extend $(\EE G)_{k-1}\to(\BB G)_{k-1}$ to $(\EE G)_k\to(\BB G)_k$.

Now we claim that $\EE G\to\BB G$ is the desired universal principal $G$-bundle over an orbi-CW-complex.
It suffices to show that for every triple consisting of a map $\partial D^k\times\BB\Gamma\to\BB G$, a principal $G$-bundle $P$ over $D^k\times\BB\Gamma$, and an isomorphism over $\partial D^k\times\BB\Gamma$ between the restriction of $P$ and the pullback of $\EE G$, we can extend the map and the isomorphism to $D^k\times\BB\Gamma$.
By cellular approximation, we may assume the map $\partial D^k\times\BB\Gamma\to\BB G$ lands inside $(\BB G)_{k-1}$.
Now our principal $G$-bundle over $D^k\times\BB\Gamma$ is necessarily pulled back from $\BB\Gamma$ (since $D^k$ is contractible) hence is classified by a conjugacy class of homomorphisms $\Gamma\to G$.
In particular, it is pulled back from $D^k\times\BB(\Gamma/N)$ for $N\trianglelefteq\Gamma$ the kernel.
Since the principal $G$-bundle over $\BB G$ is faithful, this map also factors, uniquely, through $D^k\times\BB(\Gamma/N)$ by Lemma \ref{repfactor}.
Now that we have a representable map to $(\BB G)_{k-1}$, we can appeal to the definition of $(\BB G)_k$ to see that the triple involving $D^k\times\BB(\Gamma/N)$ extends as desired, hence by pre-composition the original triple as well.
\end{proof}

\begin{remark}
Another construction of $\BB G$ is given in \cite{orbibundle}.
There, the $G$-CW-complex $\EE G$ is defined by the property of carrying a $G$-action with finite stabilizers such that $(\EE G)^H$ is contractible for every finite subgroup $H\leq G$.
The orbispace $\BB G$ is then defined as the quotient $(\EE G)/G$.
\end{remark}

Note that since $\EE G\to\BB G$ is faithful, a principal $G$-bundle $P\to X$ is faithful iff the corresponding map $X\to\BB G$ is representable.

It is important to note that the extension property shown above in the proof that $\BB G$ represents the functor of \emph{isomorphism classes} of principal $G$-bundles is strictly stronger than the representing property (though of course \emph{a posteriori} it is equivalent).
The extension property corresponds to a more homotopical ($\infty$-categorical or model categorical) universal property of $\BB G$, and it will be used implicitly at later points (e.g.\ to know that every isomorphism of principal $G$-bundles is induced by a homotopy of maps to $\BB G$).

\begin{remark}
The object classifying principal $G$-bundles depends strongly on the category we are working in.
For example, the CW-complex $BG\in\Spc$ classifying principal $G$-bundles over CW-complexes evidently does not coincide with the orbi-CW-complex $\BB G\in\OrbSpc$ classifying principal $G$-bundles over orbi-CW-complexes.
Rather, it is immediate that the right adjoint to the inclusion $\Spc\hookrightarrow\OrbSpc$, namely the classifying space functor, sends $\BB G$ to $BG$ (and $R(\BB G)$ classifies principal $G$-bundles in the category $\RepOrbSpc$).
Similarly, if we were to define a larger category of `Lie orbispaces' allowing objects such as $*/G$, then the right adjoint (if it exists) to the inclusion of $\OrbSpc$ into this larger category would send $*/G$ to (what we have decided to call) $\BB G\in\OrbSpc$.
As a more explicit warning to the reader: the most natural meaning of the symbol $\BB G$ thus differs from context to context, and it should probably default to $\BB G:=*/G$ unless the contrary is explicitly stated, as we have done here.
\end{remark}

\subsection{Stable vector bundles}

We discuss stable vector bundles on orbi-CW-complexes.
The principal new feature in this discussion compared with the corresponding discussion for CW-complexes is that there are many different ways to `stabilize'.
We will consider only two extreme notions: `coarse stabilization', involving a direct limit over $\oplus\underline\RR$, or, equivalently, over $\oplus V$ for arbitrary \emph{coarse} vector bundles, and `stabilization', involving a direct limit over $\oplus V$ for arbitrary vector bundles $V$.

For an orbi-CW-complex $X$, let $\Vect(X)$ denote the category whose objects are vector bundles over $X$ and whose morphisms are homotopy classes of injective maps.
As a set, $\Vect(X)$ is the set of homotopy classes of maps $X\to\bigsqcup_{n\geq 0}\BB O(n)$.

\begin{lemma}
The category $\Vect(X)$ is filtered.
\qed
\end{lemma}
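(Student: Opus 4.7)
The plan is to verify the three defining axioms of a filtered category directly. The category is non-empty (take the zero bundle), and for any two objects $V, W$ the direct sum $V \oplus W$ equipped with the coordinate inclusions $V \hookrightarrow V \oplus W \hookleftarrow W$ provides a common target. The only nontrivial point is the coequalizer axiom: given two parallel morphisms $V \rightrightarrows W$ represented by injective bundle maps $f, g$, I must produce a further morphism $[h]\colon W \to W'$ in $\Vect(X)$ satisfying $[h] \circ [f] = [h] \circ [g]$.

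The plan is to take $W' := W \oplus W$ and $[h]$ to be the common homotopy class of the two coordinate inclusions $i_1, i_2 \colon W \hookrightarrow W \oplus W$. These inclusions are homotopic through injections via the rotation
\begin{equation}
W \times [0, \tfrac{\pi}{2}] \to W \oplus W, \qquad (w, \theta) \mapsto (\cos\theta \cdot w, \sin\theta \cdot w),
\end{equation}
which is fiberwise injective for every $\theta$, since the two coefficients never vanish simultaneously. Hence $[i_1] = [i_2] =: [h]$ in $\Vect(X)$, and so $[h] \circ [f] = [i_1 \circ f]$ while $[h] \circ [g] = [i_2 \circ g]$. It remains to show $[i_1 f] = [i_2 g]$, for which I would use the analogous rotation homotopy
\begin{equation}
V \times [0, \tfrac{\pi}{2}] \to W \oplus W, \qquad (v, \theta) \mapsto (\cos\theta \cdot f(v), \sin\theta \cdot g(v)).
\end{equation}
For $\theta \in (0, \pi/2)$ the positivity of both coefficients combined with the injectivity of $f$ (or of $g$) forces this map to be fiberwise injective; at the endpoints it specializes to $i_1 f$ and $i_2 g$, which are injective by construction.

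I anticipate no serious obstacle; this is essentially the standard \emph{rotation trick} used for stabilizing pairs of bundle injections. The only ingredient beyond elementary linear algebra is the continuity of the fiberwise linear constructions on vector bundles over orbispaces, which is guaranteed by Lemma~\ref{mappingcts} and the discussion following it.
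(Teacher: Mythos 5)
Your proof is correct and is exactly the standard argument the paper leaves implicit (the lemma is stated with the proof omitted): all three filteredness axioms are verified, and the rotation homotopies you write down are genuinely fiberwise injective for all $\theta$, which is what is needed since morphisms in $\Vect(X)$ are homotopy classes of injections through injections.
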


\begin{example}
A vector bundle over $\BB G$ is a $G$-representation.
Thus objects of $\Vect(\BB G)$ are in bijection with elements of $\ZZ_{\geq 0}^{\hat G}$ where $\hat G$ denotes the set of isomorphism classes of real irreducible representations of $G$.
An automorphism of an object of $\Vect(\BB G)$ also splits as a direct sum of isotypic pieces.
The component group of the space of automorphisms of $\rho^{\oplus n}$ for $n>0$ is $\ZZ/2$ if $\End(\rho)=\RR$ and is trivial otherwise (i.e.\ if $\End(\rho)=\CC$ or $\HH$).
\end{example}

For two vector bundles $V$ and $W$ on $X$, let $\pi_0\Iso(V,W)$ denote homotopy classes of isomorphisms $V\to W$.
Vector bundles and isomorphisms up to homotopy form a groupoid $\Vect(X)_\iso$.
A \emph{stable isomorphism} $V\dashrightarrow W$ up to homotopy is an element of
\begin{equation}\label{stiso}
\pi_0\Iso^\st(V,W):=\varinjlim_{E\in\Vect(X)}\pi_0\Iso(V\oplus E,W\oplus E).
\end{equation}
Vector bundles and stable isomorphisms also form a groupoid $\Vect(X)_\iso^\st$.
If we restrict \eqref{stiso} to coarse vector bundles $E$, we obtain the notion of a \emph{coarsely stable isomorphism} and a resulting groupoid $\Vect(X)^\cst_\iso$.
If $X$ is compact, then the sequence $0\hookrightarrow\underline\RR\hookrightarrow\underline\RR^2\hookrightarrow\cdots$ is cofinal in coarse vector bundles on $X$, so it is equivalent to stabilize just by these.
When stabilizing with respect to all vector bundles, there seems to be no such nice canonical sequence (though see \cite[Remark 1.4]{orbibundle}).
The notion of stable isomorphism is most reasonable when $X$ is compact (or at least has enough vector bundles).

The groupoid of vector bundles and stable isomorphisms may be extended to a larger groupoid of \emph{stable vector bundles} (similarly, the groupoid of vector bundles and coarsely stable isomorphisms extends to a groupoid of \emph{coarsely stable vector bundles}).
A (coarsely) stable vector bundle is a formal difference $E-F$ (where $F$ is coarse); if $X$ is compact a coarse vector bundle is equivalently a formal difference $E-\underline\RR^n$.
An isomorphism of (coarsely) stable vector bundles $(E-F)\to(E'-F')$ is a (coarsely) stable isomorphism $E\oplus F'\dashrightarrow E'\oplus F$; note that we can indeed compose these.
Provided $X$ is compact, the groupoid of coarsely stable vector bundles is the direct limit of $\Vect(X)\xrightarrow{\oplus\underline\RR}\Vect(X)\xrightarrow{\oplus\underline\RR}\cdots$.
The groupoid of stable vector bundles is the direct limit of $\Vect(X)$ over the $2$-categorical refinement $\Vect^2(X)$ of $\Vect(X)$ in which a morphism is an inclusion of vector bundles and a $2$-morphism is a homotopy class of paths of inclusions.

\begin{example}\label{stablevboverbg}
Isomorphism classes of stable vector bundles on $\BB G$ are in bijection with $\ZZ^{\hat G}$.
The automorphism group of every one is the product of $\ZZ/2$ over all $\rho\in\hat G$ with $\End(\rho)=\RR$.
Coarsely stable vector bundles on $\BB G$ are in bijection with $\ZZ\oplus\ZZ_{\geq 0}^{\hat G-\1}$, and the automorphism group of a coarsely stable vector bundle is $\ZZ/2$ (corresponding to $\rho=\1$) times the product of $\ZZ/2$ over all $\rho\ne\1$ for which $\End(\rho)=\RR$ and whose isotypic piece is nontrivial.
\end{example}

There is an orbi-CW-complex $\bO:=\varinjlim_n\BB O(n)$ defined as the infinite mapping cylinder of the maps $\BB O(n)\xrightarrow{\oplus\underline\RR}\BB O(n+1)$.
This orbispace $\bO$ classifies coarsely stable vector bundles: a map $X\to\bO$ up to homotopy is the same as a coarsely stable vector bundles of dimension zero over $X$ up to isomorphism.
The notation $\bO$ is chosen to coincide with the notation for a corresponding global space defined by Schwede \cite[\S 2.4]{schwedeglobal} which has the same classifying property; see \S\ref{globalclassifyingspaces} below.

One might desire an orbispace $\BO$ classifying stable vector bundles; intuitively, it should be the group completion of $\bigsqcup_n\BB O(n)$.
There is indeed a global space $\BO$ \cite[\S 2.4]{schwedeglobal} which is the group completion \cite[Theorem 2.5.33]{schwedeglobal} and which has this desired classifying property, as we will see in \S\ref{globalclassifyingspaces}.
Note that if we were to naively apply the usual definition of group completion to the monoid $\bigsqcup_n\BB O(n)$, we would need to apply $B$ to it, and this would involve gluing along non-representable maps.
In fact:

\begin{lemma}\label{noBO}
There does not exist an orbi-CW-complex $\BO$ and a functorial bijection between isomorphism classes of stable vector bundles over orbi-CW-complexes $X$ and homotopy classes of maps $X\to\BO$.
\end{lemma}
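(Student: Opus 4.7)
My plan is to assume such a $\BO$ exists and to derive a contradiction by computing $\pi_1^{\ZZ/2}(\BO,p)$ in two different ways, where $p\colon\BB(\ZZ/2)\to\BO$ is the basepoint classifying the trivial (zero-dimensional) stable vector bundle on $\BB(\ZZ/2)$. Note that $\BO$, being an orbi-CW-complex, is in particular an orbispace, so Lemma \ref{repfactor} applies.

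For the first computation, I would apply the functorial classifying property of $\BO$ to identify homotopy classes of maps $S^1\times\BB(\ZZ/2)\to\BO$ with isomorphism classes of stable vector bundles on $S^1\times\BB(\ZZ/2)$, and then unwind the definition of $\pi_1^{\ZZ/2}(\BO,p)$ via clutching: a pair consisting of a stable vector bundle on $S^1\times\BB(\ZZ/2)$ together with an identification of the fiber over $*\times\BB(\ZZ/2)$ with the trivial bundle is precisely a stable $\ZZ/2$-equivariant automorphism of the trivial bundle on $\BB(\ZZ/2)$. Hence $\pi_1^{\ZZ/2}(\BO,p)$ is identified with the group of homotopy classes of such automorphisms, which by Example \ref{stablevboverbg} is $(\ZZ/2)^2$ (one factor for each of the two real irreducibles $\1,\sigma$ of $\ZZ/2$).

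For the second computation, I observe that the trivial stable vector bundle on $\BB(\ZZ/2)$ is pulled back from $*$, so $\ker p=\ZZ/2$ and $p$ factors as $\BB(\ZZ/2)\to *\xrightarrow{p^{\rep}}\BO$. The consequence of Lemma \ref{repfactor} recorded in the text (that $\pi_k^G(X,p)=\pi_k^{G/\ker p}(X,p^{\rep})$ for $k\geq 1$) then gives $\pi_1^{\ZZ/2}(\BO,p)=\pi_1(\BO,p^{\rep})$, and the same clutching argument applied with $X=S^1$ would identify $\pi_1(\BO,p^{\rep})$ with the homotopy classes of stable (non-equivariant) automorphisms of the zero-dimensional vector bundle on a point, namely $\pi_0 O=\ZZ/2$. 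Since $(\ZZ/2)^2\neq\ZZ/2$, this will give the desired contradiction.

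The one potentially subtle step is the passage from the $\pi_0$-level classifying property to information about $\pi_1^G$ via clutching. I expect this to be essentially formal once one unwinds the definition of $\pi_1^G(\BO,p)$ as homotopy classes of maps $(S^1\times\BB G,*\times\BB G)\to(\BO,p)$ together with an isomorphism of the restriction to $*\times\BB G$ with $p$, which on the vector-bundle side is precisely the data of a stable vector bundle on $S^1\times\BB G$ equipped with a trivialization of its fiber at the basepoint; the naturality of the bijection in both $S^1\times\BB G$ and $S^1\times\BB G\times I$ (for the homotopy relation) provides everything needed.
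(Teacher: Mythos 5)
Your strategy is in the right spirit---both you and the paper exploit Lemma \ref{repfactor}---but there is a genuine gap in the clutching step, and it is not ``essentially formal'' as you claim. The hypothesis of the lemma is only a \emph{$\pi_0$-level} statement: a functorial bijection between isomorphism classes of stable vector bundles on $X$ and \emph{homotopy classes} of maps $X\to\BO$. The group $\pi_1^{\ZZ/2}(\BO,p)$, on the other hand, is defined in terms of maps $f\colon S^1\times\BB(\ZZ/2)\to\BO$ together with an explicit $2$-morphism $\alpha\colon f|_{*\times\BB(\ZZ/2)}\xrightarrow\sim p$. The classifying bijection says nothing about the relation between $2$-morphisms of maps to $\BO$ and isomorphisms of bundles, so you cannot identify $\pi_1^{\ZZ/2}(\BO,p)$ with the group of stable automorphisms of the trivial bundle without first upgrading the hypothesis to an equivalence of groupoids (or a lifting property as in Lemma \ref{bgexist}). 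The paper is careful to point out exactly this distinction in the discussion following Lemma \ref{bgexist}. Since the lemma you are proving is a non-existence statement, you cannot appeal to an \emph{a posteriori} equivalence between the two properties; you would have to prove the groupoid-level upgrade from scratch before the clutching identification becomes available.

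The way to close the gap is to avoid $\pi_1^G$ entirely and stay at the $\pi_0$ level, which in fact reproduces the paper's argument. Consider the fiber of the restriction map $[S^1\times\BB(\ZZ/2),\BO]\to[\BB(\ZZ/2),\BO]$ over the class $[p]$ of the trivial bundle. By the assumed bijection this fiber is the set of isomorphism classes of stable bundles on $S^1\times\BB(\ZZ/2)$ restricting trivially over $*\times\BB(\ZZ/2)$, namely $(\ZZ/2)^2$ (a $\ZZ/2$-twist for each of the two real irreducibles of $\ZZ/2$). On the other hand, any $f\colon S^1\times\BB(\ZZ/2)\to\BO$ in this fiber has $\ker = \ZZ/2$ over the basepoint (since $p$ is homotopic to a map factoring through $*$), hence, by connectedness of $S^1$ and Lemma \ref{repfactor}, $f$ factors up to isomorphism through $S^1\to\BO$; by functoriality the classified bundle is pulled back from $S^1$, forcing the fiber to be contained in the image of $[S^1,\BO]=\ZZ/2$. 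This is a contradiction, and it is precisely the paper's proof specialized to $X=S^1$, $V$ the M\"obius bundle, and $Q$ the sign representation of $\ZZ/2$. The paper's more general formulation with an arbitrary non-stably-trivial $V$ over a CW-complex $X$ and an arbitrary nontrivial irreducible $Q$ is cleaner because it makes the non-pullback nature of $(V-\underline\RR^{|V|})\otimes Q$ transparent and keeps the entire argument at the level of the given $\pi_0$ bijection, never invoking a finer classifying property.
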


\begin{proof}
Consider a vector bundle $V$ over a CW-complex $X$ which is not stably trivial (e.g.\ one with nontrivial Pontryagin classes).
Now fix a nontrivial irreducible representation $Q$ of a finite group $G$, and consider the stable vector bundle $(V-\RR^{\left|V\right|})\otimes Q$ over $X\times\BB G$.
The restriction of this stable vector bundle to any $*\times\BB G$ is evidently zero.
Hence if it were pulled back from a classifying map $X\times\BB G\to\BO$, each restriction $*\times\BB G\to\BO$ would factor through $*\to\BO$, hence by Lemma \ref{repfactor} the entire map $X\times\BB G\to\BO$ would factor through $X\to\BO$, implying that our given stable vector bundle is pulled back from $X$.
On the other hand, stable vector bundles on $X\times\BB G$ are simply the direct sum over $\hat G$ of stable vector bundles on $X$, so our given stable vector bundle is definitely not pulled back from $X$.
As in Example \ref{nocones}, the key point in this argument was the use of Lemma \ref{repfactor}.
\end{proof}

\subsection{Stable structures on vector bundles}

A \emph{structure on vector bundles} $\fS$ is a sequence of orbi-CW-complexes $\BB\fS(n)$ for $n\geq 0$ each carrying a vector bundle $\xi_n$ of rank $n$ (equivalently, we could specify the maps $\BB\fS(n)\to\BB O(n)$); we write $\xi$ for $\bigsqcup_{n\geq 0}\xi_n$ over $\bigsqcup_{n\geq 0}\BB\fS(n)$.
An \emph{$\fS$-structure} on a vector bundle $V$ over an orbi-CW-complex $X$ is a map $f:X\to\bigsqcup_n\BB\fS(n)$ together with an isomorphism $V=f^*\xi$.
The set of $\fS$-structures up to homotopy on a vector bundle $V$ is denoted $\Str_\fS(V)$.
An isomorphism $V\xrightarrow\sim W$ induces a bijection $\Str_\fS(V)\xrightarrow\sim\Str_\fS(W)$.

The notion of an $\fS$-structure provides a common language for many structures of interest on vector bundles.
In particular: for $\BB\fS(n)=\BB SO(n)$, an $\fS$-structure is an orientation; for $\BB\fS(n)=\BB U(n/2)$, an $\fS$-structure is a complex structure; for $\BB\fS(n)=*$, an $\fS$-structure is a trivialization (or framing).

A \emph{shift} on a structure on vector bundles $\fS$ is a collection of maps $s_n:\BB\fS(n)\to\BB\fS(n+1)$ and isomorphisms $s_n^*\xi_{n+1}=\xi_n\oplus\underline\RR$ (equivalently, we could specify for each diagram
\begin{equation}\label{stabilization}
\begin{tikzcd}
\BB\fS(n)\ar[r,"s_n"]\ar[d,"\xi_n"]&\BB\fS(n+1)\ar[d,"\xi_{n+1}"]\\
\BB O(n)\ar[r,"\oplus\underline\RR"]&\BB O(n+1)
\end{tikzcd}
\end{equation}
a homotopy between the two compositions).
A shift on $\fS$ gives rise to natural maps $\Str_\fS(V)\to\Str_\fS(V\oplus\underline\RR)$, and a homotopy class of \emph{coarsely stable $\fS$-structure} on $V$ is an element of
\begin{equation}
\Str_\fS^\cst(V):=\varinjlim_n\Str_\fS(V\oplus\underline\RR^n).
\end{equation}
We have $\Str_\fS^\cst(V)=\Str_\fS(V)$ if \eqref{stabilization} is a homotopy pullback square (in the sense that the relevant lifting property holds for every $(D^k,\partial D^k)\times\BB G$).
It also makes sense to put a coarsely stable $\fS$-structure on a coarsely stable vector bundle: $\Str_\fS^\cst(F-\underline\RR^k):=\varinjlim_n\Str_\fS(V\oplus\underline\RR^{n-k})$, and coarsely stable isomorphisms between coarsely stable vector bundles induce maps between their sets of homotoy classes of coarsely stable $\fS$-structures.
The orbi-CW-complex $\bb\fS:=\varinjlim_{n\to\infty}\BB\fS(n)$ (infinite mapping cylinder) classifies coarsely stable vector bundles with $\fS$-structure, in the sense that homotopy classes of maps $X\to\bb\fS$ are in bijection with isomorphism classes of coarsely stable vector bundles with $\fS$-structure.

The set $\Str_\fS^\cst(V)$ has a canonical involution defined by noting the canonical isomorphism $\Str_\fS^\cst(V)=\Str_\fS^\cst(V\oplus\underline\RR)$ and acting via $\id_V\oplus(-1)$ on $V\oplus\underline\RR$.
Note that, having defined the involution on every $\Str_\fS^\cst$ in this way, the isomorphism $\Str_\fS^\cst(V)=\Str_\fS^\cst(V\oplus\underline\RR)$ respects involutions since $\bigl(\begin{smallmatrix}-1&0\\\hfill 0&1\end{smallmatrix}\bigr)$ and $\bigl(\begin{smallmatrix}1&\hfill 0\\0&-1\end{smallmatrix}\bigr)$ lie in the same component of $O(2)$.

A coarsely stable orientation is simply an orientation, due to the aforementioned condition that \eqref{stabilization} be a homotopy pullback being satisfied.
To define coarsely stable complex structures, we should take $\fS_n:=\BB U(\lfloor\frac n2\rfloor)$ and $\xi_n$ to be the tautological bundle plus $\underline\RR$ for $n$ odd, and the map $s$ to be addition of $\underline\RR$ (choosing a convention for which homotopy class of complex structure on $\RR^2$ to use).
A coarsely stable complex structure is weaker than a complex structure (even in even dimensions).
A coarsely stable framing exists only on coarse vector bundles.

A \emph{stable structure on vector bundles} is a structure on vector bundles $\fS$ together with a map $i:*\to\BB\fS(1)$ with an isomorphism $i^*\xi=\RR$ and maps $s_{n,m}:\BB\fS(n)\times\BB\fS(m)\to\BB\fS(n+m)$ with isomorphisms $\xi_n\oplus\xi_m=s_{n,m}^*\xi_{n+m}$ which are associate and graded symmetric in the sense that we now explain.
Associativity means that the two resulting maps $\BB\fS(n)\times\BB\fS(m)\times\BB\fS(k)\to\BB\fS(n+m+k)$ covered by isomorphisms $\xi_n\oplus\xi_m\oplus\xi_k=\xi_{n+m+k}$ are homotopic.
Note that $s:=s_{n,1}\circ(\id\times i)$ defines a shift on $\fS$, so we can already make sense of coarse stabilization.
Graded symmetry is the statement that the maps $\Str_\fS(V)\times\Str_\fS(W)\to\Str_\fS^\cst(V\oplus W)$ given by adding in either order differ by $(-1)^{\left|V\right|\left|W\right|}$, where $-1$ denotes the canonical involution on $\Str_\fS^\cst$ defined above.
We thus obtain graded symmetric maps
\begin{equation}
\Str_\fS^\cst(V)\times\Str_\fS^\cst(W)\to\Str_\fS^\cst(V\oplus W)
\end{equation}
defined as the direct limit over $n$ and $m$ of $(-1)^{n\left|W\right|}$ times the map $\Str_\fS(V\oplus\underline\RR^n)\times\Str_\fS(W\oplus\underline\RR^m)\to\Str_\fS(V\oplus W\oplus\underline\RR^{n+m})$.

A homotopy class of \emph{stable $\fS$-structure} on $V$ is an element of the direct limit
\begin{equation}\label{ststrdef}
\Str_\fS^\st(V):=\varinjlim_W\Str_\fS^\cst(V\oplus W)
\end{equation}
over the category whose objects are vector bundles $W$ equipped with a homotopy class of coarsely stable $\fS$-structure and whose morphisms are injections of vector bundles $V\hookrightarrow W$ together with a homotopy class of coarsely stable $\fS$-structure on $W/V$ such that the resulting homotopy class of coarsely stable $\fS$-structure on $W=V\oplus W/V$ is the given one, modulo homotopy.
(We warn the reader that the forgetful functor from this category to $\Vect(X)$ need not be cofinal.)

\begin{lemma}
The indexing category above is filtered.
\end{lemma}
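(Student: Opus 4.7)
The plan is to verify the two axioms of filteredness: any two objects admit morphisms to a common third, and any two parallel morphisms become equal after postcomposition with some morphism.

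For the first axiom, given $(W_1, \tau_1)$ and $(W_2, \tau_2)$, I would take the common target $(W_1 \oplus W_2, \tau_1 \oplus \tau_2)$, using the direct-sum operation on coarsely stable $\fS$-structures defined via the stabilization maps $s_{n,m}$ on $\bb\fS$. The first-factor inclusion $W_1 \hookrightarrow W_1 \oplus W_2$ with quotient $(W_2, \tau_2)$, and symmetrically for $W_2$, give the desired morphisms; the compatibility condition that source plus quotient structures equal the target is tautological.

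For the second axiom, given parallel morphisms $\alpha = (i, \sigma)$ and $\beta = (j, \sigma')$ from $(W_1, \tau_1)$ to $(W_2, \tau_2)$, I would take $\gamma: (W_2, \tau_2) \to (W_2 \oplus W_1, \tau_2 \oplus \tau_1)$ to be the first-factor inclusion with quotient $(W_1, \tau_1)$. The compositions $\gamma\alpha$ and $\gamma\beta$ then have underlying injections $(i, 0)$ and $(j, 0)$ from $W_1$ to $W_2 \oplus W_1$, which I would connect by a homotopy through injections routed via the intermediate injection $(0, 1_{W_1})$: the family $((1-t)i, t \cdot 1_{W_1})$ remains injective throughout because the second coordinate $t \cdot 1_{W_1}$ alone witnesses injectivity for $t > 0$, while the first coordinate $i$ does so at $t = 0$; the analogous path handles $(0, 1_{W_1}) \to (j, 0)$.

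The main obstacle will be verifying that the coarsely stable $\fS$-structures on the quotients cohere along this homotopy of injections, promoting the homotopy of injections to a homotopy of morphisms in the indexing category. The family of quotient bundles forms a vector bundle on $X \times [0,1]$, and the required structure on each quotient is constrained by the equation: source structure $\tau_1$ plus quotient structure equals the fixed target structure $\tau_2 \oplus \tau_1$. At the three distinguished injections I would verify directly that the quotient structure identifies with $\tau_2$ under the natural splittings: at $(0, 1_{W_1})$ this holds by construction of $\gamma$; at $(i, 0)$ the quotient $W_2/i(W_1) \oplus W_1$ carries $\sigma \oplus \tau_1$, which equals $\tau_2$ under the canonical identification with $W_2$ by the compatibility $\tau_1 \oplus \sigma = \tau_2$ furnished by $\alpha$; analogously at $(j, 0)$ via $\tau_1 \oplus \sigma' = \tau_2$ from $\beta$. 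Since the monoid of coarsely stable $\fS$-structures is not in general grouplike, this coincidence cannot be obtained by formal cancellation of $\tau_1$, but must be extracted directly from the compatibility data supplied by $\alpha$ and $\beta$; once assembled, these ingredients will produce the required equality $\gamma\alpha = \gamma\beta$ as morphisms, completing the verification.
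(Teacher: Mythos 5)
Your approach is the same as the paper's — take direct sums for the cocone, and coequalize two parallel morphisms by postcomposing with the ``stabilize by the source'' map $\gamma$ — and your explicit treatment of the homotopy of injections via the rotation swindle makes one point more explicit than the paper does. But there is a sign gap that recurs at two places. Coarsely stable $\fS$-structures are only \emph{graded}-commutative under direct sum, so the second cocone morphism $(W_2,\tau_2)\to(W_1\oplus W_2,\tau_1\oplus\tau_2)$ requires the quotient structure on $W_1$ to be $(-1)^{|W_1||W_2|}\tau_1$, not $\tau_1$; the compatibility condition for that map is not tautological. The paper states this twist explicitly. The same issue reappears in your endpoint analysis: the claim that $\sigma\oplus\tau_1$ ``equals $\tau_2$ under the canonical identification with $W_2$'' is using commutativity of $\oplus$ without its sign $(-1)^{|W_1|(|W_2|-|W_1|)}$, and is false in general.

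These slips do not ultimately sink the argument, and the reason why is precisely what makes the lemma go through: both $\gamma\alpha$ and $\gamma\beta$ have quotients of the same rank and the same source $W_1$, so their quotient structures transport to the target with the \emph{same} sign twist, hence coincide with each other, whatever that common sign turns out to be. This is exactly the parenthetical ``(notice that $|W|=|W'|$, so the sign twist in each case is the same)'' in the paper's proof; your writeup needs the corresponding remark. As written, the two endpoint identifications with $\tau_2$ are incorrect, and the closing sentence (``once assembled, these ingredients will produce the required equality'') is left unjustified. The honest final step is to compare $\gamma\alpha$ and $\gamma\beta$ directly \emph{to each other} (rather than each separately to $\tau_2$) and to observe that the two sign twists agree because the ranks do.
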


\begin{proof}
It is non-empty since there is the zero vector bundle.
Given objects $V$ and $V'$, they both admit morphisms to the same object $V\oplus V'$, namely $\oplus V'$ in the former case and $\oplus V$ in the latter case twisted by $(-1)^{\left|V\right|\left|V'\right|}$.
Finally, suppose given two morphisms $V\to V\oplus W$ and $V\to V\oplus W'$ where $V\oplus W=V\oplus W'$.
Then compose further with $\oplus V$, so that the two compositions become $\oplus(W\oplus V)$ and $\oplus(W'\oplus V)$, which we assumed were the same (notice that $\left|W\right|=\left|W'\right|$, so the sign twist in each case is the same).
\end{proof}

There are associative graded symmetric maps
\begin{equation}\label{fSstaddition}
\Str_\fS^\st(V)\times\Str_\fS^\st(W)\to\Str_\fS^\st(V\oplus W).
\end{equation}
In particular, $\Str_\fS^\st(0)$ is an abelian group (to see that it has inverses, note that an element of $\Str_\fS^\st(0)$ is given by a vector bundle $V$ with two coarsely stable $\fS$-structures, and exchange them with a sign twist), each $\Str_\fS^\st(V)$ is either empty or a principal homogeneous space for $\Str_\fS^\st(0)$, and the addition maps \eqref{fSstaddition} are maps of $\Str_\fS^\st(0)$-sets.
Each $\Str_\fS(V)$ also carries a canonical involution given by adding $\underline\RR$ and acting on it by $-1$.

It also makes sense to discuss stable structures on stable vector bundles, and the above continues to apply.

A stable orientation is the same as an orientation.
A stable almost complex structure is strictly weaker than a coarsely stable almost complex structure.
A stable framing is the same as a coarsely stable framing.

\section{Orbifold bordism}

\subsection{Definitions}

We define orbifold bordism $\Omega_*(X,A)$ and derived orbifold bordism $\Omega_*^\der(X,A)$ for any orbispace pair $(X,A)$ as follows.

Consider compact orbifolds with boundary $Z$ together with a representable map $f:(Z,\partial Z)\to(X,A)$.
A bordism between such pairs $(Z_1,f_1)$ and $(Z_2,f_2)$ consists of a compact orbifold with boundary $W$ with a codimension zero embedding $Z_1\sqcup Z_2\hookrightarrow\partial W$ and a representable map $f:(W,\partial W-(Z_1^\circ\cup Z_2^\circ))\to(X,A)$ whose restrictions to $Z_1$ and $Z_2$ are $f_1$ and $f_2$, respectively.
(Alternatively, one could regard $W$ as a compact orbifold with corners, where the corner locus is precisely $\partial Z_1\cup\partial Z_2\subseteq\partial W$.)
Bordism is an equivalence relation (by a collaring result, which allows one to glue together bordisms).
Now $\Omega_*(X,A)$ is the set of pairs $(Z,f)$ modulo compact bordism, graded by dimension.

We now consider a `derived' version of this construction.
A \emph{derived orbifold chart (with boundary)} $Z$ is a tuple $(D,E,s)$ where $D$ (the `domain') is an orbifold (with boundary), $E$ (the `obstruction bundle') is a vector bundle, and $s$ (`the obstruction section') is a smooth section.
A derived orbifold chart with boundary is called compact iff the zero set of $s$ is compact.
A \emph{restriction} of a derived orbifold chart with boundary replaces $D$ with an open subset of $D$ which contains the zero set of $s$ (we may always restrict to a pre-compact subset of $D$, hence the non-compactness of $D$ is never an issue).
A \emph{stabilization} of a derived orbifold chart with boundary $Z=(D,E,s)$ replaces $D$ with the total space of a vector bundle $F$ over $D$, replaces $E$ with its direct sum with $F$, and replaces $s$ with its direct sum with the identity map on $F$.
Bordism of derived orbifold charts is defined as before.
Now $\Omega_*^\der(X,A)$ is the set of compact derived orbifold charts with boundary $Z=(D,E,s)$ together with a representable map $(D,\partial D)\to(X,A)$, modulo compact bordism, restriction, and stabilization.
It is graded by virtual dimension $\dim D-\dim E$.
There is an obvious map $\Omega_*\to\Omega_*^\der$.

While bordism of orbifolds is obviously an equivalence relation (since boundaries of orbifolds with boundary have collars), the analogous assertion for bordisms of derived orbifold charts relies on enough vector bundles.

\begin{proposition}\label{gluingderivedbordisms}
Two compact derived orbifold charts with boundary representable over $(X,A)$ represent the same element of $\Omega_*^\der(X,A)$ iff they are compactly bordant after restricting and stabilizing.
\end{proposition}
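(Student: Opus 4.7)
The plan is to show that the relation $\sim_*$ on compact derived orbifold charts, defined by $Z_1 \sim_* Z_2$ iff there exist restrictions-and-stabilizations $Z_1', Z_2'$ of $Z_1, Z_2$ that are compactly bordant, coincides with the equivalence $\sim_{\mathrm{gen}}$ generated by compact bordism, restriction, and stabilization. The containment $\sim_* \subseteq \sim_{\mathrm{gen}}$ is tautological. For the reverse, since $\sim_{\mathrm{gen}}$ is the smallest equivalence relation containing the three generators, it suffices to check that $\sim_*$ is itself an equivalence relation absorbing each of them. Reflexivity (via the trivial bordism $Z \times [0,1]$), symmetry, and absorption of each individual generating operation (via an identity bordism on the appropriate side) are routine. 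The content is transitivity.

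The transitivity argument rests on two subsidiary facts, both consequences of enough vector bundles (Theorem \ref{enough}). The first, \emph{common refinement}, asserts that any two restrictions-and-stabilizations of a chart $Z = (D, E, s)$, presented respectively by $(F^a, U^a \subseteq \operatorname{tot}(F^a))$ and $(F^b, U^b)$, admit a common further restriction-and-stabilization: stabilize $Z$ by $F^a \oplus F^b$ and restrict to $\pi_a^{-1}(U^a) \cap \pi_b^{-1}(U^b) \subseteq \operatorname{tot}(F^a \oplus F^b)$, an open subset which still contains the zero set of $s$. The second, \emph{extension across bordisms}, asserts that given a bordism $W$ with boundary component $Z^\partial$, any restriction-and-stabilization $Z^{\partial\prime}$ of $Z^\partial$ can---possibly after passage to a further restriction-and-stabilization---be realized as the $Z^\partial$-boundary of a restriction-and-stabilization of $W$ itself. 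Concretely, the stabilization bundle of $Z^{\partial\prime}$ need not extend from $D_{Z^\partial}$ to $D_W$, but by Theorem \ref{enough} it embeds into $\tilde E^{\oplus N}|_{D_{Z^\partial}}$ for some module faithful bundle $\tilde E$ on $D_W$, so stabilizing $W$ by $\tilde E^{\oplus N}$ followed by a suitable open restriction supplies the desired bordism.

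For transitivity, given bordisms $W_{12}\colon Z_1^a \to Z_2^a$ and $W_{23}\colon Z_2^b \to Z_3^b$ with $Z_2^a, Z_2^b$ restrictions-and-stabilizations of $Z_2$, apply common refinement to produce $Z_2^c$. Apply extension across bordisms to $W_{12}$ and $W_{23}$ to refine them so that their $Z_2$-boundaries become restrictions-and-stabilizations of $Z_2^c$. Iterate common refinement and extension across bordisms a finite number of times, using Theorem \ref{enough} on the disjoint union $D_{W_{12}} \sqcup D_{W_{23}}$ in the final extension step to stabilize both bordisms by a single module faithful bundle, so that both $Z_2$-boundaries end at a common chart $Z_2^*$. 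Gluing the resulting bordisms along a collar of $Z_2^*$ produces a bordism between restrictions-and-stabilizations of $Z_1$ and $Z_3$, witnessing $Z_1 \sim_* Z_3$.

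The main obstacle is the extension-across-bordisms step, since the stabilization bundle prescribed on a boundary component of a bordism generically fails to extend over the whole bordism. The essential device is the module faithful embedding supplied by enough vector bundles, which substitutes an exact extension by an extension after further stabilization; this is precisely the point at which Theorem \ref{enough} is indispensable to the proof.
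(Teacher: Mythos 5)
Your reduction to transitivity is correct, as is the overall idea of using Theorem \ref{enough} to find a single stabilization that works across both bordisms. But the final alignment step has a genuine gap, and it is precisely the technical heart of the proposition. You assert that stabilizing both $W_{12}$ and $W_{23}$ by a single module faithful bundle $\tilde E^{\oplus N}$ on $D_{W_{12}}\sqcup D_{W_{23}}$ (after enough iteration of common refinement and extension) makes ``both $Z_2$-boundaries end at a common chart $Z_2^*$.'' This does not follow, and is in fact false in general. Even after arranging that both bordisms are stabilized to have the same obstruction bundle $\tilde E^{\oplus N}$, the resulting stabilizations of $Z_2$ on the two boundaries are determined by two different surjections $\tilde E^{\oplus N}|_{Z_2}\twoheadrightarrow E_2$ (in the paper's notation, $\Psi_{12}\circ\Phi_{12}$ versus $\Psi_{23}\circ\Phi_{23}$), corresponding to the two different ways the stabilization data were chosen on each side. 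Your ``extension across bordisms'' step introduces a new choice of complement $F^\perp$ each time it is applied, and these choices on the two sides are unrelated; there is no reason for your iteration to terminate, nor for the resulting charts to coincide rather than merely be abstractly isomorphic.

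The paper's proof confronts exactly this point and resolves it with two ideas you don't have an analogue of. First, it reinterprets a stabilization as a surjection $Q\twoheadrightarrow E$ of vector bundles (rather than as a choice of bundle-plus-complement), which makes the mismatch on $Z_2$ into a concrete question about two surjections onto $E_2$. Second, it replaces $N$ by $2N$ and uses $\Phi_{12}\oplus 0$ and $0\oplus\Phi_{23}$ so that the two composite surjections onto $E_2$ become homotopic through surjections by linear interpolation; this homotopy is then inserted into the collar coordinate to produce the glued bordism. Without some device of this kind that reconciles the two stabilizations of $Z_2$ (not just their obstruction bundles but the identifications), your gluing step cannot be carried out, so the argument as written does not prove transitivity.
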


\begin{proof}
It suffices to check that the stated relation is transitive.
Suppose that $Z_1\sim Z_2\sim Z_3$, and let us show that $Z_1\sim Z_3$.
The key obstacle to overcome is that the vector bundles by which one stabilizes $Z_2$ to become bordant to (stabilizations of) $Z_1$ and $Z_3$ may not coincide.

We begin by introducing a new perspective on stabilization.
Let $(D,E,s)$ be a derived orbifold chart, and let $Q$ be a vector bundle over $D$ together with a surjection $f:Q\twoheadrightarrow E$.
We obtain a new derived orbifold chart
\begin{equation}
\Bigl(\bigl\{d\in D,q\in Q:s(d)=f(q)\bigr\},Q,\pi_Q\Bigr).
\end{equation}
In fact, this new derived orbifold chart is a stabilization of $(D,E,s)$: indeed a choice of splitting $Q=E\oplus\ker f$ identifies the new derived orbifold chart above with the stabilization of $(D,E,s)$ by $\ker f$.

Let us now observe that stabilization is transitive: a stabilization of a stabilization is a stabilization.
The point is just that if $(D,E,s)\leadsto(D',E',s')\leadsto(D'',E'',s'')$ are stabilizations, then the vector bundle $W$ by which the second stabilization stabilizes is pulled back from $D\subseteq D'$ (the first stabilization says that $D'$ is the total space of a vector bundle over, hence has a projection map down to, $D$).
Choosing such an identification of $W$ with the pullback of its restriction to $D$ identifies $(D'',E'',s'')$ with a stabilization of $(D,E,s)$.

We now return to the problem at hand.
We have bordisms $C_{12}$ and $C_{23}$ between stabilizations of $Z_1$, $Z_2$, $Z_3$.
By making a small deformation, we may assume that these bordisms are collared (i.e.\ near the boundary are the product of the boundary times $[0,\varepsilon)$).
Consider the orbispace $C_{12}\cup_{Z_2}C_{23}$ (i.e.\ the gluing of the `domains' of the corresponding derived orbifolds, possibly after restricting to pre-compact open subsets thereof).
By enough vector bundles Theorem \ref{enough}, there is a module faithful vector bundle $Q$ over this space.
There thus exists an $N<\infty$ and surjections $\Phi_{12}$ and $\Phi_{23}$ from $Q^{\oplus N}|_{C_{12}}$ and $Q^{\oplus N}|_{C_{23}}$ to the obstruction spaces $E_{12}$ of $C_{12}$ and $E_{23}$ of $C_{23}$, respectively (say, independent of the radial coordinate of the collar near the boundary), thus determining stabilizations of $C_{12}$ and $C_{23}$, respectively.
The resulting composite stabilizations of $Z_2$ on the boundary are thus determined by surjections $\Psi_{12}\circ\Phi_{12}$ and $\Psi_{23}\circ\Phi_{23}$ from $Q^{\oplus N}$ to the obstruction space $E_2$ of $Z_2$ (where $\Psi_{12}:E_{12}\to E_2$ and $\Psi_{23}:E_{23}\to E_2$ are the surjections inducing the stabilizations of $Z_2$ on the boundaries of $C_{12}$ and $C_{23}$, respectively).
If these surjections $\Psi_{12}\circ\Phi_{12}$ and $\Psi_{23}\circ\Phi_{23}$ from $Q^{\oplus N}$ to $E_2$ are homotopic through surjections, we may insert such a homotopy in the collar coordinate and glue the stabilizations of $C_{12}$ by $\Phi_{12}$ and $C_{23}$ by $\Phi_{23}$ together to obtain the desired glued bordism between (stabilizations of) $Z_1$ and $Z_3$.
By replacing $N$ with $2N$ and replacing $\Phi_{12}$ and $\Phi_{23}$ with $\Phi_{12}\oplus 0$ and $0\oplus\Phi_{23}$, respectively, the desired homotopy through surjections is simply the obvious linear interpolation.
\end{proof}

\begin{remark}
A derived orbifold is an object with an atlas of derived orbifold charts.
It is a consequence of enough vector bundles that every derived orbifold has in fact a global chart.
Thus we may (and do) define derived orbifold bordism groups purely in terms of derived orbifold charts, without delving into the details of the definition of derived orbifolds.
The cost of this approach is that enough vector bundles becomes a crucial ingredient in the proofs of most properties of derived orbifold bordism as we have defined it here.
\end{remark}

\subsection{Basic properties}

The sets $\Omega_d$ and $\Omega_d^\der$ are both abelian groups under disjoint union; each element is its own inverse.

These groups $\Omega_d$ and $\Omega_d^\der$ are functorial under representable maps of pairs, namely they define functors $\RepOrbSpcPair\to\Ab$.
In fact, they descend to functors
\begin{equation}
\RepOrbSpc_*\to\Ab,
\end{equation}
which can be seen either directly from the definition or by appealing to Proposition \ref{excision}.
(The proof is exactly as for classical bordism, so we omit it.)

There is a natural map $\Omega_d\to\Omega_d^\der$ (take $E=0$).
Since a section of a vector bundle over a manifold can be perturbed to be transverse to zero, the map $\Omega_d(X,A)\to\Omega_d^\der(X,A)$ is an isomorphism for $(X,A)\in\Spc_*$.

There are natural product maps
\begin{align}
\Omega_*(X,A)\otimes\Omega_*(Y,B)&\to\Omega_*((X,A)\times(Y,B))\\
\Omega_*^\der(X,A)\otimes\Omega_*^\der(Y,B)&\to\Omega_*^\der((X,A)\times(Y,B))
\end{align}
given simply by taking product of (derived) orbifolds.

(Derived) orbifold bordism groups also satisfy exactness:

\begin{proposition}\label{bordismexact}
The functors $\Omega_d$ and $\Omega_d^\der$ send any cofiber sequence \eqref{cofiber} to an exact sequence of abelian groups.
\end{proposition}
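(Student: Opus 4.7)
The argument has two parts: vanishing of the composition $\Omega_d(Y, B) \to \Omega_d(X, A \cup Y)$, and containment of the kernel of $\Omega_d(X, A) \to \Omega_d(X, A \cup Y)$ in the image from $\Omega_d(Y, B)$. I focus on $\Omega_d$; the derived case is parallel. Vanishing of the composition is immediate: the image of a class $[(Z, \partial Z) \to (Y, B)]$ factors through $(Y, Y) \hookrightarrow (X, A \cup Y)$, and the cylinder $Z \times [0, 1]$, equipped with the constant-in-$t$ map to $Y$, provides an explicit null-bordism.

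For the main direction, let $[Z, f] \in \Omega_d(X, A)$ vanish in $\Omega_d(X, A \cup Y)$, witnessed by a compact $(d+1)$-orbifold-with-boundary $W$, a codim-$0$ embedding $Z \hookrightarrow \partial W$, and a representable map $g: W \to X$ extending $f$ on $Z$ and sending $\overline{\partial W - Z^\circ}$ into $A \cup Y$. My plan is to produce a codim-$1$ suborbifold $V \subseteq W$, with $\partial V \subseteq \partial W$ mapping into $B$, splitting $W = W_A \cup_V W_Y$ so that $Z \subseteq W_A$, $g(W_A \cap \overline{\partial W - Z^\circ}) \subseteq A$, $g(W_Y) \subseteq Y$, and $g(V) \subseteq Y$. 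After smoothing corners, $W_A$ then serves as a bordism in $(X, A)$ between $(Z, f)$ and $(V, g|_V: V \to Y)$, exhibiting $[Z, f]$ as the image of the class $[V, g|_V] \in \Omega_d(Y, B)$.

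To construct $V$, I first use Proposition \ref{excision} to replace the cofiber sequence by an isomorphic one in which $Y$ sits inside $X$ with a genuine collar: substitute $X' := X \cup_Y (Y \times [0, \varepsilon])$, $A' := A \cup_B (B \times [0, \varepsilon])$, and $Y' := Y \times \{\varepsilon\}$, so that $Y'$ has collar $Y \times (0, \varepsilon]$ meeting $A'$ exactly along $B \times (0, \varepsilon]$. Using Corollary \ref{pairispair} and enough vector bundles (Theorem \ref{enough}), I model the full setup by compact effective smooth orbifolds. Note that $f: Z \to A \subseteq X \subseteq X'$ automatically avoids the collar interior, eliminating any complication on $Z$. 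I then homotope $g$ rel $Z$ to be smooth and transverse to the coarse codim-$1$ hypersurface $Y \times \{\varepsilon/2\}$; set $V := g^{-1}(Y \times \{\varepsilon/2\})$ and compose with the projection $Y \times \{\varepsilon/2\} \to Y'$. The pieces $W_Y := g^{-1}(Y \times [0, \varepsilon/2])$ and $W_A := \overline{W - W_Y}$ furnish the decomposition, and the fact that $A'$ meets $Y \times \{\varepsilon/2\}$ only in $B \times \{\varepsilon/2\}$ forces $g(\partial V) \subseteq B$. For $\Omega_d^\der$, the same construction applies to the domain of a derived chart $(D, E, s)$, with the obstruction bundle and section restricted along each piece; stabilization and restriction absorb any residual flexibility.

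The principal obstacle is transversality in the orbifold category, where generic smooth maps need not be transverse to arbitrary closed suborbifolds because isotropy representations constrain the allowed perturbations. Here, however, the hypersurface $Y \times \{\varepsilon/2\}$ is coarse along its codim-$1$ normal direction (the collar coordinate carries trivial isotropy), so standard chartwise equivariant transversality suffices. The same reduction to a coarse-normal problem handles transversality of $g$ restricted to $\overline{\partial W - Z^\circ}$, completing the construction.
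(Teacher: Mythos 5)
Your proof is correct in substance but takes a genuinely different route from the paper's. The paper never cuts the null-bordism itself: it composes the map $C\to X$ with a small cellular perturbation $\Phi$ of $\id_X$ satisfying $\Phi(A)\subseteq A$, $\Phi(Y)\subseteq Y$, and $\Phi(\Nbd B)\subseteq B$, so that the closure of $(f|_{\partial C})^{-1}(Y-B)$ becomes disjoint from $Z$, and then extracts a \emph{codimension-zero} compact piece $Z'\subseteq\partial C$ containing that locus; $C$ itself then serves as the bordism over $(X,A)$ from $Z$ to $Z'$, and $(Z',\partial Z')\to(Y,B)$ is the sought preimage. That argument is pure point-set topology and needs no transversality and no smooth model of $X$. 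Your version instead slices the null-bordism $W$ along the codimension-one level set of a collar coordinate, which requires a Sard-type argument in the normal direction; you correctly observe that this direction is coarse (trivial isotropy), so equivariant transversality is not an obstruction — this is the same mechanism the paper uses for Thom maps along coarse bundles — and the payoff is that $V$ is produced directly as ``the intersection of $W$ with $Y$,'' which is geometrically transparent. Three smaller remarks. First, you do not actually need Corollary \ref{pairispair}: the proposition is stated for arbitrary (not necessarily finite) orbi-CW pairs, and only the bordism $W$ and the collar coordinate function on $g^{-1}(Y\times(0,\varepsilon))$ need smooth structures, not $X$ itself; invoking \ref{pairispair} would needlessly restrict to finite complexes. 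Second, in the derived case the existence of a \emph{single} null-bordism witnessing vanishing (after restriction and stabilization) is exactly Proposition \ref{gluingderivedbordisms}, which should be cited rather than absorbed into ``the derived case is parallel.'' Third, with your conventions ($Y'=Y\times\{\varepsilon\}$ at the far end of the collar), the piece $W_Y$ should be $g^{-1}$ of $Y\times[\varepsilon/2,\varepsilon]$ rather than of $Y\times[0,\varepsilon/2]$; this is only an indexing slip and does not affect the argument.
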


\begin{proof}
We treat both cases ($\Omega_d$ and $\Omega_d^\der$) simultaneously, writing $\Omega_*^{(\der)}$ for either one.

It is immediate that any element of $\Omega_*^{(\der)}(X,A)$ represented by something mapped entirely to $A$ is zero (multiply by $I$ to obtain a bordism to the empty set).
It follows that the composition $\Omega_*^{(\der)}(Y,B)\to\Omega_*^{(\der)}(X,A)\to\Omega_*^{(\der)}(X,A\cup_BY)$ vanishes.

Now suppose an element $(Z,\partial Z)$ of $\Omega_*^{(\der)}(X,A)$ is sent to zero in $\Omega_*^{(\der)}(X,A\cup_BY)$.
There is thus a null bordism $C$ of $(Z,\partial Z)$ over $(X,A\cup_BY)$ (in the case of $\Omega_*^\der$, this uses Proposition \ref{gluingderivedbordisms}).
The boundary of this bordism consists of $Z$ (mapped to $(X,A)$) and its complement, which is mapped to $A\cup_BY$.
Replace the map $f:C\to X$ with its composition with a small perturbation of the identity $\Phi:X\to X$ satisfying $\Phi(A)\subseteq A$, $\Phi(Y)\subseteq Y$, and $\Phi(\Nbd B)\subseteq B$ (such a map $\Phi$ may be constructed by induction on cells).
Since the closure of $\Phi^{-1}(Y-B)$ is disjoint from $B$, it follows that the closure of the set $(f|_{\partial C})^{-1}(Y-B)$ is disjoint from $Z\subseteq\partial C$.
Now take $Z'\subseteq\partial C$ a compact codimension zero submanifold with boundary, disjoint from $Z$, containing $(f|_{\partial C})^{-1}(Y-B)$.
Thus $(Z',\partial Z')\to(Y,B)$ represents an element of $\Omega_*^{(\der)}(Y,B)$ which is sent to $Z$ in $\Omega_*^{(\der)}(X,A)$.
\end{proof}

Applying Proposition \ref{bordismexact} to the Puppe sequence gives a long exact sequence, which acquires the usual form once we observe that $\Omega_*(X,A)=\Omega_{*+1}((X,A)\times(I,\partial I))$ (and likewise for $\Omega_*^\der$) as we will see next.
Namely, for any cofiber sequence \eqref{cofiber}, we obtain a (bi-infinite) long exact sequence
\begin{equation}
\cdots\to\Omega_*(Y,B)\to\Omega_*(X,A)\to\Omega_*(X,A\cup_BY)\to\Omega_{*-1}(Y,B)\to\cdots
\end{equation}
and the same for $\Omega_*^\der$.

\begin{example}\label{dertonondernonzero}
Here is a way to detect nontrivial negative degree classes in derived bordism.
Let $G$ be any finite group.
There is an \emph{ungraded} map
\begin{align}
\Omega_*(\BB G)&\to\Omega_*(*)\\
M/G&\mapsto M^G
\end{align}
(note that every representable map $N\to\BB G$ is of the form $M/G\to\BB G$ for $M=N\times_{\BB G}*$).
Similarly, there is an ungraded map
\begin{align}
\Omega_*^\der(\BB G)&\to\Omega_*^\der(*)=\Omega_*(*)\\
(M,E,s)/G&\mapsto(M^G,E^G,s|_{M^G}).
\end{align}
One should be careful to note that this map does indeed respect bordism (in particular, stabilization).
For any $G$-representation $V$, this map sends
\begin{equation}
(\BB G,V/G,0)\in\Omega_{-\dim V}^\der(\BB G)
\end{equation}
to $(*,V^G,0)\in\Omega_{-\dim V^G}^\der(*)$, which is nonzero iff $V^G=0$.
We conclude that if $V^G=0$ then $\Omega_{-\dim V}^\der(\BB G)\ne 0$.
\end{example}

\subsection{(Inverse) Thom maps}

For any vector bundle $V$ over $X$, there are natural \emph{inverse Thom maps} (terminology following Schwede \cite[\S 6]{schwedeglobal})
\begin{align}
\Omega_d(X,A)&\to\Omega_{d+\left|V\right|}((X,A)^V),\\
\Omega_d^\der(X,A)&\to\Omega_{d+\left|V\right|}^\der((X,A)^V),
\end{align}
given by replacing a given (derived) orbifold with the Thom space of the pullback of $V$.
We also have \emph{Thom maps} in the opposite direction
\begin{align}
\Omega_{d+\left|V\right|}((X,A)^V)&\to\Omega_d(X,A),\\
\Omega_{d+\left|V\right|}^\der((X,A)^V)&\to\Omega_d^\der(X,A),
\end{align}
given by intersecting with the zero section of $V$.
More precisely, the Thom map on $\Omega_*$ is only defined for coarse vector bundles $V$, and it requires an appeal to Sard's theorem to conclude that intersecting with a generic perturbation of the zero section of $V$ is transverse.
The Thom map on $\Omega_*^\der$ is defined for all vector bundles $V$ and consists simply of adding $V$ to the obstruction bundle and the identity section to the obstruction section.

\begin{proposition}\label{thomiso}
The Thom map and the inverse Thom map are inverses.
\end{proposition}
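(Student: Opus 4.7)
The plan is to verify that both composites Thom $\circ$ Inverse Thom and Inverse Thom $\circ$ Thom are the identity, on $\Omega_*$ (for coarse $V$, where the Thom map is defined) and on $\Omega_*^\der$ (for arbitrary $V$). I would treat the two cases in parallel.

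The easy direction is Thom $\circ$ Inverse Thom. Starting with a representative $(Z,f\colon Z\to X)$ in the ordinary case, or $(D,E,s)$ with $f\colon D\to X$ in the derived case, the inverse Thom map produces the disk bundle $D(f^*V)$ with its natural projection to $D(V)$ together with the pulled-back obstruction data. Applying the Thom map then recovers the original class: in the ordinary case, the tautological section $D(f^*V)\to f^*V$ is the projection, and is transverse to zero precisely along $Z\subseteq D(f^*V)$ identified with the zero section (up to small generic perturbation, using that $V$ is coarse so that Sard's theorem applies); in the derived case, the output $(D(f^*V),\pi^*E\oplus\pi^*f^*V,\pi^*s\oplus\tau)$ with $\tau$ the tautological section of $\pi^*f^*V$ is literally a stabilization of $(D,E,s)$ by $f^*V$ restricted to an open neighborhood of the zero section, hence represents the same class under the restriction and stabilization relations.

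The harder direction Inverse Thom $\circ$ Thom = id requires a genuine bordism. Given a class over $(X,A)^V$ represented by $g\colon(W,\partial W)\to((X,A)^V,\mathrm{pt})$ (or the analogous derived datum $g\colon D''\to D(V)$), the Thom map produces $Z:=g^{-1}(X)$ after a Sard-style perturbation of $g$ to be transverse to the zero section in the ordinary case, or the derived chart $(D'',E''\oplus g^*V,s''\oplus\pi_V\circ g)$ in the derived case; inverse Thom then produces the disk bundle $D(f^*V)$ where $f$ is the retained map to $X$. I would construct the required bordism to the original via the classical Pontryagin--Thom tubular neighborhood collapse: in the ordinary case, identify $D(f^*V)$ with a tubular neighborhood of $Z\subseteq W$ via the equivariant exponential map, and interpolate on $W\times[0,1]$ between the original $g$ and the map that collapses the complement of the tubular neighborhood to the basepoint of $(X,A)^V$. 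In the derived case, the bordism is more algebraic: a straight-line homotopy of sections over $D(f^*V)\times[0,1]$ interpolating between $\pi^*s''\oplus\tau$ and the pullback of $s''\oplus\pi_V\circ g$ along the graph of $\pi_V\circ g$, together with the corresponding homotopy of maps to $D(V)$.

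The main obstacle is ensuring all constructions respect the orbifold structure. Equivariant tubular neighborhoods of representable suborbifolds exist by the standard equivariant tubular neighborhood theorem applied to each local chart, since normal bundles carry well-defined isotropy representations and Corollary \ref{embedding} provides the global embeddings needed to work with them intrinsically. Achieving the transversality needed for the Thom map in the ordinary case uses coarseness of $V$ so that generic perturbations from Sard suffice. Finally, gluing the local bordism pieces into a global bordism in the derived case invokes Proposition \ref{gluingderivedbordisms} to guarantee the required transitivity of the bordism relation after stabilization.
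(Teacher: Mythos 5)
Your overall strategy — verify that both composites are the identity, treating $\Omega_*$ and $\Omega_*^\der$ in parallel — is the same as the paper's. Your treatment of the direction you call ``Thom $\circ$ inverse Thom'' (start in $\Omega_*(X,A)$, inflate, then intersect) matches the paper exactly: for $\Omega_*$ the tautological section is already transverse along the zero section, and for $\Omega_*^\der$ the result is literally a stabilization by $f^*V$. No issues there.

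For the harder direction (start in $\Omega_*((X,A)^V)$, intersect, then inflate), your argument genuinely diverges from the paper's, and in the derived case it has a gap as written. The paper does \emph{not} construct a direct bordism; instead it rewrites the composition as $\Omega^\der((X,A)^V)\to\Omega^\der((X,A)^{V\oplus V})\to\Omega^\der((X,A)^V)$ — inflate along the second copy of $V$, then intersect the zero section of the first — and then conjugates by the automorphism $\bigl(\begin{smallmatrix}\hfill 0&1\\-1&0\end{smallmatrix}\bigr)\cdot\id_V$ of $(X,A)^{V\oplus V}$, homotopic to the identity in $O(2)$, which exchanges the two copies. That turns the composition into inflate-second-copy followed by intersect-second-copy, i.e.\ the easy direction. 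This is cleaner than a hand-built bordism because the rotation is an automorphism, so compactness of the zero set and the relative condition over $S(V)\cup D(V)|_A$ are preserved for free.

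Your proposed ``straight-line homotopy'' needs more care on exactly these points. First, the phrase ``the pullback of $s''\oplus\pi_V\circ g$ along the graph of $\pi_V\circ g$'' is confused: the output section lives over $D(f^*V)$ and is the pullback under the projection $\pi:D(f^*V)\to D''$, not along a graph (which would produce a section over $D''$). Second, and more substantively, the natural linear interpolation — obstruction section $(1-t)\,\pi^*(\pi_V g)\oplus t\,\tau$ and map to $D(V)$ with $V$-component $(1-t)\,\tau + t\,\pi^*(\pi_V g)$ — does not visibly send the lateral boundary $\partial D(f^*V)\times[0,1]$ into $S(V)\cup D(V)|_A$: when $d\in\partial D''$ with $|\pi_V g(d)|=1$ and $f(d)\notin A$, the $V$-norm of $(1-t)v + t\,\pi_V g(d)$ can drop below $1$ for $|v|\le 1$. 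One would have to argue (by enlarging disk radii, or by excision after the fact) that this can be repaired, which is precisely the fiddling that the paper's rotation trick is designed to avoid. In the underived case your tubular-neighborhood-collapse idea is morally the same as the paper's parenthetical remark (replace $(X,A)^V$ by a small tubular neighborhood of the perturbed zero section and excise), so that part is fine modulo the standard equivariant tubular neighborhood package.
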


\begin{proof}
We have four compositions to show are the identity map:
\begin{align}
\label{compA}\Omega_d^{(\der)}(X,A)&{}\to\Omega_{d+\left|V\right|}^{(\der)}((X,A)^V)\to\Omega_d^{(\der)}(X,A)\\
\label{compB}\Omega_{d+\left|V\right|}^{(\der)}((X,A)^V)&{}\to\Omega_d^{(\der)}(X,A)\to\Omega_{d+\left|V\right|}^{(\der)}((X,A)^V)
\end{align}
The map \eqref{compA} for $\Omega_*$ is the identity by inspection.
The map \eqref{compA} for $\Omega_*^\der$ is the identity since its action on a given derived orbifold chart is to stabilize by $V$.
The map \eqref{compB} for $\Omega_*$ is also the identity by inspection (given a transverse perturbation $\epsilon$ of the zero section which is transverse to a given orbifold, consider replacing $(X,A)^V$ with a small tubular neighborhood of the image of $\epsilon$ relative its boundary).

The map \eqref{compB} for $\Omega_*^\der$ may be expressed alternatively as
\begin{equation}
\Omega_{d+\left|V\right|}^\der((X,A)^V)\to\Omega_{d+2\left|V\right|}^\der((X,A)^{V\oplus V})\to\Omega_{d+\left|V\right|}^\der((X,A)^V)
\end{equation}
which looks very much like \eqref{compA}, except it is not quite the same since here the first map `inflates' along the second copy of $V$ whereas the second map intersects along the zero section of the first copy of $V$.
However, we may note that $(X,A)^{V\oplus V}$ has an automorphism, homotopic to the identity map, given by the matrix $\bigl(\begin{smallmatrix}\hfill0&1\\-1&0\end{smallmatrix}\bigr)\cdot\id_V$, conjugation by which turns the second map into intersection with the second copy of $V$, putting our composition into the form \eqref{compA} for $\Omega_*^\der$ which we already saw is the identity map.
\end{proof}

Given the Thom isomorphism, we may extend $\Omega_*$ and $\Omega_*^\der$ to orbispectra as follows.
Bordism $\Omega_*$ extends to naive orbispectra $\RepOrbSpc[\Sigma^{-1}]$ by taking $\Omega_*(\Sigma^{-n}(X,A)):=\Omega_{*+n}(X,A)$, which is consistent since $\Omega_*(X,A)=\Omega_{*+1}(\Sigma(X,A))$ by the Thom isomorphism.
Derived bordism $\Omega_*^\der$ extends to genuine orbispectra $\RepOrbSp^f$ by taking $\Omega_*((X,A)^{-V}):=\Omega_{*+\left|V\right|}(X,A)$, which is again consistent by the Thom isomorphism.

When $V$ is not coarse, the inverse Thom map $\Omega_*(X,A)\to\Omega_{*+\left|V\right|}((X,A)^V)$ is in general not an isomorphism.
It is thus natural to ask whether $\Omega_*((X,A)^V)$ may be expressed as bordism classes of some class of (derived) orbifolds mapping to $(X,A)$ (rather than $(X,A)^V$).
We will see how to do this below, based on Wasserman's theorem, which we will meet shortly.
This is the key to extending $\Omega_*$ to genuine orbispectra.

For the moment, we will observe that $\Omega_*\to\Omega_*^\der$ is the \emph{localization} at the inverse Thom maps, in the following sense:

\begin{lemma}
For finite orbi-CW-pairs $(X,A)$, the natural map
\begin{equation}
\Omega_*[{\textstyle\frac 1\tau}](X,A):=\varinjlim_{V/X}\Omega_{*+\left|V\right|}((X,A)^V)\xrightarrow\sim\varinjlim_{V/X}\Omega_{*+\left|V\right|}^\der((X,A)^V)=\Omega_*^\der(X,A)
\end{equation}
is an isomorphism.
\end{lemma}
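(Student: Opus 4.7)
The plan is to construct an inverse map $\Psi: \Omega^\der_*(X,A) \to \Omega_*[\tfrac 1\tau](X,A)$ via the Pontryagin--Thom construction and then verify it is a two-sided inverse of the natural map $\Phi$. Given a representative $(D, E, s)$ of a class in $\Omega^\der_*(X,A)$ with representable map $f: (D, \partial D) \to (X, A)$, I use enough vector bundles (Theorem \ref{enough}, combined with Lemma \ref{faithfulpowerall} applied to $f^*$ of a module-faithful vector bundle on $X$) to choose a vector bundle $V$ on $X$ and an embedding $E \hookrightarrow f^*V$, and set $W := f^*V/E$. Stabilizing $(D, E, s)$ by $W$ gives the derived chart on the total space of $W$ with obstruction bundle $\pi^*f^*V$ and section $s \oplus \id_W$; viewed as a map into $V$ over $f \circ \pi$, this section is proper onto its zero set, which coincides with $s^{-1}(0)$ and is compact. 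After restricting to a closed disk subbundle $\bar W$ of large radius over a compact codimension-zero neighborhood of the zero set, the collapse construction gives a representable map of orbifold pairs $\bar W \to (X, A)^V$ (cf.\ Corollary \ref{pairispair}), representing a class in $\Omega_{*+\left|V\right|}((X,A)^V)$. Define $\Psi[D,E,s]$ to be the image of this class in $\Omega_*[\tfrac 1\tau](X,A)$.

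The bulk of the work is to verify that $\Psi$ is well-defined, i.e.\ the image class in the direct limit does not depend on the choices made (the radius of $\bar W$, the restriction of $D$, the splitting of $f^*V$, the embedding $E \hookrightarrow f^*V$, and the bundle $V$ itself) and is invariant under the equivalences defining $\Omega^\der$ (restriction, stabilization, bordism). The choices (i)--(iv) are routine: the excised part maps to the basepoint; any two splittings or embeddings agree after homotopy and enlargement; enlarging $V$ corresponds to an inverse Thom map. For stabilization by a bundle $F$ over $D$, I would further enlarge $V$ on $X$ so that $f^*V$ contains both $E$ and $F$ as subbundles (again using enough vector bundles), realize the stabilization by $F$ inside a Pontryagin--Thom class for the enlarged $V$, and compare to the original via an explicit bordism. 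For bordisms of derived charts $(C, E_C, s_C)$, Proposition \ref{gluingderivedbordisms} lets me assume a genuine bordism on the nose after restriction and stabilization, and the collapse construction applied in families then produces a bordism of the Pontryagin--Thom classes.

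Finally, I would check that $\Phi \circ \Psi$ and $\Psi \circ \Phi$ are identities. For $\Phi \circ \Psi = \id$: the class $[\bar W \to (X,A)^V]$ maps under $\Phi$ (via Proposition \ref{thomiso}'s Thom isomorphism for $\Omega^\der$) to the derived chart obtained from the zero-section preimage of the collapse map, which by construction is a stabilization of $(D, E, s)$. For $\Psi \circ \Phi = \id$: starting from $[M, g: M \to (X,A)^V]$, $\Phi$ produces the derived chart on a neighborhood $U$ of $g^{-1}(X)$ with obstruction bundle the pullback of $V$ and section the $V$-component of $g$; running $\Psi$ on this chart with the tautological embedding of $f^*V$ into itself and no further stabilization gives back the collapse map from $U$, which equals $g|_U$ and is identified with $g$ via excision (Proposition \ref{excision}). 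The main obstacle will be the well-definedness step for stabilization by $F$: the comparison is not formal and requires constructing the comparing bordism by hand, balancing the simultaneous enlargement of $V$ on the base with the stabilization on the domain, and this is where enough vector bundles for both $D$ and $X$ is used most essentially.
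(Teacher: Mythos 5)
Your approach is correct in principle but genuinely different from the paper's. The paper does not construct an inverse map: it shows surjectivity and injectivity of the natural map directly. For surjectivity, given a derived chart $(D,E,s)$ it uses enough vector bundles to choose $V$ on $X$ with a \emph{surjection} $f^*V\twoheadrightarrow E$; applying the inverse Thom map replaces $D$ by the total space of $f^*V$, and then the section $s+\epsilon\phi$ (where $\phi$ is the vertical coordinate followed by the surjection) is transverse because its vertical derivative is already surjective, so the chart lies in the image of $\Omega_{*+|V|}((X,A)^V)$. Injectivity follows by running the same perturbation on a derived bordism. Your approach instead chooses an \emph{embedding} $E\hookrightarrow f^*V$, stabilizes by $W=f^*V/E$, and uses the (non-transverse) section $s\oplus\id_W$ as a map into the Thom pair to build an explicit inverse $\Psi$. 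Both hinge on the same use of enough vector bundles to compare $E$ with a bundle pulled back from $X$. What the paper's route buys is economy: there is no well-definedness of an inverse to verify (no dependence on $V$, the embedding, the splitting, the restriction, or invariance under stabilization/bordism of derived charts), so the whole argument is a few lines. What your route buys is an explicit formula for the inverse; it also makes $\Phi\circ\Psi=\id$ transparent, since $\Phi$ applied to $[\bar W,g]$ recovers exactly a restriction of the $W$-stabilization of $(D,E,s)$. One small correction: the step you flag as the hardest, well-definedness under stabilization by $F$, is actually mostly bookkeeping once independence of the choice of $(V, E\hookrightarrow f^*V)$ is in place — taking $V'=V\oplus V_F$ with $F\hookrightarrow f^*V_F$ and the direct-sum embedding identifies the two outputs on the nose — so the real work is distributed over the invariance-of-choices arguments rather than concentrated there. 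Also, the citation of Lemma \ref{faithfulpowerall} is slightly off (if the chosen bundle on $X$ is already module faithful you do not need it; you need it to pass from faithful to module faithful first), and Corollary \ref{pairispair} and Proposition \ref{excision} are not the relevant references for, respectively, the statement that $\bar W$ gives a class in $\Omega_{*+|V|}((X,A)^V)$ and the identification of the collapse with $g$ up to bordism.
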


\begin{proof}
We prove surjectivity.
Let $(D,E,s)$ be a derived orbifold chart representable over $(X,A)$.
To obtain the corresponding derived orbifold chart over $(X,A)^V$, we simply replace $D$ with the total space of the pullback of $V$ to it.
By enough vector bundles Theorem \ref{enough}, we may take $V$ so that its pullback to $D$ surjects onto $E$.
Now we may perturb $s$ by adding to it (epsilon times) this surjection, thus making it transverse.
Hence our derived orbifold chart lies in the image of $\Omega_{*+\left|V\right|}((X,A)^V)\to\Omega_{*+\left|V\right|}^\der((X,A)^V)$.

Injectivity follows from the same argument applied to a derived orbifold bordism between two orbifolds.
\end{proof}

\subsection{Wasserman's theorem}

A remarkable observation of Wasserman \cite{wasserman} provides a sufficient condition under which a section of a vector bundle over an orbifold may be perturbed to become transverse to zero.
In particular, it gives a condition under which a derived orbifold is bordant to an orbifold.

To state this condition, let us fix some notation.
For a vector bundle $V$ over an orbispace and a point $p$, we may decompose the fiber $V_p$ into a direct sum of isotypic pieces, indexed by the set $\hat G_p$ of isomorphism classes of real irreducible representations of the isotropy group $G_p$ of $p$.
In particular, we may split $V_p$ as the direct sum of the isotropy invariant part $(V_p)^{G_p}=(V_p)_\1$ and the direct sum $(V_p)_{\hat G_p-\1}$ of isotypic pieces of nontrivial representations.
We denote by $V_{\widehat\iso-\1}\subseteq V$ the sum of the isotypic pieces associated to nontrivial representations (note that $V_{\widehat\iso-\1}$ is not itself a vector bundle), and for a map of vector bundles $f$, we denote by $f_{\widehat\iso-\1}$ its action on these subspaces.
Given a vector bundle $V$ over an orbifold $X$ together with a map $\alpha:TX\to V$ for which $\alpha_{\widehat\iso-\1}$ is surjective, a section $s:X\to V$ is called \emph{$\alpha$-consistently transverse (to zero)} iff over its zero set $ds$ is surjective with $(ds)_{\widehat\iso-\1}=\alpha_{\widehat\iso-\1}$.

\begin{theorem}[Wasserman \cite{wasserman}]\label{wasserman}
Let $X$ be an orbifold, let $E$ be a vector bundle over $X$, and fix a map $\alpha:TX\to E$ for which $\alpha_{\widehat\iso-\1}$ is surjective.
Every section of $E$ has a $C^0$-small perturbation which is $\alpha$-consistently transverse.
This perturbation may be taken relative a neighborhood of any closed set over which it is already $\alpha$-consistently transverse.
\end{theorem}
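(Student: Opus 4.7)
The argument is local on $X$, so the first step is to reduce to a statement in an orbifold chart $V/G$ centered at a point $p_0 \in X$ with isotropy $G = G_{p_0}$, where $V = T_{p_0}X$. In such a chart $s$ and $\alpha$ become $G$-equivariant maps $V \to E_{p_0}$ (with $\alpha$ linear at $p_0$). Using the isotypic decompositions $V = V^G \oplus V_{\widehat\iso-\1}$ and $E_{p_0} = E_{p_0}^G \oplus E_{p_0,\widehat\iso-\1}$, write $s = s_\1 + s_{\widehat\iso-\1}$ and $\alpha = \alpha_\1 + \alpha_{\widehat\iso-\1}$; by $G$-equivariance $s_{\widehat\iso-\1}$ automatically vanishes on $V^G$ (because $(V_{\widehat\iso-\1})^G = 0$). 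The local goal is a $G$-equivariant $C^0$-small perturbation $s'$, supported in a small ball, that is $\alpha$-consistently transverse in a smaller neighborhood of $p_0$.

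For the local construction, choose a $G$-invariant cutoff $\chi : V \to [0,1]$ supported in $B_{2\epsilon}(0)$ with $\chi \equiv 1$ on $B_\epsilon(0)$, a generic small $c \in E_{p_0}^G$, and the $G$-equivariant projection $\pi_N : V \to V_{\widehat\iso-\1}$, and set
\[
s'(x) := s_\1(x) + c + (1-\chi(x))\, s_{\widehat\iso-\1}(x) + \chi(x)\, \alpha_{\widehat\iso-\1}(\pi_N x).
\]
Since $s_{\widehat\iso-\1}$ vanishes at $0$, the perturbation $s' - s$ has $C^0$-size $O(|c| + \epsilon)$. Near $0$ the section reduces to $s_\1 + c + \alpha_{\widehat\iso-\1}\circ\pi_N$, so $(ds')_{\widehat\iso-\1}|_{p_0} = \alpha_{\widehat\iso-\1}$ by construction; and classical Sard applied to the map $s_\1|_{V^G} : V^G \to E_{p_0}^G$ shows that for generic $c$ this restriction is transverse to zero. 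Combined with surjectivity of $\alpha_{\widehat\iso-\1}$, this gives full surjectivity of $ds'$ at zeros lying in $V^G$, which are exactly the zeros with isotropy $G$.

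The local construction so far handles only zeros on the top stratum (isotropy exactly $G$); zeros of smaller isotropy $H < G$ require the analogous condition relative to the $H$-isotypic decomposition, which is a genuinely different assertion. I would set up an induction on the orbit-type stratification of $X$, from the deepest stratum (largest $|G_p|$) outward: after perturbing to achieve consistent transversality in an open neighborhood of the first $k$ strata, perform the analogous local construction at points of the next stratum inside open sets whose closures are disjoint from a slightly smaller neighborhood of the strata already handled. Because each local perturbation has small support, it does not disturb consistent transversality achieved elsewhere. The rel-condition is incorporated by choosing all perturbation supports disjoint from the given closed set on which the condition already holds. A locally finite cover of $X$ by orbifold charts plus iterative application gives the global perturbation.

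The main obstacle is precisely this orbit-type stratification: the perturbation appropriate for isotropy $G$ can create new zeros on strata with strictly smaller isotropy $H$, at which consistent transversality requires the $H$-isotypic decomposition of $\alpha$, not the $G$-isotypic one; managing this requires the inductive structure described above. A secondary subtlety is that the local perturbation is only $C^0$-small and not $C^1$-small — the term $\chi \cdot \alpha_{\widehat\iso-\1}(\pi_N x)$ has $C^1$-size $O(1)$, since $|d\chi| = O(1/\epsilon)$ while $|\alpha_{\widehat\iso-\1}(\pi_N x)| = O(\epsilon)$ on its support — so the patching cannot rely on the usual principle that $C^1$-small perturbations preserve transversality, and must instead exploit the disjointness of the supports of the successive local perturbations.
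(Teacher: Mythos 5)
Your local construction is in the right spirit but the cutoff $\chi$ is the wrong shape, and the patching by disjoint supports does not close. At a zero $x_0\in V^G$ of $s'$ with $\chi(x_0)=t\in(0,1)$, the $d\chi$ terms drop out (since $s_{\widehat\iso-\1}(x_0)=0=\pi_N(x_0)$) and one is left with $(ds')_{\widehat\iso-\1}=(1-t)\,(ds)_{\widehat\iso-\1}+t\,\alpha_{\widehat\iso-\1}$, which equals $\alpha_{\widehat\iso-\1}$ only if the \emph{unperturbed} $s$ was already $\alpha$-consistently transverse at $x_0$. So your formula achieves consistency only inside $B_\epsilon$, not on the full support $B_{2\epsilon}$, and there is no mechanism controlling where the zeros of $s_\1+c$ land in the annulus. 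Meanwhile, patching by disjoint supports cannot cover a compact stratum: if the $B_{2\epsilon_i}$ are disjoint, so are the $B_{\epsilon_i}$, so they do not cover. Neither step of your global scheme goes through as written.

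The paper resolves exactly this point by triangulating each stratum and working simplex by simplex rel boundary. One reduces to $X=D^k\times D^\ell\times(W/G)$ with $W^G=0$ and with $s$ already $\alpha$-consistently transverse near $\partial D^k\times 0\times 0$; one first perturbs $s|_{D^k\times0\times0}$ rel boundary to make $(ds)_\1$ surjective and then extends so that $(ds)_{\widehat\iso-\1}=\alpha_{\widehat\iso-\1}$ over the stratum; and finally one interpolates $\varphi\bar s+(1-\varphi)s$ with $\bar s(a,b,c):=s(a,b,0)+\alpha_{\widehat\iso-\1}(c)$ and a cutoff $\varphi$ depending \emph{only on $a\in D^k$}. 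Because $\varphi$ is constant in the $D^\ell$- and $W$-directions, $d\varphi$ vanishes on the $(\cdot)_{\widehat\iso-\1}$ part of the tangent bundle, so the interpolation preserves $\alpha$-consistency; and in the region where $\varphi<1$ (near $\partial D^k$), the unperturbed $s$ is already consistent by the rel-boundary hypothesis supplied by the adjacent simplices. These two features — a cutoff depending only on the stratum coordinates, and the rel-boundary inductive structure over a triangulation of each stratum — are what is missing from your sketch, and they are precisely what makes the annulus problem disappear.
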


(We credit this result to Wasserman \cite{wasserman}, although Wasserman only stated the special case that $X=\RR^n/G$ and $E$ is the descent of the trivial bundle $\RR^n$ with the same $G$-action, and $\alpha$ is the identity.)

\begin{proof}
We proceed by induction over the stratification of $X$ by order of stabilizer.
By triangulating a given stratum, it suffices to perturb on any given disk rel boundary inside $X$, which all have a standard local model.
In other words, it suffices to consider the case of $X=D^k\times D^\ell\times W/G$ where $G\acts W$ has zero invariant part $W^G=0$, the section $s$ is $\alpha$-consistently transverse over a neighborhood of $\partial D^k\times 0\times 0$, and we would like to make it $\alpha$-consistently transverse over a neighborhood of $D^k\times 0\times 0$.
Now over $D^k\times 0\times 0$, the derivative $ds$ is $G$-equivariant, hence respects the decomposition into isotypic pieces:
\begin{equation}
(ds)_\1\oplus(ds)_{\hat G-\1}:(TD^k\oplus TD^\ell)\oplus W\to E^G\oplus E_{\hat G-\1}.
\end{equation}
By perturbing (rel a neighborhood of the boundary) the restriction of $s$ to $D^k\times 0\times 0$, we may make $(ds)_\1$ surjective (note that $s$ is constrained to land inside $E^G$ over $D^k\times 0\times 0$), and we may then extend $s$ to a neighborhood of $D^k\times 0\times 0$ so that $(ds)_{\hat G-\1}=\alpha_{\hat G-\1}$ over $D^k\times 0\times 0$.

We are not quite done, however, since the above construction ensures that our perturbed section $s$ will be $\alpha$-consistently transverse over $D^k\times 0\times 0$ and a neighborhood of $\partial D^k\times 0\times 0$, but not over a neighborhood of $D^k\times 0\times 0$.
To fix this, choose an isomorphism $E=\pi^*E$ where $\pi$ denotes the projection $\pi:D^k\times D^\ell\times W/G\to D^k\times D^\ell$ forgetting the last coordinate.
Now given the section $s$ defined above, set
\begin{equation}
\bar s(a,b,c):=s(a,b,0)+\alpha_{\hat G-\1}(c),
\end{equation}
where we use the isomorphism $E=\pi^*E$ to make sense of the right hand side as an element of the fiber of $E$ over $(a,b,c)\in D^k\times D^\ell\times W/G$.
Now this section $\bar s$ is certainly $\alpha$-consistently transverse over a neighborhood of $D^k\times 0\times 0$, however it does not agree with $s$ over a neighborhood of $\partial D^k\times 0\times 0$.
Instead, let us use $\varphi\cdot\bar s+(1-\varphi)\cdot s$ for a smooth function $\varphi:D^k\to[0,1]$ vanishing near $\partial D^k$ and which equals $1$ over a large compact set.
This interpolation is now $\alpha$-consistently transverse over a neighborhood of $D^k\times 0\times 0$, noting that the restriction of $d\varphi$ to the $(\cdot)_{\widehat\iso-\1}$ part of the tangent bundle is zero.
\end{proof}

\begin{remark}
A stable homotopy theoretic analogue of this argument appears in tom Dieck \cite[Satz 5]{tomdieckII} Schwede \cite[Theorem 6.2.33]{schwedeglobal}.
It would be interesting to explore whether a stable homotopy theoretic analogue of Fukaya--Ono's `integer part' construction \cite{fukayaono} exists as well (that construction follows a strategy similar to Wasserman's strategy above, though rather than using $\alpha$ in the normal directions, one requires complex polynomial behavior in the normal directions).
\end{remark}

\begin{corollary}\label{wassermancoarselystable}
A derived orbifold chart whose tangent bundle is stably isomorphic to a coarsely stable vector bundle is bordant to an orbifold.
\end{corollary}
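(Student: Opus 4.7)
The plan is to convert the stable hypothesis into a concrete bundle map to which Wasserman's theorem applies, and then use the resulting transverse perturbation to exhibit the chart as bordant to its smooth zero locus.

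First I would unpack the hypothesis on the chart $(D,E,s)$: that $TD-E$ is stably isomorphic to a coarsely stable bundle means there exist a coarse vector bundle $F$ on $D$, vector bundles $V,W$ on $D$, and an isomorphism $\psi\colon TD\oplus F\oplus W\xrightarrow{\sim} E\oplus V\oplus W$. I would then stabilize $(D,E,s)$ by $V\oplus W$ to obtain a chart $(D',E',s')$ with $D'$ the total space of $V\oplus W$ over $D$, $E'=\pi^*(E\oplus V\oplus W)$, and $s'=\pi^*s\oplus\id_{V\oplus W}$; this does not change the class in $\Omega^\der_*$.

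To construct the map $\alpha\colon TD'\to E'$ required by Wasserman, I would first define $\tilde\alpha\colon TD\oplus V\oplus W\to E\oplus V\oplus W$ on $D$ by $(t,v,w)\mapsto\psi(t,0,w)$. The key observation is that since $F$ is coarse, $F_{(\rho)}=0$ for every nontrivial irreducible $\rho$, so the restriction of $\psi$ to $\{F=0\}$ remains an isomorphism on every nontrivial isotypic piece; hence $\tilde\alpha_{\widehat\iso-\1}$ is surjective. Pulling $\tilde\alpha$ back along $\pi\colon D'\to D$, using any choice of connection on $V\oplus W$ to identify $TD'\cong\pi^*(TD\oplus V\oplus W)$, yields the desired $\alpha\colon TD'\to E'$ with $\alpha_{\widehat\iso-\1}$ surjective.

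I would then apply Wasserman's Theorem \ref{wasserman}: after restricting $D'$ to a pre-compact open neighborhood of the compact zero set $(s')^{-1}(0)$, I would perturb $s'$ rel a neighborhood of the closed set $\{|s'|\ge\delta\}$ (on which $s'$ has no zeros and is vacuously $\alpha$-consistently transverse) for some small $\delta>0$; this yields a $C^0$-small, $\alpha$-consistently transverse perturbation $\tilde s$ agreeing with $s'$ outside a compact set. The straight-line homotopy $(1-t)s'+t\tilde s$ then defines a compact bordism from $(D',E',s')$ to $(D',E',\tilde s)$. Transversality of $\tilde s$ makes $Z:=\tilde s^{-1}(0)$ a compact suborbifold of $D'$, and the tubular neighborhood theorem, together with the isomorphism $\nu_{Z/D'}\cong E'|_Z$ induced by $d\tilde s$, identifies $(D',E',\tilde s)$ restricted to a tubular neighborhood of $Z$ with a small deformation of the stabilization of the orbifold chart $(Z,0,0)$ by $E'|_Z$; a linear homotopy straightening $\tilde s$ to the tautological section completes the chain of bordisms from $(D,E,s)$ to $(Z,0,0)$.

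The main obstacle is the middle step: turning the abstract stable isomorphism $\psi$ into a concrete bundle map $\alpha$ with the correct surjectivity property. The trick is that coarseness of $F$ is precisely what makes the naive restriction $(t,w)\mapsto\psi(t,0,w)$ retain surjectivity on every nontrivial isotypic piece, so that Wasserman's criterion is satisfied after only the stabilization dictated by the hypothesis.
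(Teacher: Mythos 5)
Your argument is correct and follows essentially the same route as the paper: stabilize away the auxiliary bundle in the stable isomorphism, use coarseness of the subtracted summand to produce a map $\alpha\colon TD'\to E'$ that is surjective on nontrivial isotypic pieces, apply Wasserman's Theorem \ref{wasserman}, and take the straight-line homotopy as the bordism. The only cosmetic difference is that you also stabilize by the non-coarse summand $V$ of the coarsely stable bundle (the paper stabilizes only by the bundle appearing on both sides of the isomorphism), which is harmless, and you spell out the final identification of the transverse chart with a stabilization of its zero locus, which the paper leaves implicit.
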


\begin{proof}
Let $Z=(D,E,s)$ be a derived orbifold chart.
By assumption, the stable vector bundle $TD-E$ is stably isomorphic to a coarsely stable vector bundle $F-\underline\RR^N$ (in fact, we will not use anything special about $\underline\RR^N$ other than that it is coarse).
In other words, there exists a vector bundle $V$ and an isomorphism $TD\oplus V\oplus\underline\RR^N=E\oplus V\oplus F$.
By stabilizing our derived orbifold chart $(D,E,s)$ by $V$, we may reduce this to $TD\oplus\underline\RR^N=E\oplus F$.
Now the composition $\alpha:TD\to TD\oplus\underline\RR^N=E\oplus F\to E$ is evidently surjective on $(\cdot)_{\widehat\iso-\1}$ pieces.
We can thus apply Wasserman Theorem \ref{wasserman} to perturb $s$ to a section $s'$ which is transverse to zero (and agrees with $s$ outside a compact set).
The desired bordism is thus $(D\times[0,1],E\times[0,1],ts+(1-t)s')$.
\end{proof}

The literal converse to Corollary \ref{wassermancoarselystable} is false for trivial reasons ($\partial[0,1]$ times anything is null bordant yet need not have coarsely stable tangent bundle).
The next subsection formulates an `up to bordism' version of Corollary \ref{wassermancoarselystable} which is an `if and only if' (or rather isomorphism) statement.

\subsection{Orbifold bordism as oriented derived orbifold bordism}\label{ordinaryasderived}

Let us now explain how Wasserman's theorem implies, as one might expect after seeing Corollary \ref{wassermancoarselystable}, that orbifold bordism may be expressed as derived orbifold bordism with a sort of tangential structure, namely what we will call a \emph{coarsely stable structure} on its stable tangent bundle.
We may thus think of $\Omega_*$ as an `oriented' version of $\Omega_*^\der$, in the sense that modifying the definition of $\Omega_*^\der$ by imposing a marking on the stable tangent bundle yields $\Omega_*$.

A \emph{coarsely stable structure} on a stable vector bundle $V$ is a coarsely stable vector bundle $W$ and a stable isomorphism $V=W$.
A given stable vector bundle may admit multiple non-isomorphic coarsely stable structures (non-isomorphic coarsely stable vector bundles may be stably isomorphic).

Derived orbifold bordism with coarsely stable tangential structure $\Omega_*^{\cst,\der}$ is defined as follows.
Consider derived orbifold charts $Z=(D,E,s)$ representable over $(X,A)$ together with a vector bundle $A$ and a stable isomorphism $A-\underline\RR^{\left|E\right|-\left|TD\right|-\left|A\right|}=TD-E$, modulo restriction, stabilization, $A\mapsto A\oplus\underline\RR$, and bordism.
Let us argue that bordism after restriction, stabilization, and $A\mapsto A\oplus\underline\RR$ is transitive (hence is an equivalence relation).
As argued in the proof of Proposition \ref{gluingderivedbordisms}, given two bordisms $C_{12}$ and $C_{23}$, we may stabilize so that the requisite stabilizations of $Z_2$ coincide.
The bordisms may thus be glued, so it suffices to argue that the coarsely stable structures can also be glued.
We have vector bundles $A_{12}$ on $C_{12}$ and $A_{23}$ on $C_{23}$ and a coarsely stable isomorphism between their restrictions to $Z_2$.
Thus stabilizing $A_{12}$ and $A_{23}$ by adding $\underline\RR^k$, we get a genuine isomorphism on $Z_2$, which thus allows us to glue them together.
Now we have stable isomorphisms between this glued coarsely stable vector bundle and the tangent space to our glued derived bordism, separately on $C_{12}$ and $C_{23}$, and their restrictions to $Z_2$ are homotopic.
They may thus be glued (non-uniquely).
We thus conclude that bordism after restriction, stabilization, and $A\mapsto A\oplus\underline\RR$ is an equivalence relation, as desired.

\begin{remark}
One can similarly define a theory $\Omega_*^{-\cst,\der}$ of bordism of derived orbifolds with coarsely stable structure on minus their tangent bundle.
\end{remark}

Given that bordism after restriction and stabilization and $A\mapsto A\oplus\underline\RR$ is an equivalence relation, the proof of Proposition \ref{bordismexact} now applies to show that $\Omega_*^{\cst,\der}:\OrbSpc_*\to\Ab$ sends cofiber sequences to exact sequences.

\begin{proposition}
The natural map $\Omega_*\xrightarrow\sim\Omega_*^{\cst,\der}$ is an isomorphism.
\end{proposition}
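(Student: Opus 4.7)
The plan is to construct a two-sided inverse to the natural map using Wasserman's theorem, as in Corollary~\ref{wassermancoarselystable}. The natural map is well-defined: an orbifold $M$ goes to $(M, 0, 0, TM, \id)$, and an orbifold bordism $W$ gives $(W, 0, 0, TW, \id)$, where along the boundary $TW|_{M_i} = TM_i \oplus \underline\RR$ is equivalent to $TM_i$ via the relation $A \mapsto A \oplus \underline\RR$.

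For the inverse, given a representative $(D, E, s, A, \phi)$, I would first express the stable iso $\phi$ via a genuine iso $TD \oplus V \oplus \underline\RR^N = E \oplus V \oplus F$, then stabilize $(D, E, s)$ by $V$ so as to replace this (abusing notation to reuse $D, E, s$ for the stabilized chart) by $TD \oplus \underline\RR^N = E \oplus F$; the coarsely stable structure is then represented by $F - \underline\RR^N$. The composition $\alpha: TD \hookrightarrow TD \oplus \underline\RR^N \cong E \oplus F \twoheadrightarrow E$ has $\alpha_{\widehat\iso-\1}$ surjective since $\underline\RR^N$ is coarse. Theorem~\ref{wasserman} then produces an $\alpha$-consistently transverse $C^0$-small perturbation $s'$ of $s$, and I assign to $[(D, E, s, A, \phi)]$ the class of the orbifold $M := (s')^{-1}(0)$ in $\Omega_*$. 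Well-definedness modulo restriction, stabilization, $A \mapsto A \oplus \underline\RR$, and bordism follows from applying the rel-boundary form of Wasserman's theorem.

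To check the compositions are identities: $\Omega_* \to \Omega_*^{\cst,\der} \to \Omega_*$ is trivially the identity, since the zero section of the zero bundle is already transverse. For the other composition, I would use the straight-line bordism $(D \times [0,1], E \times [0,1], ts + (1-t)s', A \times [0,1], \phi \times [0,1])$ connecting $(D, E, s, A, \phi)$ to $(D, E, s', A, \phi)$, then restrict the latter to a tubular neighborhood of $M = (s')^{-1}(0)$ — identified via $ds'$ with the total space of $E|_M$ — to reduce to $(M, 0, 0, A|_M, \phi|_M)$.

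The main and most delicate step is identifying $(M, 0, 0, A|_M, \phi|_M)$ with the canonical $(M, 0, 0, TM, \id)$, which requires a coarsely stable (not merely stable) iso of the two coarsely stable structures. Restricting $TD \oplus \underline\RR^N = E \oplus F$ to $M$ and using the transversality splitting $TD|_M = TM \oplus E|_M$ gives $TM \oplus E|_M \oplus \underline\RR^N \cong E|_M \oplus F|_M$. Decomposing into isotypic pieces, the $\widehat\iso-\1$-part gives $(F|_M)_{\widehat\iso-\1} \cong (TM)_{\widehat\iso-\1}$ directly (from $\alpha$-consistency matching both splittings on these pieces), while the $\1$-part reduces to cancellation of coarse bundles over the underlying space of $M$, yielding (after suitable stabilization by trivial bundles) a genuine iso $F|_M \oplus \underline\RR^m \cong TM \oplus \underline\RR^{N+m}$. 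This coarsely stable iso identifies the two structures via the $A \mapsto A \oplus \underline\RR$ relation, completing the proof.
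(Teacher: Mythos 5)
Your overall strategy is the same as the paper's: both directions reduce to Wasserman's Theorem \ref{wasserman} (via Corollary \ref{wassermancoarselystable} in the absolute case, and its rel-boundary form for bordisms), merely repackaged as an explicit two-sided inverse rather than as separate surjectivity and injectivity statements. Your setup of the inverse, the well-definedness discussion, and the straight-line bordism argument all match the paper's reasoning.

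The one place you go beyond the paper — the identification of $(M,0,0,F|_M,\phi|_M)$ with $(M,0,0,TM,\id)$, which you rightly flag as the delicate point and which the paper leaves implicit by simply declaring a transverse chart to \emph{be} an orbifold — is also where your argument has a concrete flaw. You cannot "decompose into isotypic pieces" and build the isomorphism piecewise: the fiberwise pieces $V_{\1}$ and $V_{\widehat\iso-\1}$ are \emph{not} subbundles over an orbifold (the isotropy groups jump between strata; the paper explicitly notes that $V_{\widehat\iso-\1}$ is not a vector bundle). So a fiberwise isomorphism $(F|_M)_{\widehat\iso-\1}\cong(TM)_{\widehat\iso-\1}$ together with a separate treatment of the $\1$-parts does not assemble into the genuine bundle isomorphism $F|_M\oplus\underline\RR^m\cong TM\oplus\underline\RR^{N+m}$ that you need. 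What $\alpha$-consistency actually provides is that the globally defined bundle map $\alpha-ds'$ (equivalently, the discrepancy between the $E|_M$-block of your isomorphism $TM\oplus E|_M\oplus\underline\RR^N\cong E|_M\oplus F|_M$ and the identity on $E|_M$) vanishes on every nontrivial isotypic piece; the correct move is therefore to deform the given \emph{global} isomorphism through isomorphisms — using this vanishing to keep invertibility on the $\widehat\iso-\1$ pieces, and stabilizing by trivial bundles to handle the invariant parts — rather than to reassemble it from non-existent summands. You must also track that the resulting coarsely stable isomorphism is compatible with $\phi|_M$ up to homotopy, since the paper warns that a stable bundle can carry inequivalent coarsely stable structures; your sketch does not address this compatibility.
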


\begin{proof}
Surjectivity is the statement that every derived orbifold chart $(D,E,s)$ with coarsely stable vector bundle $\xi$ and stable isomorphism $\xi=TD-E$ is bordant to an orbifold (i.e.\ a derived orbifold chart whose obstruction section is transverse).
Corollary \ref{wassermancoarselystable} provides a transverse perturbation of $s$ which, executed over $[0,1]$, defines the desired bordism.

Injectivity is (given the nontrivial result, proved just above, that derived bordism with coarsely stable tangential structure is an equivalence relation) the statement that every derived orbifold bordism between stabilizations of orbifolds, with coarsely stable structure on its tangent bundle, agreeing with the tautological such on the boundary, can be perturbed rel boundary to be transverse.
Concretely, such a structure is (after stabilizing as in the proof of Corollary \ref{wassermancoarselystable}) a vector bundle $F$ and an isomorphism $F\oplus E=TD\oplus\underline\RR^k$, which on the boundary must coincide with the isomorphism $E=TD$ given by $ds$ and $F=\underline\RR^k$ (some isomorphism); thus $s$ is already $\alpha$-transverse over the boundary, so the relative form of Wasserman's Theorem \ref{wasserman} gives us what we want.
\end{proof}

The theory $\Omega_*^{\cst,\der}$ may be twisted: for any stable vector bundle $\xi$ on $X$, we may define a group $\Omega_*^{\xi\oplus\cst,\der}(X,A)$ of bordism classes of derived orbifolds carrying a coarsely stable vector bundle $W$ and an isomorphism $TD-E=\xi\oplus W$.
These twisted theories are the natural setting for inverse Thom maps
\begin{equation}
\Omega_*^{\xi\oplus\cst,\der}(X,A)\to\Omega_{*+\left|V\right|}^{\xi\oplus V\oplus\cst,\der}((X,A)^V).
\end{equation}
Now there is an obvious Thom map in the reverse direction (add $V$ to the obstruction space and the identity map to the obstruction section) which is an inverse to the inverse Thom map exactly as in Proposition \ref{thomiso}.
There are also forgetful maps
\begin{equation}
\Omega_*^{\xi\oplus V\oplus\cst,\der}(X,A)\to\Omega_*^{\xi\oplus\cst,\der}(X,A)
\end{equation}
for vector bundles $V$, which need not be isomorphisms.
This refines the discussion of inverse Thom maps for $\Omega_*$ given above.

The Thom isomorphism for these twisted theories allows us to extend $\Omega_*=\Omega_*^{\cst,\der}$ to genuine orbispectra by defining $\Omega_*((X,A)^{-V})=\Omega_*^{\cst,\der}((X,A)^{-V}):=\Omega_{*+\left|V\right|}^{V\oplus\cst,\der}(X,A)$.
To check that this indeed defines a functor $\RepOrbSp^f\to\Ab$, use the localization result Proposition \ref{reporbsecondlocalization} and the twisted Thom isomorphism.
Indeed, the definition above gives a functor out of the direct limit of $\RepOrbSpc_{N,k}$ (by the Thom isomorphism), and it satisfies excision (by inspection), thus descending to $\RepOrbSp^f$.
This functor sends cofiber sequences to exact sequences (the proof for twisted $\Omega_*^{\cst,\der}$ is the same as for untwisted, which was already mentioned above).

\subsection{Tangential structure}

We define orbifold and derived orbifold bordism groups with tangential structure, and we show how to generalize the basic properties proven above to this setting.
In a word, a structure on vector bundles $\fS$ with a shift allows us to define orbifold bordism groups $\Omega_*^\fS$, and a stable structure on vector bundles $\fS$ allows us to define derived orbifold bordism groups $\Omega_*^{\fS,\der}$.

For $\fS$ a structure on vector bundles with a shift, we define bordism groups $\Omega_*^\fS$ as follows.
We consider orbifolds with coarsely stable $\fS$-structure on their tangent bundle.
Using the isomorphism $\Str_\fS^\cst(V)=\Str_\fS^\cst(V\oplus\underline\RR)$, we can define a notion of bordism of orbifolds with coarsely stable $\fS$-structure: given a boundary marking $Z_0\sqcup Z_1\subseteq\partial W$, we use the isomorphisms $\Str_\fS^\cst(TZ_i)=\Str_\fS^\cst(TZ_i\oplus\underline\RR)=\Str_\fS^\cst(TW|_{Z_i})$ (where, crucially, we identify $\underline\RR$ with the inward normal along $Z_0$ and the outward normal along $Z_1$) to require compatibility between the coarsely stable $\fS$-structure on $W$ with those on $Z_0$ and $Z_1$.
Bordism is a symmetric relation, as can be seen by inverting the coarsely stable structure on the bordism (i.e.\ applying the canonical involution of $\Str_\fS^\cst$).
It is transitive since coarsely stable $\fS$-structures glue: by applying $\oplus\underline\RR^k$ enough times, we reduce to gluing for $\fS$-structures; an $\fS$-structure over $C_{12}$ and over $C_{23}$ which are homotopic over $Z_2$ glue, non-uniquely, to an $\fS$-structure over $C_{12}\cup_{Z_2}C_{23}$.
The resulting $\fS$-bordism groups $\Omega_*^\fS$ satisfy functoriality (including excision) and exactness by the same reasoning as before.
They have inverse Thom maps
\begin{equation}
\Omega_*^\fS(X,A)\to\Omega_{*+1}^\fS((X,A)\times(I,\partial I))
\end{equation}
and Thom maps in the reverse direction which are inverse to the inverse Thom maps; this extends $\Omega_*^\fS$ to a functor on naive orbispectra.
As before, we may extend $\Omega_*^\fS$ to genuine orbispectra by viewing it as a structured version of derived orbifold bordism.
Namely, $\Omega_*^\fS$ coincides with the group $\Omega_*^{\fS\cst,\der}$ of bordism classes of derived orbifold charts carrying a coarsely stable vector bundle $A$ with isomorphism $A=TD-E$ and a coarsely stable $\fS$-structure on $A$.
These groups $\Omega_*^{\fS\cst,\der}$ satisfy the same properties as above, and the map $\Omega_*^\fS\to\Omega_*^{\fS\cst,\der}$ is an isomorphism.
There are also twisted versions $\Omega_*^{\xi\oplus\fS\cst,\der}(X,A)$ for any stable vector bundle $\xi$ on $X$, and there are inverse Thom maps $\Omega_*^{\xi\oplus\fS\cst,\der}(X,A)\to\Omega_{*+\left|V\right|}^{\xi\oplus V\oplus\fS\cst,\der}((X,A)^V)$ and forgetful maps $\Omega_*^{\xi\oplus V\oplus\fS\cst,\der}\to\Omega_*^{\xi\oplus\fS\cst,\der}$ for any vector bundle $V$ with coarsely stable structure.
We may thus extend $\Omega_*^\fS$ to genuine orbispectra by taking $\Omega_*^\fS((X,A)^{-\xi}):=\Omega_{*+\left|\xi\right|}^{\xi\oplus\fS\cst,\der}(X,A)$.

Now suppose $\fS$ is a stable structure on vector bundles, and let us define derived $\fS$-orbifold bordism.
We consider derived orbifold charts $(D,E,s)$ together with a stable $\fS$-structure on $TD-E$, modulo restriction, stabilization, and bordism as before.
The equivalence relation proof Proposition \ref{gluingderivedbordisms} applies; for this, we need to know that stable structures on vector bundles glue, and the main point to see that is to use enough vector bundles to know that we can stabilize by vector bundles on $C_{12}\cup_{Z_2}C_{23}$ to reduce to gluing (again, non-uniqely) $\fS$-structures on $C_{12}$ and $C_{23}$ which agree over $Z_2$.
The resulting theory thus satisfies exactness.
These theories can be twisted: we may define $\Omega_*^{V\oplus\fS,\der}(X,A)$ to be bordism classes of derived orbifold charts with a stable $\fS$-structure on $TD-E-V$, where $V$ is any stable vector bundle on $X$; an $\fS$-structure on $V$ gives an isomorphism $\Omega_*^{V\oplus\fS,\der}(X,A)=\Omega_*^{\fS,\der}(X,A)$.
Inverse Thom maps for $\Omega_*^{\fS,\der}$ now take the form
\begin{align}
\Omega_*^{\xi\oplus\fS,\der}(X,A)\to\Omega_{*+\left|V\right|}^{\xi\oplus V\oplus\fS,\der}((X,A)^V),
\end{align}
and there are also Thom maps which are inverse to these.
We may thus extend $\Omega_*^{\fS,\der}$ to orbispectra as $\Omega_*^{\fS,\der}((X,A)^{-\xi}):=\Omega_{*+\left|\xi\right|}^{\xi+\fS,\der}(X,A)$.
The natural map
\begin{equation}
\varinjlim_W\Omega_{*+\left|W\right|}^\fS((X,A)^W)\to\varinjlim_W\Omega_{*+\left|W\right|}^{\fS,\der}((X,A)^W)=\Omega_*^{\fS,\der}(X,A)
\end{equation}
is an isomorphism, where the direct limit is over all vector bundles with $\fS$-structure as in \eqref{ststrdef}.
There are also graded symmetric product maps on $\Omega_*^\fS$ and $\Omega_*^{\fS,\der}$.

\subsection{Fundamental classes}

We make a few remarks about fundamental classes of orbifolds and derived orbifolds.

A closed orbifold $M$ has a tautological fundamental class $[M]\in\Omega_{\dim M}(M)$.
This class is best viewed as arising from the more refined fundamental class $[M]\in\Omega_0^\fr(M^{-TM})$ lying in the bordism group of derived orbifolds representable over $M$ with a stable isomorphism between their tangent bundle and $TM$.
This class may be pushed forward under the map $\Omega_0^\fr(M^{-TM})\to\Omega_0(M^{-TM})$ forgetting the framing and under the inverse Thom map $\Omega_0(M^{-TM})\to\Omega_{\dim M}(M)$ to obtain the naive fundamental class $[M]\in\Omega_{\dim M}(M)$.
If $TM$ is equipped with an $\fS$-structure, then we may push forward to $\Omega_{\dim M}^\fS(M)$ using the $\fS$-structured inverse Thom map to obtain the $\fS$-structured fundamental class.
The same applies when $M$ is a compact orbifold with boundary, just replacing $M$ with the pair $(M,\partial M)$.

Let us now work towards the fundamental class of a derived orbifold.
Consider an inclusion of subcomplexes $(Y,B)\to(X,A)$ (so $B=Y\cap A$) and a vector bundle $E$ over $X$ with a section $s:X\to E$ whose zero set is (contained in) $Y$.
There is then an induced map $(X,A)\to(Y,B)^E$, obtained by appealing to the fact that $(Y,B)\subseteq(X,A)$ is a retract of any sufficiently small neighborhood, and any two such retracts are homotopic.
Thus if $(D,E,s)$ is a derived orbifold chart and $Z:=s^{-1}(0)$ has the same neighborhood retract property, we obtain a map
\begin{equation}
(D,\partial D)^{-TD}\to(Z,\partial Z)^{-TZ}
\end{equation}
where $TZ:=TD-E$, $\partial Z:=Z\cap\partial D$, and $\dim Z:=\dim D-\dim E$ (note that whereas the left side is the dual of $D$, the right side is very much \emph{not} the dual of $Z$ unless $s$ is transverse to zero).
We may now define $[Z]\in\Omega_0^\fr((Z,\partial Z)^{-TZ})$ as the image of $[D]$ under the above map.
Since $TZ$ is not a vector bundle, rather only a stable vector bundle, we are no longer able to map this fundamental class to $\Omega_{\dim Z}(Z,\partial Z)$, rather only to $\Omega_{\dim Z}^\der(Z,\partial Z)$ (the map now involves the inverse of an inverse Thom map which only exists for $\Omega_*^\der$).
If $TZ$ is equipped with a stable $\fS$-structure, then we can also push forward the fundamental class to $\Omega_{\dim Z}^{\fS,\der}(Z,\partial Z)$.

If our derived orbifold $Z\subseteq D$ does not have the neighborhood retract property, the above reasoning produces only a class in the inverse limit $\varprojlim_{\epsilon>0}\Omega_0^\fr((Z_\varepsilon,\partial Z_\varepsilon)^{-TZ})$ where $Z_\varepsilon\subseteq D$ denotes the $\varepsilon$-neighborhood of $Z$.
This is not really the correct bordism group to associate to $(Z,\partial Z)^{-TZ}$, rather differing from it by a $\varprojlim^1$ term.
In the correct bordism group to attach to it, a cycle would be a collection of (derived, with structure) orbifolds $(M_{\varepsilon_i},\partial M_{\varepsilon_i})\to(Z_{\varepsilon_i},\partial Z_{\varepsilon_i})$ together with bordisms between $M_{\varepsilon_i}$ and $M_{\varepsilon_{i+1}}$ over $Z_{\varepsilon_i}$ (fixing some sequence $\epsilon_1>\epsilon_2>\cdots$ converging to zero).

\section{Global homotopy theory}

This section shows one way to connect the homotopy theory of orbispaces developed thus far and \emph{global homotopy theory}.
We prove only what we need for the Pontryagin--Thom isomorphism; there is yet much to be worked out.
We refer to the treatment by Schwede \cite{schwedeglobal} for the foundations of global homotopy theory.
Global homotopy theory depends on a choice of set $\F$ of isomorphism classes of compact Lie groups; we will always take this set be the class of finite groups, and it will not be mentioned further.

\subsection{Global spaces}

Here we relate the category $\OrbSpc$ with the global homotopy category $\GloSpc$, whose objects we call \emph{global spaces}.

Let $\LL$ denote the topological category of finite-dimensional real vector spaces with a positive definite inner product and linear isometric (in particular, injective) maps.
An \emph{orthogonal space} is a continuous functor $F:\LL\to\kTop$ where $\kTop$ is the category of $k$-spaces \cite[Definition 1.1.1]{schwedeglobal} (a $k$-space is a topological space which is compactly generated and weakly Hausdorff).
In other words, it is the assignment to each $V\in\LL$ of a $k$-space $F(V)$ and to each pair $V,W\in\LL$ a continuous map $F(V)\times\LL(V,W)\to F(W)$, such that this rule is compatible with composition for triples $V,W,U\in\LL$.
The category of orthogonal spaces is denoted $\OrthSpc$.

A map of orthogonal spaces $F\to F'$ is called a \emph{global equivalence} \cite[Definition 1.1.2]{schwedeglobal} iff for every finite group $G$, every orthogonal $G$-representation $V$, and every diagram of solid arrows
\begin{equation}\label{gleqdiagspc}
\begin{tikzcd}
\partial D^k\ar[r]\ar[d]&F(V)^G\ar[d]\ar[r,dashed]&F(W)^G\ar[d,dashed]\\
D^k\ar[r]\ar[rru,dashed]&F'(V)^G\ar[r,dashed]&F'(W)^G
\end{tikzcd}
\end{equation}
there exists an orthogonal $G$-representation $W$ and an inclusion $V\hookrightarrow W$ such that after pushing forward under it, the bottom map $D^k\to F'(W)^G$ above may be homotoped rel boundary so as to lift to $F(W)^G$.
The category of global spaces $\GloSpc$ is the localization of the category of orthogonal spaces $\OrthSpc$ at the global equivalences.
(There is a model structure on $\OrthSpc$ whose weak equivalences are the global equivalences, giving an effective way to understand the localization $\GloSpc$ \cite[\S 1.2]{schwedeglobal}.)

An orthogonal space gives rise, in particular, to a representable map $\bigsqcup_{n\geq 0}F(\RR^n)/O(n)\to\bigsqcup_{n\geq 0}*/O(n)$.
Thus for any vector bundle $V$ with inner product over a stack $X$, we may pull back under the classifying map to obtain a representable map $F(V)\to X$.
Moreover, for any isometric inclusion $V\hookrightarrow W$ of vector bundles with inner product, we get a map $F(V)\to F(W)$ over $X$.
Denote by $\Vect^O(X)$ the category of vector bundles with inner product on $X$ and homotopy classes of injective isometric maps; this category is filtered.
There is thus a directed system over $\Vect^O(X)$ assigning to a vector bundle $V$ the set of homotopy classes of sections of $F(V)\to X$.
Note that the forgetful functor $\Vect^O(X)\to\Vect(X)$ is an equivalence, due to Lemma \ref{paracompactmetric} and the deformation retraction from injections to isometric injections given by $f\mapsto f(f^*f)^{-t/2}$.
Therefore in the event that the orthogonal space $F$ is pulled back from the category of finite-dimensional vector spaces and injective maps, we may simply take the direct limit over $\Vect(X)$ and forget about inner products.

Given a finite orbi-CW-complex $X$ and an orthogonal space $F$, let $\Hom(X,F)$ denote the direct limit over $V\in\Vect^O(X)$ of homotopy classes of sections of $F(V)\to X$.
This set $\Hom(X,F)$ is functorial in $X$ (pull back vector bundles) and in $F$.
This is only a reasonable definition because of enough vector bundles; in particular, enough vector bundles is used crucially in the following proof that maps from a finite orbi-CW-complex to an orthogonal space descends to a functor $(\OrbSpc^f)^\op\times\GloSpc\to\Set$.

\begin{lemma}\label{maptoorthspcdescend}
For a finite orbi-CW-complex and a global equivalence of orthogonal spaces $F\to F'$, the induced map $\Hom(X,F)\to\Hom(X,F')$ is a bijection.
\end{lemma}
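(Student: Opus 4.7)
The plan is to induct on the cells of the finite orbi-CW-complex $X$, using the lifting property built into the definition of global equivalence \eqref{gleqdiagspc} at each cell attachment, and invoking enough vector bundles (Theorem \ref{enough}) to patch the cell-local enlargements of $V$ into a single global vector bundle on $X$. The argument for surjectivity and injectivity are essentially the same, the latter being the former applied to the relative orbi-CW-pair $(X\times[0,1],X\times\partial[0,1])$.

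For surjectivity, let $\sigma'\in\Hom(X,F')$ be represented by a section $s':X\to F'(V)$ for some $V\in\Vect^O(X)$. Filter $X$ by its skeleta $\varnothing=X_{-1}\subseteq X_0\subseteq\cdots\subseteq X_n=X$. I will build, by induction on $k$, a vector bundle $V_k\in\Vect^O(X)$ containing $V_{k-1}$ (with $V_{-1}=V$) together with a section $s_k:X_k\to F(V_k)$ and a homotopy between $F(V_k)\to F'(V_k)$ composed with $s_k$ and the stabilization of $s'|_{X_k}$ along $V\hookrightarrow V_k$. For each $k$-cell $\phi_\alpha:D^k\times\BB G_\alpha\to X$, the attaching map takes $\partial D^k$ into $X_{k-1}$, so $\phi_\alpha^*V_{k-1}$ is determined by a $G_\alpha$-representation $V_{k-1,\alpha}$, and the inductive data yields the top row of a square of the form \eqref{gleqdiagspc} with $G=G_\alpha$ and $V=V_{k-1,\alpha}$, while $s'$ (stabilized into $V_{k-1}$) provides the bottom map $D^k\to F'(V_{k-1,\alpha})^{G_\alpha}$. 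The global equivalence property produces a $G_\alpha$-representation $W_\alpha\supseteq V_{k-1,\alpha}$ and a lift of the bottom map, up to homotopy rel boundary, to $F(W_\alpha)^{G_\alpha}$.

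The key packaging step is now to assemble the local enlargements $W_\alpha/V_{k-1,\alpha}$ into a single global bundle. By enough vector bundles (Theorem \ref{enough}), $X$ admits a module faithful vector bundle $E$; taking a sufficiently large power of $E$, we find $U_k\in\Vect^O(X)$ whose restriction to each $\BB G_\alpha$ contains $W_\alpha/V_{k-1,\alpha}$ as an orthogonal $G_\alpha$-subrepresentation. Set $V_k:=V_{k-1}\oplus U_k$ with the orthogonal direct sum inner product, and extend $s_{k-1}$ across each $k$-cell by composing the local lift $D^k\to F(W_\alpha)^{G_\alpha}$ with the stabilization $F(W_\alpha)\hookrightarrow F(V_k|_{\BB G_\alpha})$ induced by $W_\alpha\hookrightarrow V_k|_{\BB G_\alpha}$. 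After $n$ iterations, $s_n:X\to F(V_n)$ represents a preimage of $\sigma'$.

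For injectivity, suppose $s_0:X\to F(V_0)$ and $s_1:X\to F(V_1)$ map to homotopic elements of $\Hom(X,F')$. After passing to a common enlargement $V\supseteq V_0,V_1$ in $\Vect^O(X)$, there exists a section $H:X\times[0,1]\to F'(V)$ (where $V$ is pulled back along the projection) whose restrictions to $X\times\{0\},X\times\{1\}$ agree with the stabilizations of $s_0,s_1$. Now apply the surjectivity argument to the finite orbi-CW-pair $(X\times[0,1],X\times\partial[0,1])$: by induction on the relative cells, lift $H$ (after possibly enlarging $V$ using Theorem \ref{enough}) to a section of $F(V')$, which exhibits the required homotopy between $s_0$ and $s_1$ in $\Hom(X,F)$. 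The hard part throughout is precisely the globalization: a priori each cell demands its own enlargement by a different $G_\alpha$-representation, and it is only enough vector bundles that lets us do all of these simultaneously inside one bundle on $X$ while remaining compatible with the inner product structure used to formulate orthogonal spaces.
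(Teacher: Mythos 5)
Your proposal is correct and follows essentially the same route as the paper: lift cell by cell, use that the pullback of $V$ to a cell $D^k\times\BB G_\alpha$ is (up to homotopy) a $G_\alpha$-representation so that the definition of global equivalence \eqref{gleqdiagspc} applies directly, and then invoke enough vector bundles (Theorem \ref{enough}) to absorb the cell-local enlargements $W_\alpha$ into a single global bundle on $X$ containing $V$. Your explicit treatment of injectivity via the relative pair $(X\times[0,1],X\times\partial[0,1])$ is the intended (implicit) relative version of the same argument.
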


\begin{proof}
Let $V$ be a vector bundle with inner product over $X$, let a section of $F'(V)\to X$ be given, and let us lift it (up to homotopy) to $(F)$ (after possibly enlarging $V$).
Since $X$ is finite, it suffices to do this lifting cell by cell.
So, fix a cell $(D^k,\partial D^k)\times\BB G$ of $X$.
The pullback of $V$ to this cell is classified by a map $D^k\times\BB G\to\bigsqcup_{n\geq 0}*/O(n)$, which up to homotopy (hence isomorphism by Lemma \ref{intervalpullback}) factors through $\BB G\to\bigsqcup_{n\geq 0}*/O(n)$, which is an orthogonal $G$-representation $V_0$.
Now the section of $F'(V)\to X$ pulled back from $D^k\times\BB G$ is a map $D^k\to F'(V_0)^G$, which over $\partial D^k$ we have lifted to $F(V_0)^G$.
We are thus exactly in the situation of the solid arrows in \eqref{gleqdiagspc}, so we conclude that there exists another orthogonal $G$-representation $W_0$ and an embedding $V_0\hookrightarrow W_0$ so that the desired lift exists over $D^k\times\BB G$ after pushing forward to $W_0$.
Now by enough vector bundles Theorem \ref{enough}, there exists a $W'$ on $X$ and an embedding $V\hookrightarrow W'$ which over $D^k\times\BB G$ factors through $V_0\hookrightarrow W_0\hookrightarrow W'_0$.
\end{proof}

There is much more to this story, however further precise discussion would take us too far afield.
There is a functor
\begin{align}\label{orbispacetoglobalspace}
\OrbSpc^f&\to\GloSpc,\\
X&\mapsto\Emb_X(E,-)\quad\text{($E/X$ faithful),}
\end{align}
where $\Emb_X(E,V)$ denotes the total space of the fibration over $X$ whose fiber over $x\in X$ is the space of embeddings $\Emb(E_x,V)$ (this is a space since $E$ is faithful).
The spaces $\Emb_X(E,-)$ form an inverse system on the category of vector bundles on $X$, and for an inclusion of faithful vector bundles $E\hookrightarrow E'$, the induced map $\Emb_X(E',-)\to\Emb_X(E,-)$ is a global equivalence \cite[Proposition 1.1.26(ii) and Definition 1.1.27]{schwedeglobal}.
A map of orbispaces $f:X\to Y$ induces maps $\Emb_X(f^*E_Y,-)\to\Emb_Y(E_Y,-)$ for any vector bundle $E_Y$ over $Y$.
Taking $E_Y$ to be faithful and choosing an embedding of $f^*E_Y$ into a faithful $E_X$, we obtain a map $\Emb_X(E_X,-)\to\Emb_X(f^*E_Y,-)\to\Emb_Y(E_Y,-)$.

\begin{conjecture}
For $X\in\OrbSpc^f$ and $F\in\GloSpc$, the set $\Hom(X,F)$ coincides with the morphisms from the image of $X$ under \eqref{orbispacetoglobalspace} to $F$.
\end{conjecture}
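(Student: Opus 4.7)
The plan is to construct a natural comparison map $\Phi\colon\Hom(X,F)\to\Hom_{\GloSpc}(\Emb_X(E,-),F)$ and show it is a bijection by induction on the cell structure of $X$, with base case $X=\BB G$ supplied by standard global homotopy theory. To construct $\Phi$, I represent a class in $\Hom(X,F)$ by a vector bundle with inner product $V\in\Vect^O(X)$ together with a section $s\colon X\to F(V)$; after replacing $V$ by $V\oplus E$ (which is again faithful), I may assume $E\subseteq V$. I then define a map of orthogonal spaces $\Emb_X(V,-)\to F$ by sending $(x,\iota\colon V_x\hookrightarrow W)$ to $F(\iota)(s(x))\in F(W)$; this is well-defined on $\Emb_X(V,W)$ because $s(x)\in F(V_x)$ is $G_x$-invariant. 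The inclusion $E\hookrightarrow V$ induces a global equivalence $\Emb_X(V,-)\to\Emb_X(E,-)$ as recalled following \eqref{orbispacetoglobalspace}, and inverting it in $\GloSpc$ yields a morphism $\Emb_X(E,-)\to F$. Independence from the representative and from the choice of enlargement, as well as homotopy invariance of $s$, follow from the filteredness of $\Vect^O(X)$ and from applying the same construction over $X\times[0,1]$.

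For bijectivity, the base case is $X=\BB G$. The left side is $\varinjlim_V\pi_0(F(V)^G)$, since a section of $F(V)\to\BB G$ is precisely a $G$-fixed point of $F(V)$. On the right, $\Emb_{\BB G}(E,-)$ is the orthogonal space $W\mapsto\Emb(E,W)/G$, where $G$ acts freely on $\Emb(E,W)$ because $E$ is faithful; this is Schwede's globally cofibrant model of the classifying orbispace of $G$, and the morphism set from it into $F$ in $\GloSpc$ is the equivariant homotopy set $\pi_0^G(F)=\varinjlim_V\pi_0(F(V)^G)$ as in \cite[Chapter~1]{schwedeglobal}. The map $\Phi$ tautologically realizes this identification.

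For the inductive step, I would argue that both functors $\Hom(-,F)$ and $\Hom_{\GloSpc}(\Emb_{-}(E,-),F)$ turn each cell-attachment pushout $X_{k-1}\leftarrow\partial D^k\times\BB G\to D^k\times\BB G$ into a pullback of sets. For the left side this follows from Lemma~\ref{maptoorthspcdescend} together with the collar structure of cell attachments, so that a section of $F(V)\to X_k$ is determined by compatible sections over $X_{k-1}$ and over $D^k\times\BB G$ on a common neighborhood of $\partial D^k\times\BB G$. For the right side, the analogous statement would follow from the fact that $X\mapsto\Emb_X(E,-)$ preserves these attachment pushouts in $\GloSpc$; the key input is that by enough vector bundles we may choose a single faithful $E$ over $X_k$ whose restrictions to all pieces are compatible, and that the fibration $\Emb_X(E,W)\to X$ is pulled back fiberwise along the inclusions. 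Since $D^k\times\BB G\simeq\BB G$ up to homotopy and $\partial D^k\times\BB G$ is built from cells of lower dimension, the bijection for $X_k$ follows from the base case by induction on $k$ and on the number of attached cells.

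The principal obstacle, and the reason the assertion is stated as a conjecture, is the descent statement for the right-hand side: one must verify that $\Emb_{X_{k-1}\cup(D^k\times\BB G)}(E,-)$ is a homotopy pushout in $\GloSpc$ of the analogous orthogonal spaces attached to the pieces, so that morphism sets out of it into $F$ really do fit in a pullback square. Although plausible within Schwede's model-categorical framework, verifying this rigorously requires entering the details of the global model structure on orthogonal spaces and checking that the relevant maps remain cofibrations (or global equivalences) under pushout, which is not developed in the present paper.
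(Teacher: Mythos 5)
The statement you are addressing is stated in the paper only as a \emph{conjecture}; the paper offers no proof, so there is nothing to compare your argument against except on its own terms. That said, your proposed strategy has a gap that goes beyond the one you flag at the end.

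The construction of $\Phi$ and the identification of the base case $X=\BB G$ are fine: $\Hom(\BB G,F)=\varinjlim_V\pi_0(F(V)^G)$, and $\Emb_{\BB G}(E,-)=\Emb(E,-)/G$ is Schwede's globally cofibrant model $B_{\mathrm{gl}}G$, which corepresents $\pi_0^G$ in $\GloSpc$.

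The serious problem is the inductive step. You assert that both $\Hom(-,F)$ and $\Hom_{\GloSpc}(\Emb_{-}(E,-),F)$ turn a cell-attachment pushout into a \emph{pullback of sets}. This is false in general, already for ordinary CW-complexes: if $X=A\cup_C B$, then $\Maps(X,Y)$ is a homotopy pullback of mapping spaces, but $\pi_0$ does not commute with homotopy pullbacks. Concretely, the surjectivity part holds (compatible homotopy classes on $A$, $B$ can be glued, using HEP), but injectivity fails: two maps out of $X$ that are homotopic on $A$ and on $B$ need not be homotopic on $X$ (the gluing of homotopies is obstructed by $\pi_1$ of the mapping space over $C$; the degree $1$ and degree $2$ self-maps of $S^1$, restricted to two arcs, is the standard counterexample). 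So both sides of your comparison fail to be pullbacks of sets, and the induction cannot close as written. To salvage the induction you would need to upgrade the entire comparison to a map of mapping \emph{spaces} (or at least of all homotopy groups), establish the base case at that level, and run a Mayer--Vietoris argument with the resulting long exact sequences. This is a genuinely harder statement than the one you formulate, and it is not supplied by Lemma~\ref{maptoorthspcdescend}, which only controls $\pi_0$.

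The issue you do flag --- that $\Emb_{-}(E,-)$ sends cell-attachment pushouts of orbispaces to homotopy pushouts in $\GloSpc$ --- is also a real obstacle and would have to be checked in the global model structure. But note it is of the same character as the gap above: both would be handled simultaneously by working at the level of the full homotopy type rather than $\pi_0$. As it stands, the proposal identifies the right structural outline (comparison map, $\BB G$ base case, induction over cells) but does not supply the space-level descent statements that the induction actually requires, on either side.
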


\begin{conjecture}
The functor \eqref{orbispacetoglobalspace} is fully faithful.
\end{conjecture}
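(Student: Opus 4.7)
The plan is to deduce this from the preceding Yoneda-type conjecture by an explicit computation. Granting that conjecture applied with $F:=\Emb_Y(E_Y,-)$ for a faithful bundle $E_Y/Y$, the morphisms in $\GloSpc$ from $\Emb_X(E_X,-)$ to $\Emb_Y(E_Y,-)$ coincide with $\Hom(X,\Emb_Y(E_Y,-))$, and full faithfulness of the functor \eqref{orbispacetoglobalspace} reduces to identifying this latter set with $\Hom_{\OrbSpc^f}(X,Y)$.

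For the identification, I would unpack the definitions. For $V\in\Vect^O(X)$, the pullback of the representable map $\bigsqcup_n\Emb_Y(E_Y,\RR^n)/O(n)\to\bigsqcup_n*/O(n)$ along the classifying map of $V$ is the orbispace whose fiber over $x\in X$ is $\Emb_Y(E_Y,V_x)=\{(y,\phi):y\in Y,\ \phi:(E_Y)_y\hookrightarrow V_x\text{ isometric}\}$; hence a section over $X$ is tautologically a pair $(g,\phi)$ with $g:X\to Y$ a map and $\phi:g^*E_Y\hookrightarrow V$ an isometric bundle embedding. Thus $\Hom(X,\Emb_Y(E_Y,-))$ is the direct limit over $V$ of homotopy classes of such pairs. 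The forgetful map $(g,\phi)\mapsto g$ to $\Hom_{\OrbSpc^f}(X,Y)$ is surjective because enough vector bundles (Theorem \ref{enough}) lets one enlarge $V$ to accommodate $g^*E_Y$, and injective in the direct limit because, for $V$ sufficiently large, the fiberwise Stiefel-type space of isometric bundle embeddings $g^*E_Y\hookrightarrow V$ becomes arbitrarily highly connected, so any two embeddings $\phi_0,\phi_1$ (and any two homotopies between them) become homotopic after a further enlargement of $V$.

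The principal obstacle is the preceding conjecture, which reconciles the colimit-based definition of $\Hom(X,F)$ with the model-categorical mapping set in $\GloSpc$. Invariance of $\Hom(X,-)$ under global equivalences, one half of what is needed, is already Lemma \ref{maptoorthspcdescend}; the converse requires exhibiting $\Emb_X(E_X,-)$ as a cofibrant corepresentative, up to global equivalence, of the functor $F\mapsto\Hom(X,F)$ on $\OrthSpc$. I would approach this by induction on the cell structure of $X$, reducing to the base case $X=\BB G$ with $E_X$ a faithful $G$-representation, where $\Emb_X(E_X,-)$ should coincide with a semi-free orthogonal space of the sort in \cite{schwedeglobal} corepresenting the functor $F\mapsto F(E_X)^G$, and then gluing these building blocks along cells using the expression of cells as pushouts and the compactness of $X$ to commute $\Hom(X,-)$ with finite colimits.
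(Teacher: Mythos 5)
First, a point of calibration: the paper does not prove this statement --- it is deliberately left as an open conjecture (along with the adjacent conjecture identifying $\Hom(X,F)$ with $\GloSpc$-morphisms out of the image of $X$), so there is no argument of the paper to compare yours against. Judged on its own terms, the half of your proposal that computes $\Hom(X,\Emb_Y(E_Y,-))$ is sound: unwinding a section of $\Emb_Y(E_Y,-)(V)\to X$ as a pair $(g,\phi)$ with $g:X\to Y$ arbitrary (not necessarily representable --- the induced homomorphisms $G_x\to G_{g(x)}$ may have kernel acting trivially on the image of $\phi$, which is consistent with the functor being defined on all of $\OrbSpc^f$) and $\phi:g^*E_Y\hookrightarrow V$ is correct, and the surjectivity/injectivity of the forgetful map in the colimit over $V$ follows from enough vector bundles exactly as you say: powers of a module faithful bundle are cofinal and make the fiberwise equivariant embedding spaces arbitrarily highly connected, so obstruction theory over the finite complex $X$ kills the choice of $\phi$. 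You should, however, also verify that under this identification the map on morphisms actually induced by \eqref{orbispacetoglobalspace} --- which passes through choices of $E_X$, $E_Y$ and of an embedding $f^*E_Y\hookrightarrow E_X$ --- is the forgetful bijection; otherwise you have only computed the two $\Hom$-sets abstractly rather than shown that the functor's structure map between them is a bijection.

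The genuine gap is in your treatment of the corepresentability conjecture, which carries all the real content. Beyond the fact that you are assuming an unproven conjecture of the paper, the inductive step as sketched fails: "commuting $\Hom(X,-)$ with finite colimits" is false at the level of homotopy categories. For a cell attachment $X=X'\cup_{\partial D^k\times\BB G}(D^k\times\BB G)$, the restriction map $[\,\Emb_X(E,-),F\,]\to[\,\Emb_{X'}(E,-),F\,]\times_{[\,\Emb_{\partial D^k\times\BB G}(E,-),F\,]}[\,\Emb_{D^k\times\BB G}(E,-),F\,]$ is in general only surjective, with fibers controlled by $\pi_1$ of the relevant mapping spaces, so knowing the bijection on the three pieces does not give it on $X$. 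To make the induction work one must upgrade both sides from sets to (derived) mapping spaces: (i) show that the levelwise pushout square of orthogonal spaces obtained by restricting $\Emb_X(E,-)$ to the pieces of the cell decomposition is a homotopy pushout for global equivalences (this needs the inclusion $\Emb_{\partial D^k\times\BB G}(E,-)\hookrightarrow\Emb_{D^k\times\BB G}(E,-)$ to be an h-cofibration of orthogonal spaces, which is where the gluing lemmas of \cite{schwedeglobal} enter); and (ii) show that the section-space model $\varinjlim_V\Gamma(X,F(V))$ computes the corresponding homotopy limit, using that $\Gamma(D^k\times\BB G,F(V))\to\Gamma(\partial D^k\times\BB G,F(V))$ is a fibration and that the filtered colimit over $V$ commutes with the finite homotopy limit. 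Only after establishing the equivalence of spaces can one take $\pi_0$. The base case $X=\BB G$, where $\Emb_{\BB G}(W,-)=L_{G,W}/G$ is Schwede's global classifying space corepresenting $F\mapsto\pi_0(F(\U_G)^G)$, is fine; it is the passage through cell attachments that your sketch does not actually justify.
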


Schwede \cite{schwedeequivalence} has shown that $\GloSpc$ is equivalent to $\PSh(\{\BB G\})$ where $\{\BB G\}\subseteq\OrbSpc$ is the full subcategory spanned by the objects $\{\BB G\}$ (this result requires a homotopical categorical context such as model categories or $\infty$-categories), and Gepner--Henriques \cite{gepnerhenriques} have defined via stacks a natural enlargement $\overline\OrbSpc$ of $\OrbSpc$ (resulting from gluing together cells $(D^k,\partial D^k)\times\BB G$ under arbitrary maps) and shown that the natural map $\overline\OrbSpc\to\PSh(\{\BB G\})$ is an equivalence.
Together this defines an equivalence $\overline\OrbSpc=\GloSpc$.

\begin{conjecture}
The restriction of the equivalence $\overline\OrbSpc=\GloSpc$ from \cite{gepnerhenriques,schwedeequivalence} to the full subcategory $\OrbSpc\subseteq\overline\OrbSpc$ coincides with the functor \eqref{orbispacetoglobalspace}.
\end{conjecture}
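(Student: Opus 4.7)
The plan is to construct a natural isomorphism of functors $\OrbSpc^f\to\GloSpc$ by identifying both sides through the intermediate category $\PSh(\{\BB G\})$. Schwede's equivalence \cite{schwedeequivalence} sends an orthogonal space $F$ to the presheaf whose value at $\BB G$ is the (derived) value $F(V)^G$ for a sufficiently large $G$-representation $V$, while the Gepner--Henriques equivalence $\overline\OrbSpc\xrightarrow\sim\PSh(\{\BB G\})$ is the restricted Yoneda functor $X\mapsto(\BB G\mapsto\Mapstilde(\BB G,X))$. So it suffices to produce, naturally in $X\in\OrbSpc^f$, a weak equivalence between $\Emb_X(E,V)^G$ (as $V$ ranges over large $G$-representations) and $\Mapstilde(\BB G,X)$.

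The heart of the argument is a direct unwinding of the $G$-fixed-point space: a $G$-fixed point of $\Emb_X(E,V)$ consists of a point $x\in X$, a homomorphism $\varphi:G\to G_x$ (which makes $E_x$ into a $G$-representation by pullback along $\varphi$), and a $G$-equivariant linear embedding $E_x\hookrightarrow V$, considered modulo the $G_x$-ambiguity of $(x,\varphi)$. Projection to $(x,\varphi)$ gives a natural map to $\Mapstilde(\BB G,X)$, whose fiber over each point is the space of $G$-equivariant linear embeddings of a fixed finite-dimensional $G$-representation into $V$; in the direct limit over $V$ (which is what Schwede's equivalence records) these fibers become spaces of equivariant embeddings into a complete $G$-universe, hence contractible. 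Naturality in $X$ requires choosing faithful bundles on source and target compatibly via Theorem \ref{enough}, with independence of the choice following from Lemma \ref{maptoorthspcdescend} and the already-noted fact that enlarging a faithful $E$ induces a global equivalence $\Emb_X(E',-)\to\Emb_X(E,-)$. Naturality in $\BB G$ is tautological.

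The main obstacle is matching this geometric comparison with the specific models of the abstract equivalences in \cite{gepnerhenriques,schwedeequivalence}, which involve implicit model-categorical or $\infty$-categorical resolutions. The cleanest route is to verify agreement first on the generating subcategory $\{\BB G\}\subseteq\OrbSpc^f$, where $\Emb_{\BB G}(E,-)$ is (a model of) Schwede's semifree orthogonal space on the faithful $G$-representation $E$ and is known to correspond under Schwede's equivalence to the representable presheaf on $\BB G$, matching the Gepner--Henriques assignment. Propagating to all of $\OrbSpc^f$ then reduces to checking that both functors send cell attachments along representable maps $\partial D^k\times\BB G\to X_{k-1}$ to the corresponding pushouts in $\GloSpc$: for $\Emb_X(E,-)$ this comes from the compatibility of the gluing construction of Proposition \ref{collarpushout} with the formation of $\Emb_{(-)}(E,-)$, and for the abstract equivalence it is automatic since equivalences of suitably cocomplete categories preserve colimits. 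Since every object of $\OrbSpc^f$ is built by finitely many such attachments, agreement on $\{\BB G\}$ extends to all of $\OrbSpc^f$.
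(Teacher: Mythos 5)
This statement is labeled as a \emph{conjecture} in the paper and is not proved there; you are not being compared against an existing proof, so what follows is an assessment of the proposal on its own merits.

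The geometric core of your sketch — the identification of $G$-fixed points of $\Emb_X(E,V)$ — is the right idea and matches the evidence the paper does provide (the computation in \S\ref{globalclassifyingspaces} that $\BB G\in\GloSpc$ classifies principal $G$-bundles on finite orbi-CW-complexes is precisely the case $X=\BB H$ of your unwinding). Two points you gloss over but should make explicit: the homomorphism $\varphi:G\to G_x$ arises because $G_x$ acts \emph{freely} on $\Emb(E_x,V)$ (this is exactly where faithfulness of $E$ enters), so $\varphi(g)$ is uniquely determined by the $G$-fixed-point condition $g\circ e=e\circ\varphi(g)$; and the claim that equivariant embeddings into a complete $G$-universe form a contractible space is a standard but nontrivial fact (it is the same contractibility that underlies Schwede's \cite[Proposition 1.1.26]{schwedeglobal}) and deserves a citation.

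The propagation step is where I would push back. Verifying agreement on the subcategory $\{\BB G\}$ and then ``propagating along cell attachments'' is the correct strategy, but two things are needed that you only gesture at. First, you need the comparison to be a \emph{natural transformation} on $\{\BB G\}$ which is then checked to extend over the pushout diagrams; agreement object-by-object on $\{\BB G\}$ is not enough. Second, and more seriously, you need to know that the functor \eqref{orbispacetoglobalspace} — which as defined in the paper involves an unnatural choice of faithful $E$ for each $X$ and each map — is actually a well-defined $\infty$-functor preserving the relevant pushouts. The paper establishes only that different choices of $E$ give globally equivalent results object-by-object (the global equivalence $\Emb_X(E',-)\to\Emb_X(E,-)$), which is weaker than coherence of the functor at the $\infty$-level. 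Establishing this coherence is, in my view, the real content of the conjecture, and the part of your proposal that is furthest from complete. The ``compatibility of $\Emb_{(-)}(E,-)$ with the gluing construction of Proposition \ref{collarpushout}'' that you invoke is not proved in the paper and would itself require a careful argument (one needs to extend a faithful bundle across the attachment, which Theorem \ref{enough} allows, and then check that the resulting square of orthogonal spaces is a homotopy pushout in the global model structure).

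So: the sketch is a plausible and well-motivated plan of attack, with the fixed-point unwinding being clean and essentially correct, but the functoriality and colimit-preservation of \eqref{orbispacetoglobalspace} at the $\infty$-categorical level — which is where the difficulty actually lies — is asserted rather than argued.
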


\subsection{Global classifying spaces}\label{globalclassifyingspaces}

We now recall various `global classifying spaces' from \cite{schwedeglobal}.

For any compact Lie group $G$, there is a `global classifying space' $\BB G\in\GloSpc$ \cite[Definition 1.1.27]{schwedeglobal} (there denoted $B_{gl}G$).
It is represented by the orthogonal space
\begin{equation}
(\BB G)(V):=\Emb(E,V)/G
\end{equation}
for any faithful $G$-representation $E$.
In particular, when $G=O(n)$, it is natural to take the defining representation $O(n)\acts\RR^n$, so we get
\begin{equation}
(\BB O(n))(V):=\Gr_n(V).
\end{equation}
Also note that when $G$ is finite, $\BB G\in\GloSpc$ is the image of $\BB G\in\OrbSpc$ under \eqref{orbispacetoglobalspace}.

Let us see that the global space $\BB G$ represents the functor of $G$-bundles on finite orbi-CW-complexes.
Maps from a finite orbi-CW-complex $X$ to $\BB G$ is the direct limit over $V/X$ of the space of embeddings of $E$ into $V$ modulo $G$ where $G\acts E$ is a faithful representation.
Denoting by $\Emb_X(E,V)$ the total space over $X$, we note that $\Emb_X(E,V)\to\Emb_X(E,V)/G$ is a principal $G$-bundle, so any section of $\Emb_X(E,V)/G\to X$ gives via pullback a principal $G$-bundle over $X$.
Conversely, given a principal $G$-bundle $P\to X$, a section of $\Emb_X(E,V)/G$ together with an isomorphism between the resulting pullback bundle and $P\to X$ is the same as an embedding $E\times_GP\hookrightarrow V$, and the space of such embeddings becomes contractible in the direct limit over $V$.

\begin{conjecture}
The functor \eqref{orbispacetoglobalspace} sends $\BB G\in\OrbSpc$ to $\BB G\in\GloSpc$.
\end{conjecture}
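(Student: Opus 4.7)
The plan is simply to unpack both sides of the claimed equality and check that they coincide on the nose for a natural choice of faithful vector bundle.

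First I would recall that the functor \eqref{orbispacetoglobalspace} sends an orbi-CW-complex $X$ to the inverse system of orthogonal spaces $V \mapsto \Emb_X(E,V)$ as $E$ ranges over faithful vector bundles on $X$, where Schwede's global equivalence between successive terms \cite[Proposition 1.1.26(ii)]{schwedeglobal} ensures this defines a well-defined object of $\GloSpc$. For $X = \BB G$, a vector bundle is the same as an orthogonal $G$-representation, and ``faithful'' in the bundle sense means exactly faithful in the representation sense. So choose any faithful orthogonal $G$-representation $E$, and the image of $\BB G \in \OrbSpc$ is represented by the orthogonal space $V \mapsto \Emb_{\BB G}(E,V)$.

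Next I would compute this orthogonal space explicitly. The stack $\Emb_{\BB G}(E,V)$ is by definition the total space over $\BB G$ of the fibration whose fiber is $\Emb(E_x, V)$. Pulling back along the atlas $* \to \BB G$ gives $\Emb(E,V)$ as a $G$-space, so $\Emb_{\BB G}(E,V) = \Emb(E,V)/G$ as a stack. Since $E$ is faithful, $G$ acts freely on $\Emb(E,V)$, so this stack quotient is in fact a topological space. Thus the orthogonal space representing the image of $\BB G \in \OrbSpc$ is
\begin{equation}
V \mapsto \Emb(E,V)/G.
\end{equation}
But by Schwede's definition \cite[Definition 1.1.27]{schwedeglobal}, this is precisely the orthogonal space representing $\BB G \in \GloSpc$.

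There isn't really a ``hard part'' here --- the conjecture is essentially a tautology once one matches the two definitions. The only care needed is to observe that our Lemma \ref{maptoorthspcdescend}-style discussion makes the functor \eqref{orbispacetoglobalspace} land in $\GloSpc$ well-defined, and that the choice of faithful bundle on $\BB G$ is irrelevant because all such choices are cofinal in $\Vect^O(\BB G)$ and define globally equivalent orthogonal spaces. The genuine content, to the extent there is any, lies in Schwede's independence results that this construction represents the same object of $\GloSpc$ regardless of which faithful $E$ one picks, which is already cited as part of the setup.
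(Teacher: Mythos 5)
First, a caveat about the ground truth: the paper does not prove this statement --- it is stated as a conjecture and left open --- so there is no proof of record to compare against. Your argument therefore has to stand on its own, and it does so only for finite $G$. In that case your computation is correct and is exactly what the paper already records in one sentence just before the conjecture (``Also note that when $G$ is finite, $\BB G\in\GloSpc$ is the image of $\BB G\in\OrbSpc$ under \eqref{orbispacetoglobalspace}''): $\BB G=*/G$ is a single-cell finite orbi-CW-complex, a faithful vector bundle on it is a faithful orthogonal $G$-representation $E$, the $G$-action on $\Emb(E,V)$ is free, and $\Emb_{\BB G}(E,V)=\Emb(E,V)/G$ is literally Schwede's $B_{gl}G$. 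No objection there.

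The gap is that the conjecture is about $\BB G\in\OrbSpc$ for a general compact Lie group $G$ (this is what Lemma \ref{bgexist} constructs, and it is what the neighboring conjecture about $\bO=\varinjlim_n\BB O(n)$ needs). The paper explicitly warns that $*/G$ is \emph{not} an orbi-CW-complex unless $G$ is finite, and that $\BB G\in\OrbSpc$ is instead the infinite orbi-CW-complex with finite isotropy groups built cell by cell in Lemma \ref{bgexist}. For such $G$ every step of your unpacking fails: a faithful vector bundle on $\BB G$ is not a faithful $G$-representation; $\Emb_{\BB G}(E,V)$ is not $\Emb(E,V)/G$; and indeed the functor \eqref{orbispacetoglobalspace} is only defined on $\OrbSpc^f$, whereas $\BB G$ is an infinite complex, so one must first extend the functor (say by a colimit over finite subcomplexes, each with its own faithful bundle supplied by Theorem \ref{enough}) before the statement even parses, and then compare that colimit of orthogonal spaces $\Emb_{X_i}(E_i,-)$ with $\Emb(E,-)/G$ for a faithful $G$-representation $E$. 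That comparison is the actual content of the conjecture, and the fact that the author states the finite case as an immediate observation and then still records the general statement as a conjecture should have signaled that it is not a tautology.
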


There are two natural global spaces $\bO$ and $\BO$ which generalize the classifying space $BO:=\varinjlim_nBO(n)$.
The global space $\bO$ \cite[Example 2.4.18]{schwedeglobal} is given by the orthogonal space
\begin{equation}
\bO(V):=\Gr_{\left|V\right|}(V\oplus\RR^\infty)
\end{equation}
in which to a map $V\hookrightarrow W$ we associate the map
\begin{equation}
\Gr_{\left|V\right|}(V\oplus\RR^\infty)\xrightarrow{\oplus(W/V)}\Gr_{\left|W\right|}(W\oplus\RR^\infty).
\end{equation}
The global space $\bO$ is the direct limit of $\BB O(n)\in\GloSpc$ \cite[Proposition 2.4.24]{schwedeglobal}.

Let us argue that $\bO$ classifies coarsely stable vector bundles of rank zero.
Maps from a finite orbi-CW-complex $X$ to $\bO$ are given by the direct limit over all vector bundles $E$ over $X$ of global sections of $\Gr_{\left|E\right|}(E\oplus\RR^\infty)$.
We may express this as the direct limit over both $n$ and $E$ of subbundles of rank $\left|E\right|$ of $E\oplus\RR^n$.
Equivalently, this is quotient bundles of $E\oplus\RR^n$ of rank $n$.
Now taking the direct limit over $E$, we realize every vector bundle has a homotopically unique surjection from $E\oplus\RR^n$ in the direct limit over $E$.
Thus what remains is the direct limit over $n$ of vector bundles over $X$, with passage from $n$ to $n+1$ acting as $\oplus\underline\RR$.
This is thus precisely rank zero coarsely stable vector bundles over $X$.

\begin{conjecture}
The functor \eqref{orbispacetoglobalspace} sends $\bO\in\OrbSpc$ to $\bO\in\GloSpc$.
\end{conjecture}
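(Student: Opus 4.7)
The plan is to reduce the identification to the previous conjecture applied level-wise and then to verify that the stabilization maps $\BB O(n)\to\BB O(n+1)$ used in the two colimit presentations of $\bO$ match under the functor \eqref{orbispacetoglobalspace}. Since $\bO=\varinjlim_n\BB O(n)$ is not a finite orbi-CW-complex, the functor \eqref{orbispacetoglobalspace} must first be extended to infinite orbi-CW-complexes, which is most naturally done by ind-prolongation: one sends a sequential colimit $\colim_nX_n$ of finite orbi-CW-subcomplexes to the corresponding colimit in $\GloSpc$ of the values on the $X_n$. The map $\BB O(n)\hookrightarrow\BB O(n+1)$ in $\OrbSpc$ is a subcomplex inclusion between finite orbi-CW-complexes, so this prolongation is unambiguous for $\bO$.

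Next I would compute the transition map at each level. By the previous conjecture, the finite-stage functor \eqref{orbispacetoglobalspace} sends each $\BB O(n)\in\OrbSpc$ to the global classifying space $\BB O(n)\in\GloSpc$, whose underlying orthogonal space is $V\mapsto\Emb(\RR^n,V)/O(n)=\Gr_n(V)$, so only the identification of the stabilization maps is at issue. The orbispace map $\BB O(n)\to\BB O(n+1)$ classifies the direct sum of the tautological rank-$n$ bundle with a trivial line. Tracing this through the definition $X\mapsto\Emb_X(E,-)$ with $E$ the defining faithful representation, one obtains the morphism of orthogonal spaces sending an $O(n)$-orbit of embeddings $\RR^n\hookrightarrow V$ to the $O(n+1)$-orbit of its direct sum with the standard $\RR\hookrightarrow\RR$ inside $V\oplus\RR$. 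This is precisely Schwede's stabilization map realizing $\bO\in\GloSpc$ as $\varinjlim_n\BB O(n)$ in \cite[Proposition 2.4.24]{schwedeglobal}, so matching the two diagrams term-wise and map-wise produces the claimed isomorphism after passing to the colimit.

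The main obstacle is making the ind-prolongation rigorous in a way that is compatible with the ambiguity inherent in \eqref{orbispacetoglobalspace}, which depends on a choice of faithful vector bundle $E$. On each $\BB O(n)$ there is the tautological choice, but these do not assemble into a faithful bundle on $\bO$ (as shown by the failure argument leading to Lemma \ref{noBO}). One must therefore verify that the system of finite-stage values $\Emb_{\BB O(n)}(\RR^n,-)$ together with the transition morphisms described above is well-defined up to canonical isomorphism in $\GloSpc$, independently of intermediate choices of larger faithful bundles on each $\BB O(n)$; this uses that the space of embeddings between faithful representations is highly connected after suitable stabilization, which is implicit in the proof that the finite-stage functor is well-defined. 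Once the coherence is in place, the conjecture follows by identifying the resulting sequential diagram in $\GloSpc$ with Schwede's filtration of $\bO$.
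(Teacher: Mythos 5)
The paper does not prove this statement; it is left as a conjecture, so there is no argument to compare against, and your sketch is an attempt at a new proof rather than a reconstruction of an existing one.

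Evaluating it on its own terms, the ind-prolongation strategy is natural, but there are concrete gaps. Your assertion that $\BB O(n)\hookrightarrow\BB O(n+1)$ is a subcomplex inclusion of \emph{finite} orbi-CW-complexes is false for $n\geq 2$: since $O(n)$ is an infinite compact Lie group, the orbi-CW-complex $\BB O(n)\in\OrbSpc$ constructed in Lemma \ref{bgexist} has infinitely many cells (in contrast to $\BB G=*/G$ for $G$ finite). So the prolongation is not ``unambiguous'' in the way you claim; one would have to filter $\bO$ by arbitrary finite subcomplexes rather than by the $\BB O(n)$, and then re-identify the resulting colimit with Schwede's filtration, which requires a bifiltration argument you do not supply. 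More seriously, the coherence issue you raise in your last paragraph is not a side remark but the crux. The functor \eqref{orbispacetoglobalspace} is only defined up to a choice of faithful bundle, and it lands in the homotopy category $\GloSpc$; its values on an increasing chain of finite subcomplexes therefore form a diagram in a homotopy category, and sequential colimits of such diagrams are not well-defined without lifting \eqref{orbispacetoglobalspace} and its target to a model-categorical or $\infty$-categorical refinement. The high connectivity of embedding spaces between faithful representations is precisely the ingredient one would use to construct such a lift, but gesturing at it does not discharge the obligation. Finally, even if these gaps were filled, your argument reduces the present conjecture to the immediately preceding one (that \eqref{orbispacetoglobalspace} sends $\BB G\in\OrbSpc$ to $\BB G\in\GloSpc$), which is itself unproven in the paper, so at best you obtain a conditional implication rather than a proof.
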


The global space $\BO$ \cite[Example 2.4.1]{schwedeglobal} is defined as the orthogonal space
\begin{equation}
\BO(V):=\Gr_{\left|V\right|}(V\oplus V)
\end{equation}
in which to a map $V\hookrightarrow W$ we associate the map
\begin{equation}
\Gr_{\left|V\right|}(V\oplus V)\to\Gr_{\left|V\right|}(W\oplus V)\xrightarrow{\oplus(W/V)}\Gr_{\left|W\right|}(W\oplus W).
\end{equation}
We argue that $\BO\in\GloSpc$ classifies stable vector bundles of rank zero (recall from Lemma \ref{noBO} that there is no $\BO\in\OrbSpc$ with this property).
Maps from a finite orbi-CW-complex $X$ to $\BO$ are the direct limit over $E/X$ of subbundles of rank $\left|E\right|$ of $E\oplus E$.
Let us choose to view this as the direct limit over pairs of vector bundles $E$ and $E'$ of subbundles of rank $\left|E\right|$ of $E\oplus E'$.
Taking the direct limit over $E'$, we see that this is just vector bundles of rank $\left|E\right|$ on $X$, and that in the remaining directed system over $E$, when going from $E_1$ to $E_2$, we add $E_2/E_1$.
Thus we get precisely rank zero stable vector bundles over $X$.

\subsection{Global spectra}

Here we relate the category $\OrbSp^f$ with the global stable homotopy category $\GloSp$ whose objects we call \emph{global spectra}.

Let $\OO$ denote the based topological category with the same objects as $\LL$ and with morphism space from $V$ to $W$ given by the Thom space of (i.e.\ the one-point compactification of the total space of) the tautological vector bundle ``$W/V$'' over $\LL(V,W)$.
An \emph{orthogonal spectrum} is a based continuous functor $F:\OO\to\kTop_*$ where $\kTop_*$ is the category of pointed $k$-spaces.
In other words, it is the assignment to each $V\in\OO$ of a based $k$-space $F(V)$ and to each pair $V,W\in\OO$ a based map $F(V)\wedge\OO(V,W)\to F(W)$, such that this rule is compatible with composition for triples $V,W,U\in\LL$.
The category of orthogonal spectra is denoted $\OrthSp$.

A map of orthogonal spectra $F\to F'$ is called a \emph{global equivalence} \cite[Equation (3.1.11) and Definition 4.1.3]{schwedeglobal} iff for every finite group $G$, every orthogonal $G$-representation $V$, every $k,\ell\geq 0$, and every diagram of based $G$-equivariant maps on the left:
\begin{equation}\label{gleqdiagsp}
\begin{tikzcd}
\partial D^k\wedge S^V\ar[r]\ar[d]&F(V\oplus\RR^\ell)\ar[d]\\
D^k\wedge S^V\ar[r]&F'(V\oplus\RR^\ell)
\end{tikzcd}
\quad
\implies
\quad
\begin{tikzcd}
\partial D^k\wedge S^W\ar[r]\ar[d]&F(W\oplus\RR^\ell)\ar[d]\\
D^k\wedge S^W\ar[r]\ar[ru,dashed]&F'(W\oplus\RR^\ell)
\end{tikzcd}
\end{equation}
there exists an orthogonal $G$-representation $W$ and an inclusion $V\hookrightarrow W$ such that after pushing forward under it, the square obtained on the right has a lift after homotoping the bottom map rel boundary (everything $G$-equivariantly).
The category of global spectra $\GloSp$ is the localization of the category of orthogonal spectra $\OrthSp$ at the global equivalences.
(There is a model structure on $\OrthSp$ whose weak equivalences are the global equivalences, giving an effective way to understand the localization $\GloSp$ \cite[\S 4.3]{schwedeglobal}.)

Given an orthogonal spectrum $F$ and a vector bundle $V\to X$ with inner product (over any stack $X$), we may define a representable map $F(V)\to X$ by applying $F$ to the fibers of $V$, just as we did for an orthogonal space.
This map $F(V)\to X$ is moreover equipped with a `basepoint' section.
A vector bundle $V\to X$ (where $X$ is still any stack) has an associated sphere bundle $S^V$ (fiberwise one-point compactification of $V$), again by defining it over $\bigsqcup_{n\geq 0}*/\GL_n(\RR)$ and pulling back under the classifying map; this is also equipped with a `basepoint' section.
We may thus consider, for any vector bundle $V\to X$ with inner product, based maps $S^V\to F(V)$ over $X$, where `based' means that the composition of the basepoint section of $S^V$ with the map $S^V\to F(V)$ is the basepoint section of $F(V)$.

Let us now argue that given an isometric inclusion of vector bundles with inner product $V\hookrightarrow W$, we may push forward a based map $S^V\to F(V)$ to obtain a based map $S^W\to F(W)$.
The structure of $F$ as an orthogonal spectrum gives a based map $F(V)\to F(W)$ over the total space of $S^{W/V}$, which over the basepoint section of $S^{W/V}$ (i.e.\ when pulled back under it) is the constant map to the basepoint section of $F(W)$.
Precomposing this map $S^{W/V}\times_XF(V)\to F(W)$ over $X$ with a map $S^V\to F(V)$ over $X$ defines a map $S^V\times_XS^{W/V}\to F(W)$, which we claim descends uniquely to a map $S^W\to F(W)$.
To prove this claim, it suffices to show that the obvious map $S^V\times_XS^{W/V}\to S^W$ (obvious since its existence over the universal classifying space is clear, so we can just pull back to $X$) pulls back under any map $Z\to X$ (where $Z$ is a topological space) to a topological quotient map.
Since vector bundles (inner product is now irrelevant) are locally trivial, this amounts to showing that $S^n\times S^m\times Z\to S^{n+m}\times Z$ is a topological quotient map for any topological space $Z$, which holds since the locus $(\{*\}\times S^m)\cup(S^n\times\{*\})\subseteq S^n\times S^m$ contracted by $S^n\times S^m\to S^{n+m}$ is compact.

We now show that global spectra give rise to cohomology theories on orbispectra.
Namely, we construct a functor
\begin{align}\label{glocoh}
(\OrbSp^f)^\op\times\GloSp&{}\to\Ab\\
W\times Z&{}\mapsto Z^0(W)
\end{align}
sending cofiber sequences to exact sequences.

We begin by defining $(W,Z)\mapsto Z^0(W)$ as a functor $(\OrbSpcPair^{f,-\Vect})^\op\times\OrthSp\to\Ab$.
For a finite orbi-CW-pair $(X,A)$ with vector bundle $\xi$ and an orthogonal spectrum $F$, we consider homotopy classes of based maps $S^V\to F(V\oplus\xi)$ over $X$, which over a neighborhood of $A$ are the constant map to the basepoint.
By the discussion in the paragraph just above, such homotopy classes of maps form a directed system over $V\in\Vect^O(X)$, and we define $F^0((X,A)^{-\xi})$ to be its direct limit (this set is naturally an abelian group by the usual argument involving $\underline\RR^2\subseteq E$).
As before, this is only a reasonable definition because of enough vector bundles.
Note that, for the purposes of computing the set of homotopy classes of based maps $S^V\to F(V\oplus\xi)$, we may consider just those which are properly supported over $X$ in the sense that they send a neighborhood of the fiberwise basepoint of $S^V$ to the fiberwise basepoint of $F(V\oplus\xi)$ (to see this, apply a map $S^V\to S^V$ which sends a neighborhood of the basepoint to the basepoint, which we may construct universally over $\bigsqcup_{n\geq 0}*/O(n)$).
That $F^0((X,A)^{-\xi})$ is a functor of $F\in\OrthSp$ is evident.
The following implies descent to a functor of $F\in\GloSp$.

\begin{lemma}
A global equivalence of orthogonal spectra $Z\to Z'$ induces an isomorphism $Z^0((X,A)^{-\xi})\to Z^{\prime 0}((X,A)^{-\xi})$.
\end{lemma}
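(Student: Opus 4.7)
The plan is to reduce both surjectivity and injectivity of the induced map $Z^0((X,A)^{-\xi}) \to Z'^0((X,A)^{-\xi})$ to a cell-by-cell lifting argument that turns the defining lifting property of a global equivalence into lifts over all of $X$, using enough vector bundles (Theorem \ref{enough}) to globalize local constructions. This is the stable analogue of the argument used in Lemma \ref{maptoorthspcdescend}.

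For surjectivity, I would fix a based map $f': S^V \to Z'(V \oplus \xi)$ over $X$, properly supported away from $A$, for some $V \in \Vect^O(X)$. The goal is to produce, after enlarging $V$ to some $W \in \Vect^O(X)$ with an isometric inclusion $V \hookrightarrow W$, a based map $f: S^W \to Z(W \oplus \xi)$ over $X$ (properly supported away from $A$) and a homotopy from the pushforward of $f'$ to the image of $f$ in $Z'$. Since $(X,A)$ is a finite orbi-CW-pair, I would proceed by induction on the cells of $X$ not contained in $A$ (over cells in $A$ the map is already the basepoint). For a cell $(D^k, \partial D^k) \times \BB G$, the restrictions of $V$ and $\xi$ are classified by maps to $\bigsqcup_n */O(n)$ which, by Lemma \ref{intervalpullback} and the triviality of bundles on contractible spaces, are homotopic to pullbacks of orthogonal $G$-representations $V_0$ and $\xi_0$. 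Taking $G$-fixed points, the datum over this cell becomes precisely the solid-arrow square on the left of \eqref{gleqdiagsp} (with $V_0 \oplus \xi_0$ playing the role of $V$), so the defining property of a global equivalence produces an inclusion $V_0 \oplus \xi_0 \hookrightarrow W_0$ of $G$-representations over which the required lift and homotopy exist. Finally, Theorem \ref{enough} supplies a global $W \in \Vect^O(X)$ containing $V$ whose restriction to every cell embeds $W_0$; making compatible choices cell-by-cell and absorbing accumulated homotopies into collars on the boundaries of cells yields the desired global lift.

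For injectivity, I would apply the surjectivity argument to a slightly larger pair. Two based maps $f_0, f_1: S^V \to Z(V \oplus \xi)$ over $X$ whose images in $Z'$ are homotopic are the same datum as a based map $S^V \to Z'(V \oplus \xi)$ over $X \times I$, supported away from $(A \times I) \cup (X \times \partial I)$ up to the prescribed boundary values, together with a chosen lift of its restriction to $X \times \partial I$ to $Z$. The problem of producing a lift over all of $X \times I$ extending the given boundary lifts is identical to the surjectivity problem for the pair $((X \times I,\, (A \times I) \cup (X \times \partial I)),\, \xi)$, except that one must work relative to the boundary lift. This relative cell-by-cell argument is a routine variant of the absolute one, using that the stated lifting property implies its relative form (by attaching a collar to reduce the relative case to the absolute case on a slightly enlarged cell).

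The main obstacle is bookkeeping: at each step one must identify the restrictions of $V$ and $\xi$ to a cell $D^k \times \BB G$ with genuine $G$-representations up to homotopy, accumulate the lifts and homotopies coherently as one passes to higher-dimensional cells, and, at the end, assemble compatible enlargements of vector bundles into a single global $W$. Processing cells in order of increasing dimension and absorbing the accumulated homotopies into collars—exactly as in Lemma \ref{maptoorthspcdescend}—handles this, but requires care because of the additional parameter of the suspension coordinates $S^V$ and the requirement that everything be based and properly supported away from $A$.
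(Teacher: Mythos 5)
Your proposal is correct and is essentially the paper's argument: the paper's proof of this lemma is literally ``Same as Lemma \ref{maptoorthspcdescend},'' i.e.\ the cell-by-cell lifting argument using the defining property \eqref{gleqdiagsp} of global equivalences together with enough vector bundles (Theorem \ref{enough}) to globalize the resulting enlargements, with injectivity handled as the relative case over $X\times I$. Your write-up just makes explicit the bookkeeping (suspension coordinates, basedness, support away from $A$) that the paper leaves implicit.
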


\begin{proof}
Same as Lemma \ref{maptoorthspcdescend}.
\end{proof}

We now argue that $Z^0((X,A)^{-\xi})$ is functorial in $(X,A)^{-\xi}\in\OrbSpcPair^{f,-\Vect}$.
Suppose given a map $f:X\to Y$, an inclusion $f^*\zeta\hookrightarrow\xi$, and a section $s:X\to\xi/f^*\zeta$ such that $A$ is covered by $f^{-1}(B)$ and the locus where $\left|s\right|\geq\varepsilon$ for some $\epsilon>0$.
Now given a map $S^V\to F(V\oplus\zeta)$ over $Y$ supported away from $B$, we may pull it back to obtain a map $S^{f^*V}\to F(f^*V\oplus f^*\zeta)$ over $X$ supported away from $f^{-1}(B)$.
We then further pair with $s$, viewed as a section of $S^{\xi/f^*\zeta}$, to obtain a map $S^{f^*V}\to F(f^*V\oplus\xi)$ supported away from $A$.
To finish the construction of \eqref{glocoh}, it suffices to show that morphisms $W$ and $S$ are sent to isomorphism.
That morphisms $W$ are sent to isomorphism follows from enough vector bundles Theorem \ref{enough} (restriction of vector bundles is cofinal).
That morphisms $S$ are sent to isomorphisms is immediate from the definition.

\begin{proposition}
For any global spectrum $Z$, the functor $Z^0$ sends cofiber sequences in $\RepOrbSp^f$ to exact sequences.
\end{proposition}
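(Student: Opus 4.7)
A cofiber sequence in $\RepOrbSp^f$ is, up to isomorphism, of the form $(Y,B)^{-\xi}\to(X,A)^{-\xi}\to(X,A\cup_B Y)^{-\xi}$ with $Y,A\subseteq X$ subcomplexes and $B=A\cap Y$. The plan is to show that the induced sequence
\[
Z^0((X,A\cup_B Y)^{-\xi})\to Z^0((X,A)^{-\xi})\to Z^0((Y,B)^{-\xi})
\]
is exact at the middle. That the composition vanishes is immediate: an element of $Z^0((X,A\cup_B Y)^{-\xi})$ is represented by a based section $S^V\to Z(V\oplus\xi)$ over $X$ which is constant on a neighborhood of $A\cup_B Y\supseteq Y$, so its restriction over $Y$ is the basepoint section and hence represents zero in $Z^0((Y,B)^{-\xi})$.

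For the nontrivial direction, suppose $\alpha\in Z^0((X,A)^{-\xi})$ restricts to zero in $Z^0((Y,B)^{-\xi})$. Choose a representative $f:S^V\to Z(V\oplus\xi)$ over $X$ which is constant on a neighborhood $N$ of $A$. The hypothesis, unwound through the direct limit defining $Z^0$ and the definition of global equivalence, provides (after enlarging $V$ to some $W$ and replacing $f$ by its structure-map image $f_W:S^W\to Z(W\oplus\xi)$) a based homotopy $H_t:S^W|_Y\to Z(W\oplus\xi|_Y)$ over $Y$, with $H_0=f_W|_Y$, $H_1$ the basepoint section, and $H_t$ constant on a neighborhood of $B$ in $Y$ for all $t$.

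Next I would apply the homotopy extension property of the orbi-CW-pair $(X,Y)$ (noted in the paper after the construction of orbi-CW-complexes) to the bundle-theoretic setting: since $S^W\to X$ is locally trivial and $Y\hookrightarrow X$ is a cofibration of orbi-CW-complexes, the restriction from sections-over-$X$ to sections-over-$Y$ is a Hurewicz fibration in the appropriate sense. This lets me extend $H$ to a homotopy $\tilde H_t$ of $f_W$ on all of $X$. By first shrinking the neighborhood $N$ of $A$ slightly and performing the extension relative to the even smaller neighborhood on which $f_W$ is already the basepoint, I can arrange that $\tilde H_t$ remains constant on a neighborhood of $A$ throughout (this is HEP for the triple $(X;Y\cup N',A)$, which reduces to standard HEP since $Y,A$ are subcomplexes). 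The endpoint $g:=\tilde H_1$ is then a based section of $Z(W\oplus\xi)$ over $X$, representing the same class $\alpha$, which is constant on a neighborhood of $A$ and identically the basepoint on $Y$.

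Finally, to upgrade ``constant on $Y$'' to ``constant on a neighborhood of $Y$'', which is what is required to represent a class in $Z^0((X,A\cup_B Y)^{-\xi})$, I would use that the subcomplex $Y\subseteq X$ admits a neighborhood $N(Y)$ together with a deformation retraction $r_t:N(Y)\to N(Y)$ onto $Y$ rel $Y$ (standard for orbi-CW-pairs by induction on cells). Pulling $r_t$ back to $S^W|_{N(Y)}$ and composing $g|_{N(Y)}$ with $r_t$ gives a homotopy rel $Y$ from $g|_{N(Y)}$ to $g|_{N(Y)}\circ r_1$, whose endpoint is constant in a neighborhood of $Y$ since $g|_Y$ is the basepoint; patching this with the unchanged $g$ outside $N(Y)$ (using a cutoff in $t$ supported in a yet smaller neighborhood, which does not affect the endpoint) yields the desired representative of a preimage in $Z^0((X,A\cup_B Y)^{-\xi})$. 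The main obstacle is the bookkeeping in this last step to ensure that enlarging $V$ to $W$, shrinking neighborhoods of $A$, and applying HEP and the deformation retract can all be done compatibly with one another and with the bundle structure over $X$; none of these is difficult individually, but they must be arranged in the right order to preserve the ``constant near $A$'' condition throughout.
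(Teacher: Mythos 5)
Your high-level strategy matches the paper's (vanishing of the composite is clear, and the content is extending a null-homotopy over $Y$ to a homotopy of sections over $X$ supported away from $A\cup_BY$), but you have overlooked the one genuinely nontrivial step, and it is exactly the one the paper singles out: the cofinality of restriction of vector bundles.

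When you write ``after enlarging $V$ to some $W$ and replacing $f$ by its structure-map image $f_W:S^W\to Z(W\oplus\xi)$,'' you are implicitly assuming that the vector bundle $W$ needed to witness the null-homotopy of $f|_Y$ is the restriction to $Y$ of a vector bundle defined on all of $X$. But $Z^0((Y,B)^{-\xi})$ is defined as a direct limit over $\Vect(Y)$, not $\Vect(X)$, so a priori the stabilizing bundle $W'$ lives only on $Y$ and there is no reason it should extend over $X$. This is precisely where the paper invokes Theorem~\ref{enough} (enough vector bundles) to conclude that restriction $\Vect(X)\to\Vect(Y)$ is cofinal: any $W'$ on $Y$ embeds into $W|_Y$ for some $W$ on $X$. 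Without this, your argument cannot even get started, because $f_W$ would not be a section over $X$. Also, your appeal to ``the definition of global equivalence'' here is a red herring: global equivalence compares two orthogonal spectra, whereas the fixed $Z$ is a single spectrum, and what is being unwound is merely the definition of the direct limit.

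The remaining steps (homotopy extension for the orbi-CW-pair $(X,Y)$ relative to a neighborhood of $A$, then a deformation retraction of a neighborhood of $Y$ onto $Y$ rel $Y$ to turn ``constant on $Y$'' into ``constant near $Y$'') are correct and spell out what the paper compresses into ``$(Y,B)$ has a nice neighborhood inside $(X,A)$.'' You correctly flag that there is bookkeeping in making these compatible, but you have mislocated the ``main obstacle'': it is not the bookkeeping but the cofinality of restriction, which requires the enough-vector-bundles theorem and should be stated explicitly.
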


\begin{proof}
We are to show that $Z^0(Y,B)\leftarrow Z^0(X,A)\leftarrow Z^0(X,A\cup_BY)$ is exact.
The composition is evidently zero.
Now suppose we have a section over $(X,A)$ whose restriction to $(Y,B)$ is null-homotopic after stabilizing by a vector bundle on $Y$.
The restriction map on vector bundles is cofinal by enough vector bundles, so without loss of generality we are in the situation of a section on $(X,A)$ whose restriction to $(Y,B)$ is null-homotopic rel $B$.
Now $(Y,B)$ has a nice neighborhood inside $(X,A)$, so we can extend this null-homotopy to a homotopy of sections over $(X,A)$ to become supported away from $A\cup_BY$.
\end{proof}

Define $Z^i(W):=Z^0(\Sigma^{-i}W)$, so the Puppe sequence now gives a bi-infinite long exact sequence of the expected form for any cofiber triple in $\RepOrbSp^f$.

\subsection{Global Thom spectra}\label{globalthomspectra}

We now recall the so-called \emph{global Thom spectra} \cite[\S 6]{schwedeglobal}, whose associated cohomology theories are called homotopical cobordism theories.
They are given by the following orthogonal spectra:
\begin{align}
\SSS(V):={}&S^V,\\
\mO(V):={}&\Gr_{\left|V\right|}(V\oplus\RR^\infty)^\tau,\\
\MO(V):={}&\Gr_{\left|V\right|}(V\oplus V)^\tau,
\end{align}
where $\tau$ denotes the tautological vector bundle.
The structure maps are induced by those of the corresponding $\bO$ and $\BO$ defined above, just passing to Thom spaces as appropriate.
In the present context of orthogonal spectra, Thom space always means the one-point compactification of the total space.
These are ring spectra in various senses, however we will not discuss this precisely, instead referring to Schwede \cite[\S 6]{schwedeglobal}.

There is a canonical `unit element' $\1\in\SSS^0(X)$ for any orbispace $X$, namely given in the definition of $\SSS^0(X)$ by taking $E=0$ and taking the unit section of $\Omega^E\SSS(E)=S^0$ over $X$.

\begin{remark}
It is natural to conjecture that $S^0\in\OrbSpc_*$ is sent to $\SSS$ and that $\varinjlim_n\BB O(n)^{\underline\RR^n-\xi_n}\in\OrbSp$ is sent to $\mO$, under natural functors to $\GloSp$.
As we have not defined an orbispace $\BO$, we cannot define an orbispectrum corresponding to $\MO$.
\end{remark}

\subsection{Pontryagin--Thom isomorphism}

Theorem \ref{ptfinal} is the combination of Propositions \ref{ptS} and \ref{ptThom} below.

\begin{proposition}\label{ptS}
There is a bijection $\SSS^0(DW)\xrightarrow\sim\Omega^\fr_0(W)$ natural in $W\in\RepOrbSp^f$.
\end{proposition}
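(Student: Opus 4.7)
The plan is to construct mutually inverse Pontryagin--Thom maps geometrically, after reducing to the situation of a compact orbifold pair. By Corollary \ref{pairispair} and the definition of $D$ in Theorem \ref{duality}, write $W = (X,A)^{-\xi}$ for a compact effective orbifold pair $(X,A)$ and a vector bundle $\xi / X$, so that $DW = (X, \partial X - A^\circ)^{\xi - TX}$. Using enough vector bundles (Theorem \ref{enough}), choose $F/X$ with an inclusion $TX \hookrightarrow \xi \oplus F$ and set $\zeta := (\xi \oplus F)/TX$, so that $\zeta - F = \xi - TX$ stably and $DW = ((X, \partial X - A^\circ)^\zeta)^{-F}$. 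Unwinding the definition of $\SSS^0$ on orbispectra, $\SSS^0(DW)$ is the direct limit over $E \in \Vect(X)$ of homotopy classes of fiberwise based maps of sphere bundles $\phi : S^E \to S^{E \oplus F}$ over the disk bundle $D(\zeta)$, equal to the basepoint section over $D(\zeta)|_{\partial X - A^\circ} \cup S(\zeta)$.

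The forward map $\mathrm{PT} : \SSS^0(DW) \to \Omega^\fr_0(W)$ reads off the tautological derived orbifold chart from $\phi$. The preimage under $\phi$ of the linear tubular neighborhood $E \oplus F \subseteq S^{E \oplus F}$ of the origin is an open suborbifold $D_\phi$ of the total space of $E \oplus \zeta$ over $X$, on which $\phi$ is literally a section of the pulled-back bundle $E \oplus F$. Hence $(D_\phi, E \oplus F, \phi)$ is a derived orbifold chart; its zero set is compact and disjoint from the marked locus by the support condition, and the projection $D_\phi \to X$ is representable as the composition of an inclusion into a vector bundle with the bundle projection. The virtual dimension is $(\dim X + |E| + |\zeta|) - (|E| + |F|) = |\xi|$, and the computation
\begin{equation}
TD_\phi - (E \oplus F) \;=\; f^*TX + \zeta - F \;=\; f^*\xi \qquad \text{(stably),}
\end{equation}
using $\zeta + TX = \xi + F$ stably, supplies the required stable framing.

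The inverse map $\mathrm{PT}^{-1}$ starts from a derived chart $(C, E_0, s_0)$ representable over $(X, A)$ with stable framing $TC - E_0 \cong f^*\xi$. I would embed $C$ smoothly into $D(\rho)$ for some $\rho/X$ (via Corollary \ref{embedding}) chosen large enough to contain $\xi$; the normal bundle $N$ of the embedding then satisfies $E_0 + N \cong f^*(TX + \rho - \xi)$ stably, and after one round of stabilization (of $E_0$, $s_0$, and $\rho$ by pulled-back bundles from $X$, allowed by enough vector bundles) this becomes an honest isomorphism with $f^*\Lambda$ for some $\Lambda/X$. A tubular neighborhood $\tau \subseteq D(\rho)$ of $C$ carries the section $(\pi_C^* s_0,\, \mathrm{id}_N)$ of $E_0 \oplus N$ with compact zero set; extending by the basepoint outside $\tau$ and relabelling $\rho = E \oplus \zeta$, $\Lambda = E \oplus F$ yields the required fiberwise based map $\phi$ representing a class in $\SSS^0(DW)$.

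The main technical obstacle I anticipate is the bookkeeping needed to verify that both constructions respect all choices and equivalence relations simultaneously: direct limits over $E$ and stabilization bundles on the $\SSS^0$ side, and restriction/stabilization/bordism on the $\Omega^\fr_0$ side. The core observation is that the two constructions are geometrically inverse up to these allowed stabilizations: the composition $\mathrm{PT}^{-1} \circ \mathrm{PT}$ applied to $\phi$ reproduces $\phi$ up to linear homotopy because $D_\phi$ is already presented as the preimage of a tubular neighborhood, so collapsing outside returns $\phi$; and $\mathrm{PT} \circ \mathrm{PT}^{-1}$ reproduces the starting chart up to the normal-bundle stabilization just discussed, which is exactly the equivalence being quotiented out. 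Naturality in $W$ then reduces via Proposition \ref{reporbsecondlocalization} to compatibility with smooth representable maps of orbifold pairs and with stabilization by vector bundles, both of which are immediate from the construction. Throughout, the essential use of enough vector bundles is to guarantee that any two admissible choices admit a common refinement, making the relevant direct limits well defined and the two constructions genuinely mutually inverse.
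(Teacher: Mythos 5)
Your proposal is correct and follows essentially the same strategy as the paper's proof: realize $W$ as $(X,A)^{-\xi}$ for a compact orbifold pair, unwind $\SSS^0(DW)$ as a direct limit of based fiberwise sphere maps, read off the tautological derived orbifold chart (with its stable framing coming from the tangent-bundle bookkeeping) for the forward direction, and use Corollary \ref{embedding} plus stabilization to realize a framed derived chart as such a section for the inverse, invoking Proposition \ref{reporbsecondlocalization} to reduce naturality to smooth embeddings of orbifold pairs. The only deviation is presentational: you introduce an explicit pair $(\zeta,F)$ resolving the stable bundle $\xi-TX$ and phrase the two directions as mutually inverse constructions, whereas the paper defines the single forward map (working directly with $\Omega^{E\oplus\xi}S^{E\oplus TX}$) and proves bijectivity by running the embedding argument for surjectivity and a relative version of the same argument for injectivity.
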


For a compact orbifold-with-boundary $X$, this bijection sends the unit element $1\in\SSS^0(X)$ to the fundamental class $[X]\in\Omega^\fr_0((X,\partial X)^{-TX})$.

\begin{proof}
Given a compact orbifold pair $(X,A)$ and a vector bundle $\xi$ over $X$, we define a map
\begin{equation}\label{ptmapinit}
\SSS^0((X,\partial X-A^\circ)^{\xi-TX})\to\Omega^\fr_0((X,A)^{-\xi})
\end{equation}
as follows.
The value of $\SSS^0((X,\partial X-A^\circ)^{\xi-TX})$ is the direct limit over vector bundles $E/X$ of (homotopy classes of) sections of $\Omega^{E\oplus\xi}S^{E\oplus TX}$ over $X$ supported away from $\partial X-A^\circ$.
Equivalently, this is sections $s$ of $S^{E\oplus TX}$ (the fiberwise one-point compactification) over the total space of $E\oplus\xi$ over $X$ whose zero set $s^{-1}(0)$ is proper over $X$ and disjoint from the inverse image of $\partial X-A^\circ$.
Such data defines a compact derived orbifold chart with boundary $(D,E\oplus TX,s)$ ($D$ an open subset of the total space of $E\oplus\xi$), representable over $(X,A)$, with a stable isomorphism between its tangent bundle and $\xi$; this defines an element of $\Omega_0^\fr((X,A)^{-\xi})$.
This construction is compatible with enlarging $E$ and sends homotopies of sections to bordisms, hence defines the desired map \eqref{ptmapinit}.

Let us argue that \eqref{ptmapinit} defines a natural transformation of functors $\SSS^0(DW)\to\Omega^\fr_0(W)$ of $W\in\RepOrbSp^f$.
By Proposition \ref{reporbsecondlocalization} and the universal property of localization (and of direct limit), it suffices to show that this defines a natural transformation of functors out of $\RepOrbSpcPair_{N,k}^{f,-\xi}$ for every $\xi\in\Vect(R(*)_{N,k+2})$ (compatible with the functors modifying $\xi$ and $N,k$).
Compatibility with the functors modifying $\xi$ and $N,k$ is immediate; the real content is to check that the following diagram commutes
\begin{equation}
\begin{tikzcd}
\SSS^0(DW)\ar[r]\ar[d]&\Omega^\fr_0(W)\ar[d]\\
\SSS^0(DZ)\ar[r]&\Omega^\fr_0(Z)
\end{tikzcd}
\end{equation}
for any map $W\to Z$ in $\RepOrbSpcPair_{N,k}^{f,-\xi}$.
We may assume that this map $W\to Z$ is a smooth embedding of compact orbifold pairs $(X,A)\to(Y,B)$, namely $X\hookrightarrow Y$ is a smooth embedding and $A=X\cap\partial Y$ meeting transversally (so $X$ has corners at the boundary of $A$), desuspended by a vector bundle $\xi$ on $R(*)_{N,k+2}$ where the isotropy groups of $X$ and $Y$ have order $\leq N$ and $X,Y$ have dimension $\leq k$.
In this case, the map on duals is simply the evident map $(Y,\partial Y-B^\circ)\dashrightarrow(X,\partial X-A^\circ)^{TY/TX}$ desuspended by $TY$ and suspended by $\xi$.
Now commutativity of the above diagram is evident.

It remains to show that the natural transformation $\SSS^0(DW)\to\Omega^\fr_0(W)$ is a bijection for every $W\in\RepOrbSp^f$ (which we may take to be of the form $(X,A)^{-\xi}$ for a compact orbifold pair $(X,A)$ with a vector bundle $\xi$ over $X$).
To show surjectivity, let $(D,E,s)$ be a derived orbifold-with-boundary chart with a representable map $(D,\partial D)\to(X,A)$ and a stable isomorphism $TD-E=\xi$.
By Corollary \ref{embedding}, the map from $D$ to $X$ can be replaced by a smooth embedding by replacing $X$ with the unit disk bundle of a vector bundle over $X$ (and $A$ is replaced with its inverse image in this total space).
Thus we may assume $D$ is a suborbifold of $X$; choosing a nice collar near $\partial X$, we may further assume that it meets $\partial X$ transversely, precisely along $\partial D$.
Now we may stabilize our derived orbifold chart by $TX/TD$ so that $D$ is in fact an open subset of $X$.
Now we have a stable isomorphism $TX-E=\xi$, namely an isomorphism $E\oplus\xi\oplus F=TX\oplus F$ for some vector bundle $F$.
By further replacing $X$ with the total space of $F$ and stabilizing our derived orbifold chart by $F$, this becomes a true isomorphism of vector bundles $TX=E\oplus\xi$ over $D$ (which remains an open subset of $X$).
Further stabilizing by $\xi$ (thus adding $\xi$ to both $E$ and $TX$) ensures that $E$ extends to all of $X$, together with the isomorphism $TX=E\oplus\xi$.
Now the section $s$ cutting out our derived orbifold is, after extension as `infinity' to the rest of $X$, a section of $S^E$.
This gives, by definition, an element of $\SSS^0((X,\partial X-A^\circ)^{-E})=\SSS^0((X,\partial X-A^\circ)^{\xi-TX})$ which maps to our given element of $\Omega^\fr_0((X,A)^{-\xi})$.

Finally, injectivity is just a relative version of surjectivity.
We are given two elements of $\SSS^0((X,\partial X-A^\circ)^{\xi-TX})$ with the same image in $\Omega^\fr_0((X,A)^{-\xi})$.
Applying `rel boundary' the same procedure used to prove surjectivity to the derived bordism relating the images of our two given elements of $\SSS^0((X,\partial X-A^\circ)^{\xi-TX})$ produces a homotopy between them.
\end{proof}

\begin{proposition}\label{ptThom}
For $W\in\RepOrbSp^f$, there are natural bijections
\begin{align}
\label{ptmoresult}\mO^0(DW)&{}\xrightarrow\sim\Omega_0(W),\\
\label{ptMOresult}\MO^0(DW)&{}\xrightarrow\sim\Omega_0^\der(W).
\end{align}
\end{proposition}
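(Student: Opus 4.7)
The plan is to generalize the argument for Proposition \ref{ptS}, with the key new ingredient being an unpacking of the Grassmannian data built into $\mO$ and $\MO$. Write $W = (X,A)^{-\xi}$ for $(X,A)$ a compact orbifold pair and $\xi$ a vector bundle on $X$, so $DW = (X, \partial X - A^\circ)^{\xi - TX}$. An element of $\mO^0(DW)$ is represented, for some $E/X$, by a section of $\mO(E \oplus TX)$ over the total space of $E \oplus \xi$ fiberwise vanishing at infinity and supported away from $\partial X - A^\circ$; unwinding $\mO(V) = \Gr_{|V|}(V \oplus \RR^\infty)^\tau$, this is the data of (i) an open subset $D$ of the total space of $E \oplus \xi$ (the non-basepoint locus), (ii) a rank-$|E \oplus TX|$ subbundle $L$ of $\pi^*(E \oplus TX) \oplus \underline\RR^n$ over $D$, and (iii) a section $v$ of $L$ with compact zero set. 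The triple $(D, L, v)$ is a compact derived orbifold chart representable over $(X,A)$. Using $TD = \pi^*(TX \oplus E \oplus \xi)$, we compute $TD - L = \pi^*\xi + (Q - \underline\RR^n)$ where $Q$ is the rank-$n$ orthogonal complement of $L$; thus $TD - L$ equals $\pi^*\xi$ modulo a coarsely stable rank-zero piece, giving a class in $\Omega_0((X,A)^{-\xi}) = \Omega_{|\xi|}^{\xi \oplus \cst, \der}(X,A)$. The construction for $\MO$ is identical except that the ambient $\RR^\infty$ is replaced by a copy of $V$, so the complement $Q$ is an arbitrary vector bundle (not necessarily coarse); the result is only stably (not coarsely stably) isomorphic to $\pi^*\xi$, so we land in $\Omega_0^\der((X,A)^{-\xi})$.

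Naturality in $W \in \RepOrbSp^f$ is checked exactly as in Proposition \ref{ptS}: by Proposition \ref{reporbsecondlocalization} it suffices to verify naturality under smooth embeddings of compact orbifold pairs desuspended by a vector bundle pulled back from $R(*)$, where both sides act via evident collapse/restriction maps and the commutativity is immediate. For surjectivity, start with a compact derived orbifold chart $(D_0, E_0, s_0)$ representable over $(X,A)$ together with a coarsely stable (resp.\ stable) isomorphism $TD_0 - E_0 = \xi$. By Corollary \ref{embedding}, Theorem \ref{enough}, and stabilization of the derived chart by a normal bundle, we may assume $D_0$ is an open subset of the total space of $E \oplus \xi$ over $X$, so that $TD_0 = \pi^*(TX \oplus E \oplus \xi)$; the structure isomorphism then becomes a (coarsely) stable isomorphism $E_0 = \pi^*(E \oplus TX)$. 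In the $\mO$ case, this means there exist coarse bundles $C, C'$ with $E_0 \oplus C \cong \pi^*(E \oplus TX) \oplus C'$, and after enlarging $E$ and $C, C'$ we realize this as an embedding $E_0 \hookrightarrow \pi^*(E \oplus TX) \oplus \underline\RR^n$ with rank-$n$ coarse complement; taking $L := E_0$ and $v := s_0$ produces the required section of $\mO(E \oplus TX)$. The $\MO$ case is the same but easier, since the ambient $V \oplus V$ allows us to enlarge by any vector bundle.

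Injectivity follows by applying the surjectivity argument rel boundary to a derived orbifold bordism between two representatives; here Proposition \ref{gluingderivedbordisms} guarantees that the resulting bordism in the source lifts coherently. The main obstacle is the bookkeeping in the surjectivity step: promoting the abstract (coarsely) stable isomorphism $TD_0 - E_0 = \xi$ to a concrete embedding $E_0 \hookrightarrow \pi^*(E \oplus TX) \oplus (\text{stabilizing directions})$ requires repeated appeals to enough vector bundles, and in the $\mO$ case one must maintain the coarseness of the complementary $Q$ throughout (which, via the discussion of \S\ref{globalclassifyingspaces} identifying $\bO$-maps with coarsely stable vector bundles, is exactly what distinguishes $\mO$ from $\MO$). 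A subsidiary delicate point is verifying that the map is well-defined at the level of homotopies/bordisms: homotopies of sections must be shown to yield bordisms of derived orbifold charts with the appropriate tangential structure, which again relies on enough vector bundles to adjust the relevant coarsely stable/stable data simultaneously in the family.
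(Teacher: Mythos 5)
Your proposal is correct and follows essentially the same route as the paper: unwind a section of the Grassmannian Thom spectrum into a derived orbifold chart whose obstruction bundle is the tautological subbundle, record the resulting coarsely stable (for $\mO$) or merely stable (for $\MO$) identification of $TD-L$ with $\pi^*\xi$, check naturality via Proposition \ref{reporbsecondlocalization} exactly as for $\SSS$, and prove surjectivity by stabilizing the given derived chart until it is an open subset of a stabilization of $X$ and realizing its obstruction bundle as a subbundle of $E\oplus\underline\RR^n$ (resp.\ $E'\oplus E'$), with injectivity as the relative version. The only cosmetic difference is that you phrase the tangential data via the orthogonal complement $Q$ and insist on a coarse complement in the surjectivity step, which is slightly more than the Grassmannian section requires but is harmless.
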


\begin{proof}
We follow the proof of Proposition \ref{ptS}.

Given a compact orbifold pair $(X,A)$ and a vector bundle $\xi$ over $X$, we define maps
\begin{align}
\label{ptmapinitmo}\mO^0((X,\partial X-A^\circ)^{\xi-TX})&{}\to\Omega_0((X,A)^{-\xi})=\Omega_{\left|\xi\right|}^{\xi+\cst,\der}(X,A)\\
\label{ptmapinitMO}\MO^0((X,\partial X-A^\circ)^{\xi-TX})&{}\to\Omega_0^\der((X,A)^{-\xi})=\Omega_{\left|\xi\right|}^\der(X,A)
\end{align}
as follows.
The values of $\mO^0((X,\partial X-A^\circ)^{\xi-TX})$ and $\MO^0((X,\partial X-A^\circ)^{\xi-TX})$ are the direct limits over vector bundles $E/X$ of (homotopy classes of) sections of
\begin{align}
&\Omega^{E\oplus\xi}\Gr_{\left|E\right|+\left|TX\right|}(E\oplus TX\oplus\RR^{\left|E\right|+\left|TX\right|})^\tau,\\
&\Omega^{E\oplus\xi}\Gr_{\left|E\right|+\left|TX\right|}(E\oplus TX\oplus E\oplus TX)^\tau,
\end{align}
over $X$ supported away from $\partial X-A^\circ$.
Equivalently, this is open subsets $U$ of the total space of $E\oplus\xi$ over $X$ carrying a rank $\left|E\right|+\left|TX\right|$ vector bundle $V\subseteq E\oplus TX\oplus\RR^{\left|E\right|+\left|TX\right|}$ (resp.\ $\subseteq E\oplus TX\oplus E\oplus TX$) and a section $s:U\to V$ whose zero set $s^{-1}(0)$ is proper over $X$ and disjoint from the inverse image of $\partial X-A^\circ$.
Such data defines a compact derived orbifold chart with boundary $(U,V,s)$, representable over $(X,A)$, with a stable isomorphism between its tangent bundle and $TX+E+\xi-V$ (which in the case of $\mO$ is identified with $\xi+(E\oplus TX\oplus\RR^{\left|E\right|+\left|TX\right|})/V-\RR^{\left|E\right|+\left|TX\right|}$).
We thus obtain an element of $\Omega_{\left|\xi\right|}^{\xi+\cst,\der}(X,A)=\Omega_0^{\cst,\der}((X,A)^{-\xi})=\Omega_0((X,A)^{-\xi})$ (resp.\ $\Omega_0^\der((X,A)^{-\xi})$).
This construction is compatible with enlarging $E$ and sends homotopies of sections to bordisms, hence defines the desired maps \eqref{ptmapinitmo}--\eqref{ptmapinitMO}.

The proof that \eqref{ptmapinitmo}--\eqref{ptmapinitMO} define natural transformations of functors \eqref{ptmoresult}--\eqref{ptMOresult} of $W\in\RepOrbSp^f$ is exactly as in the proof of Proposition \ref{ptS}.

It remains to show that the natural transformations \eqref{ptmoresult}--\eqref{ptMOresult} are bijections for $W=(X,A)^{-\xi}$ for a compact orbifold pair $(X,A)$ with a vector bundle $\xi$ over $X$.
As before, the argument for injectivity is a relative version of that for surjectivity, so we will just explain surjectivity.
To show surjectivity, let $(D,V,s)$ be a derived orbifold-with-boundary chart with a representable map $(D,\partial D)\to(X,A)$ and, in the case of $\mO$, a vector bundle $B$ and a stable isomorphism $TD-V=\xi+B-\RR^{\left|B\right|}$ (in the case of $\MO$, with $\dim TD-\left|E\right|=\left|\xi\right|$).
As in the proof of Proposition \ref{ptS}, we may homotope and stabilize to reduce to the case that $D$ is an open subset of $X$.
Now further stabilize both $X$ and $D$ by the vector bundle $\xi$, so that we now have an isomorphism $TX=\xi\oplus E$ where $E$ is the tangent bundle before stabilizing.
We seek an element of $\mO^0((X,\partial X-A^\circ)^{-E})$ (resp.\ $\MO^0$); more specifically, we will produce a section of $\Gr_{\left|E\right|}(E\oplus E)^\tau$ (resp.\ $\Gr_{\left|E\right|}(E\oplus E)^\tau$).
We have a stable isomorphism $E\oplus\RR^{\left|B\right|}=V\oplus B$ (resp.\ an equality $\left|V\right|=\left|E\right|$); in the former case we may stabilize $X$ and $D$ to get a true isomorphism.
For $\mO^0$, we want to embed $V\hookrightarrow E\oplus\RR^{\left|E\right|}$, which we get from the isomorphism $E\oplus\RR^{\left|B\right|}=V\oplus B$ once $\left|E\right|\geq\left|B\right|$ which we can achieve by stabilizing.
For $\MO^0$, we want $V\hookrightarrow E\oplus E$.
Stabilizing to $V'$ and $E'$ allows us to embed $V\hookrightarrow E'\oplus E$ hence $V'\hookrightarrow E'\oplus E'$.
\end{proof}

\section{Bordism and stable maps}

In this final section, we apply the Pontryagin--Thom principle to describe morphism spaces in $\RepOrbSp^f$ and $\OrbSp^f$ in terms of derived orbifold bordism.

\begin{proof}[Proof of Theorem \ref{stablerepresentablemapsbordism}]
Fix a compact orbifold pair $(X,A)$ with a vector bundle $\xi$ and a finite orbi-CW-pair $(Y,B)$ with vector bundle $\zeta$.
Given a map in $\RepOrbSp^f$ (resp.\ $\OrbSp^f$)
\begin{equation}\label{maptoassociate}
D((X,A)^{-\xi})=(X,\partial X-A^\circ)^{\xi-TX}\to(Y,B)^{-\zeta},
\end{equation}
we associate as follows a bordism class of derived orbifold chart with boundary $(C,\partial C)$ with a map $(C,\partial C)\to(X,A)\times(Y,B)$ whose projections to $X$ and $Y$ (resp.\ to $X$) are representable and with a stable isomorphism between its tangent bundle and $\xi+\zeta$.
The data of a map \eqref{maptoassociate} consists of a vector bundle $E$ over $X$, an open subset $U$ of the total space of $E\oplus\xi$, a (representable) map $h:U\to Y$, an embedding $h^*\zeta\hookrightarrow TX\oplus\xi$, and a section $s$ of the quotient whose zero set is proper over $X$ such that $\partial X-A^\circ$ is contained in $f^{-1}(B)$ union the locus where $\left|s\right|\geq\varepsilon$ for some $\varepsilon>0$.
This data defines for us a compact derived orbifold chart $(U,(TX\oplus E)/h^*\zeta,s)$ which has the desired form by inspection.
Homotopies of maps evidently induce bordisms.

Let us argue that this association (of a bordism class to a stable map) is natural in $(Y,B)^{-\zeta}$.
To make sense of this statement, we should note that bordism of derived orbifolds of the requisite form is indeed a functor of $(Y,B)^{-\zeta}\in\RepOrbSpcPair^{f,-\Vect}$ (resp.\ $\OrbSpcPair^{f,-\Vect}$), where a map $(Y,B)^{-\zeta}\to(Y',B')^{-\zeta'}$ given by $q:Y\to Y'$, $q^*\zeta'\hookrightarrow\zeta$, and $s:Y\to\zeta/q^*\zeta'$ pushes forward a derived orbifold mapping $(Y,B)$ under $q$ and adds $\zeta/q^*\zeta'$ to the obstruction space and $s$ to the obstruction section.
This evidently descends to $\RepOrbSp^f$ (resp.\ $\OrbSp^f$) due to sending to isomorphisms the morphisms $W$ (obvious) and $S$ (same as Proposition \ref{thomiso}).
Now to see that the association of a bordism class to a stable map is natural, it suffices to show it is a natural transformation of functors of $(Y,B)^{-\zeta}\in\RepOrbSpcPair^{f,-\Vect}$ (resp.\ $\in\OrbSpcPair^{f,-\Vect}$) due to the universal property of localization.
This is evident by inspection.

Next, to see naturality in $(X,A)^{-\xi}\in\RepOrbSp^f$, we may argue as in the proof of Proposition \ref{ptS}: it suffices to check naturality as a functor out of $\RepOrbSpcPair_{N,k}^{f,-\xi}$ for $\xi\in\Vect(R(*))_{N,k+2}$, and this can be seen by inspection upon arranging maps to be smooth embeddings of orbifolds.

It remains to show that this association of a bordism class to a map \eqref{maptoassociate} is bijective.
As in the proof of Proposition \ref{ptS}, injectivity is simply a relative version of surjectivity, so we will just prove surjectivity.
Thus, suppose given a compact derived orbifold chart with boundary $(D,V,s)$ with a representable map $f$ to $X$, a (representable) map $g$ to $Y$ with $\partial D\subseteq f^{-1}(A)\cup g^{-1}(B)$, and a stable isomorphism between its tangent bundle $TD-V$ and $f^*\xi+g^*\zeta$.
By replacing $(X,A)$ with the total space of a vector bundle over it, we may assume the map $D\to X$ is a smooth embedding.
By stabilizing $(D,V,s)$, we may assume $D\to X$ is an open inclusion, so we have a stable isomorphism $TX=V\oplus\xi\oplus g^*\zeta$ over $D$.
Now further stabilize by $\xi$ so that we have an everywhere defined isomorphism $TX=\xi\oplus E$ (so $E$ is the tangent bundle of $X$ before stabilizing).
The resulting stable isomorphism $E=V\oplus g^*\zeta$ may be turned into a genuine isomorphism by further stabilization.
We want a map $(X,\partial X-A^\circ)^{-E}\to(Y,B)^{-\zeta}$, and this is precisely what we have: our open subset of $X$ is $D$, which has a (representable) map $g:D\to Y$, we have an embedding $g^*\zeta\hookrightarrow g^*\zeta\oplus V=E$, and we have a section of the quotient $V$ namely the obstruction section.
\end{proof}

\begin{example}\label{stablerepmapsbg}
We describe the set of stable (representable) maps $\BB G\to\BB H$ for finite groups $G$ and $H$.
Such maps (i.e.\ morphisms in $\RepOrbSp^f$ and $\OrbSp^f$) are, according to Theorem \ref{stablerepresentablemapsbordism}, in bijection with bordism classes of derived orbifolds $C$ with a representable map to $\BB G$, a (representable) map to $\BB H$, and a stable isomorphism $TC=0$.
By Wasserman's Theorem \ref{wasserman}, this is the same as bordism classes of orbifolds $C$ with the requisite (representable) maps and stable framing.
Now $C$ has dimension zero, so it must be a disjoint union of $\BB K$ for some finite groups $K$; the only bordisms between these have the form $\BB K\times[0,1]$, so bordism is just homotopy.
A homotopy class of (representable) map $\BB K\to\BB G$ is a $G$-conjugacy class of (injective) homomorphism $K\to G$.
A stable framing of $\BB K$ is, according to Example \ref{stablevboverbg}, an element of the product of $\ZZ/2$ over all irreducible real representations of $K$ with $\End(\rho)=\RR$.
We thus obtain a group theoretic description of the morphism space $\BB G\to\BB H$ in $\RepOrbSp^f$ and in $\OrbSp^f$.
\end{example}

Stated slightly differently, Theorem \ref{stablerepresentablemapsbordism} says that the category $\RepOrbSp^f$ may be described as follows.
The objects of $\RepOrbSp^f$ are denoted $(X,A)^{-\xi}$ where $(X,A)$ is a compact orbifold pair and $\xi$ is a stable vector bundle over $X$.
The morphisms $(X,A)^{-\xi}\to(Y,B)^{-\zeta}$ are bordism classes of derived orbifolds $(C,\partial C)\to(X,\partial X-A^\circ)\times(Y,B)$ whose projections to $X$ and to $Y$ are representable, equipped with a stable isomorphism $TC=TX-f^*\xi+g^*\zeta$.
Composition is given by derived fiber product.
In this description, the action of duality $D$ is obvious: it trades $(X,A)^{-\xi}$ for $(X,\partial X-A^\circ)^{\xi-TX}$ with the evident action on morphisms.

There is a notable omission in Theorem \ref{stablerepresentablemapsbordism}: we have no idea what category one gets if one allows both maps to $(X,A)$ and to $(Y,B)$ to be arbitrary (not required to be representable).
The resulting category has an apparent involution $D$, but that's all this author knows.

Using Theorem \ref{stablerepresentablemapsbordism}, we may associate to any map $X\to Y$ in $\RepOrbSp^f$ a map $X\wedge DY\to R(*)$ in $\RepOrbSp^f$ as follows.
Given a derived orbifold of the shape prescribed by Theorem \ref{stablerepresentablemapsbordism} to specify a map $X\to Y$, we simply note that the same derived orbifold also defines a map $X\wedge DY\to R(*)$ by taking the product of the two maps and appealing to the canonical map to $R(*)$.

In particular, there is a canonical pairing $X\wedge DX\to R(*)$ induced by the identity map $X\to X$ (equivalently $DX\to DX$).
It may be described concretely as follows.
Let $(X,A)$ be a compact orbifold pair carrying a vector bundle $\xi$.
The diagonal map is a map
\begin{equation}
(X,\partial X)\to(X,A)\times(X,\partial X-A^\circ).
\end{equation}
We may suspend/desuspend to define a map $(X,\partial X)^{-TX}\to(X,A)^{-\xi}\times(X,\partial X-A^\circ)^{\xi-TX}$ and then dualize to obtain
\begin{equation}
(X,\partial X-A^\circ)^{\xi-TX}\times(X,A)^{-\xi}\to X
\end{equation}
which we may then compose with the map $X\to R(*)$.
This defines a map $DZ\wedge Z\to R(*)$ for $Z=(X,A)^{-\xi}$.
Tracing through the definition of the bijection in Theorem \ref{stablerepresentablemapsbordism}, it is immediate that this is indeed the canonical pairing $Z\wedge DZ\to R(*)$ as described above.

\bibliographystyle{amsplain}
\bibliography{ptorbifold}
\addcontentsline{toc}{section}{References}

\end{document}